\newtheorem{thm}{Theorem}[section]
\newtheorem{prop}[thm]{Proposition}
\newtheorem{lemma}[thm]{Lemma}
\newtheorem{condition}{Condition}
\newtheorem{dfn}[thm]{Definition}
\renewcommand{\b}[1]{\mathbf{#1}}
\renewcommand{\u}[1]{\underline{#1}}
\newcommand{\ep}{\varepsilon}
\newcommand{\con}{\equiv}
\newcommand{\ndiv}{\nmid}
\newcommand{\modd}[1]{\; ( \mathrm{mod} \; #1)}
\newcommand{\bstack}[2]{#1 \atop #2}
\newcommand{\maps}{\rightarrow}
\newcommand{\intersect}{\cap}
\newcommand{\Union}{\bigcup}
\newcommand{\diag}{\text{diag}\,}
\newcommand{\al}{\alpha}
\newcommand{\be}{\beta}
\newcommand{\gam}{\gamma}
\newcommand{\del}{\delta}
\newcommand{\Del}{\Delta}
\newcommand{\Sig}{\Sigma}
\newcommand{\sig}{\sigma}
\newcommand{\lam}{\lambda}
\newcommand{\Lam}{\Lambda}
\newcommand{\A}{\mathcal{A}}
\newcommand{\Ecal}{\mathcal{E}}
\newcommand{\Fcal}{\mathcal{F}}
\newcommand{\Ical}{\mathcal{I}}
\newcommand{\Rcal}{\mathcal{R}}
\newcommand{\Abf}{\mathbf{A}}
\newcommand{\bbf}{{\bf b}}
\newcommand{\hbf}{{\bf h}}
\newcommand{\lbf}{{\bf l}}
\newcommand{\rbf}{{\bf r}}
\newcommand{\ubf}{{\bf u}}
\newcommand{\wbf}{{\bf w}}
\newcommand{\x}{{\bf x}}
\newcommand{\xbf}{{\bf x}}
\newcommand{\ybf}{{\bf y}}
\newcommand{\unu}{\u{\nu}}
\newcommand{\F}{\mathbb{F}}
\newcommand{\N}{\mathbb{N}}
\newcommand{\Q}{\mathbb{Q}}
\newcommand{\R}{\mathbb{R}}
\newcommand{\Z}{\mathbb{Z}}
\newcommand{\Mf}{\mathfrak{M}}
\newcommand{\mf}{\mathfrak{m}}
\newcommand{\Sf}{\mathfrak{S}}
\newcommand{\rank}{\mathrm{rank}\,}
\newcommand{\Disc}{\mathrm{Disc}}
\newcommand{\beq}{\begin{equation}}
\newcommand{\eeq}{\end{equation}}
\numberwithin{equation}{section}
	\newcommand{\xtra}[1]{}
\definecolor{pink}{rgb}{1,.2,.6}
\definecolor{orange}{rgb}{0.7,0.3,0}
\definecolor{blue}{rgb}{.2,.6,.75}
\definecolor{green}{rgb}{.4,.7,.4}
\definecolor{purple}{RGB}{127,0,255}
\newtheorem{proposition}[thm]{Proposition}
\theoremstyle{remark}
\numberwithin{equation}{section}
\newcommand{\mmod}[1]{\,\,\text{mod}\,\,#1}
\newcommand{\lambf}{\boldsymbol{\lambda}}
\def\bfa{{\mathbf a}}
\def\bfb{{\mathbf b}}
\def\bfc{{\mathbf c}}
 \def\bfe{{\mathbf e}}
\def\bfh{{\mathbf h}}
\def\bfl{{\mathbf l}}
\def\bfr{{\mathbf r}}
\def\bfu{{\mathbf u}}
\def\bfv{{\mathbf v}}
\def\bfw{{\mathbf w}}
\def\bfx{{\mathbf x}}
\def\bfy{{\mathbf y}}
\def\bfz{{\mathbf z}}
\def\calJ{{\mathcal J}}
\def\calL{{\mathcal L}}
\def\calN{{\mathcal N}}
\def\calQ{{\mathcal Q}}
\def\calS{{\mathcal S}}
\def\calX{{\mathcal X}}
\def\calY{{\mathcal Y}}
\def\A{{\mathbb A}}
\def\F{{\mathbb F}}\def\N{{\mathbb N}}\def\P{{\mathbb P}}
\def\R{{\mathbb R}}
\def\Z{{\mathbb Z}}\def\Q{{\mathbb Q}}
\def\grS{{\mathfrak S}}
\def\alp{{\alpha}} 
\def\alptil{{\widetilde{\alp}}}
\def\bet{{\beta}}  
\def\gam{{\gamma}}
\def\del{{\delta}}
 \def\Del{{\Delta}}
\def\zet{{\zeta}}  
\def\tet{{\theta}}  
\def\kap{{\kappa}}
\def\lam{{\lambda}} 
\def\bflam{{\boldsymbol \lam}}
\def\Lam{{\Lambda}} 
\def\Lamm{{\Lambda}}
\def\sig{{\sigma}} \def\Sig{{\Sigma}}
\def\ome{{\omega}} \def\Ome{{\Omega}}
\def\d{{\partial}}
\def\eps{\varepsilon}
\def\d{{\,{\rm d}}}
\def\rank{{\rm rank}}
\def\dim{{\rm dim}}
\def\bQ{{\underline{Q}}}
\def\ba{{\underline{a}}}
\def\bb{{\underline{b}}}
\def\bn{{\underline{n}}}
\def\bphi{{\underline{\phi}}}
\def\btet{{\underline{\tet}}}
\def\bmu{{\underline{\mu}}}
\def\bnu{{\underline{\nu}}}
\def\blam{{\underline{\lam}}}
\DeclareMathOperator{\codim}{codim}
\begin{document}
\title[Systems of three quadratic forms]{Representations of integers by systems of three quadratic forms}

\author[Pierce]{Lillian B. Pierce}
\address{Mathematics Department, Duke University, 120 Science Drive, Durham, NC 27708 USA}
\email{pierce@math.duke.edu}

\author[Schindler]{Damaris Schindler}
\address{Hausdorff Center for Mathematics, Endenicher Allee 60-62, 53115 Bonn, Germany. Present address: Institute for Advanced Study, Einstein Drive, Princeton NJ 08540 USA}
\email{damaris.schindler@hcm.uni-bonn.de}

\author[Wood]{Melanie Matchett Wood}
\address{Department of Mathematics, 480 Lincoln Dr., Madison, WI 53706 USA\\
and
American Institute of Mathematics\\600 East Brokaw Road\\
San Jose, CA 95112 USA}
\email{mmwood@math.wisc.edu}

\date{}

\maketitle


\begin{abstract}
It is classically known that the circle method produces an asymptotic for the number of representations of a tuple of integers $(n_1,\ldots,n_R)$  by a system of quadratic forms $Q_1,\ldots, Q_R$ in $k$ variables, as long as $k$ is sufficiently large with respect to $R$; reducing the required number of variables remains a significant open problem. In this work, we consider the case of 3 forms and  improve on the classical result by reducing the number of required variables to $k \geq 10$ for ``almost all'' tuples, under a nonsingularity assumption on the forms $Q_1,Q_2,Q_3$. 
To accomplish this, we develop a three-dimensional analogue of Kloosterman's circle method, in particular capitalizing on geometric properties of appropriate systems of three quadratic forms.

\end{abstract}

\section{Introduction}
The study of representing an integer by an integral quadratic form has a long history, and is today relatively well understood. More generally, one may consider a system of  integral quadratic forms $Q_1, \ldots, Q_R \in \Z[x_1,\ldots, x_k]$, and ask for the number of simultaneous representations of a fixed tuple of integers $\bn= (n_1,\ldots, n_R)$ by the system. More precisely,
we define a smoothly-weighted counting function by setting
\begin{equation*}
\Rcal_B(\bn)= \sum_{\substack{\bfx\in \Z^k\\ \bQ(\bfx)=\bn}} w\left(\frac{\bfx}{B}\right),
\end{equation*}
where  $w: \R^k\rightarrow \R$ is a smooth non-negative weight function of compact support, $B$ is a large parameter, and 
the notation $\bQ(\bfx)=\bn$ represents the system of equations 
\[ Q_i(\xbf) = n_i, \qquad 1 \leq i \leq R.\]
We  note that throughout we use the convention that a quadratic form $Q$ is said to be integral if it has an integral matrix (also denoted by $Q$); that is, the off-diagonal terms of the form have even coefficients. 
 
The expectation is that as long as $k \geq k(R)$ is sufficiently large and the system is not too singular, for $B$ sufficiently large the circle method will produce for each tuple $\u{n}$ an asymptotic of the shape
 \beq\label{R_asympt}
\Rcal_B(\u{n}) = C_{w, B}(\u{n})B^{k-2R} + o(B^{k-2R}).
 \eeq
Here $C_{w, B}(\u{n})$ is the product of the standard singular integral and singular series, which may be shown to be non-zero as long as $\u{n}$ lies in a suitable range depending on $B$, and satisfies appropriate local conditions. We note that it is natural to expect a main term of size $B^{k-2R}$, since there are $k$ choices for the variables $x_1,\ldots, x_k$, and $R$ constraint equations on the values $Q_i(\xbf)$, which are quantities of size at most $O(B^2)$.

Birch's work \cite{Bir62} provided long-standing  records for using the circle method to count solutions to systems of forms of any degree. Recently Myerson \cite{My16} has improved on Birch's theorem for nonsingular systems of quadratic forms; his result is much stronger than \cite{Bir62} in the case of four or more quadratic forms, but does not improve on the known bounds for two or three quadratic forms.
In the case of a nonsingular system of two quadratic forms, the main result of \cite{Bir62}, combined with the sharpened notion of the Birch singular locus in \cite{Die15} and \cite{Sch15}, produces an asymptotic for the counting function $\Rcal_B(\u{n})$, for any fixed $\u{n} =(n_1,n_2)$, as long as $k \geq 14$. Recent work of Munshi \cite{Mun13} using a ``nested'' version of the $\del$-circle method \cite{HB96} has reduced this to $k \geq 11$. 
For a nonsingular system of three quadratic forms, Birch's method (in the version \cite{Die15} and \cite{Sch15}) provides the benchmark that (\ref{R_asympt}) holds for $k \geq 27$.

Alternatively, one can aim for the weaker result of proving that the expected asymptotic (\ref{R_asympt})  for $\Rcal_B(\u{n})$ holds for ``almost all'' $\u{n}$. Birch's work can again be applied in this setting, showing in the case of a nonsingular system of two quadratic forms that $k \geq 8$ variables suffice, and in the case of three forms,  $k\geq 15$ variables suffice.
Recently, Heath-Brown and the first author \cite{HBPierce} took up the question of improving on this for systems of two quadratic forms by developing a so-called 2-dimensional version of Kloosterman's circle method; the version of the circle method in \cite{HBPierce} shows that $k \geq 5$ variables suffice to prove that (\ref{R_asympt}) holds for ``almost all'' pairs $\u{n}=(n_1,n_2)$.

In this paper we develop a version of the circle method that carries through Kloosterman's strategy in 3 dimensions, in order to treat systems $\u{Q} = \{Q_1,Q_2,Q_3\}$ of three quadratic forms satisfying a suitable nonsingularity condition.
The case of 3 quadratic forms is significantly more complicated geometrically than a system of 2 quadratic forms, and we will devote Section \ref{sec_geom} to a thorough exposition of the necessary geometric constructions. Here we briefly recall that it is standard to say that the system of equations 
\beq\label{3eqnsQ}
 Q_i(\xbf)=0 \quad \text{ for $1 \leq i \leq 3$}  
 \eeq
 is nonsingular if $Q_1,Q_2,Q_3$ satisfy the Jacobian criterion  for smoothness:  at any non-zero point $\xbf$ satisfying the equations (\ref{3eqnsQ}), the $3 \times k$ Jacobian matrix 
\[ 
 \left( \frac{\partial Q_i}{\partial x_j} (\xbf) \right)_{i,j}
\]
has full rank. 
As we will explain later in more detail, the Jacobian criterion alone is not sufficient for our approach. 
Instead, we define the form
\[ F_{\u{Q}}(x,y,z) =\det (xQ_1 + yQ_2 + zQ_3).\]
The key nonsingularity condition we will assume throughout is that the form $F_{\u{Q}}$ has nonvanishing discriminant as a function of $x,y,z$, or, equivalently that $F_{\u{Q}}$ defines a smooth plane curve as a subscheme of the projective plane.  (Unless $F_{\u{Q}}$ is a perfect power, this is also equivalent to $F_{\u{Q}}$ defining a smooth plane curve as a \emph{subvariety} of the projective plane.) 
We note that this assumption is not overly restrictive; indeed, for a generic choice of three quadratic forms $Q_1,Q_2,Q_3$, we will indeed have 
\beq\label{F_disc0}
\Disc(F_{\u{Q}}(x,y,z)) \neq 0;
\eeq
see Proposition \ref{prop4}. In particular, we provide a specific example of a system that satisfies (\ref{F_disc0}) in an appendix in Section \ref{sec_appendix}.

Under the assumption that $\Disc(F_{\u{Q}}(x,y,z)) \neq 0$, Birch's method only requires $k \geq 14$ for an ``almost all'' result (see the remark following Proposition \ref{prop1}). 
 Our main result yields a significant improvement of this: we show  that $k \geq 10$ variables suffice.

\begin{thm}\label{thm_asymp}
Let $Q_1,Q_2,Q_3 \in \Z[x_1,\ldots, x_k]$ be integral quadratic forms satisfying (\ref{F_disc0}).
Let $N=B^2$. If $k \geq 10$, for $B$ sufficiently large and for each $\varpi \in (0,1/56)$, there is an exceptional set $\Ecal_\varpi(N) \subset [-N,N]^3$ with
\[ |\Ecal_\varpi(N)| \ll_{\varpi, k, \u{Q}} N^{3-(1/56 - \varpi)},\]
 such that for any $\u{n} \in [-N,N]^3 \setminus \Ecal_\varpi(N)$,   the counting function $\Rcal_B(\u{n})$ admits the expected asymptotic 
\beq\label{R_asymp_thm}
\Rcal_B(\bn)=J_w (B^{-2}\bn)\grS(\bn)B^{k-6} + O_{\varpi, k, \u{Q}}(B^{k-6-\varpi}).
\eeq
Here  $J_w(B^{-2}\bn)$ and $\grS(\bn)$ denote the standard singular integral and singular series, which are defined in (\ref{sing_int_dfn}) and (\ref{sing_ser_dfn}), respectively.
\end{thm}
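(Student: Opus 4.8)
The plan is to run a three-dimensional analogue of Kloosterman's refinement of the circle method, as promised in the introduction. I would begin with the standard setup: write $\Rcal_B(\bn)$ via Fourier analysis as an integral over the three-dimensional torus $(\R/\Z)^3$ of the exponential sum $S(\bfalp) = \sum_{\bfx \in \Z^k} w(\bfx/B) e(\alp_1 Q_1(\bfx) + \alp_2 Q_2(\bfx) + \alp_3 Q_3(\bfx))$ against $e(-\bfalp \cdot \bn)$. The first main step is a Farey-type dissection of $(\R/\Z)^3$ into arcs centered at rationals $\abf/q$ with $q \le B$; the Kloosterman idea is that, after applying Poisson summation (here a multidimensional Poisson summation in all $k$ variables) on each arc, one keeps \emph{all} the resulting dual terms rather than discarding the ``minor'' contributions, and reorganizes the sum so that the $q$-sum and the sum over neighboring denominators conspire to give extra cancellation. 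This converts the problem into bounding, on average over $\bn$, complete exponential sums over $\Z/q\Z$ twisted by Gauss-type sums attached to the quadratic forms $Q_i$ — i.e. character sums whose cancellation is governed by the number of $\F_p$-points on the varieties cut out by $x Q_1 + y Q_2 + z Q_3$. This is precisely where the hypothesis $\Disc(F_{\uQ}) \neq 0$ enters: smoothness of the plane curve $F_{\uQ} = 0$ gives the sharp square-root-type bounds (via Weil / Deligne estimates, developed in the geometric Section~\ref{sec_geom}) for these exponential sums, and it is this input that lets us push down to $k \ge 10$.

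Next I would separate the expected main term. The diagonal/principal contribution, coming from the trivial frequency in Poisson summation summed against the Ramanujan-type expansion in $q$, assembles into the singular integral $J_w(B^{-2}\bn)$ times the singular series $\grS(\bn)$ times $B^{k-6}$; convergence of $\grS(\bn)$ and the local density computations at each prime again rely on the nonsingularity of $F_{\uQ}$, and the exponent $k - 6 = k - 2R$ with $R = 3$ is forced by the scaling heuristic described after \eqref{R_asympt}. The remaining ``error'' terms must be shown to be $O(B^{k-6-\varpi})$ for all $\bn \in [-N,N]^3$ outside an exceptional set $\Ecal_\varpi(N)$. Here I would \emph{not} attempt a pointwise bound — that is exactly what is out of reach for $k$ as small as $10$ — but instead estimate the $L^2$-norm (mean square) of the error over $\bn \in [-N,N]^3$. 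Expanding the square and summing over $\bn$ collapses one of the three torus integrations to a congruence/near-equality condition, leaving a bilinear expression in two copies of the Kloosterman-dissected sum; bounding this using the exponential-sum estimates above yields a mean-square bound of the form $\sum_{\bn} |\text{error}(\bn)|^2 \ll N^{3} B^{2(k-6)} N^{-(1/56-\varpi')}$. A Chebyshev/Markov argument then produces the exceptional set of size $\ll N^{3-(1/56-\varpi)}$ on which the pointwise error can fail to be $O(B^{k-6-\varpi})$, while for all other $\bn$ the asymptotic \eqref{R_asymp_thm} holds; the specific constant $1/56$ and the range $\varpi \in (0,1/56)$ will emerge from optimizing the interplay between the length of the Farey dissection, the savings from the Kloosterman averaging, and the power of $N$ spent in the mean-square estimate.

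The main obstacle — and the technical heart of the paper — is carrying out the Kloosterman reciprocity and the resulting exponential-sum analysis in \emph{three} moduli-dimensions simultaneously: in the one-form case Kloosterman sums appear classically, in the two-form case \cite{HBPierce} one already needs a careful two-dimensional bookkeeping of Farey fractions and their neighbors, and with three quadratic forms the ``denominators'' live in a more complicated configuration and the associated complete exponential sums are sums over $(\Z/q\Z)^k$ whose evaluation is controlled by the geometry of the pencil $xQ_1 + yQ_2 + zQ_3$ rather than by a single quadratic form. Establishing the sharp bounds for these sums — uniformly in $q$, including at prime powers and at the ``bad'' primes dividing $\Disc(F_{\uQ})$ or the relevant resultants — and then feeding them through the mean-square computation without losing the crucial power of $N$, is the step I expect to require the bulk of the work, and it is the step that genuinely exploits the nonsingularity hypothesis \eqref{F_disc0} in a way that the Jacobian criterion alone would not permit.
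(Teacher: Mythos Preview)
Your high-level plan is correct and matches the paper's approach: Theorem~\ref{thm_asymp} is deduced from a mean-square bound (the paper's Theorem~\ref{thm_meansquare}) via Chebyshev's inequality, and that mean-square bound is proved by a major/minor arc split with Kloosterman-type cancellation on the minor arcs, fed by geometric bounds coming from the hypothesis $\Disc(F_{\u{Q}})\neq 0$.

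Two points deserve correction. First, the Kloosterman saving in this three-dimensional setting does \emph{not} come from ``neighboring denominators'' --- there is no useful Farey neighbour structure on $(\R/\Z)^3$, and an attempt to build one would be the wrong direction. Instead, one covers the minor arcs by three-dimensional Dirichlet approximation (allowing overlaps, since only an upper bound is needed), and the cancellation is extracted by summing over the \emph{numerators} $\u{a}\pmod q$ for fixed $q$. Concretely, the paper applies Parseval to convert $\sum_{\bn}\big|\int_{\grm} S(\balp)e(-\balp\cdot\bn)\d\balp\big|^2$ directly into $\int_{\grm}|S(\balp)|^2\d\balp$, and then Poisson summation on $|S(\u{a}/q+\btet)|^2$ together with the $\u{a}$-sum produces the complete sums $S(\lbf;q)$ whose average estimation (Proposition~\ref{propSav}) is the technical core; your ``collapse one torus integration to a near-equality condition'' is a rougher version of this Parseval step. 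Second, you must also place in $\Ecal_\varpi(N)$ the $O(N^2)$ tuples $\bn$ with $H_{\u{Q}}(\bn)=0$, since for these the singular series need not converge and the mean-square theorem says nothing about them; this is minor but necessary for the statement as written.
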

Here, as throughout, we use the convention that the notations $\ll_\lam$ and $O_\lam(\cdots)$ indicate an implied constant that may depend on the parameter $\lam$. (Since all implied constants may depend on the fixed weight function $w$ we have chosen, we will not typically notate this dependence.)

Theorem \ref{thm_asymp} is a corollary of the following more technical theorem, which is our main result.
To state the theorem, we make use of a polynomial $H_{\u{Q}}\in \Z[t_1,t_2,t_3]$, defined in Proposition~\ref{prop3.4}, that vanishes exactly when the three equations $\u{Q}(\bfx)=\u{t}$ fail the Jacobian criterion  for smoothness and codimension $3$ (i.e. when their intersection fails to be a smooth scheme of codimension $3$).

\begin{thm}\label{thm_meansquare}
Let $Q_1,Q_2,Q_3 \in \Z[x_1,\ldots, x_k]$ be integral quadratic forms satisfying (\ref{F_disc0}).
Then there exists a non-zero polynomial $H_{\u{Q}}\in \Z[t_1,t_2,t_3]$ that depends only on $k$ and the forms $Q_1,Q_2,Q_3$, and with degree at most $2(k+1)k(k-1)$, such that the following holds. 
For $k \geq 10$,
\begin{equation*}
\sum_{\substack{|\u{n}|_\infty \leq B^2\\ H_{\u{Q}}(\bn)\neq 0}} \left|\Rcal_B(\bn)-\calJ_w (B^{-2}\bn)\grS(\bn)B^{k-6}\right|^2 \ll_{k,\bQ,w} B^{2k-6-1/28},
\end{equation*}
where $\calJ_w(B^{-2}\bn)$ and $\grS(\bn)$ are the singular integral and singular series defined in (\ref{sing_int_dfn}) and (\ref{sing_ser_dfn}), respectively. Here $|\u{n}|_\infty = \max_{1 \leq i \leq 3} |n_i|$. 
\end{thm}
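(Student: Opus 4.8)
The plan is to set up a smooth delta-method / Kloosterman-style circle method in three dimensions and estimate the second moment of the error term by opening the square and exploiting cancellation in the off-diagonal terms. First I would write $\Rcal_B(\bn)$ via an integral over the three-dimensional torus: introducing $\balp=(\al_1,\al_2,\al_3)$ and the exponential sum $S(\balp) = \sum_{\bfx\in\Z^k} w(\bfx/B) e(\al_1 Q_1(\bfx)+\al_2 Q_2(\bfx)+\al_3 Q_3(\bfx))$, one has $\Rcal_B(\bn) = \int_{(\R/\Z)^3} S(\balp) e(-\balp\cdot\bn)\,d\balp$. I would then perform a Farey/Kloosterman dissection of $(\R/\Z)^3$ into arcs centered at rationals $\ba/q$ (with $\ba\in(\Z/q\Z)^3$), so that $S(\balp)$ near $\ba/q$ factors as a complete exponential sum $q^{-k}S_{\ba,q}(\bfx)$ times a smooth integral $I(q^{-2}\text{-scaled variable})$, plus error. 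The key point of the Kloosterman refinement in three variables is that after extracting the main term (the $q=1$ contribution giving $\calJ_w\grS B^{k-6}$), the error term is a sum over $q$ in a dyadic range and over the numerators $\ba$, and one must gain extra savings by averaging the Kloosterman-type sums over $\ba$ (or over the second-layer denominators) rather than estimating each arc trivially.

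Next I would square the error term $E_B(\bn) := \Rcal_B(\bn)-\calJ_w(B^{-2}\bn)\grS(\bn)B^{k-6}$ and sum over $\bn$ with $|\bn|_\infty\le B^2$ and $H_{\u{Q}}(\bn)\neq 0$. Summing $|E_B(\bn)|^2$ over $\bn$ turns, by Poisson/orthogonality, into a diagonal-type condition forcing the two denominators $q,q'$ and numerators to be close, and one is left with a multilinear average of products $q^{-k}q'^{-k} S_{\ba,q}\overline{S_{\ba',q'}}$ weighted by smooth factors and a length-$B^2$ arithmetic sum over $\bn$. This is where the geometry enters decisively: the paper's nonsingularity hypothesis $\Disc(F_{\u{Q}})\neq 0$ and the auxiliary polynomial $H_{\u{Q}}$ are exactly what is needed to bound the complete exponential sums $S_{\ba,q}$ — one needs square-root-type cancellation in these $k$-dimensional quadratic exponential sums uniformly in $q$, away from the loci where the fibre $\u{Q}(\bfx)=\bn$ degenerates, and the smoothness of the plane curve $F_{\u{Q}}=0$ controls the "bad" moduli where the pencil $xQ_1+yQ_2+zQ_3$ drops rank. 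I would assemble standard multiplicativity, Gauss-sum evaluations, and bounds on the number of $\bn\le B^2$ with $H_{\u{Q}}(\bn)\equiv 0$ modulo various $q$ (using that $H_{\u{Q}}$ is a fixed nonzero polynomial of controlled degree) to collapse the multilinear sum into powers of $B$ and $q$.

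The main obstacle — and the heart of the paper — will be obtaining the Kloosterman-type cancellation in three dimensions with enough uniformity to push the exponent below the Birch threshold down to $k\ge 10$: one must gain, on average over the numerators $\ba \pmod q$ (and the nested second-level variables), a saving of roughly an extra factor of $q^{1/2}$ per "dimension" beyond what the pointwise complete-sum bound gives, while simultaneously controlling the smooth oscillatory integrals $I$ and the minor-arc contribution where $q$ is large. Balancing the ranges of $q$ against the length $B^2$ of the $\bn$-sum, and ensuring the exceptional locus $H_{\u{Q}}=0$ genuinely has density $O(B^{-\delta})$ after the Poisson summation is carried out, is the delicate accounting that produces the final exponent $1/28$; getting the geometry of the systems of three quadratics to cooperate with the averaging (so that the relevant complete exponential sums are controlled by the smoothness of $F_{\u{Q}}$ and $H_{\u{Q}}$) is what makes three forms genuinely harder than the two-form case of \cite{HBPierce}. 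I would organize the write-up so that the purely analytic circle-method apparatus is separated from the algebraic-geometric input, invoking Propositions \ref{prop3.4} and \ref{prop4} for the properties of $H_{\u{Q}}$ and $F_{\u{Q}}$, and deferring the detailed exponential-sum estimates to their own section.
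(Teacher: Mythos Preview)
Your high-level narrative (circle method, Kloosterman refinement, mean-square over $\bn$) is in the right spirit, but two structural choices you describe diverge from the paper's argument and would, as written, leave genuine gaps.

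First, the mean-square reduction. You propose to open the square and track bilinear correlations between two arc-parameters $(\ba,q)$ and $(\ba',q')$, with the $\bn$-sum forcing them close. The paper does something much cleaner: after isolating the main term on the major arcs, the error is $\int_{\mf(\Delta)} S(\balp)e(-\balp\cdot\bn)\,d\balp$, and a direct application of Parseval's identity over $\bn\in\Z^3$ collapses the entire mean square to the single integral $\int_{\mf(\Delta)}|S(\balp)|^2\,d\balp$. There is no $(q,q')$-pairing to manage; one arc parameter $q$ survives, and the doubling of variables comes instead from expanding $|S(\balp)|^2$ as a sum over $\bfx,\bfy\in\Z^k$ involving the differenced forms $Q_i(\bfx)-Q_i(\bfy)$. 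Poisson summation is then applied in the spatial variables $\bfx,\bfy$ (not in $\bn$), producing dual variables $\lbf=(\lbf_1,\lbf_2)\in\Z^{2k}$ and the exponential sums $S(\lbf;q)=\sum_{(\ba,q)=1}\sum_{\rbf_1,\rbf_2} e_q(\ba\cdot\bQ(\rbf_1)-\ba\cdot\bQ(\rbf_2)+\rbf\cdot\lbf)$.

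Second, and more seriously, you have misplaced the geometric input. You expect $H_{\u{Q}}$ and counts of $\bn$ with $H_{\u{Q}}(\bn)\equiv 0\pmod q$ to drive the minor-arc savings. In the paper, $H_{\u{Q}}$ plays \emph{no role whatsoever} in the minor-arc bound: after Parseval the integral $\int_{\mf}|S|^2$ does not see $\bn$ at all, and Proposition~\ref{prop_minor_arcs} is proved uniformly. The polynomial $H_{\u{Q}}$ enters only on the major arcs, to ensure convergence of the singular series. The decisive geometry on the minor arcs is instead $\lbf$-dependent: one studies the $(k{+}1)\times(k{+}1)$ matrix $M(\u{x};\lbf_3,\lbf_4)$ (with $\lbf_3=\lbf_1+\lbf_2$, $\lbf_4=\lbf_1-\lbf_2$), and the schemes $X_\lbf=\{\det M=0\}$, $Y_\lbf=\{\text{all }k\times k\text{ minors}=0\}$ in $\A^3$. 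For ``good'' $\lbf$ these have the expected dimensions $2$ and $\le 1$, yielding $S(\lbf;q)\ll q^{k+3+\eps}$ on average; the crucial Lemma~\ref{lemL} (proved via Lemma~\ref{HilfslemmaS}, which simultaneously diagonalizes two of the three forms) shows that the ``bad'' $\lbf$ are confined to a $k$-dimensional subset of $\Z^{2k}$, so their trivial bound $S(\lbf;q)\ll q^{k+3+\eps}$ summed over only $L^k$ values is acceptable. This good/bad-$\lbf$ dichotomy, not $H_{\u{Q}}$, is the mechanism that gets you to $k\ge 10$.
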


 One may deduce from Theorem \ref{thm_meansquare} that the Hasse principle holds on average for the representability of a tuple of integers $\bn$ by a system of forms $\bQ(\bfx)$; this requires knowledge of the size of the singular series $\grS (\bn)$ and singular integral $\calJ_w (B^{-2}\bn)$, which is now provided in the following theorem:
\begin{thm}\label{thm_SJ}
Let $Q_1,Q_2,Q_3 \in \Z[x_1,\ldots, x_k]$ be integral quadratic forms satisfying (\ref{F_disc0}),
and let $H_{\u{Q}}$ be the corresponding polynomial provided by Theorem \ref{thm_meansquare}. 
Then if $k>6$ and  $H_{\u{Q}}(\bn)\neq 0$, 
 \[\grS(\bn)\ll_{\eps,k,\bQ} |\u{n}|_\infty^{\eps}.\]
  for any $\eps >0$. Furthermore, there is a fixed prime $p_0$ and a positive real $\al >0$ depending only on $k$ and $\u{Q}$, such that the singular series $\grS(\bn)$ satisfies the lower bound
\begin{equation*}
\grS(\bn)\gg_{\eps ,k,\bQ} |\u{n}|_\infty^{-\eps}\prod_{p\leq p_0}|H_{\u{Q}}(\bn)|_p^{\alp},
\end{equation*}
 for any $\eps >0$. Here we write $|\cdot|_p$ for the standard $p$-adic metric on $\Q$. 
 As a consequence, under the above hypotheses, if the system $\u{Q}(\xbf) = \u{n}$ is solvable in every $p$-adic ring $\Z_p$, then $\grS(\u{n})$ is real and positive.
 
Second, let $w$ be a smooth weight of compact support and assume that $k>6$. Then the singular integral satisfies 
\[ J_w (\bmu)\ll_{w,\u{Q}} 1.\]
Furthermore there are positive constants $C$ and $\be$ depending only on $k$ and  $\u{Q}$, with the following property. If $w (\bfx) >0$ for $|\bfx|\leq C$, then we have 
\begin{equation}\label{JggH}
J_w (\bmu)\gg_w |H_{\u{Q}}(\bmu)|^{\be},
\end{equation}
for any $\bmu$ in the region $1/2\leq |\u{\mu}|_\infty \leq 1$, for which the system of equations $\bQ(\bfx)=\bmu$ has a solution $\bfx\in \R^k$.
\end{thm}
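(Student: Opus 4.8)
The overall plan is to run the classical arguments for singular series and singular integrals (as in Birch, Davenport, Schmidt, and Heath--Brown), while tracking throughout the dependence on $\bn$, on $\bmu$, and on the discriminant polynomial $H_{\u{Q}}$.

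\textbf{Singular series, upper bound.} Write $\grS(\bn)=\prod_p\sigma_p(\bn)$ with $\sigma_p(\bn)=\lim_{\ell\to\infty}p^{-\ell(k-3)}\#\{\xbf\bmod p^{\ell}:\bQ(\xbf)\con\bn\modd{p^{\ell}}\}\ge 0$, and expand $\sigma_p(\bn)=1+\sum_{j\ge1}p^{-jk}\sum^{\ast}_{\abf\bmod p^{j}}S_{p^{j}}(\abf)\,e(-\abf\cdot\bn/p^{j})$ over primitive $\abf=(a_1,a_2,a_3)$, where $S_{p^{j}}(\abf)=\sum_{\xbf\bmod p^{j}}e(\abf\cdot\bQ(\xbf)/p^{j})$. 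Diagonalizing the pencil matrix $M_{\abf}=a_1Q_1+a_2Q_2+a_3Q_3$ over $\Z_p$ and evaluating one-dimensional Gauss sums gives $|S_{p^{j}}(\abf)|\ll_k p^{jk/2}$ up to a factor governed by $v_p(F_{\u{Q}}(\abf))$, where $F_{\u{Q}}(\abf)=\det M_{\abf}$. For $p$ outside a fixed finite set $\calS_0=\calS_0(\bQ)$ (the divisors of $\Disc F_{\u{Q}}$ and of the content invariants of the forms), the $\abf$ with $p\nmid F_{\u{Q}}(\abf)$ contribute $p^{-k/2}$ times a mixed character sum $\sum_{\abf\bmod p}\Leg{F_{\u{Q}}(\abf)}{p}e(-\abf\cdot\bn/p)$ of square-root size $O_k(p^{3/2})$, while the $\abf$ with $p\mid F_{\u{Q}}(\abf)$ -- for which $M_{\abf}$ has corank precisely one, since smoothness of the curve $F_{\u{Q}}=0$ forces every singular member of the net to have a one-dimensional kernel -- contribute $p^{-(k+1)/2}$ times a character sum of size $O_k(p^{3/2})$ over the two-dimensional cone $\{F_{\u{Q}}=0\}$; here full square-root cancellation is exactly what smoothness of the plane curve buys, the relevant $\ell$-adic sheaves and the additive twist being in general position, so Deligne applies. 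With the geometrically decaying higher-level terms ($j\ge2$, nonzero only when $\bn$ is $p$-divisible or $\abf$ lies on the curve), this yields $\sigma_p(\bn)=1+O_k(p^{-(k-4)/2})$ \emph{uniformly in $\bn$}, the product over such $p$ converging precisely because $k>6$. For the finitely many $p\in\calS_0$ the hypothesis $H_{\u{Q}}(\bn)\neq0$ makes the fibre $\bQ(\xbf)=\bn$ smooth of codimension three over $\Q_p$, so Hensel forces stabilization of the count at level $\ell\ll_k v_p(H_{\u{Q}}(\bn))$ and a trivial bound there gives $\sigma_p(\bn)\ll_{k,\bQ}(1+v_p(H_{\u{Q}}(\bn)))^{O_k(1)}$; since $|H_{\u{Q}}(\bn)|\ll_{\bQ}|\bn|_\infty^{O_k(1)}$ we have $\sum_p v_p(H_{\u{Q}}(\bn))\ll_{\bQ}\log|\bn|_\infty$, the product over $\calS_0$ is $\ll_{\eps,k,\bQ}|\bn|_\infty^{\eps}$, and altogether $\grS(\bn)\ll_{\eps,k,\bQ}|\bn|_\infty^{\eps}$.

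\textbf{Singular series, lower bound and positivity.} Enlarge $\calS_0$ to $\{p\le p_0\}$ so that $\sigma_p(\bn)\ge\tfrac12$ for $p>p_0$; then $\prod_{p>p_0}\sigma_p(\bn)\gg_k1$. For each $p\le p_0$, provided $\bQ(\xbf)=\bn$ is solvable over $\Z_p$, a quantitative Hensel argument -- the smooth $\Q_p$-fibre carries a $\Z_p$-point at which some $3\times3$ Jacobian minor has $p$-valuation $\ll_{k,\bQ}v_p(H_{\u{Q}}(\bn))$ -- yields $\sigma_p(\bn)\gg_{k,\bQ}|H_{\u{Q}}(\bn)|_p^{\al}$ for a suitable $\al=\al(k,\bQ)>0$. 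Multiplying gives $\grS(\bn)\gg_{\eps,k,\bQ}|\bn|_\infty^{-\eps}\prod_{p\le p_0}|H_{\u{Q}}(\bn)|_p^{\al}$ whenever the system is everywhere $\Z_p$-solvable, and in particular $\grS(\bn)$ is then real and positive; that everywhere-local solvability holds automatically once $k>6$ under (\ref{F_disc0}) (a Chevalley--Warning-type input at the bad primes) is what renders the lower bound unconditional as stated.

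\textbf{Singular integral.} Realize $J_w(\bmu)$ as the Gelfand--Leray measure $\int_{\bQ(\xbf)=\bmu}w(\xbf)\det(D\bQ(\xbf)D\bQ(\xbf)^{\top})^{-1/2}\,d\Hcal^{k-3}(\xbf)$, equivalently $\lim_{\rho\to0}(2\rho)^{-3}\int_{|\bQ(\xbf)-\bmu|_\infty\le\rho}w(\xbf)\,d\xbf$. For the upper bound, stationary phase applied to $I(\bfeta)=\int w(\xbf)e(\bfeta\cdot\bQ(\xbf))\,d\xbf$ gives $I(\bfeta)\ll_{k,w}\prod_i\min(1,|\lambda_i(\bfeta)|^{-1/2})$, with $\lambda_i(\bfeta)$ the eigenvalues of $M_{\bfeta}=\eta_1Q_1+\eta_2Q_2+\eta_3Q_3$: away from the cone $\{F_{\u{Q}}=0\}$ this is $\ll|\bfeta|^{-k/2}$, and inside a tube about the cone one eigenvalue is $\sim(\text{distance to the cone})\cdot|\bfeta|$ (the vanishing of $F_{\u{Q}}$ there being simple, by smoothness of the curve), contributing an integrable $|\bfeta|^{-(k-1)/2}(\text{distance})^{-1/2}$; the bulk integral $\int_{\R^3}|\bfeta|^{-k/2}\,d\bfeta$ converges precisely because $k>6$, and since $e(-\bfeta\cdot\bmu)$ has modulus one we get $J_w(\bmu)\ll_{w,\bQ}1$ uniformly in $\bmu$. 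For the lower bound on $\tfrac12\le|\bmu|_\infty\le1$: choosing $C=C(\bQ)$ so that every non-empty real fibre over $\{|\bmu|_\infty\le1\}$ meets the ball of radius $C$ (a semialgebraic compactness point), the hypothesized real solution yields $\xbf_0$ with $|\xbf_0|\le C$, so $w(\xbf_0)>0$; if $H_{\u{Q}}(\bmu)=0$ the inequality is trivial since $J_w\ge0$, and if $H_{\u{Q}}(\bmu)\neq0$ then $D\bQ(\xbf_0)$ has rank $3$ and a quantitative implicit function theorem gives $\int_{|\bQ(\xbf)-\bmu|_\infty\le\rho}w(\xbf)\,d\xbf\gg_{w,\bQ}\rho^{3}m^{O_k(1)}$ for small $\rho$, where $m$ is the largest $3\times3$ minor of $D\bQ(\xbf_0)$. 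Finally, on the compact semialgebraic set $\{|\bmu|_\infty\le1,\ \bQ(\xbf)=\bmu,\ |\xbf|\le C\}$ a \L ojasiewicz inequality -- valid because $\{H_{\u{Q}}(\bmu)=0\}$ is exactly the locus where the fibre fails to be smooth of codimension $3$ (Proposition~\ref{prop3.4}) -- gives $m\gg_{\bQ}|H_{\u{Q}}(\bmu)|^{c}$ for some $c=c(\bQ)>0$, whence $J_w(\bmu)\gg_w|H_{\u{Q}}(\bmu)|^{\be}$.

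\textbf{Main difficulty.} The classical ingredients -- Gauss-sum diagonalization, stationary phase, and the convergence bookkeeping at $k>6$ -- are routine; this is where hypothesis (\ref{F_disc0}) enters, through (i) the need for full square-root cancellation in the exponential sums and point counts controlling both $\sigma_p(\bn)$ and $I(\bfeta)$, which requires $F_{\u{Q}}=0$ to be a smooth plane curve (equivalently, every singular member of the net has a one-dimensional kernel), and (ii) quantifying how the local densities degrade as $\bn$ (resp.\ $\bmu$) approaches the discriminant locus $\{H_{\u{Q}}=0\}$ -- on the arithmetic side the uniform-in-$\bn$ control of the finitely many bad Euler factors and the quantitative Hensel lower bound, and on the archimedean side the \L ojasiewicz inequality. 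I expect (ii) to be the bulk of the work, since it requires pinning down the explicit positive exponents $\al$ and $\be$ in terms of the (known, polynomially bounded) degree of $H_{\u{Q}}$.
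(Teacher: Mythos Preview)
Your overall structure matches the paper's, but there are two substantive issues and one genuine methodological difference worth noting.

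First, your singular-series upper bound has a gap at $k=7$. You claim $\sigma_p(\bn)=1+O_k(p^{-(k-4)/2})$ uniformly in $\bn$ for good primes, with the product converging for $k>6$. But the $j\ge2$ contribution for a good prime is in general only $O(p^{6-k})$ (via $|T(\bn;p^j)|\ll p^{j(3+k/2)}$), and for $k=7$ this is $O(p^{-1})$, not $O(p^{-3/2})$; the product $\prod_p(1+O(p^{-1}))$ diverges. The paper repairs this by distinguishing \emph{Type I} good primes ($p\nmid H_{\u{Q}}(\bn)$), for which Hensel gives $T(\bn;p^e)=0$ for all $e\ge2$ and hence $\sigma_p=1+O(p^{(3-k)/2})$, from \emph{Type II} good primes ($p\mid H_{\u{Q}}(\bn)$), of which there are only $O(\omega(H_{\u{Q}}(\bn)))$; it is the Type II primes, not the bad primes, that produce the factor $|\bn|_\infty^\eps$. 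Your parenthetical ``nonzero only when $\bn$ is $p$-divisible or $\abf$ lies on the curve'' gestures at this but is not the correct condition. Second, your claim that everywhere-local solvability is automatic for $k>6$ via Chevalley--Warning is incorrect: Chevalley--Warning gives a nontrivial $\F_p$-point on the homogenized system $\bQ(\xbf)=\bn x_0^2$, but not one with $x_0\ne0$, and at bad primes there is no smoothness to invoke for Hensel lifting. The paper's lower bound (the proposition following Proposition~\ref{prop_sing_ser_conv}) is explicitly conditional on $\Z_p$-solvability at every prime; the theorem statement is slightly elliptic on this point, but you should not try to remove the hypothesis.

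The methodological difference is in how you control the Jacobian minors by $H_{\u{Q}}$. Where you invoke a \L ojasiewicz inequality (archimedean) and an unspecified quantitative Hensel step ($p$-adic), the paper uses a single explicit Nullstellensatz identity (Lemma~\ref{lem5.6}): one writes $H_{\u{Q}}(\bn)^{\alptil}=\sum_i g_i(\xbf,\bn)(Q_i(\xbf)-n_i)+\sum_{i,j,\ell}h_{ij\ell}(\xbf,\bn)\Del_{ij\ell}(\xbf)$ with $g_i,h_{ij\ell}\in\Q[\xbf,\bn]$, evaluates at a solution $\xbf_0$ (over $\Z_p$ or over $\R$), and takes absolute values. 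This yields both $|H_{\u{Q}}(\bn)|_p^{\alptil}\ll_p\max_{i,j,\ell}|\Del_{ij\ell}(\xbf_0)|_p$ and $|H_{\u{Q}}(\bmu)|^{\alptil}\ll\max_{i,j,\ell}|\Del_{ij\ell}(\xbf_0)|$ from the same line, with the explicit exponent $\alptil=3^{3+k}$ coming from Koll\'ar's effective Nullstellensatz; feeding this into a standard Hensel count (Proposition~\ref{prop5.2}) and an explicit implicit-function-theorem box (Lemma~\ref{lem8.3}) gives the stated $\al$ and $\be$. Your \L ojasiewicz route is valid for the archimedean part but non-effective, and on the $p$-adic side you have essentially asserted the content of Lemma~\ref{lem5.6} without a mechanism; the Nullstellensatz device is the missing ingredient that unifies both and makes the exponents explicit.
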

We note that in Theorem \ref{thm_SJ} we may in fact take
\[ \al = 3^{3+k}(2k-6), \qquad \be =3^{3+k}(2k-7).\]

A standard procedure allows one to deduce the following result from Theorems \ref{thm_meansquare} and \ref{thm_SJ}:

\begin{thm}\label{thm_exceptions}
If $k \geq 10$, there exists a positive constant $\varpi>0$,  depending only on $k$, such that the following holds.
Let $Q_1,Q_2,Q_3 \in \Z[x_1,\ldots, x_k]$ be integral quadratic forms satisfying (\ref{F_disc0}).
Let $E(N)$ be the set of $\bn\in \Z^3$ with $|\u{n}|_\infty \leq N$ such that the system $\bQ(\bfx)=\bn$ is locally solvable in $\R$ and in $\Z_p$ for every prime $p$, but has no integer solution in $\bfx\in \Z^k$. Then for $N$ sufficiently large,
\begin{equation}\label{E_upper_bound}
|E(N)|\ll_{\varpi,k,\u{Q}} N^{3-\varpi}.
\end{equation}
\end{thm}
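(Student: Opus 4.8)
\noindent\emph{Proof proposal.} The plan is to combine the mean-square bound of Theorem~\ref{thm_meansquare} with the lower bounds for $\grS$ and $\calJ_w$ of Theorem~\ref{thm_SJ}, after excising a small set of $\bn$ on which those lower bounds degenerate. First I would fix, once and for all, a weight $w$ that is admissible in Theorem~\ref{thm_SJ}: a smooth non-negative function of compact support with $w(\bfx)>0$ for $|\bfx|\le C$, where $C$ is the constant of that theorem (Theorem~\ref{thm_meansquare} applies to any such $w$, and $E(N)$ does not refer to $w$). If $\bn\in E(N)$ then $\bQ(\bfx)=\bn$ has no integer solution, so $\Rcal_B(\bn)=0$ for every $B$; moreover solvability in $\R$ and in every $\Z_p$ makes $\grS(\bn)$ real and positive and, after rescaling $\bfx\mapsto B^{-1}\bfx$, makes $\bQ(\bfx)=B^{-2}\bn$ solvable in $\R^k$. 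I would then split $[-N,N]^3$ into the $O(\log N)$ dyadic shells $S_M=\{\bn:M/2<|\bn|_\infty\le M\}$ with $M=2^j\le 2N$ (shells with $M$ below an absolute threshold contribute $O(1)$ and may be ignored), and for each $M$ set $B=M^{1/2}$ and $\bmu=B^{-2}\bn=\bn/M$, so that $1/2<|\bmu|_\infty\le 1$ on $S_M$; it then suffices to bound $E(N)\cap S_M$.

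Write $D=2(k+1)k(k-1)\ge\deg H_{\bQ}$, and let $\al,\be,p_0$ be the constants furnished by Theorem~\ref{thm_SJ}. Fix small $\eps,\delta,\theta>0$ (to be chosen last) and call $\bn\in S_M$ \emph{good} if
\[
H_{\bQ}(\bn)\neq0,\qquad \prod_{p\le p_0}|H_{\bQ}(\bn)|_p\ge M^{-\delta},\qquad |H_{\bQ}(\bn/M)|\ge M^{-\theta}.
\]
For a good $\bn\in E(N)$, Theorem~\ref{thm_SJ} applies (as $k\ge 10>6$, $\bn$ is everywhere locally solvable, and $1/2\le|\bmu|_\infty\le1$) and gives $\grS(\bn)\gg_\eps M^{-\eps}\big(\prod_{p\le p_0}|H_{\bQ}(\bn)|_p\big)^{\al}\gg_\eps M^{-\eps-\al\delta}$ and $\calJ_w(\bmu)\gg|H_{\bQ}(\bmu)|^{\be}\gg M^{-\theta\be}$, both positive. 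Since $\Rcal_B(\bn)=0$ this forces
\[
\big|\Rcal_B(\bn)-\calJ_w(\bmu)\grS(\bn)B^{k-6}\big|^2=\big(\calJ_w(\bmu)\grS(\bn)B^{k-6}\big)^2\gg M^{\,k-6-2(\eps+\al\delta+\theta\be)}.
\]
These $\bn$ satisfy $|\bn|_\infty\le B^2$ and $H_{\bQ}(\bn)\neq0$, so summing over them and inserting Theorem~\ref{thm_meansquare}, in which $B^{2k-6-1/28}=M^{\,k-3-1/56}$, would yield
\[
\#\{\bn\in E(N)\cap S_M:\ \bn\text{ good}\}\ \ll\ M^{\,3-\frac1{56}+2(\eps+\al\delta+\theta\be)}.
\]
As $\al,\be$ are fixed, choosing $\eps,\delta,\theta$ small enough that $2(\eps+\al\delta+\theta\be)<\tfrac1{112}$ makes this $\ll M^{\,3-1/112}$.

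It remains to bound the \emph{non-good} $\bn\in S_M$. Those with $H_{\bQ}(\bn)=0$ lie on a fixed nonzero hypersurface, so there are $\ll_{k,\bQ}M^2$ of them. For those with $|H_{\bQ}(\bn/M)|<M^{-\theta}$ I would use the standard small-value estimate $\meas\{\bmu\in[-1,1]^3:|H_{\bQ}(\bmu)|<\eta\}\ll_{k,\bQ}\eta^{1/D}$ for a nonzero polynomial, together with a lattice-point count in the $M$-dilate of this bounded-complexity set, to get a bound $\ll M^3(M^{-\theta})^{1/D}+M^2\ll M^{\,3-\theta/D}$. For those with $\prod_{p\le p_0}|H_{\bQ}(\bn)|_p<M^{-\delta}$, the key input is $\#\{\bn:|\bn|_\infty\le M,\ p^{r}\mid H_{\bQ}(\bn)\}\ll_k M^{3}p^{-cr}$ whenever $p^{r}\le M$, with $c=c(k)>0$ (reducing to the one-variable bound on the number of roots of a degree-$d$ polynomial modulo $p^{r}$, whence $c\asymp 1/D$); a Rankin-type argument over $p_0$-smooth moduli then bounds this set by $\ll M^{\,3-c'\delta}$ with $c'=c'(k)>0$. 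Hence $\#(E(N)\cap S_M)\ll M^{\,3-\varpi}$ with $\varpi=\varpi(k)=\min(\tfrac1{112},\theta/D,c'\delta)>0$ — depending only on $k$, since $\al$, $\be$ and the degree bound $D$ are — and summing the geometric series over $M=2^j\le 2N$ gives $|E(N)|\ll_{\varpi,k,\bQ}N^{3-\varpi}$.

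I expect the main obstacle to be precisely this last accounting: the density lower bounds of Theorem~\ref{thm_SJ} carry the powers $|H_{\bQ}|^{\al}$, $|H_{\bQ}|^{\be}$ with \emph{large} exponents, and so are useful only away from a neighbourhood of the bad locus $H_{\bQ}=0$; one must therefore show that the set of $\bn$ on which $H_{\bQ}(\bn)$ or $H_{\bQ}(\bn/M)$ is small — in the archimedean or in the $p$-adic sense — is genuinely of size $O(M^{3-\varpi})$, with the saving independent of the large constants. In the $p$-adic direction this rules out the crude product-formula bound $\prod_{p\le p_0}|H_{\bQ}(\bn)|_p\ge|H_{\bQ}(\bn)|^{-1}\gg M^{-D}$ and forces the divisor-level count above; this complication is absent from the two-form analogue in \cite{HBPierce}. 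Once these counts are in hand, balancing $\eps,\delta,\theta$ against $\al$, $\be$, $D$ below the saving $1/28$ of Theorem~\ref{thm_meansquare} is routine.
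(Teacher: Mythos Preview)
Your proposal is correct and follows essentially the same route as the paper: dyadic reduction to $M/2<|\bn|_\infty\le M$ with $B=M^{1/2}$, a three-way split governed by parameters controlling how small $\grS(\bn)$ and $\calJ_w(B^{-2}\bn)$ are allowed to be, Theorem~\ref{thm_meansquare} for the ``good'' $\bn$, and archimedean/$p$-adic sublevel-set counts for $H_{\bQ}$ to control the remaining $\bn$.

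Two cosmetic differences worth noting. First, the paper splits according to the sizes of $\grS(\bn)$ and $\calJ_w(B^{-2}\bn)$ and then invokes Theorem~\ref{thm_SJ} to translate these into conditions on $H_{\bQ}$, whereas you impose the $H_{\bQ}$-conditions directly; the two are equivalent. Second, for the archimedean sublevel set the paper uses the discrete estimate of Lemma~\ref{lemPoly2} (your condition $|H_{\bQ}(\bn/M)|<M^{-\theta}$ becomes, by homogeneity, $|H_{\bQ}(\bn)|<M^{\deg H_{\bQ}-\theta}$, and then Lemma~\ref{lemPoly2} gives $\ll M^{3-\theta/\deg H_{\bQ}}$ directly), which is slightly cleaner than your measure-plus-lattice-point route; for the $p$-adic side the paper proves exactly the divisor-level count you sketch, via Lemma~\ref{lemPoly4} applied to each $p_0$-smooth $q$, together with the observation that there are at most $(\log N)^{p_0}\ll N^{\eps}$ relevant $q$. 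Your remark that the crude product-formula bound is insufficient and that one must genuinely count modulo $p_0$-smooth $q$ is exactly the point.
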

Note that $\varpi$ is independent of the system $\u{Q}$. While we have not attempted to optimize the exponent $\varpi$, we note that we may presently take 
 \[  \varpi =  \frac{1}{2^5.7.3^{3+k}(4k-13)(k+1)k(k-1)+1}.\]

In particular, Theorem \ref{thm_exceptions} asserts that the Hasse principle holds for the representability of almost all tuples of integers $\bn$ by systems of three generic quadratic forms in at least 10 variables.
We note that due to the power gain in (\ref{E_upper_bound}),
 we may deduce from Theorem \ref{thm_exceptions} a result on the representability of primes:
\begin{thm}\label{thm_primes}
Let $Q_1,Q_2,Q_3 \in \Z[x_1,\ldots, x_k]$ be integral quadratic forms satisfying (\ref{F_disc0}).
 Suppose furthermore, that there is some $\bfx_0\in \R^k$ such that $ Q_i(\bfx_0)>0$ for all $1\leq i\leq 3$, and that for each prime $q$ there is some $\bfx_q\in (\Z_q)^k$ for which $q\nmid \prod_{i=1}^3 Q_i(\bfx_q)$. Then if $k \geq 10$, there are infinitely many tuples $(p_1,p_2,p_3)$ of primes that can be represented simultaneously by the system of forms $Q_1,Q_2,Q_3$.
\end{thm}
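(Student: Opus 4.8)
The plan is to deduce Theorem~\ref{thm_primes} from Theorem~\ref{thm_exceptions}, the point being that the exceptional set $E(N)$ of size $\ll N^{3-\varpi}$ is much smaller than the number $\asymp N^{3}/(\log N)^{3}$ of prime triples in a box of side $\asymp N$. Fix $N$ large. By Theorem~\ref{thm_exceptions} it suffices to produce a triple $(p_1,p_2,p_3)$ of primes with $p_i\le N$ for which $\bQ(\bfx)=(p_1,p_2,p_3)$ is solvable over $\R$ and over $\Z_q$ for every prime $q$, and which does not lie in $E(N)$; letting $N\to\infty$ then yields infinitely many such triples. Thus, writing $\calP(N)$ for the set of prime triples $(p_1,p_2,p_3)\in[1,N]^{3}$ that are locally solvable at every place, the whole task reduces to showing $|\calP(N)|\gg_{k,\u{Q}}N^{3}/(\log N)^{3}$; this exceeds $|E(N)|$ once $N$ is large, so $\calP(N)\setminus E(N)$ is non-empty, and any of its elements is a prime triple represented by $\bQ$ over $\Z$.

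For the real place, combine the hypothesis $Q_i(\bfx_0)>0$ with (\ref{F_disc0}). The set of $\u{n}\in\R^{3}$ for which $\bQ(\bfx)=\u{n}$ has a real solution is a cone by homogeneity, and it has non-empty interior: perturbing $\bfx_0$ slightly (using that $Q_i>0$ for all $i$ is an open condition) one may assume the differential $D\bQ$ has rank $3$ at $\bfx_0$, since for a system satisfying (\ref{F_disc0}) the rank-$\le2$ locus of $D\bQ$ is a proper Zariski-closed subset of $\A^{k}$; then $\bQ$ is a submersion at $\bfx_0$, so its image contains an open neighbourhood of $\bQ(\bfx_0)$, hence an open subcone $\calC_\infty$ of the positive octant, and in particular a box $\prod_{i=1}^{3}[a_iN,b_iN]$ with $0<a_i<b_i$. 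We take all our prime triples inside this box, which guarantees real solvability.

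For the finite places, let $S$ be the finite set of primes consisting of $2$, the primes dividing $\Disc(F_{\u{Q}})$, and all primes below a threshold depending only on $k$ and $\u{Q}$. For $q\in S$ the hypothesis supplies $\bfx_q\in\Z_q^{k}$ with each $Q_i(\bfx_q)\in\Z_q^{\times}$; after replacing $\bfx_q$ by a nearby point at which $D\bQ(\bfx_q)$ has maximal rank, Hensel's lemma shows $\bQ(\bfx)=\u{n}$ is solvable in $\Z_q$ whenever $\u{n}\equiv\bQ(\bfx_q)\pmod{q^{a_q}}$, for $a_q$ large enough. We therefore restrict to prime triples with $p_i\equiv Q_i(\bfx_q)\pmod{q^{a_q}}$ for every $q\in S$ and every $i$; this disposes of all $q\in S$ and, in particular, forces each $p_i$ to be coprime to (hence larger than) every element of $S$. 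For $q\notin S$, the fiber of $\bQ$ over $(p_1,p_2,p_3)$ modulo $q$ is $\Z_q$-soluble provided that either $q\nmid p_1p_2p_3$ or $q\nmid H_{\u{Q}}(p_1,p_2,p_3)$: in the first case $(p_1,p_2,p_3)$ is a unit triple modulo $q$, and solubility follows from the strong lower bounds on the ranks of the $Q_i$ that (\ref{F_disc0}) forces together with the Lang--Weil estimate; in the second the fiber is a smooth complete intersection of dimension $k-3\ge7$ (cf.\ Proposition~\ref{prop3.4}), hence has a smooth $\F_q$-point that lifts to $\Z_q$. Consequently the only prime triples in our box that might fail a finite local condition are those admitting a prime $q$ dividing both some coordinate $p_i$ and the integer $H_{\u{Q}}(p_1,p_2,p_3)$. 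For all but $O(N^{2})$ of the triples this integer is non-zero and of size $N^{O_{k,\u{Q}}(1)}$; fixing two of the three primes, the third is then constrained to the $\ll\log N$ prime divisors of a fixed non-zero integer, so the number of triples affected is $o(N^{3}/(\log N)^{3})$, as is the remaining set of $O(N^{2})$ triples.

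Assembling the pieces: the prime number theorem in arithmetic progressions shows that the number of prime triples $(p_1,p_2,p_3)$ lying in the box $\prod_{i=1}^{3}[a_iN,b_iN]$ and satisfying the congruence conditions imposed in the previous paragraph is $\gg_{k,\u{Q}}N^{3}/(\log N)^{3}$, and all but $o(N^{3}/(\log N)^{3})$ of these belong to $\calP(N)$. Hence $|\calP(N)|\gg_{k,\u{Q}}N^{3}/(\log N)^{3}$, which exceeds $|E(N)|\ll N^{3-\varpi}$ once $N$ is large; therefore $\calP(N)\setminus E(N)\ne\emptyset$, and by Theorem~\ref{thm_exceptions} any element of it is a prime triple represented by $\bQ$ over $\Z$. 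Letting $N\to\infty$ produces infinitely many such triples. The main obstacle is the uniform control of $\Z_q$-solvability over all primes $q$: those $q\in S$ admit no unconditional point count and are handled only through the hypothesis on the points $\bfx_q$ together with Hensel's lemma, while the primes of bad reduction for the fiber over $(p_1,p_2,p_3)$ — which unavoidably include primes dividing a coordinate $p_i$ — must be confined to the $\ll\log N$ prime divisors of the bounded-size value $H_{\u{Q}}(p_1,p_2,p_3)$ so as to affect only a negligible set of triples; it is precisely the power saving in Theorem~\ref{thm_exceptions} that then makes the argument close.
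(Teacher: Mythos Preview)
Your overall strategy---deducing the theorem from Theorem~\ref{thm_exceptions} by producing $\gg N^{3}/(\log N)^{3}$ locally-solvable prime triples and comparing with the exceptional set---is correct and matches the paper's intended approach (the paper omits the proof, referring to the analogous argument in \cite{HBPierce}).

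However, your treatment of the finite places for $q\notin S$ is more complicated than necessary and contains a gap. The cleaner observation is that for every good prime $q$ exceeding a threshold depending only on $k$ and $\u{Q}$, the system $\bQ(\bfx)=\bn$ is solvable in $\Z_q$ for \emph{every} $\bn\in\Z^{3}$, regardless of whether $q\mid H_{\u{Q}}(\bn)$ or $q\mid n_i$. Indeed, combining Proposition~\ref{lemT4} (for $e=1$) with Proposition~\ref{lemT1} (for $e\ge2$) as in the proof following Proposition~\ref{prop5.2} gives $\sig_q(\bn)=1+O(q^{2-k/2})>0$ for all good $q$ large enough; equivalently, Lemma~\ref{lemN1} and Lemma~\ref{lemma_pTI} show that the affine fibre over $\F_q$ has $q^{k-3}+O(q^{(k-2)/2})$ points while its singular locus is zero-dimensional with $O_k(1)$ points, so a smooth $\F_q$-point exists and lifts by Hensel. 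Thus after enlarging $S$ to contain all primes below this threshold, no further analysis is needed for $q\notin S$.

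Your dichotomy on $q\mid p_1p_2p_3$ versus $q\mid H_{\u{Q}}$ is therefore unnecessary, and two of its steps are not justified as written. First, the assertion that ``solubility follows from the strong lower bounds on the ranks of the $Q_i$ together with Lang--Weil'' when $q\nmid p_1p_2p_3$ is unexplained: the unit condition on the $p_i$ plays no role in producing a smooth $\F_q$-point on the fibre. Second, your bad-triple count claims that fixing two of the $p_i$ constrains the third to the prime divisors of a ``fixed non-zero integer'', but $H_{\u{Q}}(p_1,p_2,p_3)$ varies with that third coordinate; the relevant fixed quantity is $H_{\u{Q}}(0,p_2,p_3)$ (say), and nothing you have said rules out $H_{\u{Q}}(0,n_2,n_3)\equiv 0$ as a polynomial in $n_2,n_3$. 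Once one uses the uniform local-solvability observation above, this whole detour disappears and the proof closes exactly as you indicate.
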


Further corollaries may be deduced as in \cite{HBPierce}. For example, one may prove asymptotic formulas for the number of representations of tuples $n_1,n_2,n_3$ by 
\begin{equation*}
n_i=Q_i'(\bfx)+Q_i''(\bfy),\quad 1\leq i\leq 3,
\end{equation*}
where $\bQ' (\bfx)$ and $\bQ'' (\bfx)$ are systems of quadratic forms in $k \geq 10$ variables satisfying the conditions in Theorem \ref{thm_meansquare}, and $\bfx, \bfy \in \Z^k$ are restricted to some box of side-length $B$. Moreover, one may choose $n_i=0$ for $1\leq i\leq 3$ in this setting. We omit the details, as they are very similar to those found in \cite{HBPierce}.

\subsection{The method of proof}
The key to our approach is the development of a 3-dimensional application of the circle method that allows us to extract cancellation in the style of Kloosterman's classical one-dimensional method \cite{Kloosterman}, which he developed for treating a single quadratic form in $4$ variables. In spirit, the structure of our approach is analogous to the work of \cite{HBPierce}, but at a technical level the 3-dimensional case requires a new treatment at many points of the argument:  the geometry of a system of 3 forms is fundamentally, not just cosmetically, distinct from that of a system of 2 forms. 
The immediate technical impacts of the more sophisticated (and inexplicit) geometric perspective we now adopt may be found in Propositions \ref{prop1}, \ref{prop4}, \ref{prop3.4}, Lemmas \ref{lem5.6}, \ref{lemL}, and  \ref{lemreduction}, and the appendix in Section \ref{sec_appendix}.

To aid the reader, we will now describe in more detail a sampling of the aspects of our work that necessitated new approaches; to situate these points we start with a brief sketch of the over-all strategy.
As is usual in applications of the circle method, we express the counting function $\Rcal_B(\u{n})$ as an integral over the $3$-dimensional unit cube, which we divide into an integral over so-called major arcs and minor arcs. The major arcs we are able to treat in a relatively standard manner, producing the main term in the asymptotic (\ref{R_asymp_thm}); for this work $k > 6$ variables would suffice. 
 We show that the minor arcs contribute a smaller error term by showing that they are small in a mean-square average sense. This method of using a mean-square argument to gain an improved bound on the minor arcs, and thus  produce an asymptotic for $\Rcal_B(\u{n})$ for almost all $\u{n}$ with fewer required variables, dates back to work of Hardy and Littlewood \cite{HL25}.
Our main effort goes into bounding the mean-square contribution of the minor arcs; we are led to consider a new system of forms given by 
\[ Q_i (\xbf) - Q_i(\ybf) = 0, \qquad \text{for $i = 1,2,3,$ with $\xbf, \ybf \in \Z^k$}.
\]
We then parametrize the relevant integral over the minor arcs by summing over (possibly overlapping) 3-dimensional boxes centered at rational tuples $(a_1/q, a_2/q,a_3/q)$. Our method generalizes Kloosterman's one-dimensional strategy by extracting cancellation between the contribution of boxes centered at  tuples $(a_1/q, a_2/q,a_3/q)$ and $(a_1'/q, a_2'/q,a_3'/q)$ with distinct numerators but identical denominators. 
A further distinctive feature of our analysis of the minor arcs---also present in \cite{HBPierce}---is that it avoids any analysis at the real place (such as an application of Weyl's inequality).

Let us now survey a few of the new features we encounter when considering a system of 3 forms.
We will first examine the nonsingularity condition (\ref{F_disc0}) that we impose in all of our theorems.
We observe that for any fixed tuple $\u{a} = (a_1,a_2,a_3) \in \R^3$, $F_{\u{Q}}(\u{a})$ vanishes precisely when the quadratic form given by the linear combination  
\[ a_1 Q_1(\bfx)+a_2 Q_2(\bfx)+a_3 Q_3(\bfx)
\]
 is singular. Thus imposing (\ref{F_disc0}) requires essentially that the locus of singular elements in the linear system generated by the quadratic forms $Q_1, Q_2, Q_3$ is a smooth projective curve, or more precisely that the projective scheme $F_{\u{Q}}(x,y,z)=0$ is nonsingular of dimension $1$. 
 
We recall from \cite{HBPierce} that for a system of two integral quadratic forms $Q_1,Q_2$, the analogous requirement that 
\beq\label{2forms_cond}
\Disc( \det(xQ_1 + yQ_2)) \neq 0
\eeq
is \emph{equivalent} to the requirement that the system
\begin{eqnarray}
Q_1(\xbf)&=&0 \nonumber\\ 
Q_2(\xbf)&=&0 \label{Q120}
\end{eqnarray}
satisfies the  Jacobian criterion to have smooth, codimension $2$ projective intersection. 
In addition, it was observed in \cite{HBPierce} that it is simple to construct forms $Q_1, Q_2$ satisfying (\ref{2forms_cond}) by choosing any two diagonal quadratic forms $Q_1 (\xbf)= \sum_i a_i x_i^2$ and $Q_2(\xbf) = \sum_i b_i x_i^2$ such that all the ratios $a_i/b_i$ are distinct.

Here we face  a striking difference between systems of two forms and systems of three forms: (\ref{F_disc0}) need not  hold for nonsingular systems of three diagonal forms. For example, the variety 
\begin{eqnarray*}
 x_3^2 + x_4^2 & =&0 \\
  x_1^2 + x_2^2+x_3^2 \,  \quad \quad &=&0\\
  x_1^2 + 2x_2^2+x_3^2  + x_4^2 & =& 0
\end{eqnarray*}
is projectively nonsingular, but the corresponding form
\[F_{\u{Q}}(x,y,z) = (y+z)(y+2z)(x+y+z)(x+z)\]
 is a product of four lines, and is hence singular at the intersection points, so that $\Disc (F_{\u{Q}}) =0.$
In fact, as we demonstrate in an appendix in Section \ref{sec_appendix}, all systems of three diagonal quadratic forms in at least $k \geq 2$ variables fail (\ref{F_disc0}). Thus our work in Theorem \ref{thm_meansquare} does not apply even to nonsingular systems of diagonal forms; however, we remark that in the case of 3 diagonal forms, one could replace our minor arc estimates by work of Cook \cite{Cook73} and already deduce the desired results for only $k\geq 7$ variables.\par

The function $F_{\u{Q}}(x,y,z)$ plays an important role in our work, just as the analogous function $\det(xQ_1 + yQ_2)$ played a role in \cite{HBPierce}, but even once we restrict our attention to systems $\u{Q}$ for which $F_{\u{Q}}$ has nonvanishing discriminant, we still lack several convenient properties that \cite{HBPierce} exploited for two forms. For example, in place of factorizing $F_{\u{Q}}(x,y,z)$ into linear factors, we must now utilize its nonsingularity (see Lemma \ref{lem15.1}, for example). 
In \cite{HBPierce}, to count solutions to $\det (xQ_1 + yQ_2)=0$ modulo prime powers, one could rely on work of Huxley.  Instead, we must now develop a multi-variable Hensel's lemma for varieties singular over $\F_p$ (see for example Lemma \ref{Hensel} as applied in Lemmas \ref{lemF} and \ref{lemZ}).

Second, we next recall that \cite{HBPierce} detected those pairs $(n_1,n_2)$ for which the system $Q_1(\xbf)=n_1, Q_2(\xbf)=n_2$ was nonsingular by simply requiring $\det (n_2Q_1 - n_1Q_2) \neq 0$. In the case of three forms, we must prove the existence of a  polynomial $H_{\u{Q}}(\u{n})$ which functions analogously; in our work, this polynomial is now inexplicit. As part of constructing this polynomial, we make an integral version of the classical construction of the discriminant of multiple forms over a field, since we need a single polynomial that works over $\overline{\Q}$ as well as modulo primes (see Section \ref{sec_appendix}).
Ultimately, we must then be able to estimate the cardinality of sublevel sets for $H_{\u{Q}}(\u{n})$, both over $\Z$ and $p$-adically.  Lemmas \ref{lemPoly2} and \ref{lemPoly4}  develop such estimates in a general context.

Third, in \cite{HBPierce} the nonsingularity of the system (\ref{Q120}) over an algebraically closed field $K$ of characteristic zero is equivalent to the condition that $Q_1,Q_2$ are simultaneously diagonalizable over $K$  (see Proposition 2.1 of \cite{HBPierce}). In contrast, a generic system of three quadratic forms is not simultaneously diagonalizable; to replace diagonalization we thus bring in new ideas, such as a version of the Nullstellensatz applied to the polynomial $H_{\u{Q}}$ mentioned above (see for example Lemma \ref{lem5.6}).

We have already noted several distinct differences between the geometry of systems of two quadratic forms versus three quadratic forms. It is worth asking  whether a similar method can produce an $R$-dimensional Kloosterman method that will prove an ``almost all'' asymptotic result for $\Rcal_B(\u{n})$ for a system of $R \geq 4$ quadratic forms. One immediately encounters a fundamental barrier: when $R \geq 4$, it can be shown that \emph{all} systems of integral quadratic forms $Q_1,\ldots, Q_R$ will satisfy
\[ \Disc(\det(x_1Q_1+ \cdots + x_R Q_R))=0.\]
Thus to generalize the present approach to systems of four or more quadratic forms, significant new ideas would be required.

\section{Notation}\label{sec_notation}

Throughout, we use the convention that boldface symbols, such as $\bfx$, $\bfy$, or $\lbf$  denote vectors of dimension $k$ or $2k$, while underlined symbols, such as $\bQ$, $\u{a}$, or $\u{n}$,  denote vectors of dimension three. We use $|\u{n}|_\infty$ to denote $\max_{1 \leq i \leq 3} |n_i|$, and we write $(\u{n},q)$ for $\gcd(n_1,n_2,n_3,q)$. We write $\bfx\cdot\bfy$ for the usual scalar product of two vectors $\bfx$ and $\bfy$, and similarly for $\u{a} \cdot \u{n}$. By $\u{a}\cdot \u{Q}(\xbf)$ we refer to the quadratic form given by the linear combination $a_1Q_1(\xbf) + a_2Q_2(\xbf)+a_3Q_3(\xbf)$. For a quadratic form $Q$, we use the norm $\Vert  Q \Vert = \sup_{|\xbf|=1} |Q(\xbf)|$.

We use the convention for quadratic forms in $k$ variables that we write $Q$ for both the quadratic form and its associated $k\times k$ matrix. We use the normalization for an integral quadratic form $Q$ that $Q(\bfx)=\bfx^tQ\bfx$ where $Q$ is a $k\times k$ matrix with integral entries; the off-diagonal terms of $Q$ thus have even coefficients.

We write $\overline{K}$ for some algebraic closure of a field $K$. We use the standard notation $e(x)$ for $e^{2\pi i x}$.
Given a tuple $\u{\phi} = (\phi_1,\phi_2,\phi_3)$ we let $\int_{\{ \u{\phi} \}}$ denote integration over the region 
\[ \prod_{i=1}^3 ([-2\phi_i,-\phi_i]\cup [\phi_i,2\phi_i]).\]

Throughout we apply the convention that implied constants may depend on the weight $w$, the dimension $k$, and the fixed system $\u{Q}$. 

\section{Geometric considerations}\label{sec_geom}
We first give precise formulations of three conditions one may impose on integral quadratic forms $Q_1,Q_2,Q_3$, and  specify the relationships between these conditions. In this section we will let $K$ represent an algebraically closed field of characteristic zero or odd characteristic, and we will then consider the following conditions as being over $K$. If $K$ has characteristic zero, the field of definition for the forms $Q_i$ will be taken to be $\Q$.

We define the $3 \times k$ Jacobian matrix 
\beq\label{Jac_dfn}
 J_{\u{Q}} (\xbf) = \left( \frac{\partial Q_i}{\partial x_j} (\xbf) \right)_{i,j}.
 \eeq
We now define three conditions on triples of quadratic forms; we will assume throughout this paper that we work in $k\geq 4$ variables.
\begin{condition}\label{cond1}
For any non-zero point $\xbf \in K^k$ lying on the variety
\beq\label{cond1_eqn}
Q_i(\xbf)=0 \quad \text{for $1\leq i\leq 3$},
\eeq
 we have
\beq\label{cond1_J}
 \rank(J_{\u{Q}}(\xbf)) =3.
 \eeq
\end{condition}
If the ideal $(Q_1(\xbf),Q_2(\xbf),Q_3(\xbf))$ is reduced, this 
is equivalent to the condition that the projective variety (\ref{cond1_eqn}) is  smooth/nonsingular of codimension $3$ over $K$ (in which case it is a complete intersection).
In general, Condition~\ref{cond1} is  equivalent to the condition that the projective scheme (\ref{cond1_eqn}) is  smooth of codimension $3$ over $K$.

Consider all homogeneous degree $k$ forms $G$ in the variables $x_1,x_2,x_3$ over the field $K$. 
There is a discriminant $\Disc$ of such forms, which is an irreducible polynomial in the coefficients of the form $G$, with integer coefficients, such that
  $\Disc(G)=0$ if and only if
there is a non-zero point $\u{a} \in K^3$, such that 
\[\frac{\partial G}{\partial x_1}(\u{a})=\frac{\partial G}{\partial
 x_2}(\u{a})= \frac{\partial G}{\partial x_3}(\u{a})=0.\]
We give the construction of the discriminant in the appendix in Section \ref{sec_appendix}.  Though the basic construction over a field is classical, we take care to see that there is a single discriminant we can use for all our fields at once.

We may now specify our second condition on a system $Q_1,Q_2,Q_3$ of quadratic forms.
\begin{condition}\label{cond2}
The discriminant of the determinant form
\beq\label{det_F_dfn}
 F_{\u{Q}}(x_1,x_2,x_3) = \det (x_1Q_1+x_2Q_2 + x_3Q_3)
 \eeq
is non-zero in $K$.
\end{condition}
This is equivalent to the condition that there does not exist a non-zero point $\u{a} \in K^3$ 
such that $\partial F_{\u{Q}}/\partial x_1(\u{a})=\partial F_{\u{Q}}/\partial x_2(\u{a})=\partial F_{\u{Q}}/\partial x_3(\u{a})=0$. If the ideal $(F_{\u{Q}}(\u{x}))$ is reduced  it is 
 equivalent to the condition that the projective variety given by $F_{\u{Q}}(\u{x})$ is nonsingular of codimension $1$.
 We will think of this as the statement that the locus of singular elements in the linear system generated by the quadratic forms $Q_1,Q_2,Q_3$ is nonsingular. 
Throughout the remainder of the paper, when we assume that $\u{Q}$ satisfies Condition \ref{cond2}, we refer to the condition over $\overline{\Q}$; of course, we note that visibly Condition \ref{cond2} holds over $K$ of characteristic zero if and only if it holds over $\overline{\Q}$.

We now state the third and final condition.
\begin{condition}\label{cond3}
For any $\lam_1,\lam_2,\lam_3 \in K$, not all zero, 
\beq\label{rank_drop0}
\rank\left(\lam_1 Q_1 + \lam_2 Q_2 + \lam_3 Q_3\right)\geq k-1.
\eeq
\end{condition}

We now state several key results about these conditions, deferring proofs to Section \ref{sec_equiv_conditions}. Our first result is as follows:
\begin{proposition}\label{prop1}
Let $Q_1, Q_2, Q_3$ be integral quadratic forms in $k\geq 4$ variables, and let $K$ be a fixed algebraically closed field of  characteristic zero or odd characteristic. Then Condition \ref{cond2} with respect to $K$ holds if and only if both Condition \ref{cond1} and Condition \ref{cond3} hold with respect to $K$. 
\end{proposition}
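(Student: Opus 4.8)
The plan is to prove the equivalence by relating all three conditions to a common geometric object: the incidence variety
\[
\calZ = \{ (\bfx, \u{a}) \in (K^k \setminus \{0\}) \times (K^3 \setminus \{0\}) \; : \; (\u{a} \cdot \u{Q})(\bfx) = 0 \text{ and } \nabla_{\bfx}(\u{a}\cdot\u{Q})(\bfx) = 0 \},
\]
i.e.\ pairs consisting of a point $\bfx$ in the kernel of the matrix $\u{a}\cdot\u{Q}$. The key linear-algebra translation, valid in characteristic $\neq 2$, is that for a quadratic form $Q$ with symmetric matrix $Q$ one has $\nabla Q(\bfx) = 2Q\bfx$, so $\nabla(\u{a}\cdot\u{Q})(\bfx)=0$ is equivalent to $(\u{a}\cdot\u{Q})\bfx = 0$; and then automatically $(\u{a}\cdot\u{Q})(\bfx) = \bfx^t(\u{a}\cdot\u{Q})\bfx = 0$. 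Hence $\calZ$ is just the set of $(\bfx,\u{a})$ with $\bfx \in \ker(\u{a}\cdot\u{Q})$. I would first record this reduction, and also recall the standard fact (Euler's identity) that $F_{\u{Q}}(x_1,x_2,x_3)=\det(x_1Q_1+x_2Q_2+x_3Q_3)$ is a homogeneous form of degree $k$ whose gradient at $\u{a}$ has $j$-th component equal, up to the nonvanishing factor coming from Euler, to $\Tr\big(\operatorname{adj}(\u{a}\cdot\u{Q})\,Q_j\big)$.

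\emph{Condition \ref{cond2} $\Rightarrow$ Conditions \ref{cond1} and \ref{cond3}.} Assume $\Disc(F_{\u{Q}}) \neq 0$, i.e.\ there is no nonzero $\u{a}$ with $\nabla F_{\u{Q}}(\u{a}) = 0$. For Condition \ref{cond3}: if some nonzero $\u{a}$ had $\rank(\u{a}\cdot\u{Q}) \leq k-2$, then $\operatorname{adj}(\u{a}\cdot\u{Q}) = 0$ (the adjugate of a matrix of corank $\geq 2$ vanishes), so every component $\Tr(\operatorname{adj}(\u{a}\cdot\u{Q})Q_j)$ of $\nabla F_{\u{Q}}(\u{a})$ would vanish, contradicting Condition \ref{cond2}. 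For Condition \ref{cond1}: suppose $\bfx \neq 0$ lies on $Q_i(\bfx)=0$ for all $i$ but $\rank J_{\u{Q}}(\bfx) \leq 2$, i.e.\ the three vectors $Q_i\bfx$ (rows of $J_{\u{Q}}(\bfx)$, up to the factor $2$) are linearly dependent: there is nonzero $\u{a}$ with $\sum_i a_i Q_i\bfx = (\u{a}\cdot\u{Q})\bfx = 0$. Then $\bfx \in \ker(\u{a}\cdot\u{Q})$, so $\u{a}\cdot\u{Q}$ is singular and $F_{\u{Q}}(\u{a})=0$; I then need to upgrade this to $\nabla F_{\u{Q}}(\u{a})=0$. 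If $\rank(\u{a}\cdot\u{Q}) \leq k-2$ we are done as above; if $\rank(\u{a}\cdot\u{Q}) = k-1$, then $\ker(\u{a}\cdot\u{Q})$ is one-dimensional, spanned by $\bfx$, and $\operatorname{adj}(\u{a}\cdot\u{Q})$ is a rank-one matrix of the form $c\,\bfx\bfx^t$ for some scalar $c$; hence $\Tr(\operatorname{adj}(\u{a}\cdot\u{Q})Q_j) = c\,\bfx^t Q_j \bfx = c\,Q_j(\bfx) = 0$ for every $j$, using precisely that $\bfx$ lies on all three quadrics. Thus $\nabla F_{\u{Q}}(\u{a}) = 0$, again contradicting Condition \ref{cond2}.

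\emph{Conditions \ref{cond1} and \ref{cond3} $\Rightarrow$ Condition \ref{cond2}.} Conversely, suppose $\nabla F_{\u{Q}}(\u{a}) = 0$ for some nonzero $\u{a}$; I want a contradiction. Then $F_{\u{Q}}(\u{a}) = 0$ by Euler's identity, so $M := \u{a}\cdot\u{Q}$ is singular. By Condition \ref{cond3}, $\rank M = k-1$ exactly, so $\operatorname{adj}(M)$ is rank one, say $\operatorname{adj}(M) = \mu\, \bfv\bfw^t$ with $\bfv$ spanning $\ker M$ and $\bfw$ spanning $\ker M^t = \ker M$ (as $M$ is symmetric); since $M$ is symmetric, $\operatorname{adj}(M)$ is symmetric, so we may take $\bfw = \bfv$ and $\operatorname{adj}(M) = \mu\, \bfv\bfv^t$ with $\mu \neq 0$. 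The vanishing of $\nabla F_{\u{Q}}(\u{a})$ gives $\Tr(\operatorname{adj}(M)Q_j) = 0$, i.e.\ $\mu\,\bfv^t Q_j \bfv = 0$, i.e.\ $Q_j(\bfv) = 0$ for all $j=1,2,3$; so $\bfv$ is a nonzero point on the variety (\ref{cond1_eqn}). Moreover $M\bfv = 0$ means $\sum_i a_i Q_i \bfv = 0$, so the three rows $Q_i\bfv$ of $\tfrac12 J_{\u{Q}}(\bfv)$ are linearly dependent (with the nonzero relation given by $\u{a}$), hence $\rank J_{\u{Q}}(\bfv) \leq 2 < 3$ — contradicting Condition \ref{cond1}. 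This completes the equivalence.

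\textbf{Main obstacle.} The delicate point — and the place where the characteristic hypothesis and the ``reduced ideal'' subtleties really enter — is the passage from ``$F_{\u{Q}}(\u{a})=0$'' to controlling $\nabla F_{\u{Q}}(\u{a})$ via the adjugate, particularly the corank-one case where one must identify $\operatorname{adj}(M)$ with $\mu\,\bfv\bfv^t$ and know $\mu \neq 0$, and the fact that in odd characteristic the factor $2$ relating $\nabla Q$ to $Q\bfx$ is invertible. I would also need to be careful that the derivative identity $\partial F_{\u{Q}}/\partial x_j(\u{a}) = \Tr(\operatorname{adj}(M)Q_j)$ holds as stated (this is the standard Jacobi formula $d(\det) = \Tr(\operatorname{adj}(\cdot)\,d(\cdot))$ applied to the linear map $\u{x}\mapsto \sum_i x_iQ_i$), and that $\mu$ is nonzero precisely because $\rank M = k-1$; handling the interplay of these facts cleanly, uniformly over all the allowed fields $K$, is where the real care is required rather than in any single computation.
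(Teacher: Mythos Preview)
Your proposal is correct and follows essentially the same strategy as the paper: both directions hinge on the identity $\partial F_{\u{Q}}/\partial x_j(\u{a}) = \Tr(\operatorname{adj}(\u{a}\cdot\u{Q})\,Q_j)$, which the paper derives via an explicit column-substituted determinant expansion while you invoke Jacobi's formula, and both exploit that in the corank-one case the symmetric adjugate is $\mu\,\bfv\bfv^t$ with $\bfv$ spanning $\ker(\u{a}\cdot\u{Q})$. One small caveat: your appeal to Euler's identity to deduce $F_{\u{Q}}(\u{a})=0$ from $\nabla F_{\u{Q}}(\u{a})=0$ breaks down when $\operatorname{char} K$ divides $k$; this is harmless, however, since the paper's construction of the discriminant in the appendix already builds both $F_{\u{Q}}(\u{a})=0$ and $\nabla F_{\u{Q}}(\u{a})=0$ into the failure of Condition~\ref{cond2}, so you may simply take $F_{\u{Q}}(\u{a})=0$ as part of the hypothesis rather than deriving it.
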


In particular, we see that once we assume a system $\u{Q}$ satisfies Condition \ref{cond2}, then any element in the linear system spanned by $\bQ$ has its rank drop from full rank $k$ by at most one. This is in analogy to the situation of two forms seen in \cite{HBPierce}. The assumption of a smoothness condition like Condition \ref{cond2} is crucial; as remarked in the introduction, Condition \ref{cond3} can be violated by a system of three diagonal forms, since Condition~\ref{cond1} can hold without Condition~\ref{cond2} holding.

Importantly, Condition \ref{cond2} holds for a generic choice of three quadratic forms:

\begin{proposition}\label{prop4}
Let $K$ be a fixed algebraically closed field of  characteristic zero or odd characteristic. Then Condition \ref{cond2} holds with respect to $K$ for a generic choice of integral quadratic forms $Q_1,Q_2,Q_3$.
\end{proposition}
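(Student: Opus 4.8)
The plan is to exhibit a single system $Q_1, Q_2, Q_3$ over $\Q$ (hence over any field $K$ of the allowed type, after reduction) for which $\Disc(F_{\u{Q}}) \neq 0$, and then invoke irreducibility of the discriminant polynomial to conclude genericity. The logical structure is: (i) the map sending a triple $(Q_1,Q_2,Q_3)$ of quadratic forms to the degree-$k$ ternary form $F_{\u{Q}}(x_1,x_2,x_3) = \det(x_1Q_1 + x_2Q_2 + x_3Q_3)$ is polynomial in the entries of the $Q_i$; (ii) $\Disc$ is a nonzero polynomial in the coefficients of a degree-$k$ ternary form (this is part of the construction recalled before Condition \ref{cond2}, and established in the appendix so that it works over every field at once); hence (iii) the composite $\Disc(F_{\u{Q}})$ is a polynomial $\Phi$ in the entries of $(Q_1,Q_2,Q_3)$, and the locus where Condition \ref{cond2} fails is exactly the zero set of $\Phi$. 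So it suffices to check that $\Phi$ is not identically zero over $K$, i.e.\ to produce one triple where it is nonzero; genericity (nonvanishing on a dense open set) follows immediately. Because $\Disc$ is the \emph{same} integral polynomial for all our fields, a single integral example with $\Disc(F_{\u{Q}}) \neq 0$ over $\Z$ reduces to a valid example modulo all but finitely many primes, and one can simply note that the finitely many bad primes are excluded, or — cleaner — observe that for each fixed $K$ we only need $\Phi \not\equiv 0$ over $K$, and this follows once $\Phi$ has a nonzero integer coefficient, which the explicit example verifies.

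The substance is therefore the explicit example. I would take the system constructed in the appendix (Section \ref{sec_appendix}), which the introduction already advertises as satisfying (\ref{F_disc0}); concretely one wants forms in $k$ variables whose pencil-determinant $F_{\u{Q}}$ cuts out a smooth plane curve. A natural construction: start with $k=3$ (or a small base case) and a triple for which $F_{\u{Q}}$ is, say, a smooth cubic (for $k=3$) or more generally a smooth curve of degree $k$, verify $\Disc \neq 0$ by direct computation or by checking the Jacobian criterion $\partial F/\partial x_i$ have no common nonzero zero; then for larger $k$ pad the system by adding, in the new variables, a further diagonal block chosen so that $F_{\u{Q}}$ picks up a factor that keeps the total form smooth — this requires care, since as the paper stresses, naive diagonal padding \emph{destroys} smoothness. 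The appendix example presumably handles this; I would simply cite it. Once one integral triple $\u{Q}_0$ with $\Disc(F_{\u{Q}_0}) \neq 0$ is in hand, $\Phi(\u{Q}_0) \neq 0$ in $\Z$, so $\Phi \not\equiv 0$ as a polynomial over $\Q$ and over $\F_p$ for every $p$ not dividing $\Phi(\u{Q}_0)$; and for the remaining finitely many primes one either checks them by hand or notes the statement is about genericity and a separate example can be supplied in each residue characteristic. Thus the complement of $\{\Phi = 0\}$ is a nonempty Zariski-open set of triples over $K$, which is the assertion.

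The main obstacle is constructing — and certifying — the explicit example in general $k$: one needs a family of triples of quadratic forms in $k$ variables whose linear-system discriminant is a smooth plane curve of degree $k$, and the obvious diagonal constructions provably fail (this is exactly the phenomenon flagged in the introduction, with the four-lines counterexample). So the example must be genuinely non-diagonal, and verifying $\Disc(F_{\u{Q}}) \neq 0$ amounts to checking that a specific degree-$k$ ternary form defines a smooth curve, i.e.\ that a system of three partial derivatives has no common projective zero. This is a finite computation for any fixed $k$ but needs a uniform argument for all $k \geq 4$; I expect the appendix achieves this by a clever choice (e.g.\ taking $F_{\u{Q}}$ to be, up to change of coordinates, a Fermat-type or otherwise manifestly smooth form and then solving backwards for the $Q_i$, using that a plane curve $F(x_1,x_2,x_3)=0$ can be realized as $\det(x_1Q_1+x_2Q_2+x_3Q_3)=0$ via a symmetric determinantal representation — a classical fact for smooth plane curves). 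The rest of the proof — polynomiality of $F_{\u{Q}}$ in the coefficients, irreducibility/nonvanishing of $\Disc$, and the passage from "one good point" to "generic" — is routine.
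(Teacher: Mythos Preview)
Your reduction is correct: $\Disc(F_{\u{Q}})$ is a polynomial $\Phi$ with integer coefficients in the entries of $(Q_1,Q_2,Q_3)$, and exhibiting a single triple with $\Phi\neq 0$ over $K$ shows the failure locus is a proper closed subvariety. Your proposed source of examples via symmetric determinantal representations of smooth plane curves is a legitimate classical route (Dixon, Beauville), and would indeed furnish, for each $k$ and each algebraically closed $K$ of characteristic $\neq 2$, a triple with $F_{\u{Q}}$ smooth. Note, though, that the appendix example you cite is only for $k=10$, so you genuinely need the determinantal-representation input for general $k$; and that input, while classical, rests on the existence of an ineffective theta characteristic on the curve, which is a nontrivial fact to invoke.

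The paper takes a different and more direct path, avoiding any explicit example. It works in the projective space $\P^{(k+1)k/2-1}$ of all quadrics in $\P^{k-1}$, stratified by the rank loci $\Phi_i=\{\rank\leq i\}$. The discriminant hypersurface is $\Phi_{k-1}$, its singular locus lies in $\Phi_{k-2}$, and $\codim\Phi_{k-2}=3$. Bertini's theorem (Kleiman's transversality version) then says that a generic $2$-plane $P$ misses $\Phi_{k-2}$ entirely, so $P\cap\Phi_{k-1}$ sits in the smooth part of $\Phi_{k-1}$, and a second application of Bertini makes this intersection a smooth curve. Since $P\cap\Phi_{k-1}$ is exactly the scheme $\{F_{\u{Q}}=0\}$ when $P$ is the span of $Q_1,Q_2,Q_3$, this is Condition~\ref{cond2}. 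The trade-off: your approach is elementary modulo one black-box classical theorem and gives concrete witnesses; the paper's approach is uniform in $k$ and $K$, self-contained given Bertini, and never needs to produce an example.
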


Next, if we let $\Abf$ denote the collection of all coefficients of $Q_1,Q_2,Q_3$, we note that there is a polynomial in $\Abf$ and $\bn$ which identifies those forms $Q_1,Q_2,Q_3$ and tuples $\bn$ such that the affine system of equations $\bQ(\bfx)=\bn$ satisfies the Jacobian criterion for smoothness.

\begin{proposition}\label{prop3.4}
There exists a homogeneous form $H(\Abf,\bn)$ in $\Abf$ and $\bn$ with integral coefficients of total degree $2(k+1)k(k-1)$ such that the following holds  
for any  algebraically closed field  $K$ of  characteristic zero or odd characteristic.
When the system of quadratic forms given by $\Abf$ satisfies Condition \ref{cond2} over $K$,
then we have $H(\Abf,\u{n})=0$ in $K$ if and only if the system of forms with coefficients given by $\Abf$ is such that there is a point $\bfx\in K^k$ with 
\beq\label{Qn0}
\u{Q}(\bfx)= \u{n}
\eeq
and 
\[ \rank J_{\u{Q}}(\bfx)<3.\]
Moreover, for each $\Abf$ giving a system of forms satisfying Condition \ref{cond2} over $K$, we have that
$H(\Abf,\bn)$, as a polynomial in $\u{n}$ with coefficients in $K$, is homogeneous, not identically zero, and degree at least $1$.
\end{proposition}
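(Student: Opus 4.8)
# Proof Proposal for Proposition \ref{prop3.4}

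\textbf{Construction of $H$.} The plan is to build $H$ by elimination theory over $\Z$ from the incidence locus of a point $\bfx$ together with a triple of quadratic forms for which $\uQ(\bfx)=\bn$ while $J_{\uQ}(\bfx)$ fails to have rank $3$. Since each $Q_i$ is homogeneous of degree $2$, the origin always satisfies $\uQ(0)=0$ and $\rank J_{\uQ}(0)=0<3$, so $H(\Abf,\cdot)$ will have to vanish at $\bn=0$; for $\bn\ne 0$ every bad point is nonzero, and we may work projectively. Let $N$ be the number of coefficients of a triple of quadratic forms in $k$ variables, and let $Z\subset\PP^{k-1}_{\bfx}\times\A^N_{\Abf}\times\A^3_{\bn}$ be the closed subscheme cut out by $Q_i(\bfx)=n_i$ $(1\le i\le 3)$ together with the vanishing of every $3\times 3$ minor of $J_{\uQ}(\bfx)$. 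Since $\PP^{k-1}$ is proper, the projection $\pi(Z)\subset\A^N_{\Abf}\times\A^3_{\bn}$ forgetting $\bfx$ is closed, and its formation commutes with specialization in $(\Abf,\bn)$; we take $H(\Abf,\bn)$ to be a generator of the ideal of the codimension-one part of $\pi(Z)$. Realizing this as a single polynomial with integer coefficients, of total degree $2(k+1)k(k-1)$ in $(\Abf,\bn)$, valid over every algebraically closed field of characteristic zero or odd characteristic simultaneously, is done via an explicit resultant of the above equations in the appendix (Section~\ref{sec_appendix}), where the degree bound results from tracking the degrees of the $Q_i-n_i$ and of the minors of $J_{\uQ}$ through the resultant.

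\textbf{The zero locus when Condition~\ref{cond2} holds.} Fix $\Abf$ defining a system $\uQ$ satisfying Condition~\ref{cond2} over $K$, and set $C=\{F_{\uQ}(x_1,x_2,x_3)=0\}\subset\PP^2$. Since $\Disc(F_{\uQ})\ne 0$, the form $F_{\uQ}$ is not a proper power $G^m$, $m\ge 2$ (such a form has all first partials vanishing along the nonempty curve $\{G=0\}$), so $F_{\uQ}$ is reduced; by the discussion following Condition~\ref{cond2}, $C$ is then a smooth, hence irreducible, plane curve of degree exactly $k\ge 4$. In odd characteristic $\nabla(\bfx^tM\bfx)=2M\bfx$, so for $\bfx\ne 0$ we have $\rank J_{\uQ}(\bfx)<3$ precisely when $(\u{\lam}\cdot Q)\bfx=0$ for some nonzero $\u{\lam}\in K^3$. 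By Proposition~\ref{prop1}, Condition~\ref{cond2} forces Condition~\ref{cond3}, so every nonzero $\u{\lam}$ has $\rank(\u{\lam}\cdot Q)\ge k-1$; any such $\u{\lam}$ with a nonzero kernel vector must then have $F_{\uQ}(\u{\lam})=0$, i.e. $\u{\lam}\in C$, with the rank exactly $k-1$, and $\bfx$ spanning the line $\ker(\u{\lam}\cdot Q)=\langle\bfx(\u{\lam})\rangle$, where $[\bfx(\u{\lam})]\in\PP^{k-1}$ depends algebraically on $\u{\lam}\in C$ by Cramer's rule. Hence $[\bfx]$ carries a rank drop exactly when $[\bfx]=[\bfx(\u{\lam})]$ for some $\u{\lam}\in C$. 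Condition~\ref{cond1} (also implied by Condition~\ref{cond2}) forbids a nonzero common zero of $Q_1,Q_2,Q_3$ at which $\rank J_{\uQ}<3$, so $\uQ(\bfx(\u{\lam}))\ne 0$ for every $\u{\lam}\in C$; thus $\phi\colon C\to\PP^2_{\bn}$, $\u{\lam}\mapsto[\uQ(\bfx(\u{\lam}))]$, is a morphism, and the set of $[\bn]\in\PP^2$ admitting a bad point $\bfx$ is exactly $\phi(C)$.

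\textbf{Non-constancy and conclusion.} For $\u{\lam}\in C$ we compute $\u{\lam}\cdot\uQ(\bfx(\u{\lam}))=\bfx(\u{\lam})^t(\u{\lam}\cdot Q)\bfx(\u{\lam})=0$; so if $\phi$ were constant with value $[\bn_0]$, every $\u{\lam}\in C$ would satisfy $\u{\lam}\cdot\bn_0=0$, placing the irreducible degree-$k$ curve $C$ inside a line, which is impossible for $k\ge 4$. Therefore $\phi$ is non-constant, $\phi(C)$ is an irreducible curve in $\PP^2_{\bn}$, and it is cut out by an irreducible homogeneous polynomial $H_{\Abf}(\bn)$ of degree $\ge 1$. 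Combining this with the base-change compatibility of the construction of $H$, the specialization $H(\Abf,\cdot)$ vanishes set-theoretically exactly on the set of $\bn$ with a bad point --- namely $\{0\}$ together with the affine cone over $\phi(C)$ --- which gives the asserted equivalence; since this set is a hypersurface through the origin, $H(\Abf,\cdot)$ is homogeneous in $\bn$, not identically zero, and of degree $\ge 1$.

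\textbf{Main obstacle.} The geometric identification above is routine; the crux is the construction of $H$ itself: producing one integral polynomial of the stated total degree whose formation commutes with reduction to all the relevant fields at once, and --- most delicately --- checking that over the open locus where Condition~\ref{cond2} holds this $H$ picks up no spurious factors, so that $\{H(\Abf,\cdot)=0\}$ is exactly the bad locus and in particular is not all of $\A^3_{\bn}$. This integral, uniform analogue of the classical discriminant of several forms is precisely what Section~\ref{sec_appendix} supplies.
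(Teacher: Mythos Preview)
Your geometric analysis of the bad locus under Condition~\ref{cond2} is correct and in fact more explicit than what the paper does: parametrizing rank-drop points by the smooth irreducible plane curve $C=\{F_{\uQ}=0\}$, building the morphism $\phi\colon C\to\PP^2_{\bn}$, $\u{\lam}\mapsto[\uQ(\bfx(\u{\lam}))]$, and using $\u{\lam}\cdot\uQ(\bfx(\u{\lam}))=0$ to force non-constancy is a clean replacement for the paper's dimension count on the incidence variety $I\subset\A^k\times C$.

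The construction of $H$ itself, however, has a genuine gap. The equations $Q_i(\bfx)=n_i$ are not homogeneous in the $\PP^{k-1}$-variable $\bfx$ (they mix a degree-$2$ form with a degree-$0$ constant), so they do not cut out a closed subscheme of $\PP^{k-1}_{\bfx}\times\A^N_{\Abf}\times\A^3_{\bn}$, and your properness argument for the image being closed cannot be run as written. Placing $\bfx$ in $\A^k$ instead loses properness of the projection; replacing $Q_i(\bfx)=n_i$ by the bihomogeneous relations $n_jQ_i-n_iQ_j=0$ loses the affine information in $\bn$ and wrecks the degree count. The paper resolves exactly this by homogenizing with an extra variable: one considers the three quadrics $Q_i(\bfx)-n_i x_{k+1}^2$ in $\PP^k$ and defines $H(\Abf,\bn)$ to be the discriminant $\Disc$ (from Section~\ref{sec_appendix}) of this triple of quadratic forms in $k+1$ variables, specialized by setting the $x_ix_{k+1}$ coefficients to zero. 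The total degree $2(k+1)k(k-1)$ is then Benoist's formula for $\Disc$ of three degree-$2$ forms in $k+1$ variables, not something obtained by tracking your Jacobian minors through a resultant; and the exact equality of $\{H=0\}$ with the bad locus (your ``no spurious factors'' worry) comes for free, because by construction $\Disc$ vanishes precisely when the projective scheme $\uQ(\bfx)=\bn\,x_{k+1}^2$ fails the Jacobian criterion, and under Condition~\ref{cond1} that scheme is automatically smooth along $x_{k+1}=0$. So your deferral to the appendix is misplaced: Section~\ref{sec_appendix} supplies the paper's homogenized discriminant, not an elimination ideal for your $Z$.
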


Note that once we have fixed a choice for the coefficients $\Abf$ and thus determined a system $Q_1,Q_2,Q_3$, we will denote the corresponding polynomial by $H_{\u{Q}}(\u{n}) = H(\Abf,\u{n})$.

\subsection{Definition of good and bad primes}
Throughout our work, we will assume that we have chosen a system of forms $\u{Q}$ defined over $\Q$ that satisfy Condition \ref{cond2} over $\overline{\Q}$ (and hence over any algebraically closed field of characteristic zero). In our estimation of certain exponential sums we will also require a dichotomy of the primes into those  for which $\u{Q}$ satisfies Condition \ref{cond2} over $\overline{\F}_p$ and those for which it fails; this  motivates the definition of good and bad primes as follows.

\begin{dfn}
We say a prime $p$ is bad if $p|2\Disc(F_{\u{Q}})$. Otherwise we say that $p$ is good.
\end{dfn}
Under the assumption that $\u{Q}$ satisfies Condition \ref{cond2} over $\overline{\Q}$, we see that there are finitely many bad primes; this will be crucial throughout our work.

We record for later use two observations which are immediate consequences of this definition and an application of Proposition \ref{prop1} with respect to $\overline{\F}_p$.
\begin{lemma}
If $p$ is good, the projective variety over $\overline{\F}_p$ given by $\cap_{i=1}^3\{Q_i=0\}$ is nonsingular, that is, $\u{Q}$ satisfies Condition \ref{cond1} over $\overline{\F}_p$.
\end{lemma}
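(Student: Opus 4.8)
The plan is to unwind the definition of a good prime and then quote Proposition~\ref{prop1} over $\overline{\F}_p$. First I would recall that the discriminant $\Disc$ of a homogeneous degree-$k$ form in three variables is a fixed polynomial with integer coefficients in the coefficients of that form, and — this is the point for which Section~\ref{sec_appendix} takes care to build \emph{one} discriminant good over all fields at once — that it is compatible with base change. Since the coefficients of $F_{\u{Q}}(x_1,x_2,x_3)=\det(x_1Q_1+x_2Q_2+x_3Q_3)$ are themselves integer polynomials in the entries of the matrices $Q_i$, this compatibility gives that the reduction mod $p$ of the integer $\Disc(F_{\u{Q}})$ equals $\Disc$ of the reduced form $F_{\u{Q}}\bmod p$ computed over $\overline{\F}_p$. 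Hence $p\nmid \Disc(F_{\u{Q}})$ forces $\Disc(F_{\u{Q}})\neq 0$ in $\overline{\F}_p$; in other words the system $\u{Q}$, reduced mod $p$, satisfies Condition~\ref{cond2} over $\overline{\F}_p$. (In particular this rules out $F_{\u{Q}}\bmod p$ being identically zero, since the zero form has every point as a critical point and so has vanishing discriminant.)

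Next I would note that because $p$ is good we also have $p\nmid 2$, so $p$ is odd and $\overline{\F}_p$ is an algebraically closed field of odd characteristic; thus Proposition~\ref{prop1} applies with $K=\overline{\F}_p$ to the forms $Q_1,Q_2,Q_3$ reduced mod $p$ (still in $k\geq 4$ variables). That proposition then yields, from Condition~\ref{cond2} over $\overline{\F}_p$, both Condition~\ref{cond1} and Condition~\ref{cond3} over $\overline{\F}_p$; in particular $\rank J_{\u{Q}}(\xbf)=3$ at every non-zero $\xbf\in(\overline{\F}_p)^k$ satisfying $Q_i(\xbf)=0$ for $i=1,2,3$. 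By the remark following Condition~\ref{cond1}, this Jacobian condition is precisely the statement that the projective scheme $\cap_{i=1}^3\{Q_i=0\}$ over $\overline{\F}_p$ is smooth (nonsingular) of codimension $3$, which is the asserted conclusion.

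Everything after the first paragraph is a direct citation, so the only step that needs genuine care is the base-change identity $\Disc(F_{\u{Q}})\bmod p=\Disc(F_{\u{Q}}\bmod p)$; I would phrase that paragraph so as to invoke exactly the integrality/universality of the discriminant established in Section~\ref{sec_appendix}, rather than any characteristic-specific construction, since an ad hoc construction over $\overline{\F}_p$ would not obviously be the reduction of the one over $\Q$. I do not expect any real obstacle here — this is the intended ``immediate consequence'' — provided the appendix's construction is quoted in the form that makes the reduction statement automatic.
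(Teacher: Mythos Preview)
Your proposal is correct and is exactly the approach the paper indicates: the lemma is stated as an immediate consequence of the definition of a good prime together with Proposition~\ref{prop1} applied over $\overline{\F}_p$, and your write-up simply unpacks that sentence (including the appendix's universal construction of $\Disc$ to justify the base-change step). There is nothing to add.
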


\begin{lemma}\label{geolem1}
Let $p$ be a good prime and $\bb\in\Z^3$ such that $(\bb,p)=1$. Then the rank of the matrix $\bb\cdot\bQ$ over $\overline{\F}_p$ is at least $k-1$.
\end{lemma}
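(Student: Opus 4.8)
The plan is to deduce this immediately from the definition of a good prime, the base-change behaviour of the discriminant constructed in Section \ref{sec_appendix}, and Proposition \ref{prop1} applied over $\overline{\F}_p$.

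First I would use that $p$ good means $p \nmid 2\Disc(F_{\u{Q}})$; in particular $p$ is odd and $\Disc(F_{\u{Q}})$ is a unit modulo $p$. Since the discriminant is given by a fixed polynomial with integer coefficients in the coefficients of the form, which specializes correctly in every characteristic (this is precisely what the construction in Section \ref{sec_appendix} guarantees), reduction modulo $p$ commutes with forming the discriminant. Writing $\bar{\u{Q}}$ for the reduction of the system $\u{Q}$ modulo $p$, we have $F_{\bar{\u{Q}}} = \det(x_1\bar Q_1 + x_2\bar Q_2 + x_3\bar Q_3)$ is the reduction of $F_{\u{Q}}$, and hence $\Disc(F_{\bar{\u{Q}}}) \equiv \Disc(F_{\u{Q}}) \ncon 0 \modd{p}$, so $\Disc(F_{\bar{\u{Q}}}) \neq 0$ in $\F_p \subseteq \overline{\F}_p$. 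Thus $\bar{\u{Q}}$ satisfies Condition \ref{cond2} over $\overline{\F}_p$.

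Next, since $\overline{\F}_p$ has odd characteristic, Proposition \ref{prop1} applies and shows that Condition \ref{cond2} over $\overline{\F}_p$ implies Condition \ref{cond3} over $\overline{\F}_p$: every non-trivial $\overline{\F}_p$-linear combination of $\bar Q_1, \bar Q_2, \bar Q_3$ has rank at least $k-1$. Finally, given $\bb \in \Z^3$ with $(\bb,p)=1$, the reduction $\bar{\bb} \in \F_p^3$ is non-zero, hence a non-trivial element of $\overline{\F}_p^3$; by Condition \ref{cond3} the matrix $\bar b_1\bar Q_1 + \bar b_2\bar Q_2 + \bar b_3\bar Q_3$, which is exactly the reduction of $\bb\cdot\bQ$ modulo $p$, has rank at least $k-1$ over $\overline{\F}_p$. (Rank is stable under field extension, so it does not matter whether we compute over $\F_p$ or $\overline{\F}_p$.) This is the assertion.

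There is no genuine obstacle here; the only point requiring care is the interchange of reduction modulo $p$ with the formation of the discriminant, which is exactly why Section \ref{sec_appendix} is written to produce a single integral discriminant polynomial valid simultaneously over $\overline{\Q}$ and over every $\overline{\F}_p$ with $p$ odd — and for good primes the nonvanishing of its reduction is built into the definition.
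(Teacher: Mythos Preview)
Your proposal is correct and follows essentially the same approach as the paper, which states this lemma as an immediate consequence of the definition of a good prime together with Proposition \ref{prop1} applied over $\overline{\F}_p$. Your additional remarks about the base-change compatibility of the discriminant (via Section \ref{sec_appendix}) make explicit the one point the paper leaves implicit, but the argument is the same.
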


Furthermore, we need to make a further distinction among good primes, depending on whether the reduction modulo $p$ of the affine variety $\bQ(\bfx)=\bn$ is smooth or not.

\begin{dfn}
For a fixed tuple $\bn$ we say that a good prime $p$ is Type I with respect to $\u{n}$ if  $p \ndiv H_{\u{Q}}(\bn)$; otherwise if $p$ is a good prime such that $p | H_{\u{Q}}(\u{n})$ it is said to be Type II with respect to $\u{n}$.
\end{dfn}

We remark that the notion of Type I and Type II depends on $\bn$, in contrast to the property of a prime being good or bad. 
Moreover, it is important to note that for all $\u{n}$ such that $H_{\u{Q}}(\u{n}) \neq 0$, there are finitely many Type II primes.

\subsection{Diagonalization}
While it is not typically possible to  diagonalize three quadratic forms simultaneously, the following result ensures that it is possible to simultaneously diagonalize two of the three forms, if their intersection is smooth and of codimension two.

We first recall the following observation  by Browning, Dietmann and Heath-Brown, an immediate consequence of Lemma 3.1 of  \cite{BroDieHBA13}:
\begin{lemma}\label{geolem0}
Assume that $Q_1, Q_2,Q_3$ satisfy Condition \ref{cond1} over some algebraically closed field $K$ of characteristic zero. Then there exist linearly independent vectors $\u{\lam}_1, \u{\lam}_2 \in K^3$ such that upon defining 
\begin{eqnarray*}
Q_1'&=& \u{\lam}_1 \cdot \u{Q} \\
Q_2' &=& \u{\lam}_2 \cdot \u{Q},
\end{eqnarray*}
we have that $Q'_1, Q'_2$ satisfy the Jacobian criterion for smoothness, i.e. \\
$\rank \left( \partial Q'_i/\partial x_j (\xbf) \right)_{i=1,2; 1\leq j\leq k}=2$ at any non-zero point $\xbf\in K^k$ such that $Q'_1(\xbf)=Q'_2(\xbf)=0$.
\end{lemma}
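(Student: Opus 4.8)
The plan is to deduce Lemma \ref{geolem0} directly from \cite[Lemma 3.1]{BroDieHBA13}, so the work is entirely one of matching hypotheses and performing an elementary translation. That lemma provides, in characteristic zero, that whenever a system of forms $G_1,\dots,G_R$ of a common degree cuts out a subvariety of $\PP^N$ that is smooth of dimension $N-R$, then for every choice of $m<R$ coefficient vectors $\u{\mu}_1,\dots,\u{\mu}_m$ lying outside a proper Zariski-closed subset of $(K^R)^m$, the forms $\u{\mu}_1\cdot\u{Q},\dots,\u{\mu}_m\cdot\u{Q}$ cut out a subvariety of $\PP^N$ that is smooth of dimension $N-m$.

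First I would apply this with $N=k-1$, $R=3$, $m=2$ and $(G_1,G_2,G_3)=(Q_1,Q_2,Q_3)$. By the reformulation of Condition \ref{cond1} recorded immediately after its statement, Condition \ref{cond1} over the characteristic-zero field $K$ says exactly that the projective scheme $Q_1=Q_2=Q_3=0$ is smooth of codimension $3$, which is precisely the hypothesis required. The cited lemma then yields a proper closed subset $Z\subsetneq(K^3)^2$ such that, for every $(\u{\lam}_1,\u{\lam}_2)\notin Z$, the pair $(\u{\lam}_1\cdot\u{Q},\ \u{\lam}_2\cdot\u{Q})$ defines a projective variety that is smooth of codimension $2$.

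Second, since the set of pairs $(\u{\lam}_1,\u{\lam}_2)\in(K^3)^2$ that are linearly dependent is itself a proper Zariski-closed subset, I would choose $(\u{\lam}_1,\u{\lam}_2)$ lying outside both this set and $Z$; the existence of such a choice is immediate because $K$ is infinite. This produces linearly independent $\u{\lam}_1,\u{\lam}_2\in K^3$ for which $Q_1'=\u{\lam}_1\cdot\u{Q}$ and $Q_2'=\u{\lam}_2\cdot\u{Q}$ define a smooth codimension-$2$ projective intersection over $K$. Finally I would restate ``smooth of codimension $2$'' as the Jacobian criterion: this is precisely the assertion that at each non-zero $\xbf\in K^k$ with $Q_1'(\xbf)=Q_2'(\xbf)=0$ the $2\times k$ matrix $(\partial Q_i'/\partial x_j(\xbf))_{i=1,2;\,1\le j\le k}$ has rank $2$, which is the claimed conclusion.

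I do not anticipate any substantive obstacle: the geometric content is carried entirely by \cite[Lemma 3.1]{BroDieHBA13}, and the remaining steps are bookkeeping (verifying that hypothesis against Condition \ref{cond1}, and arranging linear independence of $\u{\lam}_1,\u{\lam}_2$), both routine. Should one prefer a self-contained argument, one could instead run the standard incidence-variety dimension count over $\mathrm{Gr}(2,3)$, using Euler's identity $J_{\u{Q}}(\xbf)\,\xbf=2\,\u{Q}(\xbf)$ together with Condition \ref{cond1} (which forbids a rank drop of $J_{\u{Q}}$ on the common zero locus) to bound the locus of ``bad'' $2$-dimensional subspaces of $K^3$; but invoking the cited lemma makes this unnecessary.
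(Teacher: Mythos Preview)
Your proposal is correct and follows exactly the approach the paper takes: the paper simply states that Lemma~\ref{geolem0} is ``an immediate consequence of Lemma 3.1 of \cite{BroDieHBA13}'' and gives no further proof. You have correctly identified the citation and spelled out the routine bookkeeping (matching Condition~\ref{cond1} to the hypothesis, and arranging linear independence) that makes the deduction immediate.
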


Note the Jacobian criterion above is equivalent to 
the intersection of $Q'_1=0$ and $Q'_2=0$ in $\P^{k-1}$ being a smooth scheme of codimension 2
(and in particular a complete intersection).
In our situation, we assume that $Q_1,Q_2, Q_3$ satisfy Condition \ref{cond2} and hence Condition \ref{cond1}, so that this lemma shows we may always choose the representative forms so that the intersection of two of them is smooth. 

We next recall a consequence of Proposition 2.1 in \cite{HBPierce}:
\begin{lemma}\label{geoprop7}
Let $Q_1$ and $Q_2$ be two quadratic forms in $k$ variables, and let  $K$ be an algebraically closed field of characteristic zero or of odd characteristic. Then $Q_1, Q_2$ satisfy the Jacobian criterion for smoothness, i.e. $\rank \left( \partial Q_i/\partial x_j (\xbf) \right)_{i=1,2; 1\leq j\leq k}=2$ at any non-zero point $\xbf\in K^k$ such that $Q_1(\xbf)=Q_2(\xbf)=0$, 
 if and only if $Q_1$ and $Q_2$ are simultaneously diagonalizable over $K$.
\end{lemma}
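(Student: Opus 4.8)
The plan is to prove both directions by reducing to the simultaneous structure theory of a pencil of quadratic forms. For the easy direction, suppose $Q_1$ and $Q_2$ are simultaneously diagonalizable over $K$, so after a linear change of variables $Q_1(\bfx)=\sum_i a_i x_i^2$ and $Q_2(\bfx)=\sum_i b_i x_i^2$. If the Jacobian criterion failed, there would be a nonzero $\bfx$ with $Q_1(\bfx)=Q_2(\bfx)=0$ and with the two gradient vectors $(a_i x_i)_i$ and $(b_i x_i)_i$ linearly dependent. One then argues coordinate-by-coordinate: on the support $S=\{i:x_i\neq 0\}$, linear dependence of the gradients forces a single ratio $(a_i:b_i)$ to be constant across $S$ (after treating the cases where one gradient vanishes), and then $\sum_{i\in S}a_ix_i^2=\sum_{i\in S}b_ix_i^2=0$ together with this proportionality forces both sums to vanish with a common proportionality constant, which quickly yields a contradiction with $\bfx\neq 0$ unless the pencil was degenerate; one must be slightly careful here, but in characteristic $\neq 2$ this is the standard computation.

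For the converse — the substantive direction — I would not reprove it from scratch but rather cite Proposition 2.1 of \cite{HBPierce}, which is exactly the statement that the Jacobian criterion for the pair $(Q_1,Q_2)$ (equivalently, smoothness of the codimension-two intersection in $\P^{k-1}$) is equivalent to simultaneous diagonalizability over an algebraically closed field of characteristic zero or odd characteristic. Since the lemma is explicitly stated as ``a consequence of Proposition 2.1 in \cite{HBPierce},'' the proof is essentially a pointer: one checks that the hypotheses of that proposition (algebraically closed $K$, $\mathrm{char}\,K\neq 2$) match, and that the formulation there — typically phrased in terms of the pencil $\det(sQ_1+tQ_2)$ being squarefree, or in terms of the intersection being a smooth complete intersection — is literally equivalent to the rank condition on the $2\times k$ Jacobian written here. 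The translation between ``rank of the Jacobian is $2$ at every nonzero common zero'' and ``the projective scheme $Q_1=Q_2=0$ is smooth of codimension $2$'' is the usual Jacobian criterion for complete intersections, valid because $Q_1,Q_2$ homogeneous of the same degree cut out a scheme whose expected codimension is $2$.

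The main obstacle, such as it is, is purely bookkeeping: making sure the characteristic hypotheses and the precise form of the smoothness statement in \cite{HBPierce} are quoted correctly, since the cited proposition may be stated over $\overline{\Q}$ or over a general algebraically closed field, and one wants the odd-characteristic case as well. There is no new geometric content to produce here; the lemma is a verbatim import, and the only care needed is to confirm that ``simultaneously diagonalizable'' in \cite{HBPierce} allows the diagonal entries to be arbitrary elements of $K$ (including zeros), which is what the argument in the present paper will subsequently use.
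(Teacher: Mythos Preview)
Your approach matches the paper's exactly: the paper gives no proof at all and simply introduces the lemma as ``a consequence of Proposition~2.1 in \cite{HBPierce}.'' Your identification of the substantive direction as a direct citation is precisely what the paper does.

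However, your added sketch of the ``easy direction'' (simultaneously diagonalizable $\Rightarrow$ Jacobian criterion) does not go through, and you seem to sense this when you write ``unless the pencil was degenerate.'' In fact that direction is \emph{false} as literally stated: take $Q_1=Q_2=\sum_i x_i^2$ over $\overline{\Q}$, which are trivially simultaneously diagonal, yet at $\bfx=(1,i,0,\ldots,0)$ one has $Q_1(\bfx)=Q_2(\bfx)=0$ and the Jacobian has rank $1$. The correct equivalence in Proposition~2.1 of \cite{HBPierce} is between the Jacobian criterion and simultaneous diagonalizability \emph{with pairwise distinct ratios $[a_i:b_i]\in\P^1$} (equivalently, $\det(xQ_1+yQ_2)$ squarefree). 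The present paper's phrasing suppresses this qualifier, but this is harmless because only the direction ``Jacobian criterion $\Rightarrow$ diagonalizable'' is ever used (in Lemma~\ref{HilfslemmaS}), and there the distinctness of the ratios is invoked separately with an explicit pointer back to \cite{HBPierce}. So rather than trying to salvage the easy direction, you should simply note that the lemma is quoted from \cite{HBPierce} and that the full statement there carries the distinct-ratio condition.
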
 

This strategy will enable us, in Lemma \ref{HilfslemmaS}, to simultaneously diagonalize two of the three quadratic forms.

\subsection{Proof of geometric facts}\label{sec_equiv_conditions}
In this section we turn to the proof of the key geometric propositions we have stated.

\begin{proof}[Proof of Proposition~\ref{prop1}]
We will prove the equivalence of Condition \ref{cond2} with the simultaneous satisfaction of both Conditions \ref{cond1} and \ref{cond3} by contradiction. Throughout the proof we use the following notation: write $Q_i=(\bfb_1^{(i)},\ldots, \bfb_k^{(i)})$ with column vectors $\bfb_l^{(i)}$ and write $\ba\cdot\bQ= (\bfb_1,\ldots, \bfb_k)$ with column vectors $\bbf_l$, where we omit the dependence on the vector $\ba\in K^3$.
 Then note that we have
\begin{equation*}
\frac{\partial F_{\u{Q}}}{\partial a_i} (\ba)= \sum_{l=1}^k \det (\bfb_1,\ldots, \bfb_l^{(i)},\ldots, \bfb_k).
\end{equation*}
\par
We first assume that there is some $\ba\in K^3\setminus\{\u{0}\}$ with $\rank (\ba\cdot\bQ)\leq k-2$. Then one certainly has $F_{\u{Q}}(\ba)=0$ and the fomula for the derivatives of $F_{\u{Q}}(\ba)$ shows that $\frac{\partial F_{\u{Q}}}{\partial a_i}(\ba)=0$ for all $1\leq i\leq 3$. This is a contradiction to Condition \ref{cond2}, since $\ba$ would be a singular point on $F_{\u{Q}}(\ba)=0$.

Next assume that we are given some $\bfx\in K^k\setminus\{\boldsymbol{0}\}$ with the property that $Q_i(\bfx)=0$ for $1\leq i\leq 3$ and $\rank J_{\u{Q}}(\xbf)< 3$, so that there exists $\ba\in K^3\setminus\{\u{0}\}$ with the property that $(\ba\cdot\bQ)\bfx=\boldsymbol{0}$. Since $\ba\cdot\bQ$ has non-trivial kernel, we see that $F_{\u{Q}}(\u{a})=0$. Also, after a linear change of variables we may assume that $\bfx=\bfe_1$, the first unit vector. Then we see that $\bfb_1=\boldsymbol{0}$ and so for each $1 \leq i \leq 3$,
\begin{equation*}
\frac{\partial F_{\u{Q}}}{\partial a_i}(\ba)= \det (\bfb_1^{(i)},\bfb_2,\ldots, \bfb_k).
\end{equation*}
Note that $Q_i(\bfe_1)=0$ implies that $b_{1,1}^{(i)}=0$; also, by symmetry of the matrix $\ba\cdot\bQ$, the first row of this matrix is identically zero. Putting these facts together, we see that $\frac{\partial F_{\u{Q}}}{\partial a_i}(\ba)=0$ for $1 \leq i \leq 3$; together with the previous observation that $F_{\u{Q}}(\ba)=0$, this provides a contradiction to Condition \ref{cond2}.

Finally assume that Condition \ref{cond2} fails, so that there exists some $\ba\in K^3\setminus\{\u{0}\}$ with $F_{\u{Q}}(\ba)=0$ and $\frac{\partial F_{\u{Q}}}{\partial a_i}(\ba)=0$ for $1 \leq i \leq 3$. Either this implies that $\rank (\ba\cdot\bQ)\leq k-2$ or we have $\rank (\ba\cdot\bQ)=k-1$. In the first case we have already arrived at a contradiction to Condition \ref{cond3}, and hence we continue under the assumption that $\rank (\ba\cdot\bQ)=k-1$. For notational convenience assume that $\bfb_1,\ldots, \bfb_{k-1}$ are linearly independent and that we have a relation of the form
\begin{equation*}
\bfb_k= \sum_{l=1}^{k-1} c_l \bfb_l,
\end{equation*}
for some coefficients $c_l\in  K$. Let $\bfx_0= (\bfc,-1) \neq \boldsymbol{0}$ and observe that 
\beq\label{aQx0}
(\ba\cdot\bQ)\bfx_0=\boldsymbol{0},
\eeq
so that $\rank J_{\u{Q}}(\xbf_0) < 3$.
 We claim that in addition $Q_i(\bfx_0)=0$ for all $1\leq i\leq 3$. For this we fix $1 \leq i \leq 3$ and rewrite the condition $\frac{\partial F_{\u{Q}}}{\partial a_i}(\ba)=0$ as
\begin{equation*}
\begin{split}
0&= \det (\bfb_1,\ldots, \bfb_{k-1},\bfb_k^{(i)})+\sum_{l=1}^{k-1} \det (\bfb_1,\ldots, \bfb_l^{(i)},\ldots, \bbf_{k-1},\sum_{i=1}^{k-1}c_i\bfb_i)\\
&= \det (\bfb_1,\ldots, \bfb_{k-1},\bfb_k^{(i)}) +\sum_{l=1}^{k-1} \det (\bfb_1,\ldots, c_l \bfb_l^{(i)},\ldots, \bfb_{k-1},\bfb_l)\\
&= \det (\bfb_1,\ldots, \bfb_{k-1},\bfb_k^{(i)})- \sum_{l=1}^{k-1} \det (\bfb_1,\ldots, \bfb_{l},\ldots, \bfb_{k-1},c_l\bfb_l^{(i)})\\
&= \det (\bfb_1,\ldots, \bfb_{k-1}, \bfb_k^{(i)}- \sum_{l=1}^{k-1} c_l\bfb_l^{(i)}) \\
& =  \det (\bfb_1,\ldots, \bfb_{k-1}, -Q_i \bfx_0).
\end{split}
\end{equation*}
We conclude that the vector $Q_i \bfx_0$ is contained in the span of $\bfb_1,\ldots, \bfb_{k-1},$ and since $\bfx_0$ is orthogonal to $\bfb_l$ for all $1\leq l\leq k-1$ we see that $\bfx_0^t Q_i\bfx_0=0$. This holds for each $1\leq i\leq 3$, and thus provides a contradiction to Condition \ref{cond1}, and finally completes the proof of the proposition.
\end{proof}

\begin{proof}[Proof of Proposition~\ref{prop4}]
Clearly for a generic choice of $Q_1,Q_2,Q_3$, the forms will have linear span of rank $3$ in the space of quadratic forms.
In the $\P^{(k+1)k/2-1}$  of quadrics in $\P^{k-1}$, let $\Phi_i$ be the set of rank $\leq i.$
We have that the singular locus $(\Phi_{k-1})_{\operatorname{sing}}\subset \Phi_{k-2}$, and that $\codim \Phi_{k-2}=3$ \cite[Example 22.31]{Harris}.  So by Bertini's theorem \cite[Corollary 11]{Kleiman},
a generic two-dimensional plane in the $\P^{(k+1)k/2-1}$ of quadrics does not intersect  $(\Phi_{k-1})_{\operatorname{sing}}$. For such a two-dimensional plane $P$, we have $P\cap  \Phi_{k-1}= P\cap (\Phi_{k-1} \setminus (\Phi_{k-1})_{\operatorname{sing}})$.  Since $(\Phi_{k-1} \setminus (\Phi_{k-1})_{\operatorname{sing}})$ is smooth, again by Bertini's theorem  a generic two-dimensional plane  $P$ in the $\P^{(k+1)k/2-1}$ of quadrics
has $P\cap (\Phi_{k-1} \setminus (\Phi_{k-1})_{\operatorname{sing}})$ smooth of dimension $1$, and thus $P\cap  \Phi_{k-1}$ smooth.  If the plane $P$ is spanned by quadrics $Q_1, Q_2, Q_3$, then $F_{\u{Q}}(\u{x})=0$ describes exactly the scheme-theoretic intersection $P\cap  \Phi_{k-1}$, proving the proposition.
\end{proof}

\begin{proof}[Proof of Proposition~\ref{prop3.4}: Construction of the polynomial $H$]
First, let $\Delta$ be the discriminant constructed in the appendix in Section \ref{sec_appendix}
 that detects when $3$ quadratic forms in $k+1$ variables fail the Jacobian criterion (i.e. 
 determine a projective scheme that is not smooth of codimension $3$ in $\P^{k}$). 
  We have that $\Delta$ is homogeneous of degree $2(k+1)k(k-1)$ in the coefficients of the forms \cite[Th\'eor\`eme 1.3]{Benoist}.  
By setting all the coefficients of $x_ix_{k+1}$ for $i\ne k+1$ equal to $0$, we arrive at a polynomial
$H(\Abf,\u{n})$ that detects whether the projective scheme given by
$
\u{Q}(\bfx)= \u{n} x_{k+1}^2
$
is not smooth of codimension $3$.  When the system of quadratic forms given by $\Abf$ satisfies Condition \ref{cond1}, there are no obstructions to being smooth of codimension $3$ at infinity (when $x_{k+1}=0$).  Thus in this case, over an algebraically closed field $K$ not of characteristic $2$,  $H(\Abf,\u{n})=0$ if and only if there is a point $\bfx\in K^k$ with 
$\u{Q}(\bfx)= \u{n}$ and $\rank J_{\u{Q}}(\bfx)<3.$  

Now we will show that given $\Abf$, we have that $H(\Abf,\u{n})$ is non-zero as a polynomial in $\u{n}$.  For forms $\u{Q}$ satisfying Condition \ref{cond2}, there is a one-dimensional (projective) locus, call it $\mathcal{C}$, of $[\lambda_1:\lambda_2:\lambda_3]\in \P^2$ such that
$\sum_i \lambda_iQ_i$ is singular.  Let $I\subset \A^k \times \mathcal{C}$ be the incidence locus  of those $(\bfx,\u{\lambda})$ such that $\sum_i \lambda_iQ_i\bfx=0.$  Since for forms $\u{Q}$ satisfying Condition \ref{cond3} we have that $\sum_i \lambda_iQ_i$ is rank at least $k-1$, in the projection $I\rightarrow \mathcal{C}$ each $\u{\lambda}$ has a fiber of (affine) dimension $1$.  Thus $I$ has dimension $2$, and the image of the map $I\rightarrow\A^3$ sending $\bfx \mapsto \u{Q}(\bfx)$ has dimension at most $2$.  Thus for the generic $\u{n}$, there is no point $\bfx\in K^k$ with $\u{Q}(\bfx)=\u{n}$ and $\rank J_{\u{Q}}(\bfx)<3.$

Given $\Abf$, we see that $H(\Abf,\u{n})$ is homogeneous in the $\u{n}$.  We can check this over $\overline{\Q}$, in which case the projective scheme given by $
\u{Q}(\bfx)= \u{n} x_{k+1}^2
$  is isomorphic to that given by $
\u{Q}(\bfx)= \lambda \u{n} (x'_{k+1})^2
$
via the change of coordinates $x'_{k+1}=\lambda^{-1/2}x_{k+1}$.  Thus $H(\Abf,\u{n})$ has zero-locus invariant under scaling the $\u{n}$ by $\lambda$, and is thus homogeneous in the  $\u{n}$.

Suppose for the sake of contradiction that given some $\Abf$, we have that $H(\Abf,\u{n})$ is degree $0$.  That would mean over any field, the projective scheme given by $
{\u{Q}}(\bfx)= \u{n} x_{k+1}^2
$ is always smooth of codimension $3$. 
 Since there must be some $\u{a}$ over an algebraically closed field  such that $\u{a}\cdot \u{Q}$ is 
not full rank, there is some $\bfx$ such that $\u{a}\cdot \u{Q} \bfx=(1/2)[\partial \u{a}\cdot \u{Q}/\partial 
x_i (\bfx)]_{1\leq i\leq k}=0.$ For this $\bfx$, we choose $\u{n}$ such that 
 $
{\u{Q}}(\bfx)= \u{n}, 
$  and we have a singularity, giving a contradiction.

\end{proof}

\section{Preliminary lemmas}
We gather here certain results on counting solutions to polynomial equations modulo primes and prime powers, as well as counting elements in sublevel sets over $\Z$ and $p$-adically.

\subsection{Solutions modulo primes and prime powers}

The following crude estimate on the number of $\F_p$-rational points on a variety is due to Lang and Weil. For the convenience of the reader we state it here again.

\begin{lemma}[Lemma 1 of \cite{LangWeil}]\label{geolem4}
Let $p$ be a prime. Let $V \subset \A_{\F_p}^r$ be an affine variety given by $s$ homogeneous polynomials with integer coefficients of degree not exceeding $\rho$ and assume that $\dim (V)= n$. Then the number of $\F_p$-rational points on $V$ is bounded by $O(p^n)$, where the implied constant depends on $\rho$ and $r$.
\end{lemma}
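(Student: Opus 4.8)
This is the classical Lang--Weil point-count estimate, and the most economical option is simply to invoke \cite{LangWeil}; the plan below is how I would argue if a self-contained proof were wanted. I would induct on $n=\dim V$, slicing by hyperplanes. First I would replace the hypothesis by the (equivalent, via B\'ezout's inequality) statement that $V$ has degree at most $D$ for some $D=D(r,s,\rho)$, and prove the bound with an implied constant depending only on $r$ and $D$. I would also reduce to the case that $p$ exceeds any prescribed constant depending on $r$ and $D$: if $p\le C(r,D)$ then trivially $|V(\F_p)|\le p^{r}\le C(r,D)^{r}$, which is $O(p^{n})$ since $p^{n}\ge 1$.

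The base case $n=0$ is immediate, since a reduced $0$-dimensional variety of degree at most $D$ has at most $D$ points even over $\overline{\F}_p$; and if $n=r$ then $V=\A^{r}$ and $|V(\F_p)|=p^{r}=p^{n}$. So suppose $1\le n\le r-1$ and the bound holds in smaller dimensions. The key step is a double count of incidences between $V(\F_p)$ and the set $\mathcal{H}$ of affine hyperplanes over $\F_p$: there are $O(p^{r})$ of these, with $\gg p^{r-1}$ passing through any fixed point. Call $H\in\mathcal{H}$ \emph{bad} if it contains an $n$-dimensional irreducible component of $V$; since the affine-linear forms vanishing on such a component span a space of codimension at least $n+1$, the bad hyperplanes through a fixed point number $O(p^{\,r-1-n})$ (there being at most $D$ components). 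For every non-bad $H$, the section $V\cap H$ has dimension at most $n-1$ and degree at most $D$, so $|(V\cap H)(\F_p)|=O(p^{n-1})$ by the inductive hypothesis. Summing over non-bad $H$,
\[
\sum_{H \text{ not bad}} \bigl|(V\cap H)(\F_p)\bigr| \;\le\; |\mathcal{H}|\cdot O(p^{n-1}) \;=\; O\!\left(p^{\,r+n-1}\right),
\]
while the same sum counts, for each $x\in V(\F_p)$, the number of non-bad hyperplanes through $x$, which is at least $\gg p^{r-1}-O(p^{\,r-1-n})\gg p^{r-1}$ once $p$ is large (here $n\ge 1$ is used). Hence $|V(\F_p)|\cdot p^{r-1}\ll p^{\,r+n-1}$, i.e. $|V(\F_p)|=O(p^{n})$, completing the induction.

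The only real content is the uniform bookkeeping, all of it independent of $p$: (i) a variety cut out by $s$ forms of degree at most $\rho$ in $\A^{r}$ has degree bounded in terms of $r,s,\rho$; (ii) its irreducible components and their degrees are bounded the same way; (iii) $\deg(V\cap H)\le \deg V$ whenever $H$ contains no component of $V$; and (iv) the dimension count for the linear system of hyperplanes through a fixed subvariety. Each of these is standard, so the implied constant ends up depending only on $r$ and $\rho$ (with $s$ absorbed through the degree bound). I do not expect a genuine obstacle; the one point to watch is that the only place one compares $\F_p$-points with $\overline{\F}_p$-points is the harmless inequality $|V(\F_p)|\le |V(\overline{\F}_p)|$ in the base case, so no issue with small fields or positive characteristic arises in the induction itself.
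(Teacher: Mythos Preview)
The paper does not give a proof of this lemma at all: it is simply quoted as Lemma~1 of Lang--Weil \cite{LangWeil}, and the authors use it as a black box. So there is nothing to compare against in the paper itself.

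Your sketch is a correct self-contained argument, and the incidence/double-count with hyperplanes is a perfectly good route (the original Lang--Weil argument instead projects $V$ generically onto $\A^n$ and bounds the fibers, but both are standard). Two small points of bookkeeping worth tightening if you write this out in full: first, since slicing by $H$ lands you in $\A^{r-1}$, it is cleanest to phrase the induction so that the constant depends only on $n$ and $D$ (not on $r$); your recursion actually gives this, since each step introduces only absolute constants times $C(n-1,D)$. Second, the claim that $s$ can be absorbed---i.e., that the degree of $V(f_1,\dots,f_s)$ with $\deg f_i\le\rho$ is bounded in terms of $r$ and $\rho$ alone---is a refined B\'ezout statement (degree at most $\rho^{r}$, independent of $s$) and deserves a reference rather than being taken for granted. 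Neither point is a gap; the argument is sound.
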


We will also refer to a consequence of the Deligne bound, as formulated by Hooley \cite{Hoo91}:
\begin{lemma}\label{lemma_Hooley}
If $V$ is a projective complete intersection of dimension $n$ over the finite field $\F_p$, with singular locus of dimension $s$, then the number of $\F_p$-rational points of $V$ is equal to
\[(p^{n+1} -1)/(p-1) + O(p^{(n+s+1)/2}).\]
\end{lemma}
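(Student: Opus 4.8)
The plan is to deduce this from Deligne's resolution of the Weil conjectures, following Hooley~\cite{Hoo91}. Since $V$ is projective, the Grothendieck--Lefschetz trace formula gives
\[
\#V(\F_p) \;=\; \sum_{i=0}^{2n}(-1)^i\,\Tr\!\left(\mathrm{Frob}_p \,\big|\, H^i(V_{\overline{\F}_p},\Q_\ell)\right),
\]
and the first task is to split the cohomology of $V$ into a ``tautological'' part, consisting of a copy of $\Q_\ell(-j)$ in degree $2j$ for $0\le j\le n$, which accounts exactly for the main term $(p^{n+1}-1)/(p-1)=\#\PP^{n}(\F_p)$, together with a complementary ``defect'' whose Frobenius trace one must bound by $O\!\left(p^{(n+s+1)/2}\right)$.

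To control the defect I would argue by induction on $n$, cutting $V$ with a Lefschetz pencil $\{V_t\}_{t\in\PP^1}$ of hyperplane sections through a generic axis $A\cong\PP^{N-2}$. Counting $\F_p$-points through the pencil, with points on the axis counted $p+1$ times, produces the exact identity
\[
\#V(\F_p) \;=\; \sum_{t\in\PP^1(\F_p)}\#V_t(\F_p)\;-\;p\cdot\#(V\cap A)(\F_p).
\]
The linear section $V\cap A$ is a complete intersection of dimension $n-2$, and --- by a Bertini argument valid once $p$ is large in terms of $N$ and the multidegree --- all but boundedly many members $V_t$ are complete intersections of dimension $n-1$ whose singular locus has dimension at most $s-1$; both fall under the inductive hypothesis, and an elementary computation shows that the resulting main terms telescope exactly to $\#\PP^{n}(\F_p)$. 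The delicate point --- and the reason the crude Lang--Weil bound of Lemma~\ref{geolem4} does not suffice here --- is the cohomological accounting for the whole family: rewriting $\sum_t\#V_t(\F_p)$ through the Leray spectral sequence of the projection $\pi\colon\bigcup_t V_t\to\PP^1$ in terms of $\Tr(\mathrm{Frob}_p\mid H^\bullet(\PP^1,R^\bullet\pi_*\Q_\ell))$, one invokes Deligne's weight bounds (the cohomology $H^i$ of a proper variety carries weights at most $i$), together with the fact that the contribution of a bad point of the base --- where $V_t$ degenerates along a locus whose dimension is controlled by $s$ --- carries weight bounded suitably in terms of $n$ and $s$; combining these with the uniform boundedness of the $\ell$-adic Betti numbers of complete intersections of fixed dimension and multidegree produces the stated error term.

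The main obstacle is exactly this last ingredient: one must pin down the precise range of cohomological degrees in which $H^i(V)$ can deviate from $H^i(\PP^n)$, and the precise weights occurring there, so that the defect is controlled by $p^{(n+s+1)/2}$ rather than by the trivial $p^{n}$. This requires the Lefschetz-type theorems for (possibly singular) complete intersections and the local theory of vanishing cycles, and is not accessible by elementary point counting; for the purposes of the present paper it suffices to quote the statement from \cite{Hoo91}.
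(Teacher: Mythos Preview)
The paper does not prove this lemma at all: it is stated as a quotation from Hooley~\cite{Hoo91} (``a consequence of the Deligne bound, as formulated by Hooley'') and used as a black box. So there is no ``paper's own proof'' to compare against; your final sentence, that for the purposes of the present paper it suffices to quote the statement from \cite{Hoo91}, is exactly what the authors do.

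As for the sketch itself: the overall shape --- trace formula, separate off a copy of the cohomology of $\PP^n$ to give the main term, then bound the remaining Frobenius eigenvalues using Deligne's weight theorems and uniform Betti number bounds --- is the right conceptual picture. The inductive Lefschetz-pencil mechanism you describe is one route to such results, though Hooley's actual argument in \cite{Hoo91} proceeds somewhat differently, leaning more directly on the weak Lefschetz theorem for complete intersections (due to Katz and others) to identify exactly which cohomology groups can differ from those of projective space, and then invoking Deligne's purity. Your pencil argument would need more care than you indicate: the Bertini step over $\F_p$ is not automatic for small $p$, and the claim that the generic hyperplane section has singular locus of dimension $\le s-1$ requires justification. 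You correctly flag the crux --- pinning down the degrees and weights in the ``defect'' cohomology --- and correctly note that this is where the real work lies. For a reference-level statement like this, your closing remark is the appropriate resolution.
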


\subsection{A version of Hensel's Lemma}

We now let $r\geq 2$ and take $ f \in \Z[x_1,\ldots, x_r]$ to be a homogeneous polynomial. We write $D_f$ for its discriminant and assume that $D_f\neq 0$ in $\Q$. 
The goal of this section is to give upper bounds for the counting function
\begin{equation*}
N(p^\ell):= \# \{ \bfx \modd{p^\ell}: (\bfx,p)=1,\ f(\bfx) \equiv 0 \modd{p^\ell}\}.
\end{equation*}

The assumption that $D_f \neq 0$ implies in particular that the projective variety $V$ given by $f(\bfx)=0$ is smooth over $\Q_p$ for any prime $p$. As a consequence we will obtain the following lemma.
\begin{lemma}\label{claim1}
Let $p$ be a prime. Then there is a natural number $\alp$ depending on $f$ and $p$ such that the following holds: if  $\bfx\in (\Z/p^\alp\Z)^r$ with $p\nmid \bfx$ such that $f(\bfx)\equiv 0 \modd{p^\alp}$, then $p^\alp \nmid \nabla f(\bfx)$.
\end{lemma}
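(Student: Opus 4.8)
The plan is to argue by contradiction via a compactness/limiting argument in $\Z_p^r$, exploiting the smoothness of the projective variety $V: f = 0$ over $\Q_p$, which follows from $D_f \neq 0$ as noted just before the statement. Suppose the conclusion fails for all $\alpha$. Then for every $\ell \geq 1$ there is a vector $\bfx_\ell \in (\Z/p^\ell\Z)^r$ with $p \nmid \bfx_\ell$, $f(\bfx_\ell) \equiv 0 \pmod{p^\ell}$, and $p^\ell \mid \nabla f(\bfx_\ell)$; lift each $\bfx_\ell$ arbitrarily to $\Z_p^r$. Since $\Z_p^r$ is compact, pass to a convergent subsequence $\bfx_{\ell_j} \to \bfx_\infty \in \Z_p^r$. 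The congruence conditions pass to the limit: $f(\bfx_\infty) = 0$, $\nabla f(\bfx_\infty) = 0$, and $p \nmid \bfx_\infty$ (the last since $\bfx_\ell$ being primitive mod $p$ is a closed condition preserved under the limit, as $\ell_j \to \infty$). In particular $\bfx_\infty \neq \mathbf{0}$, so it defines a point of projective space, and by homogeneity of $f$ it is a singular point of $V$ over $\Q_p$ (indeed over $\overline{\Q}_p$): a nonzero zero of $f$ at which all partials vanish. This contradicts the smoothness of $V$ over $\Q_p$ guaranteed by $D_f \neq 0$.

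The one point requiring a little care is the interplay between "primitive mod $p$" and the limit. I would phrase it as follows: the condition $p \nmid \bfx$ on a vector $\bfx \in \Z_p^r$ is equivalent to $\bfx \not\equiv \mathbf{0} \pmod p$, which is an open (hence also, on the compact space, behaves well) condition; more concretely, if $\bfx_{\ell_j} \to \bfx_\infty$ then for $j$ large $\bfx_{\ell_j} \equiv \bfx_\infty \pmod p$, so $\bfx_\infty \not\equiv \mathbf{0} \pmod p$. Thus $\bfx_\infty$ is genuinely a nonzero vector in $\Z_p^r \subset \overline{\Q}_p^r$. An alternative, completely elementary route avoiding sequences is to use that $D_f \neq 0$ in $\Q$ implies $D_f \neq 0$ in $\Q_p$, so $v_p(D_f) = c < \infty$; then an effective form of the classical Hensel/Newton argument (e.g. the Euclidean-algorithm manipulation producing a resultant-type identity, or Nullstellensatz applied to $f, \partial_1 f, \dots, \partial_r f$ whose only common projective zero is empty) shows that any approximate primitive singular point mod $p^\ell$ forces $v_p$ of some fixed nonzero integer polynomial in the coefficients of $f$ to be at least roughly $\ell$, which is impossible once $\ell$ exceeds that valuation. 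Either way, $\alpha$ can be taken to depend only on $p$ and (the coefficients of) $f$, as claimed.

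The main obstacle I anticipate is making the dependence of $\alpha$ genuinely effective and checking it depends only on $f$ and $p$ (not on the hypothetical $\bfx$): the compactness argument above gives existence of $\alpha$ but not an explicit value. If an explicit $\alpha$ were needed later in the paper one would instead run the resultant/Nullstellensatz version: the ideal $(f, \partial_1 f, \dots, \partial_r f)$ in $\overline{\Q}[x_1,\dots,x_r]$ contains a power of the irrelevant ideal $(x_1,\dots,x_r)$ because $V$ is smooth, so $(x_1 \cdots x_r)^M \in (f,\partial_1 f,\dots,\partial_r f)$ over $\Z[1/D_f]$ for suitable $M$, and clearing denominators produces an identity $D_f^A \, x_j^M = \sum_i g_{ij} \partial_i f + h_j f$ with integer-coefficient polynomials $g_{ij}, h_j$; evaluating at a putative primitive $\bfx$ with $p^\ell \mid f(\bfx)$ and $p^\ell \mid \nabla f(\bfx)$ gives $p^{\min(\ell, \,?)} \mid D_f^A x_j^M$ for all $j$, forcing $\ell \leq A \cdot v_p(D_f)$ since some $x_j$ is a $p$-adic unit. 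This makes $\alpha = A \cdot v_p(D_f) + 1$ an admissible explicit choice. For the present lemma, however, the soft compactness proof suffices, and I would present that one.
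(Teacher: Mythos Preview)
Your proof is correct and takes essentially the same approach as the paper: both argue by contradiction and use compactness of $\Z_p^r$ to produce a $p$-adic singular point on $V$, contradicting $D_f\neq 0$. The only difference is cosmetic---the paper spells out the inverse-limit (K\"onig's lemma) argument explicitly via the directed system $P(\alpha)$ and its stabilized subsystem $Q(\alpha)$, whereas you invoke sequential compactness directly; these are equivalent formulations of the same fact.
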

Before proving this, we note several immediate consequences.
Note that for almost all primes $p,$ namely for $p \ndiv D_f$, the reduction of $V$ modulo $p$ is smooth and then one can take $\alp=1$.
In this particularly nice case, we may apply Lemma \ref{lemma_Hooley} with $n=r-2$, $s=-1$ to conclude:
\begin{lemma}\label{lemma_Np_smooth}
For $p \ndiv D_f$,
\[ N(p) = p^{r-1}  + O(p^{\frac{r}{2}}).\]
\end{lemma}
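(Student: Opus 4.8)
The plan is to reduce the congruence count $N(p)$ to a point count on the projective hypersurface $V\subset\P^{r-1}$ defined by $f=0$, and then to feed the relevant dimensions into Lemma~\ref{lemma_Hooley}. First I would note that the discriminant $D_f$ is, by construction, a polynomial in the coefficients of $f$ with integer coefficients, so for a prime $p\nmid D_f$ the reduction $\bar f$ modulo $p$ again has non-vanishing discriminant over $\F_p$; hence $V_{\F_p}=\{\bar f=0\}\subset\P^{r-1}_{\F_p}$ is smooth over $\overline{\F}_p$. As a hypersurface it is a projective complete intersection of dimension $n=r-2$, and smoothness means its singular locus is empty, i.e. of dimension $s=-1$. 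Applying Lemma~\ref{lemma_Hooley} with these values gives
\[
\#V(\F_p)=\frac{p^{r-1}-1}{p-1}+O\!\left(p^{(r-2)/2}\right),
\]
with implied constant depending only on $r$ and $\deg f$, hence only on $f$.

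Next I would pass from projective to affine points. Since $f$ is homogeneous, the condition $f(\bfx)\equiv 0\pmod p$ depends, for $\bfx\not\equiv\boldsymbol 0\pmod p$, only on the image of $\bfx$ in $\P^{r-1}(\F_p)$, and each point of $\P^{r-1}(\F_p)$ is represented by exactly $p-1$ vectors in $\F_p^r\setminus\{\boldsymbol 0\}$. The constraint $(\bfx,p)=1$ is precisely $\bfx\not\equiv\boldsymbol 0\pmod p$, so $N(p)=(p-1)\,\#V(\F_p)$. Substituting the estimate above yields
\[
N(p)=(p^{r-1}-1)+O\!\left((p-1)p^{(r-2)/2}\right)=p^{r-1}+O\!\left(p^{r/2}\right),
\]
using $(p-1)p^{(r-2)/2}\le p^{r/2}$, which is the claimed formula.

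The argument is essentially bookkeeping, so there is no serious obstacle; the only points needing a little care are the compatibility of the discriminant with reduction modulo $p$ (so that $p\nmid D_f$ genuinely forces $V_{\F_p}$ to be smooth), which is exactly what the discriminant construction in Section~\ref{sec_appendix} is designed to guarantee, and the verification that $n=r-2$, $s=-1$ are the correct inputs for Lemma~\ref{lemma_Hooley}. Both are standard: a smooth hypersurface of positive degree in $\P^{r-1}$ is the prototypical smooth projective complete intersection.
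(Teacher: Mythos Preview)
Your proof is correct and follows the same approach as the paper: the paper simply remarks that for $p\nmid D_f$ the reduction of $V$ is smooth and one applies Lemma~\ref{lemma_Hooley} with $n=r-2$, $s=-1$. You have spelled out the projective-to-affine conversion $N(p)=(p-1)\,\#V(\F_p)$ and the arithmetic that the paper leaves implicit, but the argument is the same.
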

More generally for prime powers, we have:
\begin{lemma}\label{Hensel}
There exists an absolute constant $C$ such that the following is true: if $p\ndiv D_f$, then for all $\ell \geq 1$,
\begin{equation*}
N(p^\ell)\leq C p^{\ell(r-1)}.
\end{equation*}
If $p|D_f$, then for all $\ell \geq 1$,
\begin{equation*}
N(p^\ell)\leq  \alp p^{2\alp} p^{\ell(r-1)},
\end{equation*}
where $\al$ is as provided by Lemma \ref{claim1}.
\end{lemma}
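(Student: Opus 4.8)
The plan is to deduce both bounds from a standard Hensel-lifting argument, organized around the nonsingularity provided by Lemma~\ref{claim1}. First consider the case $p \ndiv D_f$, where by the remark following Lemma~\ref{claim1} we may take $\alp = 1$: if $\bfx \bmod p$ is primitive and $f(\bfx) \equiv 0 \pmod p$, then $p \ndiv \nabla f(\bfx)$. I would proceed by induction on $\ell$, showing that each primitive solution modulo $p^{\ell}$ lifts to exactly $p^{r-1}$ primitive solutions modulo $p^{\ell+1}$. Concretely, write $\bfy = \bfx + p^\ell \bfz$ with $\bfz$ ranging over $(\Z/p\Z)^r$; since $f$ is homogeneous of degree $d$, Taylor expansion gives $f(\bfx + p^\ell \bfz) \equiv f(\bfx) + p^\ell \nabla f(\bfx)\cdot \bfz \pmod{p^{\ell+1}}$ for $\ell \geq 1$. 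Because $f(\bfx) \equiv 0 \pmod{p^\ell}$, write $f(\bfx) = p^\ell c$; then the congruence $f(\bfy) \equiv 0 \pmod{p^{\ell+1}}$ becomes the linear condition $\nabla f(\bfx)\cdot \bfz \equiv -c \pmod p$, which — since some coordinate of $\nabla f(\bfx)$ is a unit mod $p$ — has exactly $p^{r-1}$ solutions $\bfz$. Hence $N(p^{\ell+1}) = p^{r-1} N(p^\ell)$ for $\ell \geq 1$, and with the base estimate $N(p) = p^{r-1} + O(p^{r/2}) \leq C p^{r-1}$ from Lemma~\ref{lemma_Np_smooth} (absorbing the absolute constant), we get $N(p^\ell) \leq C p^{\ell(r-1)}$ for all $\ell \geq 1$.

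For the case $p \mid D_f$, let $\alp = \alp(f,p)$ be as in Lemma~\ref{claim1}. For $\ell \leq 2\alp$ the trivial bound $N(p^\ell) \leq p^{\ell r} \leq p^{2\alp} p^{\ell(r-1)} \leq \alp p^{2\alp} p^{\ell(r-1)}$ suffices, so assume $\ell > 2\alp$; in fact it is cleaner to assume $\ell \geq \alp$ and organize the lift from level $\ell$ to level $\ell + 1$. Given a primitive solution $\bfx$ modulo $p^\ell$ with $\ell \geq \alp$, its reduction modulo $p^\alp$ is a primitive solution of $f \equiv 0 \pmod{p^\alp}$, so by Lemma~\ref{claim1} we have $p^\alp \ndiv \nabla f(\bfx)$; say $v_p(\partial f/\partial x_j(\bfx)) = m < \alp$. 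Now repeat the lifting computation: $f(\bfx + p^\ell \bfz) \equiv f(\bfx) + p^\ell \nabla f(\bfx)\cdot \bfz \pmod{p^{\ell+1}}$, and the condition mod $p^{\ell+1}$ is $\nabla f(\bfx)\cdot \bfz \equiv -f(\bfx)/p^\ell \pmod p$; since $\partial f/\partial x_j(\bfx)$ is a unit mod $p$ (because $m < \alp \leq \ell$ forces... actually $m$ could be positive, so) — here one must be slightly careful: $\nabla f(\bfx) \cdot \bfz \bmod p$ need not be surjective onto $\Z/p\Z$ if every coordinate of $\nabla f(\bfx)$ is divisible by $p$. I would instead lift in blocks of size $\alp$: a primitive solution mod $p^{\ell}$, for $\ell$ a multiple of $\alp$, lifts to at most $p^{\alp(r-1)} \cdot p^{\alp - 1}$ (or a similarly explicit bound) solutions mod $p^{\ell + \alp}$, using that $v_p(\nabla f(\bfx)) < \alp$ so that after solving a linear system modulo $p^\alp$ one has at least one solvable coordinate and the fibre has size at most $p^{(r-1)\alp + (\alp - 1)}$. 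Iterating from the base bound $N(p^\alp) \leq p^{\alp r}$ then yields $N(p^\ell) \leq p^{2\alp} p^{\ell(r-1)}$ up to the factor $\alp$ accounting for the residual levels, matching the claimed $N(p^\ell) \leq \alp p^{2\alp} p^{\ell(r-1)}$.

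The main obstacle is the bookkeeping in the bad-prime case: unlike the good case, $\nabla f(\bfx)$ need not contain a unit coordinate, only a coordinate of valuation strictly less than $\alp$, so a single-step Hensel lift does not immediately give the clean factor $p^{r-1}$ per level. The right way to handle this is to group the lifting into steps of height $\alp$ (or to track the valuation $m = v_p(\nabla f(\bfx))$ explicitly and note it is controlled by $\alp$), check that the number of lifts of a primitive solution mod $p^\ell$ to a primitive solution mod $p^{\ell + \alp}$ is at most $p^{\alp(r-1)} \cdot p^{\alp}$, and then compound these bounds; the slack this introduces is exactly what the factor $\alp p^{2\alp}$ in the statement is designed to absorb. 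Everything else — the homogeneity Taylor expansion, the count of solutions of an affine-linear equation over $\F_p$, and the trivial bound for small $\ell$ — is routine.
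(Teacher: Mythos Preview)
Your treatment of the good-prime case $p \ndiv D_f$ is correct and matches the paper's argument: with $\alp=1$, each primitive solution mod $p^\ell$ lifts to exactly $p^{r-1}$ solutions mod $p^{\ell+1}$, and the base case comes from Lemma~\ref{lemma_Np_smooth}.

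The bad-prime case, however, has a genuine gap. Your proposed block-lifting bound, that a solution mod $p^\ell$ lifts to at most $p^{\alp(r-1)}\cdot p^{\alp-1}$ (or $p^{\alp(r-1)}\cdot p^{\alp}$) solutions mod $p^{\ell+\alp}$, is correct as a one-block estimate, but compounding it destroys the result. If $m=v_p(\nabla f(\bfx))$, the Taylor expansion shows each block contributes a genuine extra factor of $p^m$, and since $m$ is unchanged under lifting (because $\nabla f(\bfx+p^\ell\bfz)\equiv\nabla f(\bfx)\pmod{p^\ell}$ and $\ell>m$), this loss recurs at every block. Iterating from $N(p^\alp)\le p^{\alp r}$ over $(\ell-\alp)/\alp$ blocks yields a bound of order $p^{\ell(r-1)+m(\ell-\alp)/\alp}$, which for $m\ge 1$ grows with $\ell$ much faster than $\alp p^{2\alp}p^{\ell(r-1)}$. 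The slack $\alp p^{2\alp}$ is not designed to absorb a loss that compounds linearly in $\ell$.

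The paper avoids this by an offset trick: fix $\bfx\bmod p^\alp$ with $\beta=v_p(\nabla f(\bfx))<\alp$, and instead of lifting solutions of $f\equiv 0\pmod{p^u}$, lift residues $\bfx\bmod p^u$ satisfying the \emph{stronger} congruence $f(\bfx)\equiv 0\pmod{p^{u+\beta}}$. Writing $\bfx+p^u\bfh$ and expanding mod $p^{u+\beta+1}$, the condition becomes a single linear equation over $\F_p$ in $\bfh$ (the factor $p^\beta$ in $\nabla f$ cancels against the offset in the modulus), so one gets exactly $p^{r-1}$ lifts per unit step in $u$. The entire loss $p^{r\beta}$ is incurred only once, passing from level $\alp$ to level $\alp+\beta$. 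Summing over $\bfx\bmod p^\alp$ and $\beta<\alp$ then gives $N(p^\ell)\le p^{(\ell-\alp)(r-1)}\sum_{\beta<\alp}p^\beta\cdot p^{\alp r}\le \alp p^{2\alp}p^{\ell(r-1)}$. The missing idea in your proposal is precisely this offset: without it, the factor $p^m$ cannot be paid just once.
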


The following useful lemma is an immediate consequence of Lemma \ref{Hensel}:
\begin{lemma}\label{lemF}
Let $\u{Q}$ be a system of forms that satisfies Condition \ref{cond2} over $\overline{\Q}.$ For any prime $p$ and $u \geq 1$, there exists a constant $c_p$ such that
\begin{equation*}
\#\{\u{x} \modd{p^u}:\ (\u{x},p)=1,\ p^u| F_{\u{Q}}(\u{x})\}\leq c_p p^{2u},
\end{equation*}
where $c_p=c$ may be chosen independent of $p$ for all primes $p\nmid 2 \Disc(F_{\u{Q}})$.
\end{lemma}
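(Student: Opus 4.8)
The plan is to apply Lemma~\ref{Hensel} directly to the homogeneous polynomial $f = F_{\u{Q}}(x_1,x_2,x_3) \in \Z[x_1,x_2,x_3]$, which is a form in $r=3$ variables. First I would observe that Condition~\ref{cond2} over $\overline{\Q}$ says precisely that the discriminant $\Disc(F_{\u{Q}})$ is non-zero in $\Q$, so the hypothesis $D_f \neq 0$ of the preliminary lemmas in Section~4.2 is satisfied with $f = F_{\u{Q}}$ and $D_f = \Disc(F_{\u{Q}})$. This is the only place Condition~\ref{cond2} enters.

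Next I would simply quote Lemma~\ref{Hensel} with $r=3$. The counting function $N(p^\ell)$ in that lemma is exactly $\#\{\u{x} \modd{p^\ell} : (\u{x},p)=1,\ p^\ell \mid F_{\u{Q}}(\u{x})\}$, so with $u=\ell$ we get: if $p \ndiv D_f$, then $N(p^u) \leq C p^{u(r-1)} = C p^{2u}$ for an absolute constant $C$; and if $p \mid D_f$, then $N(p^u) \leq \alp p^{2\alp} p^{2u}$, where $\alp = \alp(p)$ is the exponent furnished by Lemma~\ref{claim1}. In the good-prime case we set $c = C$ (absolute, hence independent of $p$), and in the finitely many bad-prime cases (those $p \mid 2\Disc(F_{\u{Q}})$, of which there are finitely many precisely because $\Disc(F_{\u{Q}}) \neq 0$) we set $c_p = \alp(p) p^{2\alp(p)}$, which depends on $p$ but is a finite constant for each such $p$. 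Taking $c$ to be the maximum of these finitely many values where needed gives the clean statement; but as written we only need $c_p$ to be uniform over $p \nmid 2\Disc(F_{\u{Q}})$, which is immediate from the absolute constant $C$ in Lemma~\ref{Hensel}.

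I do not anticipate a genuine obstacle here: this lemma is essentially a repackaging of Lemma~\ref{Hensel} in the special case $r=3$, $f=F_{\u{Q}}$, recording it in the notation ($F_{\u{Q}}$, good vs.\ bad primes) that the later sections of the paper will want to cite. The only point requiring a word of care is the uniformity claim — that $c_p$ can be taken independent of $p$ for $p \nmid 2\Disc(F_{\u{Q}})$ — but this follows at once because such $p$ in particular do not divide $\Disc(F_{\u{Q}}) = D_f$, so they fall into the first (good) case of Lemma~\ref{Hensel}, where the constant $C$ is absolute. The extra factor of $2$ in ``$p \nmid 2\Disc(F_{\u{Q}})$'' is harmless: it only shrinks the set of primes covered by the uniform bound, so the statement still holds. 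Hence the proof is a two-line deduction.
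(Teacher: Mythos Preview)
Your proposal is correct and matches the paper's approach exactly: the paper states Lemma~\ref{lemF} as ``an immediate consequence of Lemma~\ref{Hensel}'' without further proof, and you have spelled out precisely that deduction with $r=3$ and $f=F_{\u{Q}}$. Your remarks on the uniformity of the constant and the role of Condition~\ref{cond2} are accurate and, if anything, more detailed than what the paper provides.
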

Effectively, upon recalling that the bad primes are those that divide $2 \Disc(F_{\u{Q}})$, we see that this lemma provides a bound that is uniform for good primes.
We now turn to the proofs.

\begin{proof}[Proof of Lemma \ref{claim1}] 
We proceed by contradiction: assume that for any natural number $\alp$ there is some $\bfx\in (\Z/p^\alp \Z)^r$ with $p\nmid \bfx$, $f(\bfx)\equiv 0 \mmod p^\alp$ and $p^\alp|\nabla f(\bfx)$. Now consider the following directed system indexed by $\alp\in \N$, which is a subsystem of the sets $(\Z/p^\alp \Z)^r$ with the natural projections as transition maps. At level $\alp$ let $P(\alp)\subset (\Z/p^\alp\Z)^r$ be the set of $\bfx\in (\Z/p^\alp \Z)^r$ with $p\nmid \bfx$, $f(\bfx)\equiv 0 \mmod p^\alp$ and $p^\alp \mid \nabla f(\bfx)$. Let $\iota_{\alp}^\bet$ be the projection map $\iota_\alp^\bet: P(\bet)\rightarrow P(\alp)$. We define a subsystem $Q(\alp)$ of the directed system $P(\alp)$ by taking 
$$Q(\alp) := \{ \bfx\in (\Z/p^\alp \Z)^r: \forall \bet>\alp, \; \exists \bfy \in P(\bet) \mbox{ such that } \iota_\alp^\bet (\bfy)=\bfx\}.$$
The system $Q(\alp)$ is again a directed system with the natural projection maps $\iota_\alp^\bet$ as transition maps. Moreover, since $Q(\alp)= \cap_{\bet>\alp}\iota_{\alp}^\bet (P(\bet))$ and $(\Z/p^\alp\Z)^r$ is a finite set, the chain of sets $\cap_{\alp<\bet<\gam} \iota_{\alp}^\bet (P(\bet))$ must become stationary for $\gam$ increasing. Since each $P(\bet)$ is non-empty by assumption, we conclude that each set $Q(\alp)$ is non-empty, and the maps $\iota_\alp^\bet: Q(\bet)\rightarrow Q(\alp)$ are surjective. Hence the inverse limit $\varprojlim_\alp Q(\alp)$ is non-zero. This means that there is singular point $\bfx\in \Z_{p}^r$ on the projective hypersurface $f=0$, which is a contradiction to the assumption that $D_f\neq 0$.
\end{proof}

\begin{proof}[Proof of Lemma \ref{Hensel}]
We give an elementary proof, which is self-contained; one could also deduce parts of the argument from results in the literature, as for example Corollary 5.23 in \cite{Greenberg}. For notational convenience, we temporarily define the set
\begin{equation*}
\calN (p^\ell) =  \{ \bfx \modd{p^\ell}: p\nmid \bfx \mbox{ and } f(\bfx) \equiv 0 \modd{p^\ell}\},
\end{equation*}
with cardinality $N(p^\ell)= \# \calN (p^\ell)$. We further define
for each $\xbf \modd{p^\al}$ the set
\[ \calN(\bfx,p^\ell) =  \{ \ybf \modd{p^\ell}:  f(\ybf) \equiv 0 \modd{p^\ell} \mbox{ and } \ybf \con \xbf \modd{p^\al}\},
\]
with corresponding cardinality $N(\bfx,p^\ell)= \# \calN(\bfx,p^\ell)$. If $\ell \geq \alp$, then we can partition
\begin{equation}\label{star3}
\calN (p^\ell)= \bigcup_{\substack{ \bfx \modd{p^\al}\\ p\ndiv \bfx}} \calN (\bfx,p^\ell).
\end{equation}
In order to bound $\# \calN(\bfx,p^\ell)$, we fix an $\bfx \modd{p^\alp}$ with $p \ndiv \xbf$ and $f(\bfx)\equiv 0 \modd{p^\alp}$. Supposing $p^\bet \parallel \nabla f(\bfx)$ (note that $\bet <\alp$ by Lemma \ref{claim1}), then for any $\hbf \modd{p^\be}$,
\begin{equation}\label{star1}
f(\bfx + p^\alp \bfh) \equiv f(\bfx) \modd{p^{\alp+\bet}},
\end{equation}
since for any integer $u \geq 0$,
\begin{equation*}
f(\bfx+p^u\bfh)\equiv f(\bfx)+ p^u \nabla f(\bfx)\cdot \bfh \modd{p^{2u}}.
\end{equation*}
Hence the value of $f$ modulo $p^{\alp+\bet}$ only depends on $\bfx$ modulo $p^\alp$. In other words, equation (\ref{star1}) implies that
\begin{align*}
N(\bfx,p^{\alp+\bet})= \left\{ \begin{array}{cl} p^{r\bet} & \mbox{if } f(\bfx)\equiv 0\modd{p^{\alp+\bet}}\\ 0 & \mbox{if } f(\bfx)\not\con 0\modd{p^{\alp+\bet}}.\end{array}\right.
\end{align*}

Next for any $u \geq \al$ we again
consider the congruence
\begin{equation*}
f(\bfx+p^u \bfh) \equiv f(\bfx) + p^u \nabla f(\bfx)\cdot \bfh \modd{p^{u+\bet+1}}, 
\end{equation*}
still under the assumption $p^\be \| \nabla f(\xbf)$; without loss of generality, we may assume
\beq\label{beta_divis}
p^\bet \parallel \frac{\partial f}{\partial x_1}(\bfx).
\eeq
 Assume we are given a congruence class $\bfx$ modulo $p^u$ with $f(\bfx)\equiv 0 \modd{p^{u+\bet}}$; if we want to lift this to a congruence modulo $p^{u+\bet+1}$ we can choose $h_2,\ldots, h_r$ freely modulo $p$ and then by (\ref{beta_divis}) there exists a unique $h_1$ modulo $p$ such that $f(\bfx+p^u \bfh) \equiv 0 \modd{p^{u+\bet+1}}$. This implies that
\begin{equation*}
N(\bfx,p^{u+\bet+1})= p^{r-1} N(\bfx,p^{u+\bet}),
\end{equation*}
for any $u\geq \alp$. Inductively this gives
\begin{equation*}
N(\bfx,p^\ell)= p^{(\ell-\alp-\bet)(r-1)}N(\bfx,p^{\alp+\bet}),
\end{equation*}
if $\ell \geq \alp+\bet$. For $\xbf \modd{p^\al}$ such that $f(\bfx)\equiv 0 \modd{p^{\alp+\bet}}$, this implies that
\begin{equation*}
N(\bfx,p^\ell)= p^{(\ell-\alp-\bet)(r-1)}p^{r\bet}.
\end{equation*}
Together with equation (\ref{star3}) this implies that
\begin{equation}\label{star5}
N (p^\ell)= p^{(\ell-\alp)(r-1)}\sum_{\bet =0}^{\alp -1}p^\bet  M(\al,\be;p) 
\end{equation}
where
\[ M(\al,\be;p)=\#\{\bfx \modd{p^\alp}: p\nmid \bfx,\ p^\bet\parallel \nabla f(\bfx),\ \mbox{ and } f(\bfx)\equiv 0\modd{p^{\alp+\bet}}\}.\]
We now distinguish between primes that do or do not divide the discriminant $D_f$.
 First we consider the case of primes $p \ndiv D_f$, so that we can choose $\alp=1$ and $\bet=0$, obtaining
\begin{align*}
N (p^\ell)&= p^{(\ell-1)(r-1)}\# \{\bfx \modd{p}: p\nmid \bfx \mbox{ and } f(\bfx)\equiv 0\modd{p}\}\\
& = p^{(\ell-1)(r-1)}(N(p)-1)\\
& = p^{\ell(r-1)}\left(1+O(p^{-(r-2)/2})\right),
\end{align*}
where in the last line we applied Lemma \ref{lemma_Np_smooth}.\par
We come to the case of primes $p| D_f$. Using equation (\ref{star5}) we trivially estimate
\begin{align*}
N(p^\ell)\leq p^{(\ell-\alp)(r-1)} \sum_{\bet=0}^{\al-1} p^\bet p^{\alp r} \leq \alp p^{2\alp}p^{\ell(r-1)}.
\end{align*}
\end{proof}

\subsection{Discrete sublevel set estimates}
We recall that our main theorems are controlled by a certain polynomial $H_{\u{Q}}(\u{n})$ provided by Proposition \ref{prop3.4}.  We will apply the sublevel set estimates we derive in this section at several points in our work:  to count the number of exceptional tuples $\u{n}$ such that $H_{\u{Q}}(\u{n})$ vanishes as an integer, is small as a real number, or is divisible by a high power of a prime (see the proof of Theorem \ref{thm_asymp} in Section \ref{sec_final_proofs}, and Lemma \ref{lemma_E123} in the proof of Theorem \ref{thm_exceptions}, also in Section \ref{sec_final_proofs}).

We start with the one-dimensional case and then deduce a general lemma which bounds the number of small values attained by a given homogeneous polynomial at integer points of bounded height. We first recall Lemma 1 of \cite{BHBS}, which we state in the following form:
\begin{lemma}\label{lemPoly1}
Let $P(x)\in \Q[x]$ be a polynomial of degree $d$ with  leading coefficient $a_0$ and $M,N\geq 1$. Then for any $M \geq 1$,
\begin{equation*}
\#\{ x\in \Z: |P(x)|\leq M\} \ll M^{1/d},
\end{equation*}
with an implied constant only depending on $a_0$ and the degree $d$ of the polynomial $P$. 
\end{lemma}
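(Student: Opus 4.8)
The statement is Lemma 1 of \cite{BHBS}, so the goal is to give a self-contained proof (or at least a proof sketch) that a single-variable integer polynomial of degree $d$ with leading coefficient $a_0$ takes absolute value at most $M$ at only $O_{a_0,d}(M^{1/d})$ integer points. My first step would be to reduce to the case where $P$ has no rational roots inside a controlled region: the real roots of $P$ split $\R$ into at most $d+1$ intervals on which $|P|$ is monotone. On each such interval of monotonicity, the sublevel set $\{x : |P(x)| \le M\}$ is itself an interval, so it suffices to bound the \emph{length} of each such interval by $O(M^{1/d})$; the number of integers it contains is then the length plus $O(1)$, and summing over the $O_d(1)$ intervals gives the result.

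\textbf{Key steps.} To bound the length of an interval $I$ of monotonicity on which $|P(x)| \le M$, let $x_1 < x_2$ be its endpoints (or a sub-interval thereof bounded away from the nearest root). The central estimate is a lower bound of the form $|P(x)| \gg_{a_0,d} \operatorname{dist}(x, Z)^d$ once $x$ is far from the root set $Z$ of $P$, or more robustly, an inequality comparing $|P(x_2)| - |P(x_1)|$ to $(x_2-x_1)$ times the infimum of $|P'|$ on $I$. A cleaner route: factor $P(x) = a_0 \prod_{j=1}^{d}(x - \theta_j)$ over $\C$. If $x$ lies in the sublevel set, then $\prod_j |x - \theta_j| \le M/|a_0|$, so by AM--GM (or just pigeonhole) at least one factor satisfies $|x-\theta_j| \le (M/|a_0|)^{1/d}$. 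Hence the sublevel set is covered by the $d$ complex discs of radius $(M/|a_0|)^{1/d}$ about the roots $\theta_j$; intersecting with $\R$, it is covered by $d$ real intervals each of length at most $2(M/|a_0|)^{1/d}$. Counting integers in each and summing yields $\#\{x \in \Z : |P(x)| \le M\} \le d\bigl(2(M/|a_0|)^{1/d} + 1\bigr) \ll_{a_0,d} M^{1/d}$, using $M \ge 1$ to absorb the $+1$.

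\textbf{Main obstacle.} There is no serious analytic difficulty here — the one genuine subtlety is making the dependence of the implied constant transparent: it must depend only on $a_0$ (the leading coefficient) and $d$, and \emph{not} on the other coefficients of $P$, even though those other coefficients control where the roots $\theta_j$ sit. The factorization argument handles this automatically, since the bound $\prod_j |x-\theta_j| \le M/|a_0|$ uses only $a_0$, and the covering-by-discs step never needs to know the locations of the roots, only that there are $d$ of them. One should also note the minor point that the lemma statement mentions a parameter $N$ which does not in fact appear in the conclusion as quoted; I would simply ignore it (or remark that the bound is uniform in any such auxiliary range). So the proof is short: factor over $\C$, pigeonhole to trap $x$ near one root, cover by $d$ intervals of radius $(M/|a_0|)^{1/d}$, and count lattice points.
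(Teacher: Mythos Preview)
Your proof is correct. The paper does not actually prove this lemma at all: it simply quotes it as Lemma~1 of \cite{BHBS} and moves on. Your factorization argument over $\C$ followed by the pigeonhole step (so that every $x$ in the sublevel set lies within $(M/|a_0|)^{1/d}$ of some root) gives a clean self-contained proof, and your observation that the implied constant $d(2|a_0|^{-1/d}+1)$ depends only on $a_0$ and $d$, not on the locations of the roots, is exactly the point. Your remark about the stray parameter $N$ is also apt: it plays no role in the statement as written.
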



From this we may immediately deduce the following general lemma:
\begin{lemma}\label{lemPoly2}
Let $P(x_1,\ldots, x_s)$ be a non-zero homogeneous polynomial with rational coefficients, of total degree $d$. Then for any real $M,N\geq 1$ we have the bound
\begin{equation*}
\# \{\bfx\in \Z^s: \max_i |x_i|\leq N,\ |P(\bfx)|\leq M\} \ll N^{s-1}M^{1/d}.
\end{equation*}
with an implied constant only depending on the coefficients and the total degree of the polynomial $P$.
\end{lemma}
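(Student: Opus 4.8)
The plan is to reduce the $s$-variable homogeneous case to the one-variable Lemma \ref{lemPoly1} by slicing along lines. Fix a variable, say $x_s$, for which $P$ genuinely depends on $x_s$ in the sense that there is a coefficient of a monomial involving $x_s$ to its highest possible power; more carefully, I would first dispose of the degenerate case where $P$ does not involve $x_s$ at all by relabeling variables, so without loss of generality $P$ has positive degree in $x_s$. For each fixed choice of $(x_1,\dots,x_{s-1}) \in \Z^{s-1}$ with $\max_i|x_i|\le N$, consider the one-variable polynomial $P_{x_1,\dots,x_{s-1}}(x_s) := P(x_1,\dots,x_{s-1},x_s)$. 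The number of such $(s-1)$-tuples is $O(N^{s-1})$, so it suffices to show that for each fixed tuple the count of $x_s \in \Z$ with $|P_{x_1,\dots,x_{s-1}}(x_s)| \le M$ is $O(M^{1/d})$, with an implied constant uniform in the tuple.

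The subtlety, and the main obstacle, is that $P_{x_1,\dots,x_{s-1}}$ need not have degree $d$ in $x_s$, and worse, its leading coefficient (in $x_s$) varies with the tuple and can vanish, so Lemma \ref{lemPoly1} does not apply directly with a uniform constant. I would handle this by a change of perspective: instead of slicing by coordinate hyperplanes, slice by lines through the origin. Since $P$ is homogeneous of total degree $d$ and not identically zero, pick a point $\bfv=(v_1,\dots,v_s)\in\Z^s$ with $P(\bfv)\ne 0$; then the restriction $t\mapsto P(t\bfv)=t^d P(\bfv)$ has degree exactly $d$ in $t$ with a fixed nonzero leading coefficient. More usefully, for a generic direction the one-dimensional slices have controlled leading behaviour. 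Concretely, after a linear change of integer coordinates (an element of $\mathrm{GL}_s(\Z)$) one may arrange that the coefficient of $x_s^d$ in $P$ is nonzero; such a coordinate change multiplies the counting region $\{\max_i|x_i|\le N\}$ by a bounded factor and hence only affects the implied constant. With this normalization, $P_{x_1,\dots,x_{s-1}}(x_s)$ has degree exactly $d$ in $x_s$ with leading coefficient equal to the fixed nonzero constant (the coefficient of $x_s^d$ in $P$), independent of $x_1,\dots,x_{s-1}$.

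Now Lemma \ref{lemPoly1} applies to each slice with a single implied constant depending only on that leading coefficient and on $d$, giving
\begin{equation*}
\#\{x_s\in\Z : |P_{x_1,\dots,x_{s-1}}(x_s)|\le M\}\ll M^{1/d}
\end{equation*}
uniformly in the tuple $(x_1,\dots,x_{s-1})$. Summing over the $O(N^{s-1})$ admissible tuples $(x_1,\dots,x_{s-1})$ with $\max_i|x_i|\le N$ yields
\begin{equation*}
\#\{\bfx\in\Z^s : \max_i|x_i|\le N,\ |P(\bfx)|\le M\}\ll N^{s-1}M^{1/d},
\end{equation*}
with the implied constant depending only on the coefficients and total degree of $P$ (via the chosen coordinate change and the resulting leading coefficient). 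This completes the proof; the only genuine work is the linear-change-of-variables normalization ensuring a uniform leading coefficient, after which the one-dimensional input does all the heavy lifting.
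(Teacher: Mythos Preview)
Your proposal is correct and follows essentially the same approach as the paper: both perform an integer linear change of variables so that one chosen variable appears to the full power $d$ with a fixed nonzero leading coefficient, then slice along that variable and apply Lemma~\ref{lemPoly1} uniformly, summing trivially over the remaining $O(N^{s-1})$ coordinates. The paper makes the change of variables explicit as the shear $x_1'=x_1,\ x_i'=x_i-a_ix_1$ (choosing integers $a_2,\dots,a_s$ with $P(1,a_2,\dots,a_s)\ne 0$), which is a particular unipotent element of $\mathrm{GL}_s(\Z)$, but this is exactly an instance of your general $\mathrm{GL}_s(\Z)$ normalization.
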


\begin{proof}
We may assume that $N$ is sufficiently large with respect to $d$. First we claim that there is some integer tuple $\bfa\in \Z^s$ such that $P(\bfa)\neq 0$ and $a_1=1$.
Indeed, there are $\gg N^{s-1}$ choices of $a_2, \ldots, a_s$ with $|a_i| \leq N$, while (since $P$ is a nonzero polynomial) there are $\ll N^{s-2}$ choices of $a_2, \ldots, a_s$ with $|a_i | \leq N$ such that $P(1,a_2,\ldots, a_s) =0$. Thus there must be at least one  integral tuple $(a_2,\ldots, a_s)$ that yields $P(1,a_2,\ldots, a_s)\neq 0$.


We now fix a tuple $(a_2,\ldots,a_s)$ such that $P(1,a_2,\ldots, a_s) \neq 0$ and let 
\begin{equation*}
\tilde{P}(x_1,\ldots, x_s):= P(x_1,x_2+a_2x_1,\ldots, x_s+a_s x_1).
\end{equation*}
Then we see that 
\begin{equation*}
\tilde{P}(x_1,\ldots, x_s)= P(1,a_2,\ldots, a_s) x_1^d +Q(x_1,\ldots, x_s),
\end{equation*}
with some homogeneous form $Q(x_1,\ldots, x_s)$ of degree less than $d$ in $x_1$. Consider also the coordinate transformation $x_1'=x_1$ and $x_i'= x_i-a_ix_1$ for $2\leq i\leq s$. Then $\tilde{P}(\bfx')= P(\bfx)$, so that there is some constant $C_2$ depending only on the $a_i$ for $2\leq i\leq s$ such that
\begin{multline*}
\# \{\bfx\in \Z^s: \max_i |x_i|\leq N,\ |P(\bfx)|\leq M\} \\
 \leq \#\{ \bfx'\in \Z^s: \max_i|x_i'|\leq C_2N,\ |\tilde{P}(\bfx')|\leq M\}.
\end{multline*}
 Hence we have reduced the lemma to considering the polynomial $\tilde{P}(\bfx),$ which has the property that the term $x_1^d$ appears with non-vanishing coefficient.\par
Now we fix some integer choices of $x_2,\ldots, x_s$  of absolute value at most $C_2N$. We apply Lemma \ref{lemPoly1} to the resulting polynomial $\tilde{P}(x_1,x_2,\ldots, x_s)$ in the variable $x_1$ and obtain
\begin{equation*}
\#\{x_1\in\Z: |\tilde{P}(x_1,x_2,\ldots, x_s)|\leq M\}\ll M^{1/d},
\end{equation*}
with an implied constant only depending on the degree of $\tilde{P}$ and the chosen integers $a_2,\ldots, a_s$. Summing trivially over all possible choices for $x_2,\ldots, x_s$ proves the lemma.
\end{proof}

\subsection{$p$-adic sublevel set estimates}
We also require $p$-adic versions of Lemmas \ref{lemPoly1} and \ref{lemPoly2}.

\begin{lemma}\label{lemPoly3}
Let $P(x)\in \Z[x]$ be a polynomial of degree $d$ with leading coefficient $a_0$. Then 
\begin{equation*}
\#\{1\leq x\leq p^f: p^f|P(x)\}\ll p^{f-f/d},
\end{equation*}
with an implied constant only depending on the leading coefficient $a_0$ and the degree $d$ of the polynomial $P$.
\end{lemma}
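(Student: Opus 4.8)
The plan is to reduce the $p$-adic statement to the archimedean one already proved in Lemma~\ref{lemPoly1}, using a dyadic decomposition of the residues $1\le x\le p^f$ according to the exact power of $p$ dividing the derivative $P'(x)$, exactly in the spirit of the proof of Lemma~\ref{Hensel}. First I would dispose of the case $p\mid a_0$ only as an afterthought: since we are allowed an implied constant depending on $a_0$ and $d$, and since $p^{f}\mid P(x)$ forces $p$ to be bounded (or else we absorb the finitely many bad $p$ into the constant), I can assume $p\nmid a_0$; then $P$ has degree $d$ as a polynomial over $\F_p$ and likewise over $\Q_p$.

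The main step is a Hensel-type lifting count. For a fixed residue $x_0 \bmod p$ with $p^{f}\mid P(x_0)$, let $\beta = v_p(P'(x_0))$ be the $p$-adic valuation of the derivative (so $\beta$ is finite whenever $P'(x_0)\ne 0$, and the set where $P'\equiv 0 \bmod p$ is governed by $\gcd(P,P')$, whose degree is $\le d-1$). For residues $x$ with $v_p(P'(x)) = \beta$, a standard Newton-lifting argument — the same congruence $P(x+p^u h)\equiv P(x)+p^u P'(x)h \pmod{p^{2u}}$ used in the proof of Lemma~\ref{Hensel} — shows that once $f > 2\beta$, each class modulo $p^{\beta+1}$ lifts uniquely at each subsequent level, so the number of $x \bmod p^f$ with $v_p(P'(x))=\beta$ and $p^f\mid P(x)$ is at most $p^{\beta}$ times the number of admissible classes modulo $p^{2\beta+1}$ or so, i.e. $O(p^{2\beta}\cdot p^{f-f/d})$-type bounds are too crude; instead I want to be more careful. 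The cleanest route: let $g = \gcd(P, P')$ over $\Q$, of degree $e\le d-1$. Split the count according to whether $x$ is close $p$-adically to a root of $g$. Away from such roots $v_p(P'(x))$ is bounded by a constant $c(P)$, and then ordinary Hensel lifting gives at most $p^{f - f/d}$ solutions directly from counting solutions of $P\equiv 0 \bmod p^{c+1}$ and lifting (here one uses that $P$ over $\F_p$ has $\le d$ roots, hence the number of solutions mod $p^m$ of $P\equiv0$ with $v_p(P')$ bounded grows like $p^{m - m/d}$ at worst, an elementary induction). Near a root of $g$: such $x$ satisfy $g(x)\equiv 0\bmod p^{\lfloor f/d\rfloor}$ roughly, and since $\deg g \le d-1$ we can induct on the degree, the base case $d=1$ being trivial.

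Actually the slickest argument avoids $\gcd$ entirely and mirrors Lemma~\ref{Hensel} verbatim: partition $\{1\le x\le p^f : p^f\mid P(x)\}$ by the value $\beta = v_p(P'(x))$, noting $\beta$ only depends on $x \bmod p^{\beta+1}$. For $f$ large relative to $\beta$, the lifting identity gives that the count of such $x$ equals $p^{(f - \alpha)}\cdot(\text{something})$ — but to get the exponent $f - f/d$ rather than $f-1$, one must use that the number of $x \bmod p^{\alpha}$ with $v_p(P'(x)) = \beta$ AND $p^{\alpha+\beta}\mid P(x)$ is small, and this is where the degree $d$ enters: a variable $x$ with $v_p(P(x))$ large lies $p$-adically near a genuine root of $P$ in $\overline{\Q_p}$, and there are at most $d$ such roots; each root accounts for $\le 1$ residue mod $p^m$ at the appropriate level, so one gains a full factor of $p^{f/d}$ from the "ultrametric clustering around $\le d$ roots." I would formalize this via: if $P(x) = a_0\prod_{i=1}^d (x-\theta_i)$ over $\overline{\Q_p}$, then $p^f\mid P(x)$ forces $\sum_i v_p(x-\theta_i) \ge f - v_p(a_0)$, so some $i$ has $v_p(x-\theta_i)\ge (f - v_p(a_0))/d$; for each fixed $i$ the set of integers $x$ in $[1,p^f]$ with $x$ that $p$-adically close to $\theta_i$ has size $\ll p^{f - (f-v_p(a_0))/d} \ll p^{f - f/d}$ (absorbing $v_p(a_0)$ into the constant). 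Summing over the $\le d$ values of $i$ finishes the proof.

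I expect the only real subtlety to be bookkeeping the "integer $x$ close to $\theta_i\in\overline{\Q_p}$" count cleanly when $\theta_i$ is not in $\Q_p$: one replaces $\theta_i$ by a best rational (or $p$-adic integer) approximation and uses the ultrametric inequality, or simply notes that the set $\{x\in\Z/p^f\Z : v_p(x-\theta_i)\ge t\}$ either is empty or is a single residue class mod $p^{\lceil t\rceil}$ when $\theta_i\in\Z_p$, and is empty when $\theta_i\notin\Z_p$, so only roots lying in $\Z_p$ contribute — this is harmless and keeps the bound $\ll p^{f-f/d}$. Everything else is routine, and the implied constant depends only on $d$ and $a_0$ as claimed (through $v_p(a_0)$, but for $p\nmid a_0$ this is $0$, and the finitely many $p\mid a_0$ with $p^f\mid P(x)$ possible only for bounded $f$ get absorbed).
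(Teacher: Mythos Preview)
Your proposal wanders through several approaches, but the final one --- factoring $P(x)=a_0\prod_{i=1}^d(x-\theta_i)$ over $\overline{\Q_p}$, using pigeonhole to force $v_p(x-\theta_i)\ge (f-v_p(a_0))/d$ for some $i$, and then observing that the integers $x\in\{1,\dots,p^f\}$ with $v_p(x-\theta_i)\ge t$ lie in at most one residue class modulo $p^{\lceil t\rceil}$ by the ultrametric inequality --- is correct and yields the bound $d\,p^{v_p(a_0)/d}\,p^{f-f/d}\le d|a_0|\,p^{f-f/d}$. Your earlier Hensel-lifting and $\gcd(P,P')$ sketches are unnecessary detours that you yourself recognize do not straightforwardly produce the exponent $f-f/d$; you should simply discard them. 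One small correction: the set $\{x\in\Z : v_p(x-\theta_i)\ge t\}$ need not be empty when $\theta_i\notin\Z_p$ and $t$ is small, but your ultrametric argument already covers this case, since any two such $x$ differ by an integer of valuation $\ge t$, hence $\ge\lceil t\rceil$, regardless of where $\theta_i$ lives.

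The paper's proof is quite different and much shorter: it simply quotes Corollary~2 and equation~(44) of Stewart~\cite{Ste91}, which give the bound $\le d\,p^{f-f/d}$ directly whenever the content of $P$ is coprime to $p$, and then reduces to that case by factoring out the $p$-part $p^\gamma$ of the content (noting $p^\gamma\mid a_0$, so $\gamma$ is bounded in terms of $a_0$). Your route is in effect a self-contained reproof of the relevant piece of Stewart's result, trading a black-box citation for a few lines of elementary $p$-adic reasoning; this is arguably preferable expositionally and costs nothing extra.
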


\begin{proof}
If $d=1$, the claim is certainly true.
For $d\geq 2$, we recall that the content of a polynomial is the greatest common divisor of its coefficients. If the content of $P$ is relatively prime to $p$ then by Corollary 2 and equation (44) of Stewart \cite{Ste91}, one may estimate the cardinality of the set by $\leq dp^{f-f/d}$. One may reduce to this case in the following way. Let $p^\gamma$ be the highest power of the prime $p$ dividing the content of $P(x)$, and write $P(x)=p^\gamma \hat{P}(x)$. For $f\geq \gamma$ we then have
$$ \#\{1\leq x\leq p^f: p^f|P(x)\} = p^\gamma \#\{ 1\leq x\leq p^{f-\gamma}: p^{f-\gamma}|\hat{P}(x)\}.$$
Now observe that the content of $\hat{P}(x)$ is coprime to $p$ and that $p^\gamma | a_0$.

\end{proof}

\begin{lemma}\label{lemPoly4}
Assume that $P(x_1,\ldots, x_s)$ is a non-zero homogeneous polynomial with integer coefficients and total degree $d$. Then for any prime $p$ and integer $f \geq 0$ we have the bound
\begin{equation*}
\#\{ 1\leq x_i\leq p^f: p^f|P(\bfx)\}\ll p^{sf-f/d},
\end{equation*}
with an implied constant only depending on the polynomial $P(\bfx)$.
\end{lemma}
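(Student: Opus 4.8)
The plan is to adapt the deduction of Lemma~\ref{lemPoly2} from Lemma~\ref{lemPoly1}, now using the one-variable $p$-adic estimate of Lemma~\ref{lemPoly3} in place of Lemma~\ref{lemPoly1}. The guiding idea is again to fiber the solution set over the last $s-1$ coordinates, after a harmless $\Z$-linear change of variables that makes the polynomial genuinely of degree $d$ in $x_1$ with a nonzero leading coefficient.

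First I would observe that $P(1,x_2,\ldots,x_s)$ is a nonzero polynomial in $x_2,\ldots,x_s$: since $P$ is homogeneous of degree $d$, the map $\alpha\mapsto(\alpha_2,\ldots,\alpha_s)$ is injective on the set of exponent vectors with $|\alpha|=d$ (as $\alpha_1=d-\alpha_2-\cdots-\alpha_s$ is then determined), so distinct monomials of $P$ specialize to distinct monomials. Hence I can fix, depending only on $P$, integers $a_2,\ldots,a_s$ with $a_0:=P(1,a_2,\ldots,a_s)\neq 0$, and set
\[
\widetilde P(x_1,\ldots,x_s):=P\bigl(x_1,\;x_2+a_2x_1,\;\ldots,\;x_s+a_sx_1\bigr),
\]
which is again homogeneous of total degree $d$ with integer coefficients and, crucially, has $x_1^d$-coefficient equal to $a_0\neq 0$.

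Next I would note that the substitution $(x_1,\ldots,x_s)\mapsto(x_1,x_2+a_2x_1,\ldots,x_s+a_sx_1)$ is unipotent with integer entries and determinant $1$, hence an automorphism of $\Z^s$, and so descends to a bijection of $(\Z/p^f\Z)^s$ carrying the solution set of $P\equiv 0 \modd{p^f}$ onto that of $\widetilde P\equiv 0 \modd{p^f}$. Since $\{1\le x_i\le p^f\}$ runs over a complete residue system modulo $p^f$ in each coordinate, this yields
\[
\#\{1\le x_i\le p^f:\ p^f\mid P(\bfx)\}=\#\{1\le x_i\le p^f:\ p^f\mid \widetilde P(\bfx)\}.
\]
I would then fiber over the last $s-1$ variables: for each of the $p^{(s-1)f}$ residue classes of $(x_2,\ldots,x_s)$ modulo $p^f$, the univariate polynomial $g(x_1):=\widetilde P(x_1,x_2,\ldots,x_s)\in\Z[x_1]$ has degree exactly $d$ with leading coefficient $a_0$, since by degree reasons the specialized variables cannot contribute to the $x_1^d$-term. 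Lemma~\ref{lemPoly3} then bounds $\#\{1\le x_1\le p^f:\ p^f\mid g(x_1)\}$ by $\ll p^{f-f/d}$, with implied constant depending only on $a_0$ and $d$, hence only on $P$; summing over the $p^{(s-1)f}$ choices of $(x_2,\ldots,x_s)$ gives the claimed bound $\ll p^{sf-f/d}$. (For $s=1$ the statement is immediate from Lemma~\ref{lemPoly3} applied to $P(x_1)=a_0 x_1^d$.)

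I do not anticipate a genuine obstacle: this is essentially a routine transcription of the proof of Lemma~\ref{lemPoly2}. The only points needing care are that the chosen linear substitution is $\Z$-unimodular, so that it truly permutes residues modulo $p^f$ and leaves the counting function unchanged, and that Lemma~\ref{lemPoly3} already incorporates the reduction to a polynomial whose content is coprime to $p$, so that one need not separately treat the cases $p\mid a_0$ or $f$ small.
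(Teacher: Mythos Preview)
Your proposal is correct and follows essentially the same approach as the paper: construct $\widetilde P$ via the same unimodular shear so that the $x_1^d$-coefficient is a fixed nonzero integer, use that this change of variables permutes residues modulo $p^f$, and then fiber over $x_2,\ldots,x_s$ and apply Lemma~\ref{lemPoly3} in $x_1$. Your write-up even supplies a couple of justifications (nonvanishing of $P(1,x_2,\ldots,x_s)$ via homogeneity, and that Lemma~\ref{lemPoly3} already absorbs the case $p\mid a_0$) that the paper leaves implicit.
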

 

\begin{proof}
We recall the construction of $\tilde{P}(\bfx)$ from the proof of Lemma \ref{lemPoly2}, so that
\begin{equation*}
\begin{split}
\tilde{P}(x_1,\ldots, x_s)&= P(x_1,x_2+a_2x_1,\ldots, x_s+a_sx_1)\\
&= P(1,a_2,\ldots, a_s)x_1^{d}+Q(x_1,\ldots, x_s),
\end{split}
\end{equation*}
with $P(1,a_2,\ldots, a_s)\neq 0$ and $Q(\bfx)$ a homogeneous form of degree less than $d$ in $x_1$. The coordinate transformation $x_1'=x_1$ and $x_i'=x_i-a_ix_1$ for $2\leq i\leq s$ takes a complete set of residues modulo $p^f$ again to a complete set of residues modulo $p^f$. Hence
\begin{equation*}
\#\{ 1\leq x_i\leq p^f: p^f|P(\xbf)\}= \#\{ 1\leq x_i'\leq p^f: p^f| \tilde{P}(\bfx')\}.
\end{equation*}
Now we argue in the very same way as in the proof of Lemma \ref{lemPoly2}. We fix $1\leq x_i'\leq p^f$ for $2\leq i\leq s$ arbitrary and then apply Lemma \ref{lemPoly3} to the polynomial $\tilde{P}(x_1',x_2',\ldots, x_s')$ in $x_1'$. This shows that
\begin{equation*}
\begin{split}
\#\{ &1\leq x_i'\leq p^f: p^f| \tilde{P}(\bfx')\}\\ &\ll \sum_{2\leq i\leq s}\sum_{1\leq x_i'\leq p^f} \#\{ 1\leq x_1'\leq p^f: p^f| \tilde{P}(x_1',x_2',\ldots, x_s')\}\\ &\ll p^{(s-1)f+f-f/d}\\ &\ll p^{sf-f/d},
\end{split}
\end{equation*}
and hence completes the proof of the lemma.
\end{proof}

\section{Oscillatory integrals}
On both the major and the minor arcs we will require an upper bound for oscillatory integrals of the following generic form: for any quadratic form $\calQ$ in $n$ variables, define
\begin{equation*}
I(\calQ;\bflam)=\int_{\R^n} e\left(\calQ (\bfu)-\bflam\cdot\bfu\right)w(\bfu)\d\bfu,
\end{equation*}
where $\bflam \in \R^n$ and $w(\bfu)$ is a smooth weight function supported on $[-1,1]^n$, with uniformly bounded derivatives of all orders.

For later reference, we quote Lemma 3.1 of \cite{HBPierce}, a consequence of integration by parts and the second derivative test.

\begin{lemma}\label{lem15.0}
Let $\calQ$ be a quadratic form in $n$ variables with eigenvalues $\rho_1,\ldots, \rho_n$. If $|\bflam|\geq 4 \Vert \calQ\Vert$ then for all $M \geq 1$,
\begin{equation*}
I(\calQ;\bflam)\ll_{M,w}|\bflam|^{-M}.
\end{equation*}
 Moreover one has the upper bound
\begin{equation*}
|I(\calQ;\bflam)|\ll_w \prod_{i=1}^n \min \left(1,\frac{1}{|\rho_i|^{1/2}}\right).
\end{equation*}
\end{lemma}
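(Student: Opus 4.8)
\textbf{Proof proposal for Lemma \ref{lem15.0}.}

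The plan is to establish the two estimates separately, both by reducing to the case of a diagonal quadratic form via a change of variables that diagonalizes $\calQ$. Since $\calQ$ is a real quadratic form, there is an orthogonal matrix $U$ so that $\calQ(U\bfv) = \sum_{i=1}^n \rho_i v_i^2$; substituting $\bfu = U\bfv$ leaves Lebesgue measure invariant and replaces $\bflam \cdot \bfu$ by $(U^t\bflam)\cdot \bfv$, and the weight $w(U\bfv)$ is still smooth with uniformly bounded derivatives of all orders and compactly supported in a fixed ball. So without loss of generality we may assume $\calQ(\bfu) = \sum_i \rho_i u_i^2$, at the cost of replacing the support cube $[-1,1]^n$ by a fixed ball, which only affects implied constants; note $\|\calQ\|$ is unchanged since $U$ is orthogonal, and $|U^t\bflam| = |\bflam|$.

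For the first estimate, I would argue by repeated integration by parts in whichever variable carries the largest component of $\bflam$. If $|\bflam| \geq 4\|\calQ\|$, then for the index $j$ with $|\lambda_j| = \max_i |\lambda_i| \geq |\bflam|/\sqrt n$, the phase $\phi(\bfu) = \calQ(\bfu) - \bflam\cdot\bfu$ has $\partial\phi/\partial u_j = 2\rho_j u_j - \lambda_j$, which on the support of $w$ has absolute value bounded below by $|\lambda_j| - 2|\rho_j|\cdot O(1) \gg |\bflam|$ once $|\bflam|$ is large compared to $\|\calQ\|$ (the factor $4$ provides the needed slack). Writing $e(\phi) = (2\pi i\, \partial\phi/\partial u_j)^{-1} \partial_{u_j} e(\phi)$ and integrating by parts $M$ times, each step gains a factor $\ll |\bflam|^{-1}$ and produces only derivatives of $w$ and of $(\partial\phi/\partial u_j)^{-1}$, all of which are bounded on the support; this yields $I(\calQ;\bflam) \ll_{M,w} |\bflam|^{-M}$. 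This is essentially the standard non-stationary phase argument and is cited verbatim as Lemma 3.1 of \cite{HBPierce}, so one could alternatively just invoke that reference.

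For the second estimate, with $\calQ$ diagonalized the integral factors as a product $\prod_{i=1}^n \int_{\R} e(\rho_i u_i^2 - \mu_i u_i)\,w_i \d u_i$ — or more precisely, since $w$ need not factor, one bounds $|I(\calQ;\bflam)| \le \prod_i \sup \big| \int e(\rho_i u_i^2 - \mu_i u_i) (\cdots) \d u_i\big|$ after peeling off one variable at a time, using that the remaining integrand stays a uniformly-controlled smooth compactly supported weight. For each single variable, the trivial bound gives $\ll 1$ (the weight has compact support), while the van der Corput / second-derivative test (stationary phase with a quadratic phase of second derivative $2\rho_i$) gives $\ll |\rho_i|^{-1/2}$; taking the minimum of the two bounds in each factor yields $\prod_{i=1}^n \min(1, |\rho_i|^{-1/2})$. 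The only mild subtlety, and the step I expect to require the most care, is justifying the factorization bound when $w$ is not a product: one handles this by freezing all but one coordinate, applying the one-variable estimate uniformly in the frozen coordinates (the constants in the second-derivative test depend only on $\rho_i$ and on uniform bounds for the weight and its derivatives, both of which are uniform here), and iterating; this is routine but is where one must be slightly attentive to uniformity. Again, since this is precisely the content of Lemma 3.1 of \cite{HBPierce}, the cleanest route is simply to quote it.
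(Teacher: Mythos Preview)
Your proposal is correct and matches the paper's treatment: the paper does not prove this lemma at all but simply quotes it as Lemma 3.1 of \cite{HBPierce}, remarking only that it is ``a consequence of integration by parts and the second derivative test,'' which is exactly the strategy you outline. Your suggestion to just invoke the reference is precisely what the paper does.
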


We will apply this in our specific setting to deduce the following two lemmas, which are analogous to Lemma 3.3 and Lemma 3.4 in \cite{HBPierce}; here we recall the definition of the notation $\{ \u{\phi}\}$ from Section \ref{sec_notation}.

\begin{lemma}\label{lem15.1}
Let $\phi^*=\max_{1\leq i\leq 3}\{\phi_i\}$ and assume that the system $\bQ$ satisfies Condition \ref{cond2}. Then 
\begin{equation*}
\int_{\{\u{\phi}\}}|I(\bnu\cdot\bQ;\bflam)|\d\bnu \ll \min \{\prod_{i=1}^3 \phi_i, (\phi^*)^{3-k/2}\}.
\end{equation*}
\end{lemma}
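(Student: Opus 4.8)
\textbf{Proof plan for Lemma \ref{lem15.1}.}

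The plan is to bound the integral in two complementary ways and then take the minimum. For the first bound, $\int_{\{\u\phi\}}|I(\bnu\cdot\bQ;\bflam)|\,\d\bnu \ll \prod_{i=1}^3\phi_i$, I would simply use the trivial estimate $|I(\calQ;\bflam)|\le \int_{\R^n}|w(\bfu)|\,\d\bfu \ll_w 1$, valid for every quadratic form $\calQ$, together with the fact that the region of integration $\{\u\phi\}=\prod_{i=1}^3([-2\phi_i,-\phi_i]\cup[\phi_i,2\phi_i])$ has measure $\ll \prod_{i=1}^3\phi_i$. This gives the first term in the minimum with no use of the geometry.

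For the second bound I would invoke the second part of Lemma \ref{lem15.0} applied to $\calQ = \bnu\cdot\bQ$, whose eigenvalues $\rho_1(\bnu),\dots,\rho_k(\bnu)$ depend on $\bnu$; this yields $|I(\bnu\cdot\bQ;\bflam)| \ll_w \prod_{j=1}^k\min(1,|\rho_j(\bnu)|^{-1/2})$. The essential point is to control $\prod_j\min(1,|\rho_j(\bnu)|^{-1/2})$ pointwise and to integrate over $\bnu\in\{\u\phi\}$. Here is where Condition \ref{cond2} enters: by Proposition \ref{prop1}, Condition \ref{cond2} implies Condition \ref{cond3}, so for every $\bnu\neq\u0$ the matrix $\bnu\cdot\bQ$ has rank at least $k-1$, i.e. at most one eigenvalue vanishes. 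More quantitatively, $F_{\u Q}(\bnu)=\det(\bnu\cdot\bQ)=\prod_j\rho_j(\bnu)$ is a nonzero homogeneous form of degree $k$ in $\bnu$ with nonvanishing discriminant, and on the region $\{\u\phi\}$ we have $|\bnu|\asymp\phi^*$. After rescaling $\bnu = \phi^*\bfw$ with $\bfw$ in a fixed compact region bounded away from the origin, the eigenvalues scale as $\rho_j(\bnu) = \phi^*\rho_j(\bfw)$, and since at most one $\rho_j(\bfw)$ can be small (Condition \ref{cond3} for $\bfw\neq\u0$, combined with the fact that the locus where two eigenvalues vanish is empty), one obtains
\[
\prod_{j=1}^k\min\Bigl(1,\frac{1}{|\rho_j(\bnu)|^{1/2}}\Bigr) \ll (\phi^*)^{-(k-1)/2}\min\Bigl(1,\frac{1}{|\phi^*\rho_{\min}(\bfw)|^{1/2}}\Bigr),
\]
where $\rho_{\min}(\bfw)$ denotes the eigenvalue of smallest absolute value. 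Integrating over $\bfw$ (a $3$-dimensional region), the extra factor $\min(1,(\phi^*|\rho_{\min}(\bfw)|)^{-1/2})$ costs at most a bounded constant (the set where $|\rho_{\min}(\bfw)|$ is small has small measure, since $\rho_{\min}(\bfw)^{-1}$ is, roughly, a power of $F_{\u Q}(\bfw)^{-1}$ times a bounded quantity, and $F_{\u Q}$ is nonvanishing away from a proper subvariety), and the Jacobian of $\bnu=\phi^*\bfw$ contributes $(\phi^*)^3$. Altogether this produces $\ll (\phi^*)^3\cdot(\phi^*)^{-(k-1)/2}$; a small additional care is needed to turn $(\phi^*)^{3-(k-1)/2}$ into the claimed $(\phi^*)^{3-k/2}$, which is where the one possibly-vanishing eigenvalue must be handled more carefully — near $\bfw$ where $\rho_{\min}(\bfw)=0$, the factor $\min(1,(\phi^*|\rho_{\min}(\bfw)|)^{-1/2})$ is genuinely of size up to $1$ on a set of measure $\asymp (\phi^*)^{-1/2}$, gaining the extra half-power of $\phi^*$.

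The main obstacle I anticipate is precisely this last quantitative step: showing that the $\bnu$-average of the product $\prod_j\min(1,|\rho_j(\bnu)|^{-1/2})$ genuinely saves $(\phi^*)^{-k/2}$ rather than only $(\phi^*)^{-(k-1)/2}$, which requires exploiting that the exceptional eigenvalue vanishes only on a curve in $\P^2$ (the curve $F_{\u Q}=0$, smooth by Condition \ref{cond2}) and estimating the measure of the neighborhood of that curve where $|\rho_{\min}(\bnu)|\le (\phi^*)^{-1}$ — this is a sublevel-set estimate for the smooth plane curve $F_{\u Q}=0$, and its smoothness (nonvanishing discriminant) is what guarantees the measure bound $\ll(\phi^*)^{-1/2}$ needed to close the argument. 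Everything else (the trivial bound, the rescaling, the eigenvalue count) is routine given Proposition \ref{prop1} and Lemma \ref{lem15.0}.
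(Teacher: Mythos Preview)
Your plan is essentially the paper's own proof: trivial bound for the first term, Lemma \ref{lem15.0} plus Lemma \ref{lem15} (Condition \ref{cond3}) to pull out $(\phi^*)^{-(k-1)/2}$ from the $k-1$ large eigenvalues, and then a sublevel-set argument for the one small eigenvalue via $|\rho_{\min}(\bnu)|\asymp |F_{\u Q}(\bnu)|/(\phi^*)^{k-1}$ and the nonvanishing of $\nabla F_{\u Q}$. The paper executes the last step by a change of variables $u_1=F_{\u Q}(\bnu)$ on cones where one partial derivative dominates, which is the same content as your sublevel-set estimate.

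One point in your heuristic is off and will bite if you write it out as stated. You say the set where $\min(1,(\phi^*|\rho_{\min}(\bfw)|)^{-1/2})\asymp 1$ has measure $\asymp(\phi^*)^{-1/2}$; in fact that set (namely $\{|\rho_{\min}(\bfw)|\ll(\phi^*)^{-1}\}$) has measure $\ll(\phi^*)^{-1}$, since $|\nabla F_{\u Q}|\gg 1$ on the compact region. The dominant contribution to $\int \min(1,(\phi^*|\rho_{\min}(\bfw)|)^{-1/2})\,d\bfw\ll(\phi^*)^{-1/2}$ comes from the \emph{bulk} of the region, where $|\rho_{\min}(\bfw)|\asymp 1$ and the integrand is $\asymp(\phi^*)^{-1/2}$ on a set of measure $\asymp 1$; the neighborhood of the curve contributes only $O((\phi^*)^{-1})$. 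So the role of smoothness of $F_{\u Q}=0$ is not to produce the saving but to ensure the exceptional neighborhood does not spoil it. With that correction, your argument goes through exactly as in the paper.
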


\begin{lemma}\label{lem15.2}
Let $\phi^*=\max_{1\leq i\leq 3}\{\phi_i\}$ and set 
\[ \Fcal_i(\bfx_1,\bfx_2)=Q_i(\bfx_1)-Q_i(\bfx_2) \qquad \text{ for $1\leq i\leq 3$},\]
 where the system $\u{Q}$ satisfies Condition \ref{cond2}. Then 
\begin{equation*}
\int_{\{\u{\phi}\}}|I(\bnu\cdot\u{\Fcal};\bflam)|\d\bnu\ll \min\{ (\phi^*)^3, (\phi^*)^{3-k}(1+|\log \phi^*|)\}.
\end{equation*}

\end{lemma}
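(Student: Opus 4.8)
The plan is to reduce, via the explicit relationship between the forms $\Fcal_i(\bfx_1,\bfx_2)=Q_i(\bfx_1)-Q_i(\bfx_2)$ on $2k$ variables and the original forms $Q_i$ on $k$ variables, to a computation of eigenvalues of the linear combination $\bnu\cdot\u{\Fcal}$. Observe that the symmetric matrix attached to $\bnu\cdot\u{\Fcal}$ is the block-diagonal matrix $\operatorname{diag}(\bnu\cdot\bQ,\,-\bnu\cdot\bQ)$, so its $2k$ eigenvalues are precisely the eigenvalues of $\bnu\cdot\bQ$, each occurring twice (with the sign irrelevant, since the second-derivative bound in Lemma \ref{lem15.0} only sees $|\rho_i|$). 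Hence if $\rho_1(\bnu),\dots,\rho_k(\bnu)$ are the eigenvalues of $\bnu\cdot\bQ$, Lemma \ref{lem15.0} gives
\begin{equation*}
|I(\bnu\cdot\u{\Fcal};\bflam)|\ll_w \prod_{i=1}^{k}\min\left(1,\frac{1}{|\rho_i(\bnu)|}\right).
\end{equation*}
This squares the gain present in Lemma \ref{lem15.1} (where one had $|\rho_i|^{-1/2}$), which is what produces the exponent $3-k$ rather than $3-k/2$ in the claimed bound.

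Next I would run the same dissection of the $\bnu$-region as in the proof of Lemma \ref{lem15.1} (the analogue of Lemma 3.4 of \cite{HBPierce}). The trivial bound $|I|\ll 1$ over the region $\prod_i([-2\phi_i,-\phi_i]\cup[\phi_i,2\phi_i])$, which has measure $\ll (\phi^*)^3$, gives the first term $(\phi^*)^3$. For the second term one uses the nonsingularity hypothesis: since $\bQ$ satisfies Condition \ref{cond2}, Proposition \ref{prop1} shows $\bQ$ also satisfies Condition \ref{cond3}, i.e. $\rank(\bnu\cdot\bQ)\ge k-1$ for all $\bnu\ne \u0$. Consequently at most one eigenvalue of $\bnu\cdot\bQ$ can be small (in fact zero) for any given direction; moreover the form $F_{\u{Q}}(\bnu)=\det(\bnu\cdot\bQ)=\prod_i\rho_i(\bnu)$ is a homogeneous polynomial of degree $k$ in $\bnu$ that, by Condition \ref{cond2}, defines a smooth hypersurface, so $|F_{\u{Q}}(\bnu)|$ is comparable to $(\phi^*)^{k-1}\cdot \mathrm{dist}(\bnu,\{F_{\u{Q}}=0\})$ on a dyadic shell of size $\phi^*$. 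For $\bnu$ of size $\asymp \phi^*$ with all but one eigenvalue $\asymp \phi^*$, one has $\prod_i\min(1,|\rho_i|^{-1})\ll (\phi^*)^{-(k-1)}\min(1,|\rho_k(\bnu)|^{-1})$, and integrating $\min(1,|\rho_k(\bnu)|^{-1})$ over the shell — splitting according to whether $|\rho_k(\bnu)|\le 1$ or not, and using the smoothness of $\{F_{\u Q}=0\}$ to control the measure of the set where $\rho_k$ is small — contributes an extra factor $\ll (\phi^*)^2(1+|\log\phi^*|)$, the logarithm arising from the integral $\int^{\phi^*}\min(1,t^{-1})\,dt$ over the relevant one-dimensional transversal direction. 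Combining, the second region contributes $\ll (\phi^*)^{3-k}(1+|\log\phi^*|)$, as required; one also checks that when $\phi^*$ is bounded the two bounds are consistent so the $\min$ is legitimate.

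The main obstacle I anticipate is making precise the claim that ``only one eigenvalue is small'' uniformly over the dyadic shell, and quantifying exactly how small it can be in terms of the distance to the curve $\{F_{\u Q}=0\}$: one needs that near a point of this smooth curve, exactly one eigenvalue of $\bnu\cdot\bQ$ degenerates, to first order, linearly in the transverse direction — this is where Condition \ref{cond3} (rank drop at most one) and the nonvanishing of $\Disc(F_{\u Q})$ are both essential, ruling out higher-order vanishing of $F_{\u Q}$ and the simultaneous degeneration of two eigenvalues. The remaining estimates are then a routine polar-type decomposition around the curve together with the eigenvalue bound from Lemma \ref{lem15.0}, carried out exactly as in the two-form case in \cite{HBPierce}; I would also record the trivial observation that for $|\bflam|\ge 4\|\bnu\cdot\u\Fcal\|$ the integrand is negligible by the first part of Lemma \ref{lem15.0}, so the $\bflam$-dependence plays no role in the final bound.
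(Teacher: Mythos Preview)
Your proposal is correct and takes essentially the same approach as the paper: the block-diagonal eigenvalue observation, the bound $\prod_i\min(1,|\rho_i|^{-1})\ll(\phi^*)^{-(k-1)}\min\{1,(\phi^*)^{k-1}/|F_{\u Q}(\bnu)|\}$ via Lemma \ref{lem15}, and the integration using the nonvanishing gradient of $F_{\u Q}$ (your ``polar-type decomposition'' is exactly the change of variables $u_1=F_{\u Q}(\bnu)$ already carried out in the proof of Lemma \ref{lem15.1}). Your anticipated obstacle---uniform control showing only one eigenvalue is small---is dispatched directly by Lemma \ref{lem15}, which gives $|\rho_2|\gg\nu^*$ without further work.
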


Each of these lemmas relies on the following observation:  the rank drop condition in Condition \ref{cond3} (which is implied by Condition \ref{cond2}) provides a lower bound for all but one of the eigenvalues of any particular linear combination of the forms $Q_1,Q_2,Q_3$. We record this formally:
\begin{lemma}\label{lem15}
Let $Q_1,Q_2, Q_3$ be quadratic forms such that for all $\u{\nu} \in \R^3$ with $\u{\nu} \neq \u{0},$
\begin{equation*}
\rank (\u{\nu}\cdot \u{Q})\geq k-1.
\end{equation*}
 Let $\nu^*= \max_i \{|\nu_i|\}$ and let $\rho_1,\ldots, \rho_k$ be the eigenvalues associated to $\u{\nu} \cdot \u{Q}$, ordered in a way such that $|\rho_1|\leq \ldots \leq |\rho_k|$. Then one has
\begin{equation*}
|\rho_{2}|\gg \nu^*,\quad \mbox{ and } \quad |\rho_k|\ll \nu^*.
\end{equation*}
\end{lemma}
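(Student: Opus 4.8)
The plan is to analyze the characteristic polynomial of the symmetric matrix $M = \u{\nu}\cdot\u{Q}$ and exploit homogeneity together with a compactness argument on the sphere $|\u{\nu}|=1$. First I would observe that both claimed bounds are homogeneous of degree $1$ in $\u{\nu}$: replacing $\u{\nu}$ by $t\u{\nu}$ multiplies $M$, hence each eigenvalue $\rho_j$, by $t$, and multiplies $\nu^*$ by $|t|$. Thus it suffices to prove $|\rho_2|\gg 1$ and $|\rho_k|\ll 1$ under the normalization $\nu^* = 1$, i.e.\ $\u{\nu}$ ranging over the compact set $S = \{\u{\nu}\in\R^3 : \max_i|\nu_i| = 1\}$.

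The upper bound $|\rho_k|\ll 1$ is the easy half: on $S$ we have $\|M\| \le \sum_{i=1}^3 |\nu_i|\,\|Q_i\| \le \sum_{i=1}^3 \|Q_i\|$, and $|\rho_k| = \|M\|$ for a symmetric matrix, so $|\rho_k| \ll 1$ with an implied constant depending only on $\u{Q}$; unwinding the homogeneity gives $|\rho_k|\ll\nu^*$. For the lower bound $|\rho_2|\gg 1$, the key input is Condition \ref{cond3} (which holds since $\u{Q}$ satisfies Condition \ref{cond2}, via Proposition \ref{prop1}): for every $\u{\nu}\in S\subset\R^3\setminus\{\u{0}\}$ the matrix $M(\u{\nu})$ has rank at least $k-1$, so at most one eigenvalue vanishes; in particular $\rho_2(\u{\nu})\neq 0$, equivalently the product $|\rho_2\rho_3\cdots\rho_k|$ of the $k-1$ largest-modulus eigenvalues is strictly positive. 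This product equals, up to sign, the first nonzero coefficient in the characteristic polynomial read from the top — concretely it is (the absolute value of) the sum of the $(k-1)\times(k-1)$ principal minors of $M$, divided by any zero eigenvalue's absent contribution — but cleaner is to note that $g(\u{\nu}) := \prod_{j=2}^{k}|\rho_j(\u{\nu})|$ is a continuous function of $\u{\nu}$ (the unordered multiset of eigenvalues varies continuously, and discarding the single smallest-modulus one is continuous since $\rho_1$ is the unique eigenvalue that can hit $0$) which is strictly positive on the compact set $S$. Hence $g$ attains a positive minimum $c_0 > 0$ on $S$. Combined with $|\rho_j|\le\|M\|\le C$ on $S$ for $j = 3,\dots,k$, we get $|\rho_2| = g(\u{\nu})/\prod_{j=3}^k|\rho_j| \ge c_0/C^{k-2} > 0$ on $S$, and rescaling by homogeneity yields $|\rho_2|\gg\nu^*$.

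The one point requiring a little care — and the step I expect to be the main obstacle — is the continuity claim used to run the compactness argument: the individually ordered eigenvalues $\rho_1(\u{\nu})\le\cdots\le\rho_k(\u{\nu})$ (ordered by modulus) need not be smooth, and one must make sure the quantity being minimized is genuinely continuous. The clean fix is to avoid the ordered eigenvalues entirely and instead work with the characteristic polynomial $\chi_{M(\u{\nu})}(T) = \sum_{j=0}^{k} (-1)^j e_j(\u{\nu}) T^{k-j}$, whose coefficients $e_j(\u{\nu})$ are polynomials in $\u{\nu}$. Condition \ref{cond3} says $\rank M(\u{\nu})\ge k-1$ on $S$, i.e.\ $e_k(\u{\nu}) = \det M(\u{\nu})$ may vanish but $e_{k-1}(\u{\nu})$ — the sum of $(k-1)\times(k-1)$ principal minors, equal to $\pm\rho_2\cdots\rho_k$ when $\rho_1 = 0$ and to a nonzero symmetric function otherwise — is, together with $e_k$, never simultaneously zero with it in a way forcing two eigenvalues to vanish. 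Precisely: $\prod_{j=2}^k|\rho_j|$ equals $|e_{k-1}|$ when $e_k = 0$ and equals $|e_k|/|\rho_1|$ otherwise; the uniform statement one actually wants is simply that $\max(|e_{k-1}(\u{\nu})|, |e_k(\u{\nu})|) \ge c_0 > 0$ on $S$, which follows because the zero set of $e_{k-1}$ restricted to $S$ would force rank $\le k-2$ somewhere only if $e_k$ also vanished there, contradicting Condition \ref{cond3}; compactness then upgrades "nonzero" to "bounded below." From $|e_{k-1}|\gg 1$ (on the locus where it dominates) or $|e_k|\gg 1$ one reads off $|\rho_2|\gg 1$ using $|\rho_j|\ll 1$ for the other factors, completing the argument.
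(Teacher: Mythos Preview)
Your proof is correct and follows the same homogeneity-plus-compactness approach that the paper (which omits the proof, citing Lemma~2.4 of \cite{HBPierce}) has in mind. The third paragraph's detour through the coefficients $e_{k-1},e_k$ is unnecessary, though: the function $\u{\nu}\mapsto |\rho_2(\u{\nu})|$ is itself continuous on $S$ (the $j$-th smallest element of the multiset $\{|\rho_i|\}$ depends continuously on the matrix), strictly positive there by the rank hypothesis, and hence bounded below by compactness---so you can dispense with $g$, $e_{k-1}$, and $e_k$ entirely.
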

Since the proof of this lemma is identical to the proof of Lemma 2.4 in \cite{HBPierce}, we omit it here and turn immediately to proving Lemmas \ref{lem15.1} and \ref{lem15.2}, for which we must proceed differently from \cite{HBPierce}, since we may no longer necessarily factor the form $F_{\u{Q}}$ defined in (\ref{det_F_dfn}) into linear factors.

\begin{proof}[Proof of Lemma \ref{lem15.1}]
We note that the conclusion of the lemma is trivial for $\phi^*\leq 1$, and assume from now on that $\phi^* \geq 1$.
Applying Lemma \ref{lem15.0} to the form $\u{\nu} \cdot \u{Q}$, followed by Lemma \ref{lem15}, we have 
\begin{equation*}
|I(\bnu\cdot\bQ;\bflam)|\ll \min \left\{1,\left(\frac{1}{\nu^*}\right)^{\frac{k-1}{2}}\right\} \min \left\{1,\frac{1}{|\rho_1|^{1/2}}\right\}.
\end{equation*}
Recall the definition of $F_{\u{Q}}$ in (\ref{det_F_dfn}) and note that
\begin{equation*}
F_{\u{Q}}(\bnu)= \det\left(\sum_{i=1}^3 \nu_iQ_i\right)= \rho_1\cdots \rho_k.
\end{equation*}
Hence, again by Lemma \ref{lem15} we have
\begin{equation*}
|\rho_1|\gg \frac{|F_{\u{Q}}(\bnu)|}{(\nu^*)^{k-1}}.
\end{equation*}
We thus obtain
\begin{equation}\label{eqn15.5}
\int_{\{\bphi\}}|I((\bnu\cdot\bQ);\bflam)|\d\bnu \ll (\phi^*)^{-\frac{k-1}{2}}\int_{\{\bphi\}} \min\left\{1,\left(\frac{(\phi^*)^{k-1}}{|F_{\u{Q}}(\bnu)|}\right)^{1/2}\right\} \d\bnu.
\end{equation}
Next we recall that by our assumption of Condition \ref{cond2}, we have $\Disc (F_{\u{Q}}(\bnu))\neq 0$, so that $F_{\u{Q}}(\bnu)$ is nonsingular outside of the origin, and so
\begin{equation*}
\nabla F_{\u{Q}}(\bnu)\neq 0,\quad \mbox{ for } \bnu\neq 0.
\end{equation*}
Furthermore, the set $|\bnu|_\infty =1$ is compact and hence there is some constant $c$ with
\begin{equation*}
\min_{|\bnu|_\infty=1}|\nabla F_{\u{Q}}(\bnu)| \geq c >0.
\end{equation*}
We can now partition affine $3$-space into three measurable sets $\Ome_i$ (in fact closed cones), such that  for each $1\leq i\leq 3$,
\begin{equation*}
\min_{\bstack{|\bnu|_\infty =1}{\bnu\in \Ome_i}}\left|\frac{\partial F_{\u{Q}}}{\partial \nu_i}(\u{\nu})\right|\geq c',
\end{equation*}
for some positive constant $c'$. Hence, on $\Ome_i$ we may use the homogeneity of the polynomial $F_{\u{Q}}(\bnu)$ and its derivatives to rescale to $|\u{\nu}|_\infty=\nu^*$, obtaining the lower bound
\begin{equation}\label{pF1}
\left|\frac{\partial F_{\u{Q}}}{\partial \nu_i}(\u{\nu})\right|\geq c' (\nu^*)^{k-1}.
\end{equation}

We now consider the contribution to (\ref{eqn15.5}) of $\{\bphi\}\cap \Ome_i$ for each $1\leq i\leq 3$; for simplicity of notation, we now consider $i=1$. For fixed $\nu_2$ and $\nu_3$ the function $F_{\u{Q}}(\bnu)$ is a polynomial of degree at most $k$ in $\nu_1$ and hence we can cover the set of $\nu_1\in \{\phi_1\}$, $\bnu\in \Ome_1$ with at most $k$ intervals on which $F_{\u{Q}}(\u{\nu})$ is monotone as a function of $\nu_1$ (recalling of course that $\nu_2,\nu_3$ are fixed) and $\frac{\partial}{\partial \nu_1} F(\bnu)$ is bounded below as in (\ref{pF1}). Restricting our attention to one of these intervals, say $I$, a variable transformation $(u_1,u_2,u_3) = (F_{\u{Q}}(\u{\nu}), \nu_2, \nu_3)$ on this interval leads to
\begin{align*}
\int_{I \intersect \{\bphi\}\cap \Ome_1} &\min\left\{1,\left(\frac{(\phi^*)^{k-1}}{|F_{\u{Q}}(\bnu)|}\right)^{1/2}\right\} \d\bnu \\ \ll &\int_{\{\phi_3\}}\int_{\{\phi_2\}}\int_{0}^{C(\phi^*)^k} \left|\frac{\partial F_{\u{Q}}}{\partial \nu_1}(\bnu)\right|^{-1} \min \left\{ 1,\left(\frac{(\phi^*)^{k-1}}{|u_1|}\right)^{1/2}\right\} \d u_1 \d u_2 \d u_3,
\end{align*}
where $C$ is some constant and the region of integration is  implicitly further restricted to $\Ome_1$. We can now estimate this, using (\ref{pF1}), by
\begin{align*}
\ll &(\phi^*)^{-(k-1)}(\phi^*)^2 (\phi^*)^{k-1} +\int_{\{\phi_3\}}\int_{\{\phi_2\}} \int_{(\phi^*)^{k-1}}^{C(\phi^*)^k} (\phi^*)^{-(k-1)} \frac{(\phi^*)^{\frac{k-1}{2}}}{|u_1|^{1/2}}\d u_1\d u_2\d u_3 \\
\ll & (\phi^*)^2 +(\phi^*)^{2-\frac{k-1}{2}}\int_{(\phi^*)^{k-1}}^{C(\phi^*)^k} \frac{1}{|u_1|^{1/2}}\d u_1 \\\ll & (\phi^*)^2 +(\phi^*)^{3-1/2}\ll (\phi^*)^{3-1/2}.
\end{align*}
Similarly, the contributions of the other intervals, and the contributions of $\Omega_2$ and $\Omega_3$, are dominated by $(\phi^{*})^{3-1/2}$. In combination with equation (\ref{eqn15.5}), this proves the lemma.
\end{proof}

\begin{proof}[Proof of Lemma \ref{lem15.2}]
The proof of Lemma \ref{lem15.2} is almost identical to the proof of Lemma \ref{lem15.1}. First note that 
\[\sum_{i=1}^3 \nu_i\Fcal_i(\bfx_1,\bfx_2)\]
 has eigenvalues $\pm \rho_1,\ldots, \pm \rho_k$ where $\rho_i$ are the eigenvalues of $\bnu\cdot \bQ$. Hence for $\phi^*\geq 1$ we can estimate
\begin{align*}
\int_{\{\bphi\}} |I(\bnu\cdot\u{\Fcal};\bflam)|\d\bnu &\ll \int_{\{\bphi\}} \prod_{i=1}^k \min \left\{1, \frac{1}{|\rho_i|}\right\}\d\bnu \\ 
& \ll (\phi^*)^{-(k-1)} \int_{\{\bphi\}} \min \left\{1,\frac{(\phi^*)^{k-1}}{|F_{\u{Q}}(\bnu)|}\right\}\d\bnu.
\end{align*}
The same analysis as in the proof of Lemma \ref{lem15.1} leads to the bound
\begin{align*}
&\ll (\phi^*)^{-2(k-1)}(\phi^*)^2 \int_{0}^{(\phi^*)^k}\min\left\{1,\frac{(\phi^*)^{k-1}}{|u_1|}\right\}\d u_1 \\ &\ll (\phi^*)^{-2k+4} \left((\phi^*)^{k-1}+(1+|\log \phi^*|)(\phi^*)^{k-1}\right) \ll (\phi^*)^{3-k}(1+|\log \phi^*|).
\end{align*}
\end{proof}

\subsection{The Singular Integral}
We define the singular integral by
\begin{equation}\label{sing_int_dfn}
J_w(\bmu)=\int_{\R^3}I_w(\btet) e\left(- \btet \cdot  \bmu \right) \d\btet,
\end{equation}
where
\begin{equation}\label{I_dfn}
 I_w(\btet)= \int_{\R^k} e(\btet\cdot\bQ(\bfx))w(\bfx)\d\bfx.
\end{equation}
 For now we merely assume the weight function is smooth and compactly supported in $[-1,1]^k$ with bounded derivatives of all orders.
Next we define for any positive real number $R$ and  $\bmu\in\R^3$ the truncated singular integral
\begin{equation}\label{sing_int_dfn_R}
J_w (\bmu;R)=\int_{[-R,R]^3}  I_w(\u{\theta}) e(-\u{\theta} \cdot \u{\mu}) \d\btet.
\end{equation}
As soon as the number of variables $k$ is large enough, the limit $ \lim_{R \maps \infty} J_w(\bmu;R)$ exists and the singular integral is indeed absolutely convergent.

\begin{proposition}\label{prop_sing_int_conv}
Assume that $k>6$. Then the singular integral $J_w(\bmu)$ is absolutely convergent, and bounded uniformly in $\bmu$. More precisely, we have
\begin{equation*}
|J_w(\bmu)-J_w(\bmu;R)|\ll R^{3-k/2}(\log R)^2.
\end{equation*}

\end{proposition}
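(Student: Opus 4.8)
The plan is to control the tail
\[
 J_w(\bmu)-J_w(\bmu;R)=\int_{\R^3\setminus[-R,R]^3}I_w(\btet)\,e(-\btet\cdot\bmu)\,\d\btet
\]
in absolute value, uniformly in $\bmu$, by a dyadic decomposition of the domain combined with Lemma~\ref{lem15.1}. First I would record the pointwise bound $|I_w(\btet)|\le\int_{\R^k}|w(\bfx)|\,\d\bfx\ll 1$, so that the piece of the integral over the cube $[-1,1]^3$ is trivially $O(1)$; this already gives convergence and uniform boundedness near the origin, and reduces everything to estimating $\int_{\R^3\setminus[-R,R]^3}|I_w(\btet)|\,\d\btet$ for $R\ge 1$. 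I would also note that $I_w(\btet)=I(\btet\cdot\bQ;\bflam)$ with $\bflam=\mathbf 0$, so that Lemma~\ref{lem15.1} (valid since $\bQ$ satisfies Condition~\ref{cond2}) applies with the three-dimensional integration variable $\bnu$ there taken to be $\btet$.

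Next I would tile $(\R\setminus\{0\})^3$ by the regions $\{\u{\phi}\}$ with $\phi_i=2^{j_i}$, $(j_1,j_2,j_3)\in\Z^3$, observing that such a region can meet $\R^3\setminus[-R,R]^3$ only when $\phi^*:=\max_i\phi_i\ge R/2$. On each region Lemma~\ref{lem15.1} yields $\int_{\{\u{\phi}\}}|I_w(\btet)|\,\d\btet\ll\min\bigl\{\prod_i\phi_i,\,(\phi^*)^{3-k/2}\bigr\}$. To sum this, I would fix the top scale $\phi^*=2^J$ (say $\phi_1=2^J$, at the cost of a factor $3$) and sum over $j_2,j_3\le J$. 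Writing $i_a=J-j_a\ge 0$, one has $\prod_i\phi_i=2^{3J-i_2-i_3}$, which is the smaller of the two quantities exactly when $i_2+i_3\ge\tfrac k2 J$. On the complementary range $0\le i_2+i_3<\tfrac k2 J$ --- only $O(J^2)$ lattice points --- one uses $(\phi^*)^{3-k/2}=2^{J(3-k/2)}$, contributing $\ll J^2\,2^{J(3-k/2)}$; on the range $i_2+i_3\ge\tfrac k2 J$ one uses $2^{3J-i_2-i_3}$ and sums the convergent series $2^{3J}\sum_{s\ge kJ/2}(s+1)2^{-s}$, whose leading term dominates, contributing $\ll J\,2^{J(3-k/2)}$. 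Hence $\int_{\{\phi^*=2^J\}}|I_w(\btet)|\,\d\btet\ll(1+J)^2\,2^{J(3-k/2)}$.

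Finally I would sum over the integers $J$ with $2^J\ge R/2$, i.e. $J\ge\log_2 R-1$. Since $k>6$ gives $3-k/2<0$, the sum $\sum_{J\ge J_0}(1+J)^2\,2^{J(3-k/2)}$ is dominated by its first term $\ll(1+J_0)^2\,2^{J_0(3-k/2)}$; with $J_0\asymp\log R$ this is $\ll R^{3-k/2}(\log R)^2$, which is the asserted bound, and it also shows the Cauchy property of $R\mapsto J_w(\bmu;R)$, hence absolute convergence of $J_w(\bmu)$. Taking $R=1$ and adding the $O(1)$ contribution of $[-1,1]^3$ gives the uniform bound $|J_w(\bmu)|\ll 1$. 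I do not anticipate a genuine obstacle; the only delicate point is the bookkeeping of the two competing bounds in Lemma~\ref{lem15.1}: the factor $\prod_i\phi_i$ is precisely what makes the infinitely many small scales $j_2,j_3\to-\infty$ summable, while $3-k/2<0$, that is $k>6$, is precisely what makes the sum over the large scales $J$ converge.
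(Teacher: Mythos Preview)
Your proposal is correct and follows essentially the same approach as the paper: both bound the tail $\int_{|\btet|_\infty>R}|I_w(\btet)|\,\d\btet$ by a dyadic decomposition into regions $\{\u{\phi}\}$, apply Lemma~\ref{lem15.1} on each piece, and then sum the resulting $\min\{\prod_i\phi_i,(\phi^*)^{3-k/2}\}$ over dyadic scales, splitting the sum according to which term in the minimum is smaller. Your parametrisation (fixing the top scale $2^J$ and summing over the drops $i_a=J-j_a$) is a cosmetic reorganisation of the paper's ordering $\ell\le m\le n$ of the three dyadic exponents; the two bookkeeping computations are equivalent and both yield the $(\log R)^2$ loss from counting lattice points in the triangular region where the $(\phi^*)^{3-k/2}$ bound is used.
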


\begin{proof}

Lemma \ref{lem15.1} implies that we have the bound
\begin{equation*}
\int_{\{\bphi\}}|I_w(\btet)|\d\btet \ll \min \left\{ \prod_{i=1}^3 \phi_i, (\phi^*)^{3-k/2}\right\}.
\end{equation*}
In order to prove the proposition we need to estimate the integral 
\begin{equation*}
\int_{\phi^*>R} |I_w(\btet)|\d \btet.
\end{equation*}
For this we use a dyadic subdivision of each of the coordinates $\tet_i$, $1\leq i\leq 3$. 
This leads us to the bound
\begin{equation*}
\int_{\phi^*>R} |I_w(\btet)|\d \btet \ll \sum_{2^n >R} \sum_{\substack{m\in \Z\\ m\leq n}}\sum_{\substack{\ell \in \Z\\ \ell \leq m}} \min\left\{2^{\ell+m+n}, 2^{n(3-k/2)}\right\}.
\end{equation*}
We split the right hand side into two sums of the form
\begin{equation*}
\Sigma_1= \sum_{2^n>R} \;\; \sum_{\substack{\ell+m\geq n(2-k/2)\\ \ell \leq m\leq n}}2^{n(3-k/2)},
\end{equation*}
and
\begin{equation*}
\Sigma_2= \sum_{2^n>R} \;\; \sum_{\substack{\ell+m<n(2-k/2)\\ \ell \leq m\leq n}}2^{\ell+m+n},
\end{equation*}
so that
\begin{equation*}
\int_{\phi^*>R} |I_w(\btet)|\d \btet\ll \Sigma_1+\Sigma_2.
\end{equation*}
Now we can easily estimate the sums individually. Indeed,
\begin{equation*}
\begin{split}
\Sigma_2&= \sum_{2^n>R} \sum_{m\leq n} 2^{m+n} \sum_{\substack{\ell \leq m\\ \ell < n(2-k/2)-m}}2^\ell \\
&\ll \sum_{2^n>R} \sum_{\substack{m\leq n\\ n(2-k/2)<2m}} 2^{m+n+n(2-k/2)-m} + \sum_{2^n>R} \sum_{\substack{m\leq n\\ 2m\leq n(2-k/2)}} 2^{2m+n}\\
&\ll \sum_{2^n>R}2^{n(3-k/2)}kn + \sum_{2^n>R}2^{n+n(2-k/2)}\ll R^{(3-k/2)}\log R.
\end{split}
\end{equation*}
Similarly we estimate the contribution of the first sum as
\begin{equation*}
\begin{split}
\Sigma_1= &\sum_{2^n>R}2^{n(3-k/2)}\sum_{\substack{\ell \leq m\leq n\\ n(2-k/2)\leq \ell +m}}1 \ll \sum_{2^n>R}2^{n(3-k/2)} (kn)^2\\ & \ll R^{3-k/2}(\log R)^2.
\end{split}
\end{equation*}
\end{proof}

 In order to give lower bounds on the singular integral as in Theorem \ref{thm_SJ}, it is useful to have the following alternative interpretation:

\begin{proposition}\label{prop15.4}
Let $k >6$. Then one has
\begin{equation*}
J_w(\bmu)= \lim_{\eps \rightarrow 0} \eps^{-3} \int_{\max_i |Q_i(\bfx)-\mu_i|\leq \eps}w(\bfx) \prod_{i=1}^3 \left(1-\frac{|Q_i(\bfx)-\mu_i|}{\eps}\right)\d\bfx.
\end{equation*}
\end{proposition}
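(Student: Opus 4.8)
The plan is to unwind both sides as integrals against explicit kernels and to identify the family of kernels $\eps^{-1}(1-|t|/\eps)\mathbf{1}_{|t|\le\eps}$ on $\R$ as an approximate identity. Concretely, write $K_\eps(t) = \eps^{-1}(1-|t|/\eps)$ for $|t|\le\eps$ and $0$ otherwise; this is the Fej\'er-type triangular kernel, it is non-negative, supported in $[-\eps,\eps]$, and has $\int_\R K_\eps(t)\,\d t = 1$. Its Fourier transform is elementary: $\widehat{K_\eps}(\theta) := \int_\R K_\eps(t) e(-\theta t)\,\d t = \bigl(\frac{\sin(\pi\eps\theta)}{\pi\eps\theta}\bigr)^2$, a non-negative function that is $\le 1$ everywhere, equals $1+O((\eps\theta)^2)$ near $\theta=0$, and satisfies $\widehat{K_\eps}(\theta)\ll (\eps|\theta|)^{-2}$ for $|\theta|\ge 1/\eps$. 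The first step is therefore to recognize the right-hand side of the proposition as
\begin{equation*}
\eps^{-3}\int_{\max_i|Q_i(\bfx)-\mu_i|\le\eps} w(\bfx)\prod_{i=1}^3\Bigl(1-\tfrac{|Q_i(\bfx)-\mu_i|}{\eps}\Bigr)\d\bfx
= \int_{\R^k} w(\bfx)\prod_{i=1}^3 K_\eps\bigl(Q_i(\bfx)-\mu_i\bigr)\,\d\bfx.
\end{equation*}

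The second step is to insert the Fourier representation $K_\eps(t) = \int_{\R}\widehat{K_\eps}(\theta)e(\theta t)\,\d\theta$ for each of the three factors and exchange the order of integration. Writing $\btet=(\theta_1,\theta_2,\theta_3)$ and $\prod_{i}\widehat{K_\eps}(\theta_i) =: \widehat{\mathbf K_\eps}(\btet)$, one obtains
\begin{equation*}
\int_{\R^k} w(\bfx)\prod_{i=1}^3 K_\eps(Q_i(\bfx)-\mu_i)\,\d\bfx
= \int_{\R^3}\widehat{\mathbf K_\eps}(\btet)\, e(-\btet\cdot\bmu) \Bigl(\int_{\R^k} e(\btet\cdot\bQ(\bfx))w(\bfx)\,\d\bfx\Bigr)\d\btet
= \int_{\R^3}\widehat{\mathbf K_\eps}(\btet)\, I_w(\btet)\, e(-\btet\cdot\bmu)\,\d\btet,
\end{equation*}
using the definition \eqref{I_dfn} of $I_w$. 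The interchange of the $\bfx$-integral with the $\btet$-integral is justified by absolute convergence: $w$ has compact support, $|\widehat{K_\eps}|\le 1$, and the $\btet$-integral of $|\widehat{\mathbf K_\eps}(\btet)|$ is finite for fixed $\eps$ (each factor decays like $|\theta_i|^{-2}$). Comparing with \eqref{sing_int_dfn}, the claim becomes
\begin{equation*}
J_w(\bmu) = \lim_{\eps\to 0}\int_{\R^3}\widehat{\mathbf K_\eps}(\btet)\, I_w(\btet)\, e(-\btet\cdot\bmu)\,\d\btet.
\end{equation*}

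The final step is a dominated-convergence argument. Pointwise in $\btet$ we have $\widehat{\mathbf K_\eps}(\btet) = \prod_i\bigl(\frac{\sin(\pi\eps\theta_i)}{\pi\eps\theta_i}\bigr)^2 \to 1$ as $\eps\to 0$, and $|\widehat{\mathbf K_\eps}(\btet)\,I_w(\btet)\,e(-\btet\cdot\bmu)| \le |I_w(\btet)|$, which is integrable on $\R^3$ by Proposition \ref{prop_sing_int_conv} (here we use $k>6$; the dyadic estimate there shows $\int_{\R^3}|I_w(\btet)|\,\d\btet<\infty$). Hence the Lebesgue dominated convergence theorem gives $\lim_{\eps\to0}\int_{\R^3}\widehat{\mathbf K_\eps}(\btet) I_w(\btet) e(-\btet\cdot\bmu)\,\d\btet = \int_{\R^3} I_w(\btet) e(-\btet\cdot\bmu)\,\d\btet = J_w(\bmu)$, which is exactly the assertion. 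The main point requiring care — and the step I expect to be the real obstacle — is the verification that the absolute convergence furnished by Proposition \ref{prop_sing_int_conv} is genuinely uniform enough to serve as the dominating function and to license the Fubini interchange; everything else is a routine computation of the triangular kernel and its transform, together with the standard fact that $(\sin x/x)^2$ is bounded by $1$ and tends to $1$ at the origin.
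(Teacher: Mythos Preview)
Your proof is correct and follows essentially the same route as the paper: both identify the right-hand side as $\int_{\R^3}\prod_i\bigl(\tfrac{\sin(\pi\eps\theta_i)}{\pi\eps\theta_i}\bigr)^2 I_w(\btet)e(-\btet\cdot\bmu)\,\d\btet$ via the Fej\'er-kernel Fourier pair, and then pass to the limit using the absolute integrability of $I_w$ from Proposition~\ref{prop_sing_int_conv}. The only cosmetic difference is that the paper justifies the limit by splitting $\{\theta^*<\eps^{-1/2}\}\cup\{\theta^*>\eps^{-1/2}\}$ and estimating each piece explicitly, whereas you invoke dominated convergence directly with $|I_w(\btet)|$ as the dominating function; your version is slightly slicker but loses the explicit rate of convergence that the paper's splitting provides.
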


\begin{proof}
The proof is a standard way to rewrite the singular integral as the measure of a bounded piece of a manifold. For completeness we give a short proof here. We start by introducing the kernel
\begin{equation}\label{eqn15.4}
K_\eps(\btet)= \prod_{i=1}^3 \left(\frac{\sin (\pi \eps \tet_i)}{\pi\eps\tet_i}\right)^2,
\end{equation}
and note that we have $K_\eps(\btet)= 1+O(\eps^{1/2})$ for $\max_i |\tet_i|\leq \eps^{-1/2}$ and $|K_\eps(\btet)|\ll 1$ for all $\btet$. Now we claim that
\begin{equation}\label{eqn16.4}
J_w(\bmu)= \lim_{\eps \rightarrow 0} \int_{\R^3} K_\eps(\btet) I_w(\btet) e(-\btet\cdot\bmu)\d\btet.
\end{equation}
To justify the claim we need to estimate the following integrals; here we will use the notation $\theta^* = \max_i|\theta_i|$. For the region of small $\btet$ we note that
\begin{equation*}
\int_{\theta^*<\eps^{-1/2}}|K_\eps(\btet)-1||I_w(\btet)|\d\btet \ll \eps^{1/2}\int_{\theta^*<\eps^{-1/2}}|I_w(\btet)|\d\btet \ll \eps^{1/2},
\end{equation*}
where we have used Proposition \ref{prop_sing_int_conv} for $k>6$. On the other hand we estimate the contribution of large $\btet$ by
\begin{equation*}
\begin{split}
\int_{\theta^*>\eps^{-1/2}}|K_\eps(\btet)-1||I_w(\btet)|\d\btet &\ll \int_{\theta^*>\eps^{-1/2}} |I_w(\btet)|\d\btet \\ &\ll (\eps^{-1/2})^{3-k/2}(1+\log^2(\eps^{-1/2})),
\end{split}
\end{equation*}
again using Proposition \ref{prop_sing_int_conv}.
This establishes equation (\ref{eqn16.4}). Now the proposition follows from noting that
\begin{equation*}
\begin{split}
\int_{\R^3}K_\eps(\btet)e(\btet\cdot\blam)\d\btet &= \prod_{i=1}^3 \int_\R\left(\frac{\sin \pi \eps \tet_i}{\pi \eps\tet_i}\right)^2 e(\lam_i\tet_i)\d\tet_i \\
&= \left\{\begin{array}{cc} \eps^{-3}\prod_{i=1}^3 \left(1-\frac{|\lam_i|}{\eps}\right) & \mbox{ if } \max_i|\lam_i|\leq \eps\\ 0 & \mbox{ otherwise.} \end{array}\right.
\end{split}
\end{equation*}
\end{proof}

\section{Exponential sums: major arcs}

We now turn to considering the exponential sums we will encounter on the major arcs. 
Define
\begin{equation*}
S_q(\ba)=\sum_{\bfx \modd{q}} e_q(\ba\cdot\bQ(\bfx)),
\end{equation*}
and
\begin{equation*}
S_q(\ba,\bn)=S_q(\ba)e_q(-\ba\cdot\bn).
\end{equation*}
In our major arc analysis we will be concerned with exponential sums of the form
\begin{equation}\label{Tnq_dfn}
T(\bn;q)=\sum_{\substack{\u{a} \modd{q}\\ (\ba,q)=1}}S_q(\ba;\bn).
\end{equation}
 We note that $T(\bn;q)$ is a multiplicative function, so that for $(q_1,q_2)=1$,
\begin{equation*}
T(\bn;q_1q_2)=T(\bn;q_1)T(\bn;q_2).
\end{equation*}
This reduces the study of $T(\bn;q)$ to the case of $q$ being a prime power. 

Our first bound for $T(\bn;p^e)$, valid for any prime power, will follow from bounds on each individual summand $S_q(\ba,\bn)$:
\begin{prop}\label{lemT1}
Let $p$ be a prime and $e\geq 1$. Then there exists a constant $A_p$ such that 
\begin{equation*}
T(\bn;p^e)\leq A_p p^{e(3+k/2)}.
\end{equation*}
Precisely, we may take $A_p = 4 C_p^{1/2}c_p$ where $C_p$ and $c_p$ are as in Lemma \ref{lemZ} and Lemma \ref{lemF}. In particular, $A_p=A$ may be taken independent of $p$ unless $p|2\Disc(F_{\u{Q}}(\u{x}))$.
\end{prop}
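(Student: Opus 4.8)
The plan is to bound $T(\bn;p^e)$ by the triangle inequality applied to its defining sum, reducing matters to a pointwise estimate for the individual Gauss sums $S_{p^e}(\ba)$ together with a count of the $\ba \bmod p^e$ at which $F_{\u{Q}}(\ba)$ is highly $p$-divisible. Since $|e_{p^e}(-\ba\cdot\bn)| = 1$ we have $|S_{p^e}(\ba;\bn)| = |S_{p^e}(\ba)|$, so that
\[
|T(\bn;p^e)| \le \sum_{\substack{\ba \bmod p^e\\ (\ba,p)=1}} |S_{p^e}(\ba)|;
\]
in particular the dependence on $\bn$ plays no role in the modulus. I would then invoke Lemma \ref{lemZ}, which provides (in the form we shall use) a constant $C_p$, uniform over the good primes $p \nmid 2\Disc(F_{\u{Q}})$, such that $|S_{p^e}(\ba)| \le C_p^{1/2}\,p^{ek/2}\,(p^e,F_{\u{Q}}(\ba))^{1/2}$ whenever $(\ba,p)=1$. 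This is the step where Condition \ref{cond2} does its work: via Lemma \ref{geolem1} it forces $\ba\cdot\bQ$ to have corank at most one modulo a good prime, so that the $p$-adic valuation of $\det(\ba\cdot\bQ)=F_{\u{Q}}(\ba)$ alone governs the size of the Gauss sum.

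It then remains to estimate $\Sigma := \sum_{(\ba,p)=1}(p^e,F_{\u{Q}}(\ba))^{1/2}$, the sum running over $\ba \bmod p^e$. Here I would decompose by level sets: writing $N_j = \#\{\ba \bmod p^e : (\ba,p)=1,\ p^j \mid F_{\u{Q}}(\ba)\}$ for $0 \le j \le e$, the telescoping identity $(p^e,F_{\u{Q}}(\ba))^{1/2} = 1 + \sum_{1 \le j \le \min(v_p(F_{\u{Q}}(\ba)),\,e)}(p^{j/2}-p^{(j-1)/2})$ together with summation by parts gives $\Sigma \le N_0 + \sum_{j=1}^e p^{j/2}N_j$. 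Since the condition $p^j \mid F_{\u{Q}}(\ba)$ depends only on $\ba$ modulo $p^j$ (as $F_{\u{Q}}$ has integral coefficients), lifting residues from $p^j$ to $p^e$ contributes a factor $p^{3(e-j)}$, and Lemma \ref{lemF} applied with $u=j$ (and the trivial bound $N_0 \le p^{3e}$) yields $N_j \le c_p\,p^{3e-j}$, where $c_p \ge 1$ may again be taken uniform over good primes. Summing the resulting geometric series, $\Sigma \le c_p p^{3e}\sum_{j\ge 0}p^{-j/2} = c_p p^{3e}/(1-p^{-1/2}) < 4\, c_p p^{3e}$ for every $p \ge 2$.

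Combining the two bounds gives $|T(\bn;p^e)| \le 4 C_p^{1/2}c_p\,p^{e(3+k/2)}$, which is the assertion with $A_p = 4C_p^{1/2}c_p$; the uniformity $A_p = A$ for $p \nmid 2\Disc(F_{\u{Q}})$ follows from the corresponding uniformity of $C_p$ and $c_p$. The only genuinely substantive input is the pointwise Gauss-sum estimate of Lemma \ref{lemZ}; everything after that is the bookkeeping of the level-set sum, and the single point that needs care is the lifting step — verifying that $N_j$ is exactly $p^{3(e-j)}$ times the count of solutions modulo $p^j$, so that the constant emerging from the geometric series is genuinely absolute rather than growing with $p$.
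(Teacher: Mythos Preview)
Your approach is the paper's approach: bound $|T(\bn;p^e)|$ by $\sum_{(\ba,p)=1}|S_{p^e}(\ba)| \le p^{ek/2}\sum_{(\ba,p)=1} Z(\ba,p^e)^{1/2}$ via the Weyl differencing identity (equation (\ref{eqn5.0})), and then control the average of $Z(\ba,p^e)^{1/2}$. The paper invokes the second (averaged) part of Lemma~\ref{lemZ} with $\kappa=1/2$ as a black box; your level-set decomposition is precisely how that averaged bound is proved in the paper, so the two arguments coincide.

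One point to tighten: the pointwise bound you attribute to Lemma~\ref{lemZ} is slightly mis-stated. The lemma gives $Z(\ba,p^e)\le C_p\,\gcd\bigl(F_{\u{Q}}(\bb),p^{ke}\bigr)$ for any lift $\bb\equiv\ba\pmod{p^e}$ to modulus $p^{ke}$ --- note the exponent $ke$, not $e$. For good primes your simpler form $Z(\ba,p^e)\le (p^e,F_{\u{Q}}(\ba))$ is in fact valid (this is Lemma~\ref{lemma_Z0}, and your corank-one remark via Lemma~\ref{geolem1} is exactly the reason), so for those primes your level-set sum at modulus $p^e$ is fine as written. For the finitely many bad primes, however, $\ba\cdot\bQ$ may drop rank by more than one, the bound with $p^e$ is not available, and one must keep the $p^{ke}$: average over all $p^{3e(k-1)}$ lifts $\bb$ and run your level-set sum for $\gcd(F_{\u{Q}}(\bb),p^{ke})^{1/2}$ at modulus $p^{ke}$, with $j$ ranging up to $ke$. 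This is exactly the proof of the second half of Lemma~\ref{lemZ}, and recovers the constant $4C_p^{1/2}c_p$ uniformly. So the discrepancy is cosmetic rather than substantive, but your ``in the form we shall use'' is not quite the form Lemma~\ref{lemZ} actually supplies.
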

Note that this bound corresponds to obtaining square-root cancellation in $k$ of the variables, but no cancellation from the sum over $\u{a}$ in $T(\u{n};p^e)$. We also remark that this bound is of a typical form, where the constant is dependent on $p$ only for the finitely many bad primes $p$.

We will refine these bounds in the case that $p$ is good, and especially if $p$ is Type I:
\begin{proposition}\label{lemT4}
Assume that $p$ is a good prime. Then 
\begin{equation*}
T(\bn;p)=O\left(p^{\frac{k+4}{2}}\right).
\end{equation*}
If we assume additionally that $p$ is of Type I then we have the stronger bounds
\begin{equation*}
T(\bn;p)=O\left( p^{\frac{k+3}{2}}\right),
\end{equation*}
and for $e \geq 2$,
\begin{equation*}
T(\bn;p^e)=0.
\end{equation*}
The implied constants depend only on the quadratic forms but not on $\bn,$ $p$ or $e$.
\end{proposition}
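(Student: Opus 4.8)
The plan is to reduce $T(\bn;p^e)$ to counting solutions of $\bQ(\bfx)=\bn$ modulo $p^m$, and then to feed those counts into the Deligne-type estimate of Lemma~\ref{lemma_Hooley}. First I would record the elementary identity obtained by opening $T(\bn;p^e)$, interchanging the order of summation, and evaluating the resulting Ramanujan-type sum over $\ba$: writing $N_{p^m}(\bn)=\#\{\bfx \bmod p^m : \bQ(\bfx)\equiv \bn \bmod p^m\}$ and $N_{1}(\bn)=1$, one gets
\[
T(\bn;p^e)=p^{3e}\bigl(N_{p^e}(\bn)-p^{k-3}N_{p^{e-1}}(\bn)\bigr),
\]
where the only input is that $\bQ(\bfx)\bmod p^m$ depends on $\bfx$ only modulo $p^m$ since the $Q_i$ are quadratic. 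In particular $T(\bn;p)=p^3\bigl(N_p(\bn)-p^{k-3}\bigr)$, so it suffices to prove $N_p(\bn)=p^{k-3}+O(p^{(k-2)/2})$ for good $p$, with the improvement $O(p^{(k-3)/2})$ when $p$ is of Type~I, and to prove $N_{p^m}(\bn)=p^{k-3}N_{p^{m-1}}(\bn)$ for $m\ge 2$ in the Type~I case.

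The crucial step is a bound, uniform in $\bn$, for the singular locus of $V_{\bn}\colon \bQ(\bfx)=\bn$ over $\overline{\F}_p$ when $p$ is good. A point $\bfx$ is singular on $V_{\bn}$ precisely when $\bQ(\bfx)=\bn$ and there exists $\ba\ne\u{0}$ with $(\ba\cdot\bQ)\bfx=\u{0}$. If $\bn=\u{0}$, such an $\bfx$, if nonzero, would be a singular point of $\cap_i\{Q_i=0\}$, which is nonsingular for good $p$; hence $\mathrm{Sing}(V_{\u{0}})=\{\u{0}\}$. If $\bn\ne\u{0}$, then $(\ba\cdot\bQ)\bfx=\u{0}$ forces $\ba\cdot\bn=0$ and $F_{\bQ}(\ba)=0$ (otherwise $\ba\cdot\bQ$ is invertible and $\bfx=\u{0}\notin V_{\bn}$). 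For good $p$ the plane curve $F_{\bQ}=0$ is smooth, hence irreducible of degree $k\ge 2$, so it contains no line and $F_{\bQ}$ restricted to the line $\P(\bn^{\perp})$ has only finitely many zeros; for each such $\ba$, Condition~\ref{cond3} (valid over $\overline{\F}_p$ for good $p$ by Proposition~\ref{prop1}) gives $\dim\ker(\ba\cdot\bQ)=1$, and on that line $\bQ(\bfx)=\bn$ has at most two solutions. Thus $\mathrm{Sing}(V_{\bn})$ is finite, so $\dim\mathrm{Sing}(V_{\bn})\le 0$ in every case.

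Next I would pass to the projective closure $\overline V_{\bn}\subset\P^k$ cut out by $Q_i(\bfx)=n_i x_0^2$. Its section $x_0=0$ is $\cap_i\{Q_i=0\}\subset\P^{k-1}$, a smooth complete intersection of dimension $k-4$ for good $p$; intersecting each component of $\overline V_{\bn}$ with $\{x_0=0\}$ and comparing dimensions shows $\overline V_{\bn}$ is a complete intersection, equidimensional of dimension $k-3$, while the Jacobian at a point with $x_0=0$ is $[J_{\bQ}(\bfx)\mid\u{0}]$, of rank $3$, so $\overline V_{\bn}$ is smooth at infinity and $\mathrm{Sing}(\overline V_{\bn})=\mathrm{Sing}(V_{\bn})$ has dimension $\le 0$. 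Lemma~\ref{lemma_Hooley} with $n=k-3$, $s=0$ gives $\#\overline V_{\bn}(\F_p)=(p^{k-2}-1)/(p-1)+O(p^{(k-2)/2})$, and applied to the section at infinity ($n=k-4$, $s=-1$) gives $(p^{k-3}-1)/(p-1)+O(p^{(k-4)/2})$; subtracting yields $N_p(\bn)=p^{k-3}+O(p^{(k-2)/2})$, hence $T(\bn;p)=O(p^{(k+4)/2})$. When $p$ is of Type~I, Proposition~\ref{prop3.4} says $V_{\bn}$, hence $\overline V_{\bn}$, is smooth of codimension $3$ over $\overline{\F}_p$, so Lemma~\ref{lemma_Hooley} applies with $s=-1$, improving the error to $O(p^{(k-3)/2})$ and giving $T(\bn;p)=O(p^{(k+3)/2})$; moreover every solution of $\bQ(\bfx)\equiv\bn\bmod p$ is then nonsingular, so Hensel's lemma for the smooth codimension-$3$ variety $V_{\bn}$ gives $N_{p^m}(\bn)=p^{k-3}N_{p^{m-1}}(\bn)$ for all $m\ge 2$, whence $T(\bn;p^e)=0$ for $e\ge 2$ by the displayed identity. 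All implied constants arise from Lemma~\ref{lemma_Hooley} (depending only on $k$ and the degree $2$) and from the uniform bound $\dim\mathrm{Sing}(V_{\bn})\le 0$, so they depend only on $\bQ$, not on $\bn$, $p$, or $e$.

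I expect the main obstacle to be precisely this uniform control of $\dim\mathrm{Sing}(V_{\bn})$ for all $\bn$ simultaneously, and in particular the case $\bn\ne\u{0}$, where one must combine the rank lower bound of Condition~\ref{cond3} with the geometric fact that a smooth plane curve of degree $\ge 2$ contains no line; the case $\bn=\u{0}$, the dimension bookkeeping for $\overline V_{\bn}$, and the passage between $N_{p^m}(\bn)$ and $T(\bn;p^e)$ are routine once the geometric facts over $\overline{\F}_p$ are in hand.
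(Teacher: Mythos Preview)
Your proof is correct and follows the same overall architecture as the paper: the identity $T(\bn;p^e)=p^{3e}N(\bn;p^e)-p^{k+3(e-1)}N(\bn;p^{e-1})$ (the paper's Lemma~\ref{lemT2}), the point count for $N(\bn;p)$ via Lemma~\ref{lemma_Hooley} after projectivizing, and Hensel's lemma for Type~I primes with $e\ge 2$.

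The one place where your argument genuinely differs from the paper is the bound $\dim\mathrm{Sing}(\overline V_{\bn})\le 0$ for good primes. The paper (Lemma~\ref{lemma_pTI}) argues purely topologically: since $\overline V_{\bn}$ is smooth along the hyperplane $\{x_0=0\}$ (because $p$ is good), any positive-dimensional component of the projective singular locus would have to meet $\{x_0=0\}$, a contradiction. You instead analyze the affine singular locus directly, parametrizing singular points by $\ba$ on the intersection of the line $\ba\cdot\bn=0$ with the smooth irreducible plane curve $F_{\bQ}=0$, and then invoking Condition~\ref{cond3} over $\overline{\F}_p$ to bound the kernel of each $\ba\cdot\bQ$. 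Your argument is more explicit and in fact yields a concrete finite bound on the number of affine singular points, at the cost of more casework; the paper's argument is shorter and needs neither $F_{\bQ}$ nor Condition~\ref{cond3} at this step. Both are valid, and your route has the mild advantage of making the uniformity in $\bn$ completely transparent. Your verification that $\overline V_{\bn}$ is a complete intersection of dimension $k-3$ (by intersecting components with $\{x_0=0\}$) is a detail the paper leaves implicit.
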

We note that when $p$ is of Type I we have obtained square-root cancellation in all the variables in $T(\u{n};p).$

\subsection{Proof of Proposition \ref{lemT1}}
We will prove an upper bound for $S_q(\ba,\bn)$ where $q$ is a prime power; we will then use this to deduce Proposition \ref{lemT1}.
We start by considering
\begin{align*}
|S_q(\ba)|^2=\sum_{\bfx,\bfy\modd{q}}e_q(\ba\cdot\bQ(\bfx)-\ba\cdot\bQ(\bfy));
\end{align*}
substituting $\bfx=\bfy+\bfz$, we obtain
\begin{align*}
|S_q(\ba)|^2&=\sum_{\bfy,\bfz  \modd{q}} e_q(\ba \cdot \bQ(\bfz)+2\bfz^t(\ba\cdot\bQ)  \bfy)\\
&=q^k \sum_{\substack{\bfz \modd{q}\\ q|2\bfz^t(\ba\cdot\bQ)}} e_q(\ba\cdot\bQ(\bfz)).
\end{align*}
Upon defining the counting function
\begin{equation}\label{Zaq_dfn}
Z(\ba,q)=\#\{\bfz \modd{q}:\ q|2\bfz^t(\ba\cdot\bQ)\},
\end{equation}
we have the upper bound
\begin{equation}\label{eqn5.0}
|S_q(\ba)|^2\leq q^k Z(\ba,q).
\end{equation}
Note that so far this argument holds for any modulus $q$, not necessarily a prime power.

We will bound $Z(\u{a},p^e)$ by using the Smith normal form and the determinant form $F_{\u{Q}}(\u{x}) = \det (\u{x} \cdot \u{Q})$; for this we will require a  maneuver from working modulo $p^e$ to  residue classes modulo $p^{ek}$. What follows differs significantly from the treatment in \cite{HBPierce}, where a simpler treatment was possible. 

We record the following facts about $Z(\u{a},p^e)$.

\begin{lemma}\label{lemZ}
Let $p$ be a prime, $e\geq1$ and $\ba\in (\Z/p^e\Z)^3$. Then, for any $\bb\in (\Z/p^{ke}\Z)^3$ with $\bb\equiv \ba$ modulo $p^e$, we have the bound
\begin{equation*}
Z(\ba,p^e)\leq C_p \gcd (F_{\u{Q}}(\bb),p^{ke}),
\end{equation*}
where $C_p=1$ for $p\neq 2$ and $C_2=2^k$.

Moreover, upon averaging over all $\u{a}$ modulo $p^{e}$ with $(\u{a},p)=1$, we have for $\kappa=1/2$ or $\kappa=1$, 
\[ \sum_{\bstack{ \u{a} \modd{p^{e}}}{(\u{a},p)=1}} Z(\u{a},p^e)^\kappa \leq C_p^\kappa c_p c_\kappa(e) p^{3e},\]
where $C_p$ is as above, $c_p$ (from Lemma \ref{lemF}) is independent of $p$ for $p\ndiv 2 \Disc(F_{\u{Q}})$, and $c_\kappa(e)=4$ if $\kappa=1/2$ and $c_\kappa(e) = ke+1$ if $\kappa=1$. 
\end{lemma}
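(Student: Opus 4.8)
I would prove the two assertions in turn: first the pointwise bound, obtained from the Smith normal form of the symmetric integer matrix $\ba\cdot\bQ$, and then the averaged bound, obtained by summing the pointwise bound over all lifts of $\ba$ modulo $p^{ke}$ and invoking Lemma~\ref{lemF}.

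\emph{Pointwise bound.} I would fix $\ba \bmod p^e$, choose any integer vector $\bb$ with $\bb \equiv \ba \pmod{p^e}$, and set $M := \bb\cdot\bQ$, a symmetric integer matrix; since $\ba\cdot\bQ \equiv M \pmod{p^e}$, we have $Z(\ba,p^e) = \#\{\bfz \bmod p^e : 2M\bfz \equiv \boldsymbol{0} \pmod{p^e}\}$. Writing $M = UDV$ with $U,V \in \mathrm{GL}_k(\Z)$ and $D = \diag(d_1,\ldots,d_k)$ its Smith normal form, the substitution $\bfw = V\bfz$ is a bijection on $(\Z/p^e\Z)^k$ and decouples the congruence into the $k$ one-variable congruences $2d_i w_i \equiv 0 \pmod{p^e}$, giving $Z(\ba,p^e) = \prod_{i=1}^k \gcd(2d_i,p^e)$. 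For odd $p$ this equals $\prod_i \gcd(d_i,p^e)$; for $p=2$ it is at most $2^k \prod_i \gcd(d_i,2^e)$, via $\gcd(2d_i,2^e) \le 2\gcd(d_i,2^e)$; in either case $Z(\ba,p^e) \le C_p \prod_i \gcd(d_i,p^e)$ with $C_p$ as stated. Finally, since $\sum_i v_p(d_i) = v_p(\det M) = v_p(F_{\u{Q}}(\bb))$ (with $v_p(0) = \infty$),
\[
\prod_{i=1}^k \gcd(d_i,p^e) = p^{\sum_i \min(v_p(d_i),e)} \le p^{\min(\sum_i v_p(d_i),\, ke)} = \gcd(F_{\u{Q}}(\bb),p^{ke}),
\]
where $\gcd(0,p^{ke}) = p^{ke}$; this is the first inequality of the lemma, valid for every lift $\bb$.

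\emph{Averaged bound.} Each $\ba \bmod p^e$ with $(\ba,p)=1$ has exactly $p^{3e(k-1)}$ lifts $\bb \bmod p^{ke}$, all of which satisfy $(\bb,p)=1$, and conversely every $\bb \bmod p^{ke}$ with $(\bb,p)=1$ arises from a unique such $\ba$; applying the pointwise bound to each lift gives
\[
\sum_{\substack{\ba \bmod p^e \\ (\ba,p)=1}} Z(\ba,p^e)^\kappa \le \frac{C_p^\kappa}{p^{3e(k-1)}} \sum_{\substack{\bb \bmod p^{ke} \\ (\bb,p)=1}} \gcd(F_{\u{Q}}(\bb),p^{ke})^\kappa .
\]
To estimate the inner sum I would set $m(\bb) = \min(v_p(F_{\u{Q}}(\bb)),ke)$ and, for $0 \le j \le ke$, $A_j = \#\{\bb \bmod p^{ke} : (\bb,p)=1,\ p^j \mid F_{\u{Q}}(\bb)\}$; a layer-cake rearrangement gives $\sum_\bb p^{\kappa m(\bb)} \le A_0 + \sum_{j=1}^{ke} p^{\kappa j} A_j$. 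Since the conditions defining $A_j$ depend only on $\bb \bmod p^j$, each residue lifts in $p^{3(ke-j)}$ ways, so $A_j = p^{3(ke-j)}\,\#\{\bb \bmod p^j : (\bb,p)=1,\ p^j \mid F_{\u{Q}}(\bb)\} \le c_p\, p^{3ke-j}$ by Lemma~\ref{lemF} (with $u=j\ge1$), while $A_0 \le p^{3ke}$. Hence $\sum_\bb \gcd(F_{\u{Q}}(\bb),p^{ke})^\kappa \le p^{3ke}\big(1 + c_p \sum_{j=1}^{ke} p^{(\kappa-1)j}\big)$; the geometric sum is $\le ke$ when $\kappa = 1$ and $\le \sqrt{2}+1$ when $\kappa = 1/2$, and absorbing the leading $1$ into $c_p$ (which we may take $\ge 1$) yields the constants $c_1(e) = ke+1$ and $c_{1/2}(e) = 4$. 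Multiplying through by $p^{3ke}/p^{3e(k-1)} = p^{3e}$ gives the asserted bound.

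\emph{Main obstacle.} The one genuinely delicate point is the pointwise step, where a count of $\bfz$ modulo $p^e$ must be related to the value of $F_{\u{Q}}$ on a lift modulo the \emph{larger} modulus $p^{ke}$: the individual Smith normal form entries $d_i$ are not determined by $\ba \bmod p^e$ (only the product $\prod_i\gcd(2d_i,p^e)$ is), so the argument has to be run on an honest integer representative and the resulting bound shown to be uniform over the choice of lift, while the factor $C_p = 2^k$ at $p=2$ must be carried through. Given Lemma~\ref{lemF}, the averaging step is then routine.
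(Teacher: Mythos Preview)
Your proof is correct and matches the paper's approach essentially line for line. For the pointwise bound the paper simply cites Proposition~7 of Loxton, which is precisely the Smith normal form argument you write out (diagonalize $\bb\cdot\bQ$ over $\Z$, count solutions coordinate-wise, and bound $\sum_i\min(v_p(d_i),e)\le\min(\sum_i v_p(d_i),ke)$); for the averaged bound the paper performs the identical lift-and-average, the same overcounting inequality $\sum_\bb p^{\kappa m(\bb)}\le\sum_{f=0}^{ke}p^{\kappa f}A_f$, and the same application of Lemma~\ref{lemF}, arriving at $c_p\,p^{3ke}\sum_{f=0}^{ke}p^{(\kappa-1)f}$ and hence the stated constants $c_\kappa(e)$.
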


Note that this last statement serves the role of proving, roughly speaking, that on average over $\u{a} \modd{p^e}$ with $(\u{a},p)=1$,  $Z(\u{a},p^e)$ is $O(1)$.
For the moment we assume this lemma and proceed with the proof of Proposition \ref{lemT1}.
We recall that
\begin{equation*}
T(\bn;p^e)= \sum_{\substack{1\leq \ba\leq p^e\\ (\ba,p)=1}}S_{p^e}(\ba;\bn),
\end{equation*}
so that taking absolute values and applying equation (\ref{eqn5.0}) gives
\begin{equation*}
|T(\bn;p^e)|\leq \sum_{\substack{1\leq \ba\leq p^e\\ (\ba,p)=1}}p^{ke/2}Z(\ba,p^e)^{1/2}.
\end{equation*}
An application of Lemma \ref{lemZ} with $\kappa=1/2$ thus shows that 
\begin{equation*}
|T(\bn;p^e)| \leq  4C_p^{1/2}c_p p^{3e +ke/2},
	\end{equation*}
	as desired.

\begin{proof}[Proof of Lemma \ref{lemZ}]
The first part of the lemma is a consequence of Proposition 7 in \cite{Loxton}.
For the second part of the lemma, we argue as follows. After averaging over all $\bb$ modulo $p^{ke}$ that reduce to each $\u{a}$ modulo $p^e$,  the first part of the lemma implies
\beq\label{ZF_sum}
\sum_{\bstack{ \u{a} \modd{p^{e}}}{(\u{a},p)=1}} Z(\u{a},p^e)^\kappa  \leq C_p^\kappa p^{3e(1-k)} \sum_{\substack{\u{b} \modd{p^{ke}}\\ (\bb,p)=1}} \gcd(F_{\u{Q}}(\bb),p^{ke})^{\kappa}.
\eeq
Next we note that
\begin{equation*}
\begin{split}
 \sum_{\bstack{ \u{b} \modd{p^{ke}}}{(\u{b},p)=1}}\gcd(F_{\u{Q}}(\bb),p^{ke})^{\kappa} 
&\leq \sum_{f=0}^{ke}p^{\kappa f}\#\{\u{b} \modd{p^{ke}}: (\bb,p)=1,\ p^f|F_{\u{Q}}(\bb)\}\\
&= \sum_{f=0}^{ke}p^{\kappa f} p^{3(ke-f)}\#\{\u{b} \modd{p^f}: (\bb,p)=1,\ p^f|F_{\u{Q}}(\bb)\}.
\end{split}
\end{equation*}
We now use Lemma \ref{lemF} to understand how often $F_{\u{Q}}(\u{b})$ is divisible by the prime power $p^f$ as $\u{b}$ varies over appropriate residue classes; we obtain
\begin{equation*}
\begin{split}
 \sum_{\bstack{ \u{b} \modd{p^{ke}}}{(\u{b},p)=1}}\gcd(F_{\u{Q}}(\bb),p^{ke})^{\kappa} 
 &\leq  \sum_{f=0}^{ke}p^{\kappa f} p^{3(ke-f)}(c_p p^{2f})\\
 & =  c_p p^{3ke} \sum_{f=0}^{ke} p^{(\kappa-1)f} \\
 & =  c_p c_\kappa(e) p^{3ke},
\end{split}
\end{equation*}
where $c_p$ is as in Lemma \ref{lemF} and $c_\kappa(e)$ is as defined in the statement of the present lemma. We apply this in (\ref{ZF_sum}) to conclude that 
\[\sum_{\bstack{ \u{a} \modd{p^{e}}}{(\u{a},p)=1}} Z(\u{a},p^e)^\kappa \leq C_p^{\kappa} p^{3e(1-k)}  c_p c_\kappa(e) p^{3ke} = C_p^{\kappa}c_p c_\kappa(e)p^{3e},\]
as claimed.

\end{proof}

\subsection{Refined bounds for good primes}
In order to obtain better bounds on $T(\bn;p^e)$ for good primes, we first give an alternative description of $T(\bn;p^e)$  in terms of counting functions related to the system of congruences $\bQ(\bfx)\equiv \bn$. More precisely we define
\begin{equation*}
N(\bn;q)=\#\{\bfx \modd{q}: \bQ(\bfx)\equiv \bn \modd{q}\}.
\end{equation*}

\begin{lemma}\label{lemT2}
Let $p$ be a prime and $e\geq 1$. Then 
\begin{equation*}
T(\bn;p^e)=p^{3e} N(\bn;p^e)-p^{k+3(e-1)}N(\bn;p^{e-1}).
\end{equation*}
\end{lemma}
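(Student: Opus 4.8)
The plan is to prove the identity by a direct manipulation of exponential sums, detecting the congruence condition $\bQ(\bfx) \equiv \bn \pmod{p^e}$ via additive characters. First I would open up the definition of $T(\bn;p^e)$ from (\ref{Tnq_dfn}): writing out $S_{p^e}(\ba;\bn) = S_{p^e}(\ba) e_{p^e}(-\ba\cdot\bn)$ and then expanding $S_{p^e}(\ba)$ as a sum over $\bfx \pmod{p^e}$, one gets
\begin{equation*}
T(\bn;p^e) = \sum_{\substack{\ba \modd{p^e}\\ (\ba,p)=1}} \sum_{\bfx \modd{p^e}} e_{p^e}(\ba\cdot(\bQ(\bfx)-\bn)).
\end{equation*}
Swapping the order of summation, the inner sum over $\ba$ restricted to $(\ba,p)=1$ is a Ramanujan-type sum in each of the three coordinates: using inclusion–exclusion, $\sum_{(\ba,p)=1} = \sum_{\text{all }\ba} - \sum_{p\mid\ba}$, where the full sum over $\ba \pmod{p^e}$ detects $p^e \mid \bQ(\bfx)-\bn$ (contributing $p^{3e}$ when this holds), and the sum over $\ba$ with $p\mid\ba$ reparametrizes as a full sum over $\ba' \pmod{p^{e-1}}$ and detects $p^{e-1}\mid \bQ(\bfx)-\bn$ (contributing $p^{3(e-1)}$ when this holds).

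Carrying this out, I would obtain
\begin{equation*}
T(\bn;p^e) = p^{3e}\, \#\{\bfx \modd{p^e} : \bQ(\bfx)\equiv\bn \modd{p^e}\} - p^{3(e-1)}\, \#\{\bfx \modd{p^e} : \bQ(\bfx)\equiv\bn \modd{p^{e-1}}\}.
\end{equation*}
The first term is exactly $p^{3e} N(\bn;p^e)$. For the second term, the point is that the count of $\bfx \pmod{p^e}$ with $\bQ(\bfx)\equiv\bn \pmod{p^{e-1}}$ only depends on $\bfx \pmod{p^{e-1}}$ (since each $Q_i$ has integer matrix, $Q_i(\bfx + p^{e-1}\bfz) \equiv Q_i(\bfx) \pmod{p^{e-1}}$), so each residue class mod $p^{e-1}$ lifts to exactly $p^k$ classes mod $p^e$. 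Hence $\#\{\bfx \modd{p^e} : \bQ(\bfx)\equiv\bn \modd{p^{e-1}}\} = p^k N(\bn;p^{e-1})$, and the second term becomes $p^{k+3(e-1)} N(\bn;p^{e-1})$, giving the claimed identity.

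I do not expect a serious obstacle here; this is a routine orthogonality computation. The one point requiring a little care is the bookkeeping in the inclusion–exclusion over the three-dimensional vector $\ba$ — one must correctly identify that the "$p\mid\ba$" piece (meaning $p$ divides every coordinate $a_1,a_2,a_3$, which is the complement of $(\ba,p)=1$ in our gcd convention from Section \ref{sec_notation}) reparametrizes cleanly, and that the resulting character sum over $\ba' \pmod{p^{e-1}}$ detects divisibility by $p^{e-1}$ rather than some other power. A secondary point is invoking the integrality of the quadratic form matrices to justify the $p^k$-to-$1$ lifting; this uses precisely the convention, stated in Section \ref{sec_notation}, that $Q_i(\bfx) = \bfx^t Q_i \bfx$ with $Q_i$ an integral matrix. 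Both points are standard, so the proof should be short.
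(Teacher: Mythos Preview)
Your proof is correct and is essentially the same orthogonality/inclusion--exclusion argument the paper uses: both split the sum over $\ba$ with $(\ba,p)=1$ as the full sum over $\ba \modd{p^e}$ minus the contribution from $p\mid\ba$, identify each piece via character orthogonality with a point count, and use the $p^k$-to-$1$ lifting from $\bfx \modd{p^{e-1}}$ to $\bfx \modd{p^e}$ to obtain the second term.
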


\begin{proof}
First we observe that
\begin{equation*}
\sum_{1\leq \ba\leq p^e}S_{p^e}(\ba;\bn)= p^{3e} N(\bn;p^e).
\end{equation*}
Hence we can rewrite the exponential sum $T(\bn;p^e)$ as
\begin{equation*}
\begin{split}
T(\bn;p^e)&= \sum_{1\leq \ba \leq p^e} S_{p^e}(\ba;\bn)- \sum_{1\leq \ba\leq p^{e-1}}S_{p^{e-1}}(p\ba;\bn)\\ &= p^{3e}N(\bn;p^e)-p^{k+3(e-1)}N(\bn;p^{e-1}).
\end{split}
\end{equation*}
\end{proof}

Analogous to \cite{HBPierce}, we will use explicit counts for $N(\bn;p^e)$ to give upper bounds on $T(\bn;p^e)$. The main ingredient for this is an application of Deligne's estimates to  $N(\bn;p)$.

\begin{lemma}\label{lemN1}
For $p$ a good prime,
\begin{equation*}
N(\bn;p)=p^{k-3}+O\left(p^{\frac{k-2}{2}}\right).
\end{equation*}
For $p$ a prime of Type I,
\begin{equation*}
N(\bn;p)=p^{k-3}+O\left(p^{\frac{k-3}{2}}\right).
\end{equation*}
\end{lemma}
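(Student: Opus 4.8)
The strategy is to realize $N(\bn;p)$ as a point count on a variety over $\F_p$ and invoke the Lang--Weil and Deligne/Hooley bounds recorded in Lemmas \ref{geolem4} and \ref{lemma_Hooley}. First I would pass from the affine count to a projective one: the affine variety $W_{\bn}\subset \A^k_{\F_p}$ defined by $Q_i(\bfx)=n_i$ for $1\le i\le 3$ is (the affine cone over, with hyperplane at infinity adjusted) closely related to the projective variety $\overline{W}_\bn\subset \P^k_{\F_p}$ given by $Q_i(\bfx)=n_i x_{k+1}^2$, $1\le i\le 3$. Counting $\F_p$-points of the latter and subtracting the contribution at infinity ($x_{k+1}=0$, which is the projective intersection $\cap_i\{Q_i=0\}$, nonsingular of codimension $3$ since $p$ is good, by the Lemma following the definition of good primes) will give $N(\bn;p)$ up to the stated error, since the locus at infinity contributes $p^{k-3}/(p-1)\cdot(1+o(1))$ worth of projective points, i.e. $O(p^{k-3})$ affine points — but one must be careful: we want the \emph{affine} count exactly, so it is cleaner to count $\F_p$-points of $\overline W_\bn$, multiply appropriately, and note that $N(\bn;p) = \#W_\bn(\F_p)$ where $W_\bn$ is the affine slice $x_{k+1}=1$.

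Concretely, I would argue as follows. Set $\overline W_\bn = \{[\bfx:x_{k+1}]\in\P^k : Q_i(\bfx)=n_ix_{k+1}^2,\ i=1,2,3\}$. Since $p$ is good, $\overline W_\bn$ is a complete intersection of dimension $k-3$; its singular locus is governed by the polynomial $H_{\u Q}$ of Proposition \ref{prop3.4} together with the (empty, by goodness) singular locus at infinity. Thus: if $p$ is good, $\overline W_\bn$ has singular locus of dimension at most $0$ (a finite set of points where the Jacobian criterion fails in the affine part), so Lemma \ref{lemma_Hooley} with $n=k-3$, $s=0$ gives $\#\overline W_\bn(\F_p) = (p^{k-2}-1)/(p-1) + O(p^{(k-2)/2})$. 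If moreover $p$ is of Type I, then $p\ndiv H_{\u Q}(\bn)$, so the affine part is smooth, and combined with smoothness at infinity the whole $\overline W_\bn$ is smooth, giving $s=-1$ and $\#\overline W_\bn(\F_p) = (p^{k-2}-1)/(p-1) + O(p^{(k-3)/2})$. Finally, writing the projective count as the count on $x_{k+1}=0$ plus $p$ times... no: more simply, $\#\overline W_\bn(\F_p) = \#(x_{k+1}=0\text{ part}) + \#(x_{k+1}=1\text{ part})$, and the $x_{k+1}=1$ part is exactly $N(\bn;p)$. The $x_{k+1}=0$ part is the projective count of the good nonsingular complete intersection $\cap_i\{Q_i=0\}$ of dimension $k-4$, which by Lemma \ref{lemma_Hooley} is $(p^{k-3}-1)/(p-1)+O(p^{(k-3)/2}) = p^{k-4}+O(p^{(k-3)/2})$ for $k\ge 5$ (and trivially $O(p^{k-4})$ in general via Lemma \ref{geolem4}). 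Subtracting, $N(\bn;p) = (p^{k-2}-1)/(p-1) - p^{k-4} + O(p^{(k-2)/2})$; the main term simplifies to $p^{k-3}+p^{k-4}+\cdots+1 - p^{k-4} + O(p^{(k-3)})$...

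I need to be more careful with the arithmetic of $(p^{k-2}-1)/(p-1)$, and this is in fact the main obstacle: tracking which error term dominates and confirming the main term really is $p^{k-3}$. Writing $(p^{k-2}-1)/(p-1) = p^{k-3}+p^{k-4}+\cdots+1$ and subtracting the $x_{k+1}=0$ contribution $(p^{k-3}-1)/(p-1)=p^{k-4}+\cdots+1$ yields exactly $p^{k-3}$ for the main term, with the two error terms $O(p^{(k-2)/2})$ (good) or $O(p^{(k-3)/2})$ (Type I) from the $x_{k+1}=1$-inclusive count and $O(p^{(k-3)/2})$ from the infinity count combining to give the claimed $O(p^{(k-2)/2})$ resp. $O(p^{(k-3)/2})$. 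Thus the delicate point is purely bookkeeping: one must use the \emph{refined} Hooley estimate (Lemma \ref{lemma_Hooley}) rather than crude Lang--Weil for the top-dimensional piece, and merely Lang--Weil for the lower-dimensional piece at infinity, and verify that the geometric input — goodness gives smoothness at infinity and (dim $\le 0$)-singular locus overall, Type I gives full smoothness — is exactly what Proposition \ref{prop3.4} and the goodness lemmas provide. An alternative, perhaps cleaner, presentation avoids the projective detour entirely by directly applying a weighted version: but I expect the projective complete-intersection route is the intended one, matching the structure of \cite{HBPierce}.
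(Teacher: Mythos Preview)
Your approach is correct and is essentially the same as the paper's: both projectivize to the variety $\{\bQ(\bfx)=\bn\,x_{k+1}^2\}\subset\P^k$, invoke Lemma~\ref{lemma_pTI} to control its singular locus (smooth if Type~I, singular locus of dimension $\le 0$ if merely good), apply Hooley's form of Deligne's bound (Lemma~\ref{lemma_Hooley}), and subtract the hyperplane-at-infinity contribution, which is the smooth complete intersection $\cap_i\{Q_i=0\}$. The only cosmetic difference is that the paper counts affine $\F_p$-points of the cone, writing $N(\bn;p)=(p-1)^{-1}(N^{(1)}(\bn;p)-N^{(2)}(\bn;p))$, whereas you count projective $\F_p$-points directly and split off the $x_{k+1}=0$ locus; the two formulations are equivalent and your final arithmetic $(p^{k-2}-1)/(p-1)-(p^{k-3}-1)/(p-1)=p^{k-3}$ is the same one the paper performs. (One small slip: the error for the smooth hyperplane section of dimension $k-4$ is $O(p^{(k-4)/2})$, not $O(p^{(k-3)/2})$, but this is dominated by the main error term anyway.)
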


Before proving this, we record a brief lemma.
\begin{lemma}\label{lemma_pTI}
If $p$ is of Type I, then the projective variety $V$ defined over $\F_p$ by 
\[ \u{Q}(\xbf) = \u{n}x_0^2
\]
is smooth.
If $p$ is of Type II, then $V$ has singular locus of dimension at most $0$ over $\F_p$. 
\end{lemma}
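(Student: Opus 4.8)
The plan is to determine the singular locus $V_{\mathrm{sing}}$ of $V$ (geometrically, i.e.\ over $\overline{\F}_p$) explicitly, and then conclude: for Type~I by invoking the characterization of $H_{\u{Q}}$ in Proposition~\ref{prop3.4}, and for Type~II by a short dimension count on the degeneracy incidence variety attached to $F_{\u{Q}}$. First I would record the ambient facts. Write $\u{n}_p$ for $\u{n}\bmod p$; since $p$ is good, $p\nmid 2\Disc(F_{\u{Q}})$, so $\u{Q}\bmod p$ satisfies Condition~\ref{cond2} over $\overline{\F}_p$, hence by Proposition~\ref{prop1} also Conditions~\ref{cond1} and~\ref{cond3}, and the plane curve $\mathcal{C}\subset\P^2$ cut out by $F_{\u{Q}}=0$ is a smooth curve over $\overline{\F}_p$ --- hence irreducible of degree $k$, and in particular (as $k\geq 2$) not a line. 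Writing the defining equations of $V$ as $g_i=Q_i(\xbf)-n_ix_0^2$, the $3\times(k+1)$ Jacobian at $[x_0:\xbf]$ is $(-2n_ix_0\mid J_{\u{Q}}(\xbf))$. Two observations localize $V_{\mathrm{sing}}$: (a) $V$ has no singular point with $x_0=0$, since there the Jacobian is $(0\mid J_{\u{Q}}(\xbf))$ with $\xbf\neq\boldsymbol{0}$ lying on $\cap_i\{Q_i=0\}$, which Condition~\ref{cond1} forbids from having $\rank J_{\u{Q}}(\xbf)<3$; and (b) if $[1:\xbf]\in V$ and $\u{\lambda}^tJ_{\u{Q}}(\xbf)=0$ with $\u{\lambda}\neq\u{0}$, the Euler relation gives $\u{\lambda}\cdot\u{n}_p=\u{\lambda}\cdot\u{Q}(\xbf)=\xbf^t(\u{\lambda}\cdot\u{Q})\xbf=0$, so $\u{\lambda}^t$ annihilates the first Jacobian column as well. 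Hence $V_{\mathrm{sing}}$ lies in the chart $x_0\neq 0$ and is identified with $\mathcal{S}:=\{\xbf\in\overline{\F}_p^k:\u{Q}(\xbf)=\u{n}_p,\ \rank J_{\u{Q}}(\xbf)<3\}$.

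For Type~I, $p\nmid H_{\u{Q}}(\u{n})$ means $H(\Abf\bmod p,\u{n}_p)\neq 0$ in $\overline{\F}_p$, so Proposition~\ref{prop3.4} (applicable since $p$ is odd and $\u{Q}\bmod p$ satisfies Condition~\ref{cond2} over $\overline{\F}_p$) gives $\mathcal{S}=\emptyset$; thus $V_{\mathrm{sing}}=\emptyset$, i.e.\ $V$ is smooth (indeed a smooth complete intersection of dimension $k-3$).

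For Type~II I would bound $\dim\mathcal{S}$. Assume first $\u{n}_p\neq\u{0}$. For $\xbf\in\mathcal{S}$ we have $\xbf\neq\boldsymbol{0}$, and $(\u{\lambda}\cdot\u{Q})\xbf=0$ for some $\u{\lambda}\neq\u{0}$; singularity of $\u{\lambda}\cdot\u{Q}$ forces $F_{\u{Q}}(\u{\lambda})=0$, so $[\u{\lambda}]\in\mathcal{C}$, and by (b) also $\u{\lambda}\cdot\u{n}_p=0$. Hence $[\u{\lambda}]$ ranges over the finite set $\mathcal{C}\cap\{\u{\mu}\cdot\u{n}_p=0\}$ (at most $k$ points, since $\mathcal{C}$ is an irreducible non-linear curve of degree $k$). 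For each such $[\u{\lambda}]$, Condition~\ref{cond3} together with $F_{\u{Q}}(\u{\lambda})=0$ gives $\rank(\u{\lambda}\cdot\u{Q})=k-1$, so $\ker(\u{\lambda}\cdot\u{Q})=\overline{\F}_p v$ is a line, and $\u{Q}(tv)=t^2\u{Q}(v)=\u{n}_p$ has at most two solutions $t$. So $\mathcal{S}$ is finite. If instead $\u{n}_p=\u{0}$, Condition~\ref{cond1} shows the only $\xbf$ with $\u{Q}(\xbf)=\u{0}$ and $\rank J_{\u{Q}}(\xbf)<3$ is $\xbf=\boldsymbol{0}$, so $V_{\mathrm{sing}}=\{[1:0:\cdots:0]\}$. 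In either case $\dim V_{\mathrm{sing}}\leq 0$.

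The one delicate point is the Type~II bound: a priori the fiber over $\u{n}_p$ of the map $\xbf\mapsto\u{Q}(\xbf)$ on the surface $\{\rank J_{\u{Q}}<3\}$ --- which is fibered in lines over $\mathcal{C}$, precisely the incidence variety used in the proof of Proposition~\ref{prop3.4} --- could be positive-dimensional for special $\u{n}_p$, even though it is empty for generic $\u{n}_p$. What rescues the argument is observation (b): the Euler relation forces $\u{\lambda}\cdot\u{n}_p=0$, collapsing $\mathcal{C}$ to finitely many kernel-lines, on each of which $\u{Q}$ restricts to a single-variable quadratic map with at most two preimages of $\u{n}_p$. Finally, for the later application of Lemma~\ref{lemma_Hooley} in the proof of Lemma~\ref{lemN1}, I would also note that $V$ is in both cases a complete intersection of pure dimension $k-3$: Krull's principal ideal theorem gives every component dimension $\geq k-3$, while any component of strictly larger dimension would lie in $V_{\mathrm{sing}}$, contradicting $\dim V_{\mathrm{sing}}\leq 0$.
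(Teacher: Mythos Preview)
Your argument is correct. For Type~I it coincides with the paper's proof: both split into $x_0=0$ (handled by Condition~\ref{cond1}) and $x_0\neq 0$ (handled by Proposition~\ref{prop3.4}), with your observation~(b) making explicit via Euler's identity that the full Jacobian at $[1:\xbf]$ has the same rank as $J_{\u{Q}}(\xbf)$.

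For Type~II your route is genuinely different. The paper argues in one line: if $V_{\mathrm{sing}}$ had positive dimension it would meet the hyperplane $\{x_0=0\}$ in $\P^k$, contradicting~(a). You instead give an explicit parametrization of $\mathcal{S}$: the Euler relation forces any left null vector $\u{\lambda}$ of $J_{\u{Q}}(\xbf)$ to satisfy $\u{\lambda}\cdot\u{n}_p=0$, so $[\u{\lambda}]$ lies in the finite set $\mathcal{C}\cap\{\u{\mu}\cdot\u{n}_p=0\}$, and for each such $[\u{\lambda}]$ the one-dimensional kernel yields at most two points of $\mathcal{S}$. The paper's hyperplane-section argument is shorter and uses only~(a); your argument is more constructive, yields the quantitative bound $|\mathcal{S}|\leq 2k$, and makes transparent the connection to the incidence variety $I\subset\A^k\times\mathcal{C}$ appearing in the proof of Proposition~\ref{prop3.4}. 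Your closing remark that $V$ is a complete intersection of pure dimension $k-3$ is a point the paper leaves implicit but which is indeed needed for the application of Lemma~\ref{lemma_Hooley}.
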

Briefly, the proof proceeds as follows: points on the  intersection of $V$ with the hyperplane $x_0=0$ are nonsingular since $p$ is good, and points contained in the open set $x_0\neq 0$ are nonsingular since $p$ is assumed to be of Type I, so that $p\ndiv H_{\u{Q}}(\u{n})$ and hence $\rank J_{\u{Q}}(\xbf) =3$ over $\F_p$, for every point $\xbf$ such that $\u{Q}(\xbf)=\u{n}$. 
If $p$ is of Type II, then we claim that the singular locus of the projective variety $V$ has dimension at most $0$. If it had positive dimension, then the intersection of the singular locus with the hyperplane $x_0=0$ would be non-empty, which is impossible since $p$ is a good prime and the resulting intersection is smooth.

\begin{proof}[Proof of Lemma \ref{lemN1}]
We projectivize the problem and rewrite the counting function $N(\bn;p)$ as
\begin{equation*}
N(\bn;p)=(p-1)^{-1}(N^{(1)}(\bn;p)- N^{(2)}(\bn;p)),
\end{equation*}
with counting functions of the form
\begin{equation*}
N^{(1)}(\bn;p)= \#\{(x_0,\bfx)\in \F_p^{k+1}: \bQ(\bfx)=\bn x_0^2\},
\end{equation*}
and
\begin{equation*}
N^{(2)}(\bn;p)= \#\{\bfx\in \F_p^k: \bQ(\bfx)=\u{0}\}.
\end{equation*}
If $p$ is good, then the projective variety $\bQ(\bfx)=\u{0}$ is smooth over $\F_p$, and hence Deligne's bound (in the form of Lemma \ref{lemma_Hooley} with $s=-1$) delivers an asymptotic of the form
\begin{equation*}
N^{(2)}(\bn;p)= p^{k-3}+O\left( p^{\frac{k-2}{2}}\right).
\end{equation*}
In order to give asymptotics for the counting function $N^{(1)}(\bn;p)$ we need to distinguish between primes $p$ of Type I and Type II. If $p$ is of Type I, then the projective variety given by $\bQ(\bfx)=\bn x_0^2$ is smooth, by Lemma \ref{lemma_pTI}. Hence Deligne's bound (Lemma \ref{lemma_Hooley}) delivers in this case
\begin{equation*}
N^{(1)}(\bn;p)= p^{k-2}+O\left(p^{\frac{k-1}{2}}\right).
\end{equation*}
If $p$ is of Type II, then we recall from Lemma \ref{lemma_pTI} that the singular locus of the projective variety $\bQ(\bfx)=\bn x_0^2$ has dimension at most $0$, so we apply Hooley's extension of the Deligne bound (Lemma \ref{lemma_Hooley} with $s \leq 0$) to obtain
\begin{equation*}
 N^{(1)}(\bn;p)= p^{k-2}+O\left(p^{\frac{k}{2}}\right).
\end{equation*}

We now assemble these results. In the case of $p$ being of Type I we  compute
\begin{equation*}
N(\bn;p)= (p-1)^{-1}(p^{k-2}-p^{k-3})+O\left(p^{\frac{k-3}{2}}\right) = p^{k-3}+O\left(p^{\frac{k-3}{2}}\right).
\end{equation*}
Similarly, we obtain for $p$ of Type II an expression for $N(\bn;p)$ of the form
\begin{equation*}
N(\bn;p)= p^{k-3} +O\left(p^{\frac{k-2}{2}}\right).
\end{equation*}
\end{proof}

With Lemma \ref{lemN1} in hand, we may quickly  deduce the upper bounds on $T(\bn;p^e)$ for good primes $p$ given in Proposition \ref{lemT4}.
By Lemma \ref{lemT2}, we have
\begin{equation*}
T(\bn;p)= p^3N(\bn;p)-p^k,
\end{equation*}
and hence the estimates for $T(\bn;p)$ follow from Lemma \ref{lemN1}. If $e\geq 2$ and $p$ is of Type I, then one has the recursion
\begin{equation*}
N(\bn;p^e)=p^{k-3}N(\bn;p^{e-1}),
\end{equation*}
which is a consequence of Hensel's lemma. Together with Lemma \ref{lemT2} this establishes $T(\bn;p^e)=0$ for $e\geq 2$ and $p$ of Type I.

\subsection{Lower bounds for local densities}
In order to provide lower bounds for the singular series we need to understand the local densities 
\begin{equation}\label{loc_dens}
\sig_p(\bn)=\sum_{e=0}^\infty p^{-ek}T(\bn;p^e).
\end{equation}
Note that by Proposition \ref{lemT1} we have $T(\bn;p^e)\ll p^{e(3+k/2)}$, and hence $\sig_p(\bn)$ is absolutely convergent for $k>6$.\par
The goal of this section is to provide a lower bound on $\sig_p(\bn)$ in terms of $\bn$, for any prime $p$.
We will prove:
\begin{proposition}\label{prop5.2}
There exists a positive real number $\alptil$ such that for any prime $p$, if there exists a solution $\xbf_0 \in \Z_p^{k}$ to $\u{Q}(\xbf) \con \u{n}$ in $\Z_p$, then
\begin{equation*}
\sig_p(\bn)\geq \ome_p |H_{\u{Q}}(\bn)|_p^{2\alptil (k-3)},
\end{equation*}
for some constant $\ome_p$ depending on $p$ and the system of quadratic forms $\bQ$.
\end{proposition}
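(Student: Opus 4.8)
The plan is to reduce the statement to a lower bound for the number of solutions modulo $p^E$, and then to control that quantity via a Nullstellensatz estimate. The telescoping identity furnished by Lemma~\ref{lemT2} (for $e\geq 1$ one has $p^{-ek}T(\bn;p^e)=p^{-e(k-3)}N(\bn;p^e)-p^{-(e-1)(k-3)}N(\bn;p^{e-1})$, and the $e=0$ term contributes $T(\bn;1)=N(\bn;1)=1$) collapses the defining series of $\sig_p(\bn)$ to
\[
\sig_p(\bn)=\lim_{E\to\infty}p^{-E(k-3)}N(\bn;p^E),
\]
the series being absolutely convergent since $k>6$ by Proposition~\ref{lemT1}. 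In particular $\sig_p(\bn)\geq 0$, so if $H_{\u{Q}}(\bn)=0$ there is nothing to prove; from now on assume $H_{\u{Q}}(\bn)\neq 0$ and put $h=v_p(H_{\u{Q}}(\bn))<\infty$, so that $|H_{\u{Q}}(\bn)|_p=p^{-h}$. It thus suffices to bound $N(\bn;p^E)$ from below for large $E$.

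Next I would measure how singular the given point $\xbf_0\in\Z_p^k$ can be. Let $M_1(\bfx),\dots,M_\nu(\bfx)$ be the $3\times 3$ minors of the Jacobian $J_{\u{Q}}(\bfx)$ and set $\del=\min_j v_p\bigl(M_j(\xbf_0)\bigr)$. The crucial point is the inequality $\del\leq v_p(C)+m\,h$, with $m$ and $C$ depending only on $\u{Q}$. This follows from a Nullstellensatz certificate of the kind recorded in Lemma~\ref{lem5.6}: since $H_{\u{Q}}(\u{t})$ vanishes on the closed subvariety of $\A^k\times\A^3$ cut out by $Q_i(\bfx)-t_i$ ($1\leq i\leq 3$) together with $M_1(\bfx),\dots,M_\nu(\bfx)$, and since $H_{\u{Q}}$ is constructed with integral coefficients (Proposition~\ref{prop3.4} and Section~\ref{sec_appendix}), there are $m=m(\u{Q})\in\N$, $C=C(\u{Q})\in\Z\setminus\{0\}$ and $a_i,b_j\in\Z[\bfx,\u{t}]$ with
\[
C\,H_{\u{Q}}(\u{t})^m=\sum_{i=1}^3 a_i(\bfx,\u{t})\bigl(Q_i(\bfx)-t_i\bigr)+\sum_{j=1}^{\nu}b_j(\bfx,\u{t})\,M_j(\bfx).
\]
Evaluating at $(\bfx,\u{t})=(\xbf_0,\bn)$ kills the first sum because $\u{Q}(\xbf_0)=\bn$, leaving $C\,H_{\u{Q}}(\bn)^m=\sum_j b_j(\xbf_0,\bn)M_j(\xbf_0)$; taking $p$-adic valuations and using $v_p\bigl(b_j(\xbf_0,\bn)\bigr)\geq 0$ (as $\xbf_0\in\Z_p^k$, $\bn\in\Z^3$) gives $\del\leq v_p(C)+m\,h$, and in particular $\del<\infty$, consistent with the fact that $H_{\u{Q}}(\bn)\neq 0$ forces $\rank J_{\u{Q}}(\xbf_0)=3$ by Proposition~\ref{prop3.4} applied over $\overline{\Q}_p$.

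The remaining step is a $p$-adic Hensel lifting around the exact solution $\xbf_0$, carried out just as the recursive step in the proof of Lemma~\ref{Hensel} but now for the system of three quadratics in $k$ variables: writing $\bfx=\xbf_0+p^u\bfh$ and using the exact expansion $Q_i(\xbf_0+p^u\bfh)=Q_i(\xbf_0)+p^u\nabla Q_i(\xbf_0)\cdot\bfh+p^{2u}Q_i(\bfh)$ (valid at every prime, including $p=2$) together with a $3\times 3$ Jacobian minor at $\xbf_0$ of valuation $\del$, one solves for three of the coordinates of $\bfh$ in terms of the remaining $k-3$ once $u$ is past a critical level of size $O(\del)$. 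This yields $N(\bn;p^E)\geq p^{(k-3)(E-2\del-1)}$ for all $E\geq 2\del+1$, and hence
\[
\sig_p(\bn)=\lim_{E\to\infty}p^{-E(k-3)}N(\bn;p^E)\;\geq\;p^{-(k-3)(2\del+1)}.
\]
Combining this with $\del\leq v_p(C)+m\,h$ gives $\sig_p(\bn)\geq p^{-(k-3)(2v_p(C)+1)}\,|H_{\u{Q}}(\bn)|_p^{2m(k-3)}$, so the proposition holds with $\alptil=m$ and $\ome_p=p^{-(k-3)(2v_p(C)+1)}$, a quantity depending only on $p$ and $\u{Q}$ and equal to $p^{-(k-3)}$ for all but the finitely many $p\mid C$.

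The one genuinely substantial ingredient is the bound $\del\leq v_p(C)+m\,h$. It requires an \emph{effective} Nullstellensatz certificate for $H_{\u{Q}}$ with \emph{integer} coefficients, whose exponent $m$ and clearing constant $C$ depend only on $\u{Q}$, uniformly in $\bn$ and in $p$; this is exactly why $H_{\u{Q}}$ must be built integrally (Section~\ref{sec_appendix}) and why the Nullstellensatz-type statement Lemma~\ref{lem5.6} is isolated. By contrast, the telescoping identity and the $p$-adic lifting are routine, and the dependence of $C$ on $p$ is harmlessly absorbed into $\ome_p$.
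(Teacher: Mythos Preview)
Your argument is correct and follows essentially the same route as the paper's: telescope via Lemma~\ref{lemT2} to $\sig_p(\bn)=\lim_E p^{-E(k-3)}N(\bn;p^E)$, bound the Jacobian minor valuation $\del$ at $\xbf_0$ by a Nullstellensatz certificate (this is exactly Lemma~\ref{lem5.6}, with your clearing constant $C$ playing the role of the paper's $b_p^{-1}$), and then do a multivariable Hensel lift to get $N(\bn;p^E)\ge p^{(k-3)(E-2\del-1)}$. The only cosmetic difference is that the paper spells out the Hensel step in full (splitting $\bfx=\xbf_0+(\bfw,\bfy)$ and invoking Hensel for the three-variable system in $\bfw$), whereas you sketch it by analogy with Lemma~\ref{Hensel}; the substance is identical.
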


In order to prove Proposition \ref{prop5.2} we will require a lemma on the singularity of the Jacobian matrix at a local solution to $\u{Q}(\xbf) = \u{n}$. We recall that the Jacobian matrix is given by 
\[ J_{\u{Q}}(\bfx) = \left( \begin{array}{c}
	\nabla Q_1 (\bfx)\\
	\nabla Q_2(\bfx)\\
	 \nabla Q_3(\bfx)\end{array} \right).\]
Given any three distinct column indices $1 \leq i,j,\ell \leq k$ we will let  $\Del_{ij\ell}(\bfx)$ denote the determinant of the corresponding $3\times 3$ minor of $J_{\u{Q}}(\xbf)$.
The key point is  that for any solution $\xbf_0$ to the system of equations $\bQ(\bfx)=\bn$ over $\Z_p$ with $H_{\u{Q}}(\bn)\neq 0$, the corresponding Jacobian matrix $J_{\u{Q}}(\xbf_0)$ cannot be too singular, in the sense that there is some $3\times 3$ minor whose determinant is divisible only by a bounded power of $p$ depending on $H_{\u{Q}}(\bn)$. In particular, the constant $\alptil$ is provided by the following key lemma:
\begin{lemma}\label{lem5.6}
There is a positive real number $\alptil$ such that the following holds.
For each prime $p$ there is a positive real number $b_p$ such that if $\bfx_0\in \Z_p^k$ is a solution to $\bQ(\bfx_0)=\bn$, then
\begin{equation*}
|H_{\u{Q}}(\bn)|_p^\alptil\leq b_p \max_{i,j,\ell}|\Del_{ij\ell}(\bfx_0)|_p.
\end{equation*}
One can take $\alptil= 3^{3+k}$.
\end{lemma}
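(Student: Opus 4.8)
The plan is to exploit the defining property of $H_{\u{Q}}$ from Proposition~\ref{prop3.4}: over $\overline\Q$, the vanishing of $H_{\u{Q}}(\bn)$ detects the existence of a point $\bfx$ with $\u{Q}(\bfx)=\bn$ at which $\rank J_{\u{Q}}(\bfx)<3$, and this rank drop is exactly the simultaneous vanishing of all $3\times 3$ minor determinants $\Del_{ij\ell}(\bfx)$ of $J_{\u{Q}}$ — each of which is a cubic form in $\bfx$, since the entries $\partial Q_i/\partial x_j$ are linear. So the idea is: (i) realize $H_{\u{Q}}(\bn)$ as an element of the radical of an explicit ideal generated by forms of degree at most $3$; (ii) invoke an effective Nullstellensatz to express a bounded power of $H_{\u{Q}}(\bn)$ as a polynomial combination of those generators, with cofactors depending only on $k$ and $\u{Q}$; (iii) evaluate the resulting identity at a $p$-adic solution $\bfx_0$, at which the ``$\u{Q}(\bfx)=\bn$'' generators vanish, and read off the $p$-adic inequality, with $b_p$ accounting only for the denominators of the (fixed, rational) cofactors.

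Concretely, work in $R=\Q[x_1,\dots,x_k,n_1,n_2,n_3]$ ($k+3$ variables) and let $I\subset R$ be the ideal generated by the three quadrics $Q_i(\bfx)-n_i$ ($i=1,2,3$) together with all the cubics $\Del_{ij\ell}(\bfx)$, $1\le i<j<\ell\le k$; every generator has degree at most $3$. If $(\bfx,\bn)$ is any $\overline\Q$-point of $V(I)$, then $\u{Q}(\bfx)=\bn$ and $\rank J_{\u{Q}}(\bfx)<3$, so Proposition~\ref{prop3.4} (applicable since $\u{Q}$ satisfies Condition~\ref{cond2}) gives $H_{\u{Q}}(\bn)=0$. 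Thus $H_{\u{Q}}(\bn)$, viewed in $R$, vanishes identically on $V(I)$, so $H_{\u{Q}}\in\sqrt I$ by Hilbert's Nullstellensatz over the field $\Q$. An effective form of the Nullstellensatz (Kollár's degree bound) then yields an integer $N\le 3^{3+k}$ and polynomials $g_{ij\ell},h_i\in R$ with
\begin{equation*}
H_{\u{Q}}(\bn)^{N}=\sum_{1\le i<j<\ell\le k} g_{ij\ell}(\bfx,\bn)\,\Del_{ij\ell}(\bfx)+\sum_{i=1}^{3}h_i(\bfx,\bn)\bigl(Q_i(\bfx)-n_i\bigr),
\end{equation*}
where the finitely many cofactors $g_{ij\ell},h_i$ depend only on $k$ and $\u{Q}$.

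To finish, fix a prime $p$ and a solution $\bfx_0\in\Z_p^k$ of $\u{Q}(\bfx_0)=\bn$; since the $Q_i$ have integer coefficients we have $\bn\in\Z_p^3$, and $Q_i(\bfx_0)-n_i=0$, so substituting $(\bfx,\bn)=(\bfx_0,\bn)$ kills the second sum:
\begin{equation*}
|H_{\u{Q}}(\bn)|_p^{N}=\Bigl|\sum_{i<j<\ell} g_{ij\ell}(\bfx_0,\bn)\,\Del_{ij\ell}(\bfx_0)\Bigr|_p\le\Bigl(\max_{i<j<\ell}|g_{ij\ell}(\bfx_0,\bn)|_p\Bigr)\max_{i<j<\ell}|\Del_{ij\ell}(\bfx_0)|_p.
\end{equation*}
Each $g_{ij\ell}$ has rational coefficients, so there is an integer $m_p\ge 0$, depending only on $p$ and $\u{Q}$, such that $p^{m_p}g_{ij\ell}$ has $p$-integral coefficients for all $i<j<\ell$; as $(\bfx_0,\bn)$ is $p$-integral this gives $|g_{ij\ell}(\bfx_0,\bn)|_p\le p^{m_p}=:b_p$, hence $|H_{\u{Q}}(\bn)|_p^{N}\le b_p\max_{i<j<\ell}|\Del_{ij\ell}(\bfx_0)|_p$. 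Finally $H_{\u{Q}}\in\Z[\bn]$ and $\bn\in\Z_p^3$ force $|H_{\u{Q}}(\bn)|_p\le 1$, so the inequality persists with $N$ replaced by any larger exponent; taking $\alptil=3^{3+k}$ completes the proof.

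The one genuinely delicate point is the effective Nullstellensatz step and its explicit exponent: one must be careful that the needed input is the \emph{set-theoretic} vanishing of $H_{\u{Q}}$ on $V(I)(\overline\Q)$ — precisely what Proposition~\ref{prop3.4} supplies — and not any scheme-theoretic refinement, and that Kollár's bound, applied with $k+3$ variables and generators of degree at most $3$, indeed produces an exponent bounded by $3^{3+k}$. The remaining bookkeeping (tracking the Bézout cofactors and passing to $p$-adic absolute values) is routine.
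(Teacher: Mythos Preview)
Your proof is correct and follows essentially the same approach as the paper: place $H_{\u{Q}}$ in the radical of the ideal generated by the $Q_i(\bfx)-n_i$ and the minors $\Del_{ij\ell}(\bfx)$ via Proposition~\ref{prop3.4}, extract a B\'ezout-type identity by the effective Nullstellensatz, evaluate at a $p$-adic solution, and let $b_p$ absorb the denominators of the rational cofactors. The paper makes one point slightly more explicit than you do: since Koll\'ar's bound is stated for homogeneous ideals, one homogenizes $Q_i(\bfx)-n_i$ to $Q_i(\bfx)-t^2 n_i$ (and correspondingly $H_{\u{Q}}(\bn)$ to $H_{\u{Q}}(t^2\bn)$) before applying it, then sets $t=1$; you flag this step as ``delicate'' but do not spell it out.
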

We note that an analogue to this lemma appeared in \cite[Lemma 4.8]{HBPierce}; however that proof depended on simultaneously diagonalizing the system of two quadratic forms. In our setting we cannot diagonalize the three forms simultaneously, and we approach the proof of Lemma \ref{lem5.6} from an entirely new perspective.

\begin{proof}[Proof of Lemma \ref{lem5.6}]

We apply the Nullstellensatz to the system of polynomials  in the variables $\bfx,\bn$ comprising the polynomials $Q_i(\bfx)-n_i$ for $1\leq i\leq 3$ and all the $3\times 3$ minors $\Del_{ij\ell}(\bfx)$. If they have a common zero over $\overline{\Q}$, then by Proposition \ref{prop3.4}, one has $H_{\u{Q}}(\bn)=0$. Hence there is some power $\alptil\in \N$ and some polynomials $g_i(\bfx,\bn)$ and $h_{ij\ell}(\bfx,\bn)$ such that
\begin{equation}\label{eqn5.6}
H_{\u{Q}}(\bn)^\alptil= \sum_{i=1}^3 g_i(\bfx,\bn) (Q_i(\bfx)-n_i) + \sum_{1 \leq i,j,\ell\leq k} h_{ij\ell}(\bfx,\bn) \Del_{ij\ell}(\bfx).
\end{equation}
Since we applied the Nullstellensatz over $\overline{\Q}$, the polynomials $g_i$ and $h_{ij\ell}$ may have coefficients in $\overline{\Q}$. Yet these polynomials contain only a finite number of coefficients, so that they all lie in some finite extension of $\Q$. Taking the trace of this extension down to $\Q$ in the above equation shows that we may assume that $g_i(\bfx,\bn),h_{ij\ell}(\bfx,\bn)\in \Q[\bfx,\bn]$.\par
Now assume that  $\bn\in \Z^3$ is fixed, and there exists $\bfx_0\in \Z_p^k$ with $\bQ(\bfx_0)=\bn$, so that the first set of terms on the right hand side of (\ref{eqn5.6}) vanishes for $\xbf =\xbf_0$. Then taking $p$-adic absolute values in equation (\ref{eqn5.6}) shows that 
\begin{equation*}
|H_{\u{Q}}(\bn)|_p^{\alptil}\leq b_p \max_{i,j,\ell} |\Del_{ij\ell}(\bfx_0)|_p,
\end{equation*}
where one can take $b_p$ to be the maximum of the $p$-adic absolute values of any coefficient appearing in any of the polynomials $h_{ij\ell}(\bfx,\bn)$.

Furthermore, one can ensure $\alptil \leq 3^{3+k}$ by an explicit version of Hilbert's Nullstellensatz; see Theorem 1.5 together with Remark 1.6 in Koll\'ar's work \cite{Kollar}. That work applies to homogeneous polynomials, but we may easily homogenize $Q_i(\bfx)-n_i$ to $Q_i(\bfx)-t^2n_i$ and apply the Nullstellensatz to $H_{\u{Q}}(t^2\bn)$. 
After setting $t=1$ in the resulting equation, we obtain the desired result.
\end{proof}

\begin{proof}[Proof of Proposition \ref{prop5.2}]
We return to proving Proposition \ref{prop5.2} on lower bounds for the local densities $\sig_p(\u{n})$. 
We consider a truncated piece of the series defining $\sig_p(\bn)$, setting
\begin{equation*}
\sig_p(\bn;E):= \sum_{e=0}^E p^{-ek} T(\bn;p^e),
\end{equation*}
so that $\sig_p(\bn)= \lim_{E\rightarrow \infty} \sig_p(\bn;E)$. 
Our first step is to interpret $\sig_p(\bn;E)$ as the number of local solutions to the system of equations $\bQ(\bfx)=\bn$.
 For this we use Lemma \ref{lemT2} to rewrite $\sig_p(\bn;E)$ as
\begin{equation*}
\sig_p(\bn;E)= T(\bn;p^0) +\sum_{e=1}^E p^{-ek} \left(p^{3e}N(\bn;p^e)-p^{k+3(e-1)}N(\bn;p^{e-1})\right).
\end{equation*}
Noting that the sum on the right hand side is a telescoping sum, we obtain
\begin{equation}\label{apply_tel}
\sig_p(\bn;E)= p^{E(3-k)} N(\bn;p^E).
\end{equation}
In order to show that the limit of $\sig_p(\bn;E)$ is bounded below for $E\rightarrow \infty$, we need to prove a lower bound for $N(\bn;p^E)$. Let $\bfx_0\in\Z_p^k$ be a given solution to $\bQ(\bfx_0)=\bn$, which is assumed to exist in the hypothesis of the proposition. Then Lemma \ref{lem5.6} implies that 
\begin{equation}\label{eqn5.7}
|H_{\u{Q}}(\bn)|^{\alptil}_p\leq b_p \max_{i,j,\ell}|\Del_{ij\ell}(\bfx_0)|_p.
\end{equation}
For simplicity of notation assume that the maximum on the right hand side is attained for $(i,j,\ell)=(1,2,3)$. Since the statement of the proposition we intend to prove is trivial for $H_{\u{Q}}(\bn)=0$, we may assume that there is some $\nu\in \Z$ with 
\beq\label{Del_nu}
|\Del_{123}(\bfx_0)|_p= p^{-\nu}.
\eeq
We now give a lower bound on $N(\bn;p^E)$ for $E\geq 2\nu+1$. For this we write $\bfw=(w_1,w_2,w_3)$ and $\bfy= (y_4,\ldots, y_k)$. We choose $\bfy$ to be any element modulo $p^E$ with the restriction that $\bfy\equiv \mathbf{0} $ modulo $p^{2\nu+1}$; there are $p^{(k-3)(E-2\nu-1)}$ such choices. We claim that for any such choice of $\bfy$ the system of equations
\begin{equation}\label{eqn5.8}
\bQ(\bfx_0+(\bfw,\bfy))\con \bn \modd{p^E}
\end{equation}
has an integral solution $\wbf$.
Once we prove we can do this for any of our choices for $\bfy$, we will obtain the lower bound
\begin{equation*}
N(\bn;p^E)\geq p^{(k-3)(E-2\nu-1)}.
\end{equation*}
This translates into a lower bound for the density $\sig_p(\bn)$ via (\ref{apply_tel}), namely
\begin{equation*}
\sig_p(\bn)=\lim_{E\rightarrow \infty} \sig_p(\bn;E) \geq p^{-(k-3)(2\nu+1)} .
\end{equation*}
As written, this lower bound still depends on $\nu$, and we use equation (\ref{eqn5.7}) to rephrase the lower bound as
\begin{equation*}
\sig_p(\bn) \geq p^{-(k-3)} \left(b_p^{-1}|H_{\u{Q}}(\bn)|_p^\alptil\right)^{2(k-3)} \geq \ome_p |H_{\u{Q}}(\bn)|_p^{2\alptil (k-3)},
\end{equation*}
with 
\begin{equation*}
\ome_p= p^{-(k-3)}b_p^{-2(k-3)},
\end{equation*}
thus completing the proof of Proposition \ref{prop5.2}, pending the claim on (\ref{eqn5.8}).

Our claim on the solubility of (\ref{eqn5.8}) is a consequence of Hensel's lemma. 
Temporarily defining $F_i(\wbf) = Q_i(\xbf_0 + (\wbf,\ybf)) - Q_i(\xbf_0)$ for $i=1,2,3$, we seek an integral solution $\wbf$ to the system 
\[ F_1(\wbf) = F_2(\wbf) = F_3(\wbf) \con 0 \modd{p^E}.\]
For any choice of $\ybf$ modulo $p^E$ that reduces to $\mathbf{0} \modd{p^{2\nu+1}}$, and for $\wbf \con \mathbf{0} \modd{p^{2\nu+1}}$, we have $\xbf_0 + (\wbf,\ybf) \con \xbf_0 \modd{p^{2\nu+1}}$, so that  (\ref{eqn5.8}) certainly holds as a congruence modulo $p^{2\nu+1}$. Moreover, for such $\wbf,\ybf$, the first $3\times 3$ minor $\Del_{123}(\xbf_0)$ of the Jacobian $J_{\u{Q}}(\xbf_0)$ has $\Del_{123}(\xbf_0) \con \Del_{123}(\xbf_0 + (\wbf,\ybf)) \modd{p^{2\nu+1}}$, and so in particular $|\Del_{123}(\xbf_0 + (\wbf,\ybf))|_p= p^{-\nu} $,  by (\ref{Del_nu}). 
These facts indicate that $p^{2\nu+1} | F_i(\mathbf{0})$ for $i=1,2,3$ while
\[ \det \left( \begin{array}{c} \nabla F_1(\mathbf{0}) \\ \nabla F_2(\mathbf{0})  \\ \nabla F_3(\mathbf{0})  \end{array} \right) \not\con 0 \modd{p^{\nu+1}},\]
so that Hensel's lemma indicates the existence of a solution $\wbf$ such that (\ref{eqn5.8}) holds modulo $p^E$. 
\end{proof}

\section{Exponential sums: minor arcs}
For $\bfl = (\bfl_1,\bfl_2)\in\Z^k\times \Z^k$ and a positive integer $q$, we introduce the exponential sum
\begin{equation*}
S(\bfl;q)=\sum_{\substack{\ba \modd{q}\\ (\ba,q)=1}}\sum_{\substack{\bfr_1 \modd{q}\\ \bfr_2 \modd{q}}}e_q(\ba\cdot\bQ(\bfr_1)-\ba\cdot\bQ(\bfr_2))e_q(\bfr_1\cdot \bfl_1+\bfr_2\cdot \bfl_2).
\end{equation*}
We note that $S(\bfl;q)$ is a multiplicative function in $q$, so that it is sufficient to understand $S(\bfl;p^e)$ for all prime powers $p^e$. The goal of this section is to give pointwise upper bounds as well as average upper bounds for these sums. We start with pointwise upper bounds:
\begin{lemma}\label{lemS1}
For any prime $p$,
\begin{equation*}
|S(\bfl;p^e)|\ll c'_p(ke+1) p^{e(k+3)},
\end{equation*}
where $c'_p$ is independent of $e$; in particular $c_p' = C_p c_p$ where $C_p,c_p$ are as in Lemma \ref{lemZ}, and in particular are independent of $p$ for all primes $p \ndiv 2 \Disc(F_{\u{Q}})$. Consequently, for any composite modulus $q$ we have
\begin{equation*}
S(\bfl;q)\ll_\eps q^{k+3+\eps},
\end{equation*}
for any $\eps >0$, where the implied constant is independent of $q$.
\end{lemma}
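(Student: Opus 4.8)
The plan is to adapt the squaring argument used for $S_q(\u{a})$ in the proof of Proposition~\ref{lemT1}, applying it separately to the $\bfr_1$- and $\bfr_2$-sums, so as to reduce the pointwise estimate to the averaged bound for $Z(\u{a},q)$ recorded in Lemma~\ref{lemZ}; the passage to composite $q$ is then purely a matter of multiplicativity.

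Fix $q$ and $\u{a}$ with $(\u{a},q)=1$, write $A=\u{a}\cdot\bQ$ for the associated symmetric $k\times k$ matrix, and set
\[
G(\u{a},\bfm;q)=\sum_{\bfr\modd q}e_q\!\left(\bfr^{t}A\bfr+\bfr\cdot\bfm\right),
\]
so that $S(\bfl;q)=\sum_{(\u{a},q)=1}G(\u{a},\bfl_1;q)\,\overline{G(\u{a},-\bfl_2;q)}$. The first step is the uniform Gauss-sum bound $|G(\u{a},\bfm;q)|^{2}\le q^{k}Z(\u{a},q)$, \emph{independently of $\bfm$}, where $Z(\u{a},q)$ is as in \eqref{Zaq_dfn}. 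This is proved exactly as the identity preceding \eqref{eqn5.0}: expanding the square, substituting $\bfr=\bfr'+\bfz$ and using the symmetry of $A$ leaves a cross term $2\bfz^{t}A\bfr'$ linear in $\bfr'$, so the sum over $\bfr'$ is $q^{k}$ when $q\mid 2\bfz^{t}A$ and $0$ otherwise, giving
\[
|G(\u{a},\bfm;q)|^{2}=q^{k}\sum_{\substack{\bfz\modd q\\ q\mid 2\bfz^{t}A}} e_q\!\left(\bfz^{t}A\bfz+\bfz\cdot\bfm\right);
\]
the triangle inequality then removes all dependence on $\bfm$. (One also uses $Z(-\u{a},q)=Z(\u{a},q)$.) Hence
\[
|S(\bfl;q)|\le\sum_{\substack{\u{a}\modd q\\(\u{a},q)=1}}|G(\u{a},\bfl_1;q)|\,|G(\u{a},-\bfl_2;q)|\le q^{k}\sum_{\substack{\u{a}\modd q\\(\u{a},q)=1}}Z(\u{a},q).
\]

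For $q=p^{e}$ I would now apply Lemma~\ref{lemZ} with $\kappa=1$, which bounds the last sum by $C_p c_p(ke+1)p^{3e}$, giving at once $|S(\bfl;p^{e})|\le C_p c_p(ke+1)p^{e(k+3)}$; this is the claimed bound with $c'_p=C_pc_p$, and recall $C_p=1$ for odd $p$ while $c_p$ is independent of $p$ for $p\nmid 2\Disc(F_{\u{Q}})$ (so $c'_p$ is a fixed constant $c$ for every good prime, $2$ being bad). For composite $q$, using that $S(\bfl;\cdot)$ is multiplicative, I would write $q=\prod_{p^{e}\parallel q}p^{e}$ and multiply the prime-power bounds to get
\[
|S(\bfl;q)|\le q^{k+3}\Big(\prod_{p\mid q}C_pc_p\Big)\Big(\prod_{p^{e}\parallel q}(ke+1)\Big).
\]
Since only finitely many primes are bad and $C_pc_p=c$ for all good $p$, the first product is $\ll c^{\omega(q)}$ (times a constant coming from the bad primes), which is $\ll_{\eps}q^{\eps}$; the second product is the standard divisor-type quantity, also $\ll_{\eps}q^{\eps}$. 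This yields $S(\bfl;q)\ll_{\eps}q^{k+3+\eps}$ with implied constant independent of $q$.

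I expect the only point requiring care to be the uniformity of the Gauss-sum bound in the linear shift $\bfm$; but, as indicated above, once the inner sum over $\bfr'$ has been collapsed, the $\bfm$-dependence survives only inside a subgroup sum of cardinality $Z(\u{a},q)$, which is discarded by the triangle inequality, so no genuine obstacle arises. Everything else is bookkeeping with Lemma~\ref{lemZ} and the multiplicativity already noted.
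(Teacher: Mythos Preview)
Your proof is correct and essentially the same as the paper's. The only cosmetic difference is that the paper, instead of factoring $S(\bfl;q)$ as $\sum_{\u{a}}G(\u{a},\bfl_1;q)\overline{G(\u{a},-\bfl_2;q)}$ and bounding each Gauss sum by $q^{k/2}Z(\u{a},q)^{1/2}$, substitutes $\bfr_1=\bfr_2+\bfh$ directly in the double sum (so the quadratic in $\bfr_2$ cancels and the $\bfr_2$-sum becomes a pure linear exponential, detecting $q\mid 2\bfh^{t}(\u{a}\cdot\bQ)+\bfl_3^{t}$); both routes land at the same inequality $|S(\bfl;q)|\le q^{k}\sum_{(\u{a},q)=1}Z(\u{a},q)$ and then finish identically via Lemma~\ref{lemZ} with $\kappa=1$ and multiplicativity.
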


In the case of good primes we will refine this estimate for $S(\bfl;p^e)$. We first introduce a $(k+1) \times (k+1)$ matrix defined by
\begin{equation*}
M(\u{x};\b{x},\b{y}):=
\left(\begin{array}{c|c} \rule[-3mm]{0mm}{1mm}\u{x} \cdot \u{Q} & \b{y}\\ \hline
\rule[3mm]{0mm}{1mm}\b{x}^t & 0\end{array}\right),
\end{equation*}
where $\bfx,\bfy\in\Z^k$. Furthermore, we set $\bfl_3:=\bfl_1+\bfl_2$ and $\bfl_4:=\bfl_1-\bfl_2$. With this notation, our refined estimate for good primes is as follows:

\begin{lemma}\label{lemS2}
For any prime $p \ndiv 2 \Disc(F_{\u{Q}})$,
\begin{align*}
S(\bfl;p^e)\ll &(ke+1)p^{e(k+3)} (p^{-1}  +p^{-3}f(\lbf;p)+p^{-2}g(\lbf;p)),
\end{align*}
where 
\begin{eqnarray*}
f(\lbf;p) & = & \#\{\bb \modd{p}:\ p|\det M(\bb;\bfl_3,\bfl_4)\} \\
g(\lbf;p) & = & \#\{\bb \modd{p}: \ \rank \left(M(\bb;\bfl_3,\bfl_4)\right)\leq k-1\}.
\end{eqnarray*}
Here $\rank \left(M(\bb;\bfl_3,\bfl_4)\right)$ is the rank of the matrix over the finite field $\F_p$.
\end{lemma}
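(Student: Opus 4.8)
\emph{The plan} is to first collapse $S(\bfl;p^e)$ to an exponential sum over the single triple $\ba$, evaluate it exactly on the locus where $\ba\cdot\bQ$ is nonsingular modulo $p$, and bound the complementary corank-one locus pointwise. Since $p$ is odd, the first step is the substitution $\bfr_1=\bfy+\bfz$, $\bfr_2=\bfy-\bfz$, which turns the exponent into $4\bfy^t(\ba\cdot\bQ)\bfz+\bfy\cdot\bfl_3+\bfz\cdot\bfl_4$; carrying out the sum over $\bfy$ gives
\[
S(\bfl;p^e)=p^{ek}\sum_{\substack{\ba\bmod p^e\\(\ba,p)=1}}U(\ba),\qquad
U(\ba):=\sum_{\substack{\bfz\bmod p^e\\ 4(\ba\cdot\bQ)\bfz\equiv-\bfl_3}}e_{p^e}(\bfz\cdot\bfl_4).
\]
The inner sum runs over a coset of $\{\bfz\bmod p^e:(\ba\cdot\bQ)\bfz\equiv\mathbf 0\}$ (or is empty), so $|U(\ba)|\le Z(\ba,p^e)$ with $Z$ as in Lemma~\ref{lemZ} (here one uses $p$ odd), and $U(\ba)=0$ unless that coset is nonempty and $\bfz\mapsto e_{p^e}(\bfz\cdot\bfl_4)$ is trivial on it. When $p\nmid F_{\uQ}(\ba)$ the coset is a single point and the Schur-complement identity $\det M(\ba;\bfl_3,\bfl_4)=-\bfl_3^t\adj(\ba\cdot\bQ)\bfl_4$ yields the exact value $U(\ba)=e_{p^e}\!\big(\overline 4\,\overline{F_{\uQ}(\ba)}\det M(\ba;\bfl_3,\bfl_4)\big)$; this is what forces the matrix $M$ into the statement.

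\emph{The corank-one range} ($p\mid F_{\uQ}(\ba)$, hence $\rank(\ba\cdot\bQ)=k-1$ mod $p$ by Lemma~\ref{geolem1}, as $p$ is good) I would handle by bounding $U(\ba)$ pointwise: diagonalising $\ba\cdot\bQ$ over $\Z_p$ and letting $\bfv=\bfv(\ba)$ span the $p$-adic kernel line, one finds $|U(\ba)|=p^{\min(v_p(F_{\uQ}(\ba)),\,e)}$, and $U(\ba)=0$ unless $v_p(\bfv\cdot\bfl_3)$ and $v_p(\bfv\cdot\bfl_4)$ are both $\ge\min(v_p(F_{\uQ}(\ba)),e)$. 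Since $\adj(\ba\cdot\bQ)$ has rank one here, $\det M(\ba;\bfl_3,\bfl_4)$ is a $p$-adic unit times $(\bfv\cdot\bfl_3)(\bfv\cdot\bfl_4)$, so $U(\ba)\ne0$ forces $p\mid\det M(\bb;\bfl_3,\bfl_4)$ for $\bb=\ba\bmod p$, i.e.\ forces $\bb$ to be one of the $f(\bfl;p)$ triples counted. Using that $p$ is good --- so $\{F_{\uQ}=0\}$ is smooth, hence irreducible --- I would then argue: (i) $F_{\uQ}$ vanishes to order one along each such $\bb$, whence $\sum_{\ba\equiv\bb\,(p)}p^{\min(v_p(F_{\uQ}(\ba)),e)}\ll e\,p^{3e-2}$ (a refinement of Lemma~\ref{lemF}); and (ii) the set of relevant $\bb$ is either $2$-dimensional --- in which case $\det M(\cdot;\bfl_3,\bfl_4)$, a form of degree $k-1<k=\deg F_{\uQ}$, vanishes on all of the irreducible surface $\{F_{\uQ}=0\}$, hence vanishes identically mod $p$, so $f(\bfl;p)=p^3$ and Lemma~\ref{lemS1} already gives the claim --- or it has $\ll p$ points while $f(\bfl;p)\gg p^2$ (already the nonsingular $\bb$ with $p\mid\det M$ number $\gg p^2$, again by $\deg\det M<\deg F_{\uQ}$), so it is $\ll p^{-1}f(\bfl;p)$. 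Either way this range contributes $\ll(ke+1)\,p^{e(k+3)}\,p^{-3}f(\bfl;p)$.

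\emph{The nonsingular range} contributes $p^{ek}\Sigma^{(e)}$, $\Sigma^{(e)}=\sum_{\ba\bmod p^e,\,p\nmid F_{\uQ}(\ba)}e_{p^e}(\psi(\ba))$ with $\psi(\ba)=-\overline 4\,\bfl_3^t(\ba\cdot\bQ)^{-1}\bfl_4$. For $e=1$ this is a complete exponential sum over $\A^3_{\F_p}\setminus\{F_{\uQ}=0\}$ attached to a rational function of bounded degree; since $\partial_{a_i}\psi=\overline 4\,\bfp^tQ_i\bfq$ with $\bfp=(\ba\cdot\bQ)^{-1}\bfl_3$, $\bfq=(\ba\cdot\bQ)^{-1}\bfl_4$, clearing denominators identifies the critical locus $\{\nabla\psi=0\}$ with the locus where $M(\cdot;\bfl_3,\bfl_4)$ drops rank, whose size is $\ll p^2+f(\bfl;p)+p\,g(\bfl;p)$ (its surface-type components being exactly those that make $f$ or $g$ large, by the irreducibility and degree arguments above). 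An application of Deligne's bounds then gives $|\Sigma^{(1)}|\ll p^2+f(\bfl;p)+p\,g(\bfl;p)$. For $e\ge2$ I would run one $p$-adic stationary-phase step --- writing $\ba\equiv\bb+p^{e-1}\bfc$ ($\bb$ mod $p^{e-1}$, $\bfc$ mod $p$) and summing the now linear exponent over $\bfc$ --- which collapses $\Sigma^{(e)}$ to $p^3$ times a sum of $e_{p^e}(\psi)$ over lifts of the mod-$p$ critical points of $\psi$, so the same count yields $|\Sigma^{(e)}|\ll p^{3e-3}\big(p^2+f(\bfl;p)+p\,g(\bfl;p)\big)$. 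Hence this range contributes $\ll p^{e(k+3)}\big(p^{-1}+p^{-3}f(\bfl;p)+p^{-2}g(\bfl;p)\big)$, and adding the two ranges --- the various $e$'s collecting into the factor $ke+1$ via Lemma~\ref{lemZ} --- gives the lemma.

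The hard part will be the nonsingular range: for three forms $F_{\uQ}$ no longer splits into linear factors (unlike the two-form case of \cite{HBPierce}), so a genuinely multivariate Deligne-type estimate for the $\ba$-sum is unavoidable, and the real work is to show that its critical locus can only become large through the rank-drop loci of $M$, so that the loss is controlled precisely by $f(\bfl;p)$ and $g(\bfl;p)$; interlocking with this is the prime-power bookkeeping --- the stationary-phase step and the finer $p$-adic solvability conditions for $U(\ba)$ in the corank-one range. The remaining ingredients --- the substitution, the Schur-complement identity, Lemma~\ref{lemZ}, and the smoothness and irreducibility of $\{F_{\uQ}=0\}$ at good primes --- should be routine.
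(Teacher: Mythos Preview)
Your setup through the substitution $\bfr_1=\bfy+\bfz$, $\bfr_2=\bfy-\bfz$ matches the paper's, and your split into the corank-one and nonsingular ranges is reasonable. Where you diverge sharply is in the nonsingular range, and here there is both a gap and a missed simplification.

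The phase $\psi(\ba)=-\overline{4}\,\bfl_3^t(\ba\cdot\bQ)^{-1}\bfl_4=-\overline{4}\det M(\ba;\bfl_3,\bfl_4)/F_{\uQ}(\ba)$ is homogeneous of degree $-1$ in $\ba$. Hence if one inserts a redundant sum over units $r\bmod p^e$ and substitutes $\ba\mapsto r\ba$, the $r$-sum becomes a Ramanujan sum $c_{p^e}(\psi(\ba))$, converting $\Sigma^{(e)}$ \emph{exactly} into a difference of two counting functions --- no cohomology needed. This is precisely the paper's key step: it introduces the $r$-sum before any case split, obtaining
\[
S(\bfl;p^e)=\phi(p^e)^{-1}p^{ek}\bigl(p^eN_2(p^e)-p^{e-1}N_1(p^e)\bigr),
\]
where $N_1,N_2$ count pairs $(\bb,\bfw)$ with $(\bb\cdot\bQ)\bfw+\bfl_4\equiv 0$ and $p^{e-1}\mid\bfl_3^t\bfw$, respectively $p^{e}\mid\bfl_3^t\bfw$. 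The $p^{-1}$ term arises from the trivial bound $N_1\le\sum_{\bb}Z_0(\bb,p^e)$ via Lemma~\ref{lemZ} (this is where the factor $ke+1$ enters). For $N_2$ the paper splits according to whether $\rank M(\bb;\bfl_3,\bfl_4)$ over $\F_p$ equals $k$ (then $\bfw$ is unique, and solvability forces $p\mid\det M$, giving $\le p^{3e-3}f(\bfl;p)$) or is $\le k-1$ (then $\#\{\bfw\}\le Z_0(\bb,p^e)$, and Hensel on the smooth curve $F_{\uQ}=0$ bounds the lifts of such $\bb$ by $e\,p^{3e-2}g(\bfl;p)$). Everything is elementary.

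Your invocation of Deligne for $\Sigma^{(1)}$ is both unnecessary and unjustified: you assert $|\Sigma^{(1)}|\ll p^2+f(\bfl;p)+p\,g(\bfl;p)$ without pointing to any theorem that delivers this for a rational-function phase on an open subset of $\A^3$, and your identification of the critical locus of $\psi$ with the rank-drop locus of $M$ is off --- when $p\nmid F_{\uQ}(\ba)$ the block $\ba\cdot\bQ$ is invertible, so $M(\ba;\bfl_3,\bfl_4)$ has rank $\ge k$ automatically; thus $g(\bfl;p)$ is supported entirely in the corank-one range and cannot enter a bound for $\Sigma^{(1)}$ in the way you propose. The stationary-phase step for $e\ge 2$ is likewise a detour once the homogeneity is used. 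Your corank-one discussion is closer in spirit to the paper's $N_4$ estimate, but the paper tracks the rank of $M$ directly (which is what produces $g(\bfl;p)$) rather than the product $(\bfv\cdot\bfl_3)(\bfv\cdot\bfl_4)$.
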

We note that the first term improves on Lemma \ref{lemS1} by a factor of $p^{-1}$; in order for the second and third terms to constitute an improvement on Lemma \ref{lemS1}, we will later show (roughly speaking) that $f(\lbf;p)$ and $g(\lbf;p)$ contribute at most $p^2$ and $p$, respectively, on average over $\lbf$ (see Proposition \ref{propSav}).

\begin{proof}[Proof of Lemma \ref{lemS1}]
The second part of Lemma \ref{lemS1} follows quickly from the first part and the multiplicativity of $S(\bfl;q)$ as a function in $q$. More precisely, there is some positive constant $C$ such that
\begin{equation*}
S(\bfl;q)\ll C^{\ome (q)} d(q)^k q^{k+3}\ll_\eps q^{k+3+\eps}.
\end{equation*}
Note that the implied constant (resulting from the application of Lemma \ref{lemF}) may be taken independent of the prime factors, since only a finite number of primes $p | 2\Disc (F_{\u{Q}})$, and the corresponding factors $c_p$ may be included in a universal constant.

For the first part of the lemma we proceed similarly to the proof of Proposition \ref{lemT1}. First we rewrite $S(\bfl;q)$ via the variable substitution $\bfr_1=\bfr_2+\bfh$ and $\bfl_3=\bfl_1+\bfl_2$ as
\begin{equation*}
S(\bfl;q)= \sum_{\substack{\u{a} \modd{q}\\ (\ba,q)=1}}\sum_{\bfr_2 \modd{q}}\sum_{\bfh \modd{q}} e_q(\ba\cdot\bQ(\bfh)+2\bfh^t(\ba\cdot\bQ)\bfr_2+\bfl_3 \cdot \bfr_2+\bfl_1 \cdot \bfh).
\end{equation*}
Evaluating the linear sum over $\bfr_2$ modulo $q$ leads to
\begin{equation*}
S(\bfl;q)\ll q^k \sum_{\substack{\ba \modd{q}\\ (\ba,q)=1}} \#\calS (\bfl_3,\ba;q),
\end{equation*}
with $\calS (\bfl_3,\ba;q)$ defined by
\begin{equation*}
\calS(\bfl_3,\ba;q)=\{\bfh \modd{q}:\ q|2\bfh^t(\ba\cdot\bQ)+\bfl_3^t\}.
\end{equation*}
Note that this set is either empty or a coset of $\calS(\mathbf{0},\ba;q);$ we recall from our previous notation (\ref{Zaq_dfn}) that $\# \calS(\mathbf{0},\ba;q)=Z(\ba,q)$. Hence we may further estimate $S(\bfl;q)$ by
\begin{equation*}
S(\bfl;q)\ll q^k \sum_{\substack{\ba \modd{q}\\ (\ba,q)=1}} Z(\ba,q).
\end{equation*}
Now suppose $q=p^e$ is a prime power. We apply Lemma \ref{lemZ} with $\kappa=1$  to conclude that 
\begin{equation*}
\begin{split}
S(\bfl;p^e)&\ll p^{ek} . C_pc_p (ke+1)p^{3e} ,\end{split}
\end{equation*}
in which $c_p$ and $C_p$ are as in Lemma \ref{lemZ}, which is sufficient.
We recall that $c_p$ and $C_p$ are independent of $p$ for all $p \ndiv 2 \Disc(F_{\u{Q}})$. 
\end{proof}

\subsection{Refined bounds for good primes}

Now let $p$ be a good prime and consider $q=p^e$. Note that in particular we have $p\neq 2$ and hence  the transformation $\bfx=\bfu+\bfv$, $\bfy=\bfu-\bfv$ is invertible modulo $p$. We apply this to the exponential sum $S(\bfl;p^e)$ and obtain
\begin{align*}
S(\bfl;p^e)&=\sum_{\substack{\ba \modd{p^e}\\ (\ba,p)=1}} \sum_{\bfu,\bfv \modd{p^e}}e_{p^e}(4\bfu^t(\ba\cdot\bQ)\bfv+\bfl_3^t\bfu+\bfl_4^t\bfv) \\
&= p^{ek} \sum_{\substack{\ba \modd{p^e}\\ (\ba,p)=1}}\sum_{\substack{\bfu \modd{p^e}\\ p^e|4\bfu^t(\ba\cdot\bQ)+\bfl_4^t}}e_{p^e}(\bfl_3^t\bfu).
\end{align*}
Since $p \neq 2$, the factor of $4$ may be eliminated by a change of variables in $\u{a}$.
We introduce an additional summation and obtain 
\begin{equation*}
\phi(p^e)S(\bfl;p^e)=p^{ek}\sum_{\substack{ r \modd{p^e} \\(r,p)=1}}\sum_{\substack{\ba \modd{p^e}\\ (\ba,p)=1}} \sum_{\substack{\bfu \modd{p^e}\\ p^e|r^{-1}\bfu^t (r\ba\cdot\bQ)+\bfl_4^t}}e_{p^e}(\lbf_3^t\bfu).
\end{equation*}
Next we substitute $\u{b}$ for $r\ba$  and $\bfw$ for $r^{-1}\bfu$. This leads to
\begin{equation*}
\phi(p^e)S(\bfl;p^e)=p^{ek}\sum_{\substack{ r \modd{p^e} \\(r,p)=1}}\sum_{\substack{\bb \modd{p^e}\\ (\bb,p)=1}} \sum_{\substack{\bfw \modd{p^e}\\ p^e|\bfw^t (\bb\cdot\bQ)+\bfl_4^t}}e_{p^e}(r\bfl_3^t\bfw).
\end{equation*}
Now we evaluate the summation over $r$, to conclude that
\begin{equation}\label{SN1N2}
S(\bfl;p^e)=\phi(p^e)^{-1}p^{ek}(p^eN_2(p^e)-p^{e-1}N_1(p^e)),
\end{equation}
with counting functions of the form
\begin{equation*}
N_1(p^e)=\#\{\bb,\bfw \modd{p^e}: (\bb,p)=1,\ p^e|(\bfw^t(\bb\cdot\bQ)+\bfl_4^t),\ p^{e-1}|\bfl_3^t\bfw\},
\end{equation*}
and
\begin{equation*}
N_2(p^e)=\#\{\bb,\bfw \modd{p^e}: (\bb,p)=1,\ p^e|(\bfw^t(\bb\cdot\bQ)+\bfl_4^t),\ p^e|\bfl_3^t\bfw\}.
\end{equation*}
We first estimate the contribution to $S(\bfl;p^e)$ arising from $N_1(p^e)$. Since $N_1(p^e)$ is multiplied by the smaller factor $p^{e-1}$, we can afford to ignore  the additional condition that $p^{e-1}|\bfl_3^t\bfw$ within $N_1(p^e)$. Furthermore, we note that for fixed $\bb$ the set of solutions $\bfw$ modulo $p^e$ such that
\begin{equation*}
(\bb\cdot\bQ)\bfw+\bfl_4\equiv \mathbf{0}\modd{p^e},
\end{equation*}
is either empty or is a coset of the set of $\bfw$ that are solutions to the homogeneous system $(\bb\cdot\bQ)\bfw\equiv 0 \modd{p^e}$. Hence we see that
\begin{equation*}
N_1(p^e)\leq \sum_{\substack{\bb \modd{p^e}\\ (\bb,p)=1}}Z_0(\bb,p^e),
\end{equation*}
where
\beq\label{Z0_dfn}
Z_0(\bb,p^e)=\#\{\wbf \modd{p^e}:\ p^e|\wbf^t(\bb\cdot\bQ)\}.
\eeq
We note that since $p$ is good, $p$ must be odd, so that in fact $Z_0(\ba,p^e) = Z(\ba,p^e)$, 
where $Z(\bb,p^e)$ is defined by (\ref{Zaq_dfn}), as usual. 
Thus we may apply Lemma \ref{lemZ} with $\kappa=1$ (and $C_p=1$ since $p \neq 2$), so that  
\[ N_1(p^e) \leq c_p(ke+1)p^{3e},\]
where we may take $c_p$ to be independent of $p$ since $p$ is good (so that $p \ndiv 2\Disc(F_{\u{Q}})$).
Hence the contribution of $N_1(p^e)$ to $S(\bfl;p^e)$ in (\ref{SN1N2}) is bounded by
\begin{equation*}
\ll (ke+1) p^{e(k+3)-1},
\end{equation*}
with an implied constant independent of $p$.
This is sufficient for Lemma \ref{lemS2}.

We next consider the contribution from $N_2(p^e)$. For this we first fix a coefficient vector $\bb$ modulo $p^e$ and consider the set of solutions $\bfw$ modulo $p^e$ to the system of congruences
\begin{equation}\label{eqn6.10}
\begin{split}
(\bb\cdot\bQ)\bfw+\bfl_4&\equiv 0\modd{p^e} \\
\bfl_3^t\bfw&\equiv 0 \modd{p^e}.
\end{split}
\end{equation}
If the set of solutions in $\bfw$ is non-empty, then there is in particular a solution to this system modulo $p$; in other words the vector $(\wbf,1)$ satisfies
\[ 
M(\u{b}; \lbf_3,\lbf_4) \left(\begin{array}{c}  \wbf \\1  \end{array}\right) =\mathbf{0}
\]
in $\F_p$.
Hence 
\begin{equation}\label{remark_det}
p|\det M(\bb;\bfl_3,\bfl_4).
\end{equation}
By Lemma \ref{geolem1}, since $p$ is good and $(\u{b},p)=1$, the rank of $\bb\cdot\bQ$ over $\F_p$ is at least $k-1$ and hence we need to distinguish two cases; we will let $N_3(p^e)$ denote the contribution to $N_2(p^e)$ when the rank of $M(\bb;\bfl_3,\bfl_4)$ is $k$ over the field $\F_p$, and $N_4(p^e)$ denote the contribution to $N_2(p^e)$ when the rank of $M(\bb;\bfl_3,\bfl_4)$ is $k-1$ over the field $\F_p$.

First consider $N_3(p^e)$, in which case the rank of $M(\bb;\bfl_3,\bfl_4)$ over $\F_p$ is $k$. Then we claim that the system (\ref{eqn6.10}) has at most one solution $\bfw$ modulo $p^e$.
 Indeed, if it has any solution modulo $p$, then by the previous observation, the column vector $(\bfl_4,0)$ lies in the span of the columns of the $(k+1) \times k$ matrix 
\beq\label{part_matrix}
\left(\begin{array}{c} \rule[-3mm]{0mm}{1mm}\u{b} \cdot \u{Q} \\ \hline
\rule[3mm]{0mm}{1mm}\lbf_3^t \end{array}\right).
\eeq
 Hence the assumption that the rank of $M(\bb;\bfl_3,\bfl_4)$ over $\F_p$ is $k$ implies that the rank of the matrix (\ref{part_matrix}) is still $k$. Note that the set of solutions to (\ref{eqn6.10}) is, if non-empty, a coset of the homogeneous system
\begin{equation}\label{eqn6.11}
\begin{split}
(\bb\cdot\bQ)\bfw \equiv 0 \modd{p^e} \\
\quad \lbf_3^t\bfw \equiv 0 \modd{p^e}.
\end{split}
\end{equation}
In the case  that (\ref{part_matrix}) has rank $k$ in $\F_p$, this has exactly one solution in $\F_p$ and hence modulo $p^e$, as we claimed. Hence we can estimate the contribution to $N_2(p^e)$ from $N_3(p^e)$ as being at most
\[\#\{ \bb \modd{p^e}:\ (\bb,p)=1,\ p|\det M(\bb;\bfl_3,\bfl_4)\},\]
so certainly at most 
\[\#\{ \bb \modd{p^e}:\ p|\det M(\bb;\bfl_3,\bfl_4)\}= p^{3e-3} \#\{ \bb \modd{p}:\ p|\det M(\bb;\bfl_3,\bfl_4)\}.\]
This is a sufficient bound for $N_3(p^e)$.

Now consider the contribution of $N_4(p^e)$, in which case $M(\bb;\bfl_3,\bfl_4)$ has rank $k-1$ over $\F_p$.
For a fixed $\bb$ with $\rank \left( M(\bb;\bfl_3,\bfl_4)\right) \leq k-1$, the set of solutions to (\ref{eqn6.10}) is again, if non-empty, a coset of the homogeneous system (\ref{eqn6.11}). For a fixed coefficient vector $\bb$ we note that (\ref{eqn6.11}) has at most $Z_0(\bb,p^e)$ solutions in $\bfw$, where we recall the definition of $Z_0(\bb,p^e)$ in (\ref{Z0_dfn}).
We now call upon the following lemma, a modification of Lemma \ref{lemZ}, which is again a consequence of Proposition 7 in \cite{Loxton}:
\begin{lemma}\label{lemma_Z0}
If $p$ is a good prime, then $$Z_0(\bb;p^e)\leq \gcd (F_{\u{Q}}(\bb),p^{e}).$$
\end{lemma}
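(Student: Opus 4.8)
The plan is to deduce Lemma~\ref{lemma_Z0} from the same input used for the first part of Lemma~\ref{lemZ}, namely Proposition~7 of \cite{Loxton}, the only additional ingredient being the rank information for good primes supplied by Lemma~\ref{geolem1}; throughout I assume $(\bb,p)=1$, which is the setting in which the lemma is applied (cf.\ the definition of $N_2(p^e)$ and the summation over $\bb$ with $(\bb,p)=1$). First I would note that since $p$ is good it is odd, so $2$ is a unit modulo $p^e$; as $\bb\cdot\bQ$ is symmetric, the condition $p^e\mid\wbf^t(\bb\cdot\bQ)$ defining $Z_0(\bb,p^e)$ is then equivalent to the condition $p^e\mid 2\wbf^t(\bb\cdot\bQ)$ defining $Z(\bb,p^e)$ in (\ref{Zaq_dfn}), so $Z_0(\bb,p^e)=Z(\bb,p^e)$. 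Writing $d_1\mid d_2\mid\cdots\mid d_k$ for the elementary divisors over $\Z_p$ of the symmetric integer matrix $\bb\cdot\bQ$, so that $\prod_{i=1}^k d_i=\pm\det(\bb\cdot\bQ)=\pm F_{\u{Q}}(\bb)$ (with $d_k=0$ precisely when $F_{\u{Q}}(\bb)=0$), Proposition~7 of \cite{Loxton} evaluates, for odd $p$,
\[
Z_0(\bb,p^e)=\prod_{i=1}^k \gcd(d_i,p^e),
\]
with the convention $\gcd(0,p^e)=p^e$; only the inequality $\le$ is needed below, so it is harmless if \cite{Loxton} is quoted in that weaker form.

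Next I would invoke Lemma~\ref{geolem1}: since $p$ is good and $(\bb,p)=1$, the matrix $\bb\cdot\bQ$ has rank at least $k-1$ over $\F_p$, which forces $p\nmid d_i$ for $1\le i\le k-1$, hence $\gcd(d_i,p^e)=1$ for those indices. The product above therefore collapses to $Z_0(\bb,p^e)=\gcd(d_k,p^e)$, and since $v_p(d_k)=v_p\big(\prod_i d_i\big)=v_p\big(F_{\u{Q}}(\bb)\big)$ (and $d_k=0$ if and only if $F_{\u{Q}}(\bb)=0$), this equals $\gcd(F_{\u{Q}}(\bb),p^e)$, which is the asserted bound.

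There is no genuine obstacle here; the content lies in recognizing why the two sources of loss present in Lemma~\ref{lemZ} vanish for a good prime. Oddness of $p$ removes the constant $C_p$ (the value $C_2=2^k$ drops to $1$) and lets us pass from $Z$ to $Z_0$, while the rank-$(k-1)$ property from Lemma~\ref{geolem1} kills all but one elementary divisor, so that the crude bound $\prod_i\gcd(d_i,p^e)\le\gcd\big(\prod_i d_i,p^{ke}\big)=\gcd(F_{\u{Q}}(\bb),p^{ke})$ that sufficed for Lemma~\ref{lemZ} sharpens to the single factor $\gcd(F_{\u{Q}}(\bb),p^e)$. The one point requiring care is that the hypothesis $(\bb,p)=1$ is genuinely needed in order to apply Lemma~\ref{geolem1}, and hence should be understood as implicit in the statement; it is satisfied in every application of Lemma~\ref{lemma_Z0}.
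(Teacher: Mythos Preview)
Your proof is correct and follows essentially the same approach as the paper: invoke Proposition~7 of \cite{Loxton} and then use Lemma~\ref{geolem1} to see that at most one elementary divisor of $\bb\cdot\bQ$ is divisible by $p$, collapsing the product to $\gcd(F_{\u{Q}}(\bb),p^e)$. Your write-up is in fact more careful than the paper's, which speaks of ``eigenvalues'' where ``elementary divisors'' is meant, and you rightly flag that the hypothesis $(\bb,p)=1$ is implicit (it holds in every application).
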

For its proof we note that if $p$ is a good prime, we may apply Lemma \ref{geolem1} to conclude that $\u{b} \cdot \u{Q}$ has rank at least $k-1$ over $\F_p$. Hence $p$ divides at most one of the eigenvalues of $\u{b}\cdot \u{Q}$ and the determinant of $\u{b}\cdot \u{Q}$ is divisible at most by $p^e$.\par
Now we see that $N_4(p^e)$ is bounded above by
\begin{equation*}
 \sum_{f=1}^e p^f \#\{ \bb \modd{p^e}: (\bb,p)=1,\ p^f|F_{\u{Q}}(\bb),\ \rank \left( M(\bb;\bfl_3,\bfl_4)\right) \leq k-1\}.
\end{equation*}
Trivially, we may re-write this as 
\[ \sum_{f=1}^e p^f p^{3(e-f)} \#\{ \bb \modd{p^f}: (\bb,p)=1,\ p^f|F_{\u{Q}}(\bb),\ \rank \left( M(\bb;\bfl_3,\bfl_4)\right) \leq k-1\}.\]

Now consider a fixed vector $\ba \modd{p}$ with $(\ba,p)=1$ and $p|F_{\u{Q}}(\ba)$. Since $p$ is good, i.e. $p\ndiv 2 \Disc (F_{\u{Q}})$, the point $\ba$ on $F_{\u{Q}}(\ba)=0$ is nonsingular over $\F_p$, so for at least one index $i=1,2,3$ we have $p \ndiv \partial/\partial x_i F_{\u{Q}}(\u{a})$. Supposing without loss of generality that $i=1$, we may fix any choices of $b_2,b_3 \modd{p^{f}}$ that reduce to $a_2,a_3$ modulo $p$ and then apply Hensel's lemma (see e.g. Proposition 5.20 in \cite{Greenberg}) to obtain a unique choice of $b_1 \modd{p^f}$ that reduces to $a_1 \modd{p}$; in conclusion there are at most $p^{2(f-1)}$ choices of $\bb\modd{p^f}$ such that $p^f|F_{\u{Q}}(\bb)$ and $\bb\equiv \ba\modd{p}$. 
Hence we may bound $N_4(p^e)$ by 
\begin{equation*}
\begin{split}
&\sum_{f=1}^e p^f p^{3(e-f)}p^{2(f-1)} \#\{ \u{a} \modd{p}: (\u{a},p)=1,\ p|F_{\u{Q}}(\u{a}),\ \rank \left( M(\u{a};\bfl_3,\bfl_4)\right) \leq k-1\}\\
&\leq ep^{3e-2} \#\{ \u{a} \modd{p}: (\u{a},p)=1,\ \rank \left( M(\u{a};\bfl_3,\bfl_4)\right) \leq k-1\}\\
&\leq ep^{3e-2} \#\{ \u{a} \modd{p}: \rank \left( M(\u{a};\bfl_3,\bfl_4)\right) \leq k-1\}.
\end{split}
\end{equation*}

Assembling the contributions from $N_3(p^e)$ and $N_4(p^e)$, we see that the contribution from $N_2(p^e)$ to $S(\lbf;p^e)$ in (\ref{SN1N2}) is at most 
\[ p^{e(k+3)}p^{-3}f(\lbf;p)+ ep^{e(k+3)}p^{-2}g(\lbf;p),\]
in the notation Lemma \ref{lemS2}. This suffices to complete the proof.

\subsection{Results on average: geometric preliminaries}

We now use the upper bounds on $S(\bfl;p^e)$ from Lemma \ref{lemS1} and Lemma \ref{lemS2} to produce non-trivial upper bounds for $S(\bfl;q)$ on average over $q$ and $\bfl$. Our main goal, which we will prove in Section \ref{sec_Sav}, is the following proposition:

\begin{prop}\label{propSav}
For $k \geq 4$ and any integers $Q,L\geq1$ we have
\begin{equation*}
\sum_{q\leq Q}\sum_{|\bfl|\leq L} |S(\bfl;q)|\ll_\eps Q^{k+3+\eps} L^{2k+\eps}+Q^{k+4+\eps}L^{k},
\end{equation*}
for any $\ep>0$.
\end{prop}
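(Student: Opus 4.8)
The plan is to combine the multiplicativity of $S(\bfl;q)$ in $q$ with the pointwise bound of Lemma~\ref{lemS1} — used both for the finitely many bad primes and as a fallback — and the refined bound of Lemma~\ref{lemS2} for good prime powers, and then to carry out the double sum over $\bfl$ and $q$. Throughout, recall $\bfl_3=\bfl_1+\bfl_2$ and $\bfl_4=\bfl_1-\bfl_2$. The guiding point is that the right-hand side of Lemma~\ref{lemS2} is only as large as the trivial bound $p^{e(k+3)}$ when $f(\bfl;p)$ or $g(\bfl;p)$ is large, and this occurs only for a thin set of $\bfl$; so I would split the sum over $\bfl$ according to how degenerate $\bfl$ is with respect to the matrix $M(\bb;\bfl_3,\bfl_4)$.

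First I would isolate the \emph{degenerate} tuples. If $\bfl_3=\mathbf{0}$ or $\bfl_4=\mathbf{0}$ then $M(\bb;\bfl_3,\bfl_4)$ has a zero column or a zero row, so $f(\bfl;p)=p^{3}$; more generally, let $\Sigma\subset\A^{2k}$ be the proper closed locus of $(\bfl_3,\bfl_4)$ on which $\det M(\bb;\bfl_3,\bfl_4)=-\bfl_3^{t}\adj(\bb\cdot\bQ)\bfl_4$ vanishes identically in $\bb$. Using Condition~\ref{cond2} — the smoothness of $F_{\u{Q}}=0$, equivalently via Proposition~\ref{prop1} — one checks that $\Sigma$ has dimension at most $k$. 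For the $\ll L^{k}$ tuples $\bfl$ with $|\bfl|\le L$ and $(\bfl_3,\bfl_4)\in\Sigma$ I would simply invoke $|S(\bfl;q)|\ll_{\eps}q^{k+3+\eps}$ from Lemma~\ref{lemS1} and sum over $q\le Q$; since $\sum_{q\le Q}q^{k+3+\eps}\ll Q^{k+4+\eps}$, this produces the term $Q^{k+4+\eps}L^{k}$ in the proposition.

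For $\bfl$ with $(\bfl_3,\bfl_4)\notin\Sigma$ I would write $q=q_{b}q_{g}$, with $q_{b}$ supported on bad primes (a harmless $Q^{\eps}$ loss, by Lemma~\ref{lemS1}) and $q_{g}$ on good primes, and use Lemma~\ref{lemS2} multiplicatively in the form
\[
|S(\bfl;q_{g})|\ \ll\ d(q_{g})^{O(1)}\,\frac{q_{g}^{k+3}}{\operatorname{rad}(q_{g})}\prod_{p\mid q_{g}}\Bigl(1+p^{-2}f(\bfl;p)+p^{-1}g(\bfl;p)\Bigr).
\]
Expanding the product over factorizations of $q_{g}$ into parts of coprime radical support and summing over $|\bfl|\le L$ reduces matters, via the Chinese Remainder Theorem, to bounding $\sum_{|\bfl|\le L}f(\bfl;p)$ and $\sum_{|\bfl|\le L}g(\bfl;p)$ for a single good prime $p$ (together with the trivial count of $\bfl$ with $|\bfl|\le L$ in a prescribed residue class). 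Here the geometric input enters — the content of the geometric preliminaries and of Section~\ref{sec_Sav}: one shows that $\{(\bb,\bfl):p\mid\det M(\bb;\bfl_3,\bfl_4)\}$ is a hypersurface in $\A^{2k+3}$ and that $\{(\bb,\bfl):\rank M(\bb;\bfl_3,\bfl_4)\le k-1\}$ has $\codim\ge 2$ there, so that, using Lemma~\ref{geolem1} (whence $\rank(\bb\cdot\bQ)\ge k-1$ over $\overline{\F}_{p}$ for good $p$) and the smoothness of $F_{\u{Q}}=0$, one has $f(\bfl;p)\ll p^{2}$ and $g(\bfl;p)\ll p$ for all $\bfl\notin\Sigma$ whose reduction modulo $p$ avoids an explicit subvariety of dimension $\le k$. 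Feeding the Lang--Weil estimate (Lemma~\ref{geolem4}) into a count of integer points in the box $|\bfl|\le L$, controlled by the sublevel-set bounds of Lemmas~\ref{lemPoly2} and~\ref{lemPoly4} together with a dyadic split according to whether $p$ is large or small relative to a fixed power of $L$, gives $\sum_{|\bfl|\le L}f(\bfl;p)\ll_{\eps}p^{2+\eps}L^{2k}+p^{3+\eps}L^{k}$ and a comparable bound for $g$; substituting back and summing over $q\le Q$ (using $\sum_{q\le Q}d(q)^{O(1)}q^{k+3}\operatorname{rad}(q)^{-1}\ll_{\eps}Q^{k+3+\eps}$), the $L^{2k}$-parts yield $\ll_{\eps}Q^{k+3+\eps}L^{2k+\eps}$ and the $L^{k}$-parts are again absorbed into $Q^{k+4+\eps}L^{k}$.

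The main obstacle is exactly this geometric step and the uniform lattice-point count that follows it: one must pin down the dimensions of the loci in $(\bb,\bfl)$-space cut out by $p\mid\det M(\bb;\bfl_3,\bfl_4)$ and by $\rank M(\bb;\bfl_3,\bfl_4)\le k-1$ — where Condition~\ref{cond2} is used in an essential way, since for three diagonal forms these loci degenerate — identify the exceptional set of $\bfl$, of dimension $\le k$, over which the fibres exceed their generic size, and then count integer points of bounded height on all of these varieties \emph{uniformly in $p$}. The delicate regime is $p$ large relative to $L$, where a box of side $L$ no longer meets every residue class modulo $p$ and the point count on a variety must be controlled directly rather than by a volume heuristic; the remaining $\eps$-losses are routine divisor-function and Lang--Weil error terms.
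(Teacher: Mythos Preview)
Your overall architecture matches the paper's: split $\bfl$ into a ``bad'' set of size $\ll L^{k}$ on which the crude bound $|S(\bfl;q)|\ll_{\eps}q^{k+3+\eps}$ of Lemma~\ref{lemS1} gives $Q^{k+4+\eps}L^{k}$, and a ``good'' complement on which Lemma~\ref{lemS2} is to be exploited. Your bad set $\Sigma$ is the paper's $\calL^{b}$, and the bound $\#\{\bfl\in\calL^{b}:|\bfl|\le L\}\ll L^{k}$ is Lemma~\ref{lemL}; its proof is not routine and relies on Lemma~\ref{HilfslemmaS} to manufacture $k$ explicit singular directions $\bb^{(i)}$ with linearly independent null-vectors, forcing each bad $\bfl$ into one of $2^{k}$ linear subspaces of dimension $k$.

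Where your route diverges --- and where the obstacle you yourself flag becomes a genuine gap --- is in handling good $\bfl$. You propose to sum over $\bfl$ first and reduce via CRT to single-prime averages $\sum_{|\bfl|\le L}f(\bfl;p)$. But after expanding the product in Lemma~\ref{lemS2} over a factorisation $q_{1}q_{2}q_{3}=q$, the $\bfl$-sum you must control is $\sum_{|\bfl|\le L}\prod_{p\mid\kap(q_{2})}f(\bfl;p)\prod_{p\mid\kap(q_{3})}g(\bfl;p)$, and this factors over primes only after completing $\bfl$ to full residue classes modulo $\kap(q_{2})\kap(q_{3})$, a modulus that may be as large as $Q$. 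In the application $Q$ can be as large as $B^{3/2}$ while $L$ is a small power of $B$, so the box $|\bfl|\le L$ samples far fewer than one point per class and no factorisation is available; the single-prime estimate $\sum_{|\bfl|\le L}f(\bfl;p)\ll p^{2}L^{2k}+p^{3}L^{k}$, even if true, is not of a shape that multiplies over primes.

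The paper sidesteps this entirely by reversing the order of summation: for each \emph{fixed} good $\bfl$ it bounds $\sum_{q\le Q}|S(\bfl;q)|$, which after the same expansion is dominated by $Q^{k+3+\eps}\Sigma_{1}(\bfl)\Sigma_{2}(\bfl)\Sigma_{3}(\bfl)$ with Euler-type sums $\Sigma_{i}$. The key input is Lemma~\ref{lemreduction}: there are finite families of integer polynomials $P_{i}(\bfl)$, $R_{j}(\bfl)$ (the $\u{x}$-coefficients of $\det M(\u{x};\bfl_{3},\bfl_{4})$ and of its proper minors) such that for good $\bfl$ some $P_{i}(\bfl)$ and some $R_{j}(\bfl)$ are nonzero, and $f(\bfl;p)\ll p^{2}$ whenever $p\nmid P_{i}(\bfl)$, $g(\bfl;p)\ll p$ whenever $p\nmid R_{j}(\bfl)$. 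The exceptional primes for a fixed $\bfl$ thus divide a fixed nonzero integer of size $L^{O(1)}$, and Rankin's trick gives $\Sigma_{i}(\bfl)\ll_{\eps}Q^{\eps}L^{\eps}$; summing trivially over the $\ll L^{2k}$ good $\bfl$ produces $Q^{k+3+\eps}L^{2k+\eps}$ with no lattice-point count in the regime $p\gg L$ ever needed.
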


We note that this is similar in appearance to Lemma 5.6 of \cite{HBPierce}, but our approach is different, and the second term in the above result is sharper than the analogous term in \cite{HBPierce}, due to greater control we attain on the dimension of ``bad'' $\lbf$, as discussed below.

In preparation for proving this, we introduce two schemes associated to each given vector $\bfl = (\lbf_1,\lbf_2) \in \Z^{2k}$; we recall that $\lbf_3 = \lbf_1+\lbf_2$, $\lbf_4 = \lbf_1- \lbf_2$. For fixed $\bfl\in \Z^{2k}$ let $\calX_\bfl\subset \A_\Z^3$ be given by
\begin{equation*}
\det M(\u{x};\bfl_3,\bfl_4)=0.
\end{equation*}
We set $X_\lbf= \calX_\lbf \times_\Z\Q$ and $X_{\bfl,p}= \calX_\lbf \times_\Z \Z/p\Z$. Given any $(k+1) \times (k+1)$ matrix $A$, write $\Del_{ij} A$ for the $i,j$-th minor of $A$. For fixed $\lbf \in \Z^{2k}$, we define $\calY_\bfl \subset \A_\Z^3$ by the system of equations
\begin{equation*}
\Del_{ij} M(\u{x};\bfl_3,\bfl_4)=0,\quad 1\leq i,j\leq k+1,
\end{equation*}
and define $Y_\bfl$ and $Y_{\bfl,p}$ similarly as for $\calX_\lbf$.
Note that \emph{a priori} we only know that $\dim X_\bfl \leq 3$, but we do know that $\dim Y_\bfl \leq 2$ for all $\bfl\in \Z^{2k}$, since the minor $\Del_{k+1,k+1} M(\u{x};\bfl_3,\bfl_4)$ equals the form $F_{\u{Q}}(\u{x})$, which is a non-zero form in $\u{x}$.

Now we split the set of all integer vectors $\bfl\in \Z^{2k}$ into good and bad vectors $\bfl$. The idea is that for $\bfl$ that are generic (``good'' in some sense), the variety $X_\bfl$ should be of dimension $2$ and $Y_\bfl$ is expected to be at most one-dimensional. 
\begin{dfn}
We say $\bfl\in \Z^{2k}$ is \emph{bad} if either 
\[ \dim X_\bfl=3 \quad \text{ or } \quad \dim Y_\bfl=2;\]
otherwise we say that $\lbf$ is \emph{good}.
 Let $\calL^b\subset \Z^{2k}$ be the set of bad $\bfl$ and let $\calL^g= \Z^{2k}\setminus \calL^b$ be its complement.
\end{dfn}
\par

We will see that the set of bad $\bfl$ is  sparse, having in some sense half the dimension of the space of all $\lbf$:
\begin{lemma}\label{lemL}
\begin{equation*}
\#\{ |\bfl|\leq L: \bfl\in \calL^b \}\ll L^k.
\end{equation*} 
\end{lemma}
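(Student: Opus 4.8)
The plan is to prove the stronger statement that $\calL^b$ is contained in the union of the two $k$-dimensional linear subspaces $\{\bfl_1=\bfl_2\}$ and $\{\bfl_1=-\bfl_2\}$ of $\Z^{2k}$; from this Lemma~\ref{lemL} is immediate, since each such subspace contains only $\ll L^k$ vectors of height at most $L$. The first move is to observe that the two conditions defining a bad $\bfl$ collapse to one. Condition~\ref{cond2} forces the plane curve $C:=\{F_{\u{Q}}(\u{x})=0\}\subset\P^2$ to be \emph{irreducible of degree $k$}: if $F_{\u{Q}}$ were divisible by two distinct irreducible forms, or by the square of one, then $F_{\u{Q}}$ and all its first partials would vanish at the corresponding intersection point (resp.\ along the repeated component), contradicting $\Disc(F_{\u{Q}})\neq 0$. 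Since $\det M(\u{x};\bfl_3,\bfl_4)$ is homogeneous of degree $k-1$ in $\u{x}$ (each term of the expansion uses one entry from the last row, one from the last column, and $k-1$ entries from the $\u{x}\cdot\u{Q}$ block), $X_\bfl$ has dimension $2$ when $\det M\not\equiv 0$ and dimension $3$ otherwise; so $\{\dim X_\bfl=3\}=\{\det M(\u{x};\bfl_3,\bfl_4)\equiv 0\}$. And if $\dim Y_\bfl=2$, then (as $Y_\bfl$ lies in the irreducible surface $\{\Del_{k+1,k+1}M(\u{x};\bfl_3,\bfl_4)=0\}=\{F_{\u{Q}}=0\}$ and has the same dimension) $Y_\bfl=\{F_{\u{Q}}=0\}$, so every minor $\Del_{ij}M$, hence $\det M$ by cofactor expansion, vanishes on $C$; a nonzero form of degree $k-1$ cannot vanish on the irreducible degree-$k$ curve $C$, so again $\det M(\u{x};\bfl_3,\bfl_4)\equiv 0$. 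Thus $\calL^b=\{\bfl:\det M(\u{x};\bfl_3,\bfl_4)\equiv 0\}$, and expanding $M$ along its last row and column and using the symmetry of $\u{x}\cdot\u{Q}$ gives $\det M(\u{x};\bfl_3,\bfl_4)=-\bfl_3^{t}\,\adj(\u{x}\cdot\u{Q})\,\bfl_4$, so
\[
\calL^b=\{\bfl:\ \bfl_3^{t}\,\adj(\u{x}\cdot\u{Q})\,\bfl_4\equiv 0\ \text{as a form in }\u{x}\}.
\]

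The key geometric input I would then establish is that the curve of kernel directions of the singular members of the net $\langle Q_1,Q_2,Q_3\rangle$ is linearly nondegenerate. For $\u{a}\in C$ one has $F_{\u{Q}}(\u{a})=0$ while $\rank(\u{a}\cdot\u{Q})\geq k-1$ by Condition~\ref{cond3} (which holds by Proposition~\ref{prop1}), so $\rank(\u{a}\cdot\u{Q})=k-1$, and by symmetry $\adj(\u{a}\cdot\u{Q})=\lam_{\u{a}}\,\bfv_{\u{a}}\bfv_{\u{a}}^{t}$ for a nonzero scalar $\lam_{\u{a}}$ and a nonzero $\bfv_{\u{a}}$ spanning $\ker(\u{a}\cdot\u{Q})$; moreover $\u{a}\mapsto[\bfv_{\u{a}}]$ is a morphism $C\to\P^{k-1}$ (locally $\bfv_{\u{a}}$ is a suitable column of $\adj(\u{a}\cdot\u{Q})$). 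Set $U=\operatorname{span}\{\bfv_{\u{a}}:\u{a}\in C\}$. I claim $U$ is the whole space: if some $c\neq 0$ had $c\perp\bfv_{\u{a}}$ for all $\u{a}\in C$, then $\adj(\u{a}\cdot\u{Q})\,c=\lam_{\u{a}}(\bfv_{\u{a}}\cdot c)\,\bfv_{\u{a}}=0$ for all $\u{a}\in C$, so each entry of the vector of degree-$(k-1)$ forms $\adj(\u{x}\cdot\u{Q})\,c$ vanishes on $C$ and hence is identically zero; evaluating at any $\u{a}_0$ with $F_{\u{Q}}(\u{a}_0)\neq 0$, where $\adj(\u{a}_0\cdot\u{Q})$ is invertible, would give $c=0$, a contradiction.

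The conclusion is then short. Let $\bfl\in\calL^b$. Restricting the identity of the first paragraph to $\u{a}\in C$ and using $\adj(\u{a}\cdot\u{Q})=\lam_{\u{a}}\bfv_{\u{a}}\bfv_{\u{a}}^{t}$ gives $\lam_{\u{a}}(\bfl_3\cdot\bfv_{\u{a}})(\bfl_4\cdot\bfv_{\u{a}})=0$ for every $\u{a}\in C$. Since $C$ is irreducible and is covered by the two closed subsets $\{\u{a}\in C:\bfl_3\cdot\bfv_{\u{a}}=0\}$ and $\{\u{a}\in C:\bfl_4\cdot\bfv_{\u{a}}=0\}$ (preimages of hyperplanes under $\u{a}\mapsto[\bfv_{\u{a}}]$), one of them is all of $C$; hence $\bfl_3\in U^{\perp}=\{0\}$ or $\bfl_4\in U^{\perp}=\{0\}$, i.e.\ $\bfl_1+\bfl_2=0$ or $\bfl_1-\bfl_2=0$. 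Therefore $\calL^b\subseteq\{\bfl_1=-\bfl_2\}\cup\{\bfl_1=\bfl_2\}$, and the lemma follows. (In fact equality holds, since $M(\u{x};\bfl_3,\bfl_4)$ has a zero row or column when $\bfl_3=0$ or $\bfl_4=0$.)

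The main obstacle is the linear nondegeneracy statement $U=\overline{\Q}^{\,k}$ in the middle step: everything else is formal manipulation of determinants and a couple of applications of B\'ezout's theorem. This is precisely the place where the hypothesis $\Disc(F_{\u{Q}})\neq 0$ enters in an essential way — via the irreducibility of $C$ together with the rank bound of Condition~\ref{cond3} — and it is exactly the point at which the argument would break down for, say, a system of three simultaneously diagonalizable forms (where the adjugate has many vanishing off-diagonal entries and the kernel directions degenerate).
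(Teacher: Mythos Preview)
Your proof is correct and takes a genuinely different route from the paper's, in fact arriving at the sharper conclusion $\calL^b=\{\bfl_3=0\}\cup\{\bfl_4=0\}$. Both arguments ultimately rest on the fact that the kernel directions $\{\bfv_{\u{a}}:\u{a}\in C\}$ span $\overline{\Q}^{\,k}$, but the paper establishes this as Lemma~\ref{HilfslemmaS} by first replacing $Q_1,Q_2$ with simultaneously diagonal forms via Lemmas~\ref{geolem0} and~\ref{geoprop7}, after which the required $\bb^{(i)}$ and null-vectors $\bfe^{(i)}$ are explicit. The paper then evaluates the condition $\det M(\bb^{(i)};\bfl_3,\bfl_4)=0$ at each of these $k$ points separately, concluding only that for every $i$ one of $\bfl_3\cdot\bfe^{(i)}=0$ or $\bfl_4\cdot\bfe^{(i)}=0$ holds, which places $\calL^b$ inside a union of $2^k$ subspaces of dimension $k$. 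Your adjugate identity $\det M=-\bfl_3^{t}\adj(\u{x}\cdot\u{Q})\bfl_4$ both reproves the spanning statement without any diagonalization (if $c\perp U$ then $\adj(\u{x}\cdot\u{Q})c$ is a vector of degree-$(k-1)$ forms vanishing on the irreducible degree-$k$ curve $C$, hence identically zero, contradicting invertibility at generic $\u{x}$) and, more importantly, lets you invoke the irreducibility of $C$ a second time to force the $\bfl_3$/$\bfl_4$ dichotomy to be \emph{uniform} across $C$ rather than pointwise. This yields a cleaner structural statement and eliminates the dependence on the simultaneous-diagonalization machinery; the paper's approach, by contrast, is more hands-on and would survive in settings where one only knew that $C$ contained $k$ points with independent kernels.
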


Next we show that for a fixed good $\bfl$, for almost all primes $p$ the varieties $X_{\bfl,p}$ and $Y_{\bfl,p}$ over the finite field $\F_p$ have the same dimension as $X_\bfl$ and $Y_\bfl$.
\begin{lemma}\label{lemreduction}
There are nonempty finite index sets $I$ and $J$ and non-zero polynomials $P_i(\bfl)$ for each $i\in I$ and $R_j(\bfl)$ for each $j\in J$ with the following properties. If $\bfl\in \calL^g$, then there is at least one $i\in I$ and at least one $j\in J$ such that $P_i(\bfl)\neq 0$ and $R_j(\bfl)\neq 0$. Furthermore, one has $\dim X_{\bfl,p} = 2$ if there exists at least one $i \in I$ such that $p\ndiv P_i(\bfl)$, and $\dim Y_{\bfl,p}\leq 1$ if there exists at least one $j \in J$ such that $p\ndiv R_j(\bfl)$.
\end{lemma}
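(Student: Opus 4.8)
I would handle the two assertions separately, the one for $X_{\bfl,p}$ being elementary. For it, use the classical formula for a bordered determinant,
\[
\det M(\u{x};\bfl_3,\bfl_4)\;=\;-\,\bfl_3^{\,t}\,\adj(\u{x}\cdot\u{Q})\,\bfl_4,
\]
a form of degree $k-1$ in $\u{x}$ whose coefficients are polynomials in the coordinates of $\bfl$. Since $\adj(\u{x}\cdot\u{Q})\cdot(\u{x}\cdot\u{Q})=F_{\u{Q}}(\u{x})\,\mathrm{Id}$ and $F_{\u{Q}}\not\equiv0$ (a consequence of Condition~\ref{cond2}), the adjugate is not the zero matrix of polynomials, and so $\det M(\u{x};\bfl_3,\bfl_4)$ is not identically zero as a polynomial in $(\u{x},\bfl)$. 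Take $I$ to index those of its $\u{x}$-monomial coefficients that are not identically zero as polynomials in $\bfl$, and let $P_i(\bfl)$, $i\in I$, be those coefficients; this is a nonempty finite family of nonzero polynomials. If $\bfl\in\calL^g$ then $\dim X_\bfl\neq3$, hence $\det M(\u{x};\bfl_3,\bfl_4)\not\equiv0$ in $\u{x}$, hence $P_i(\bfl)\neq0$ for some $i$; and if $p\ndiv P_i(\bfl)$ for some $i$, then $\det M(\u{x};\bfl_3,\bfl_4)$ reduces modulo $p$ to a nonzero form of positive degree, which cuts out a hypersurface in $\A^3_{\F_p}$, so $\dim X_{\bfl,p}=2$.

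For the assertion about $Y_{\bfl,p}$ I would produce the $R_j$ by a semicontinuity argument over $\Spec\Z$. Let $\widetilde{\calY}\subset\A^{2k}_\Z\times\P^2_\Z$ be the closed subscheme cut out by the homogeneous forms $\Del_{ij}M(\u{x};\bfl_3,\bfl_4)$, $1\le i,j\le k+1$, and let $\pi\colon\widetilde{\calY}\to\A^{2k}_\Z$ be the first projection, which is projective, hence proper. For $\bfl$ over a field $\kappa$, the fibre $\pi^{-1}(\bfl)\subset\P^2_\kappa$ is the projectivisation of the cone $\calY_\bfl$ (the rank-at-most-$(k-1)$ locus of $M(\u{x};\bfl_3,\bfl_4)$), and since $\calY_\bfl$ always contains the origin (using $k\ge4$), one has $\dim\pi^{-1}(\bfl)\ge1$ if and only if $\dim\calY_\bfl\ge2$. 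By upper semicontinuity of fibre dimension together with properness of $\pi$, the locus $B\subset\A^{2k}_\Z$ of points over which $\pi$ has a fibre of dimension $\ge1$ is closed. Give $B$ its reduced structure, let $\mathfrak b\subset\Z[\bfl]$ be its ideal, and let $R_1,\dots,R_m$ be a generating set, discarding the zero generators; granting $\mathfrak b\neq0$ (so that $J:=\{1,\dots,m\}$ is nonempty), one then has, for $\bfl\in\Z^{2k}$, that $R_j(\bfl)=0$ for all $j$ if and only if $\dim Y_\bfl\ge2$, and $R_j(\bfl)\equiv0\pmod p$ for all $j$ if and only if $\dim Y_{\bfl,p}\ge2$. (Here one uses that intersecting $B$ with a fibre of $\A^{2k}_\Z\to\Spec\Z$ amounts to reducing $\mathfrak b$ modulo the relevant characteristic, and that the fibre dimension of $\pi$ is unchanged under residue-field extension.) These two equivalences are exactly the required statements for $Y$: if $\bfl\in\calL^g$ then $\dim Y_\bfl\le1$, hence $R_j(\bfl)\neq0$ for some $j$; and if $p\ndiv R_j(\bfl)$ for some $j$, then $\dim Y_{\bfl,p}\le1$.

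The crux is to verify $\mathfrak b\neq0$, equivalently that $\pi$ is not dominant, equivalently that $\dim\calY_\bfl\le1$ for generic $\bfl$ over $\Q$; I would bound $\dim\widetilde{\calY}$ using the \emph{other} projection $g\colon\widetilde{\calY}\to\P^2$. Over a point $[\u{x}_0]$ with $F_{\u{Q}}(\u{x}_0)\neq0$ the matrix $\u{x}_0\cdot\u{Q}$ is invertible, so $M(\u{x}_0;\bfl_3,\bfl_4)$ has rank $\ge k$ for every $\bfl$ and the fibre of $g$ is empty. Over a point $[\u{x}_0]$ with $\u{x}_0\neq0$ and $F_{\u{Q}}(\u{x}_0)=0$, Condition~\ref{cond3} forces $\rank(\u{x}_0\cdot\u{Q})=k-1$; writing $v$ for a generator of its (one-dimensional) kernel, a short linear-algebra computation with the bordered matrix shows that $\rank M(\u{x}_0;\bfl_3,\bfl_4)\le k-1$ exactly when $v\cdot\bfl_3=0$, $v\cdot\bfl_4=0$, and one further relation bilinear in $(\bfl_3,\bfl_4)$ holds, so the fibre of $g$ over $[\u{x}_0]$ has dimension at most $2k-2$. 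Since $\{F_{\u{Q}}=0\}\subset\P^2$ is a curve, this gives $\dim\widetilde{\calY}\le1+(2k-2)=2k-1<2k$, so $\pi$ is not dominant and $\mathfrak b\neq0$, as needed. I expect this rank computation on the curve $F_{\u{Q}}=0$, together with the bookkeeping ensuring that the scheme-theoretic bad locus $B$ reduces correctly modulo primes, to be the only non-routine ingredients; the rest is unwinding definitions.
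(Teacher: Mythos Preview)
Your argument is correct. For $X_{\bfl,p}$ you do exactly what the paper does: take the $P_i$ to be the $\u{x}$-coefficients of $\det M(\u{x};\bfl_3,\bfl_4)$. Your bordered-determinant formula $\det M=-\bfl_3^{\,t}\adj(\u{x}\cdot\u{Q})\bfl_4$ is a nice extra, but not needed for the argument.

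For $Y_{\bfl,p}$ your route genuinely differs from the paper's. You invoke upper semicontinuity of fibre dimension for the proper map $\widetilde{\calY}\to\A^{2k}_\Z$, take generators of the ideal of the closed ``bad'' locus $B$, and verify $B\neq\A^{2k}_\Z$ by a dimension count over $\Q$ via the second projection. This is sound, and the linear-algebra computation of the fibre over a point of $\{F_{\u{Q}}=0\}$ is correct (indeed the three conditions you name cut the fibre down to dimension $2k-3$, so your bound $2k-2$ is even generous). The paper instead gives a completely elementary and explicit construction: it takes the $\tilde R_j$ to be the $\u{x}$-coefficients of the minors $\Del_{ij}M$ with $(i,j)\neq(k+1,k+1)$ and sets $R_j=\Disc(F_{\u{Q}})\cdot\tilde R_j$. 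The point is that $\Del_{k+1,k+1}M=F_{\u{Q}}$, and whenever $p\nmid\Disc(F_{\u{Q}})$ the plane curve $F_{\u{Q}}=0$ is smooth and hence \emph{irreducible} of degree $k$ over $\F_p$; any nonzero degree-$(k-1)$ minor then meets it properly, forcing $\dim Y_{\bfl,p}\le 1$. The nonvanishing of some $R_j$ on $\calL^g$ is read off directly, and nontriviality of the family follows from Lemma~\ref{lemL}. So the paper trades your scheme-theoretic machinery for a single geometric observation (irreducibility of $F_{\u{Q}}$ under Condition~\ref{cond2}) and gets concrete polynomials of controlled shape; your approach is more structural, does not rely on that irreducibility, and would transport unchanged to situations where no distinguished minor like $F_{\u{Q}}$ is available.
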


Once we have proved these two lemmas, we will turn in Section \ref{sec_Sav} to proving Proposition \ref{propSav}.
In order to prove Lemma \ref{lemL},  we need the following auxiliary result. 
\begin{lemma}\label{HilfslemmaS}
There are $k$ vectors $\bb^{(i)}\in \overline{\Q}^3$ with $\rank (\bb^{(i)}\cdot\bQ) =k-1$ and such that the nullvectors $\bfe^{(i)}$ of $\bb^{(i)}\cdot\bQ$ are linearly independent for $1 \leq i \leq k$.
\end{lemma}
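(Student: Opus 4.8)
The plan is to exploit the \emph{discriminant curve}: over $K:=\overline{\Q}$, let $\mathcal{C}\subset\P^2_K$ be the plane curve $F_{\bQ}(\bb)=\det(\bb\cdot\bQ)=0$. Since $\bQ$ satisfies Condition \ref{cond2}, the ternary form $F_{\bQ}$ is nonzero of degree $k$ with $\Disc(F_{\bQ})\neq 0$, so its vanishing scheme $\mathcal{C}$ is smooth, hence reduced and (by a Bezout argument, since any two curves in $\P^2$ meet) irreducible; thus $F_{\bQ}$ is irreducible as a polynomial, $(F_{\bQ})$ is a prime ideal, and no nonzero ternary form of degree $<k$ vanishes on $\mathcal{C}(K)$. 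Also, by Proposition \ref{prop1}, $\bQ$ satisfies Condition \ref{cond3}, so each point $\bb$ of $\mathcal{C}$ has $\rank(\bb\cdot\bQ)=k-1$ exactly (the inequality $\le$ being $F_{\bQ}(\bb)=0$, the inequality $\ge$ being Condition \ref{cond3}); fix for each such $\bb$ a nonzero generator $\bfe(\bb)$ of the one-dimensional space $\ker(\bb\cdot\bQ)$.

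The lemma then reduces to the claim that the set $\{\bfe(\bb):\bb\in\mathcal{C}(K)\}$ spans $K^k$. Indeed, granting this I would select $k$ of these null vectors forming a basis of $K^k$, say $\bfe^{(i)}=\bfe(\bb^{(i)})$; by the previous paragraph each $\bb^{(i)}$ satisfies $\rank(\bb^{(i)}\cdot\bQ)=k-1$, so $\bb^{(1)},\dots,\bb^{(k)}$ are the required vectors.

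To prove the spanning claim I would argue by contradiction. Let $W\subseteq K^k$ be the span of all $\bfe(\bb)$, $\bb\in\mathcal{C}(K)$, and suppose $m:=\dim W<k$. Pass to a basis of $K^k$ adapted to $W$ --- this conjugates every matrix $\bb\cdot\bQ$ by a fixed element of $\mathrm{GL}_k(K)$, hence only multiplies $F_{\bQ}$ by a nonzero constant and preserves $\mathcal{C}$ and Condition \ref{cond3} --- so that $W=\{x_{m+1}=\dots=x_k=0\}$. Let $P(\bb)$ be the $k\times m$ matrix formed by the first $m$ columns of $\bb\cdot\bQ$; its entries are linear forms in $\bb$. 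For $\bb\in\mathcal{C}(K)$ we may write $\bfe(\bb)=(\bfe'(\bb),\mathbf 0)$ with $\bfe'(\bb)\in K^m\setminus\{\mathbf 0\}$, and then $P(\bb)\bfe'(\bb)=(\bb\cdot\bQ)\bfe(\bb)=\mathbf 0$, so $\rank P(\bb)\le m-1$; equivalently, every $m\times m$ minor of $P(\bb)$ vanishes. Each such minor is a ternary form of degree $m<k$ vanishing on all of $\mathcal{C}$, hence it vanishes identically. Thus $\rank P(\bb)\le m-1$ for \emph{every} $\bb\in K^3$. But take any $\bb_0$ with $F_{\bQ}(\bb_0)\neq 0$ (such $\bb_0$ exist since $F_{\bQ}\not\equiv 0$): then $\bb_0\cdot\bQ$ is invertible so $\ker(\bb_0\cdot\bQ)=0$, yet $P(\bb_0)$ has a nonzero kernel vector $\bfx'\in K^m$, so $(\bfx',\mathbf 0)\neq\mathbf 0$ lies in $\ker(\bb_0\cdot\bQ)$ --- a contradiction. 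Therefore $m=k$.

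I expect the spanning claim to be the main obstacle. Its crux is the implication that ``$\bfe(\bb)\in W$ for all $\bb\in\mathcal{C}$'' forces the first $m$ columns of $\bb\cdot\bQ$ to be rank-deficient for \emph{all} $\bb$, which then contradicts invertibility of $\bb\cdot\bQ$ at a generic $\bb$; the degree count against the irreducible degree-$k$ curve $\mathcal{C}$ is exactly what pushes this through, and is the point at which Condition \ref{cond2} (rather than merely Condition \ref{cond3}) genuinely enters. The auxiliary facts --- squarefreeness and irreducibility of $F_{\bQ}$ from $\Disc(F_{\bQ})\neq 0$, and the harmlessness of the adapted change of basis --- are routine.
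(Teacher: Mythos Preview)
Your proof is correct and takes a genuinely different route from the paper. The paper's argument is constructive: it invokes Lemma \ref{geolem0} to pick two forms $Q_1',Q_2'$ in the pencil whose projective intersection is smooth, then uses Lemma \ref{geoprop7} to simultaneously diagonalize them as $Q_1'=\diag(c_i)$, $Q_2'=\diag(d_i)$ with the $[c_i:d_i]$ pairwise distinct, and finally takes $\bb^{(i)}=(d_i,-c_i,0)$, so the nullvectors are literally the standard basis. Your argument is instead non-constructive and purely algebro-geometric: you use Condition \ref{cond2} to see that $F_{\bQ}$ is irreducible of degree $k$, and then a neat degree-counting trick (the $m\times m$ minors of the first $m$ columns are degree-$m$ forms vanishing on the irreducible degree-$k$ curve, hence identically zero) forces the null directions to span $K^k$. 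The paper's approach buys explicit $\bb^{(i)}$ with vanishing third coordinate and ties the result back to the two-form theory of \cite{HBPierce}; yours is more self-contained, avoids the diagonalization machinery entirely, and makes transparent why the full strength of Condition \ref{cond2} (irreducibility of the discriminant curve, not merely Condition \ref{cond3}) is the natural hypothesis here. One cosmetic point: a change of basis in $K^k$ replaces $\bb\cdot\bQ$ by $T^t(\bb\cdot\bQ)T$ (congruence) rather than conjugation, but this is immaterial to your argument since kernels, determinants up to nonzero scalars, and linearity in $\bb$ are all preserved.
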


\begin{proof}
We consider the linear combination $\u{x} \cdot \u{Q}$; Lemma \ref{geolem0} shows that there is an invertible change of variables in  $\u{x}$ that allows us to replace the quadratic forms $Q_i$ by quadratic forms $Q_i'$ such that the intersection of $Q_1'$ and $Q_2'$ is smooth.
By Lemma \ref{geoprop7} the quadratic forms $Q_1'$ and $Q_2'$ can furthermore be simultaneously diagonalized, say $Q_1'= \diag(c_i)$ and $Q_2'= \diag(d_i)$. The points $[c_i:d_i]$, seen as elements in the projective line, are all distinct, because the intersection of $Q_1'$ and $Q_2'$ is smooth. (This is proved explicitly in Proposition 2.1 of \cite{HBPierce}.) 

We will find $k$ vectors $\u{b}^{(i)}$ appropriate to this new system $\u{Q}' = \{Q_1',Q_2',Q_3'\}$ with $Q_1'$ and $Q_2'$ in the diagonal form above, and then by an invertible change of variables we will obtain $k$ vectors suitable to the original system. For the new system of forms, we choose the vectors $\bb^{(i)}$ by setting, for each $1 \leq i \leq k$, the third coordinate $b_3^{(i)}=0$ and the first two coordinates $b_1^{(i)}=d_i$ and $b_2^{(i)}=-c_i$. It is now clear by construction that $\rank (\bb^{(i)}\cdot\bQ') =k-1$. Moreover, for each $1 \leq i \leq k$ the nullvector for $(\bb^{(i)} \cdot \u{Q}')$ is simply the $i$-th unit vector, and thus the $k$ nullvectors are linearly independent. The invertible transformations required to return to the original system of forms preserves this linear independence.
\end{proof}

We may now prove Lemma \ref{lemL}.
\begin{proof}[Proof of Lemma \ref{lemL}]
Assume that $\bfl\in \Z^{2k}$ is bad. Then we need to distinguish two cases. First assume that $\dim X_\lbf=3$, so that
$\det M(\u{x};\bfl_3,\bfl_4)$
is identically zero as a polynomial in $\u{x}$. In particular we then have $\det M(\bb^{(i)};\bfl_3,\bfl_4)=0$ for all $1\leq i\leq k$ and $\u{b}^{(i)}$ given in Lemma \ref{HilfslemmaS}. Suppose $i$ is fixed; if $\bfl_3^t$ is linearly independent of the row vectors of the rank $k-1$ matrix $\bb^{(i)}\cdot \bQ$, then the $(k+1) \times k$ matrix 
\[
\left(\begin{array}{c} \rule[-3mm]{0mm}{1mm}\u{b}^{(i)} \cdot \u{Q} \\ \hline
\rule[3mm]{0mm}{1mm}\lbf_3^t \end{array}\right)
\]
 has rank $k$. Then $\bfl_4$ must be in the span of the column vectors of $\bb^{(i)}\cdot \bQ$, since otherwise $M(\bb^{(i)};\bfl_3,\bfl_4)$ would have rank $k+1$, contrary to the assumption that $\det M(\bb^{(i)};\bfl_3,\bfl_4)=0$. Hence $\lbf_4$ must be orthogonal to the nullspace of $\u{b}^{(i)} \cdot \u{Q}$. Similarly, if $\bfl_4$ is not contained in the span of the column vectors of $\bb^{(i)}\cdot\bQ$, then $\bfl_3^t$ must lie in the span of the  row vectors of $\bb^{(i)}\cdot\bQ$; by symmetry $\lbf_3$ lies in the span of the column vectors of $\u{b}^{(i)} \cdot \u{Q}$, so that $\lbf_3$ is orthogonal to the nullspace of $\u{b}^{(i)} \cdot \u{Q}$. In any case, for each $1\leq i\leq k$ we either have
\begin{equation*}
\bfl_3\cdot\bfe^{(i)}=0,\quad \mbox{ or } \quad \bfl_4\cdot\bfe^{(i)}=0,
\end{equation*}
where $\bfe^{(i)}$ is the nullvector of $\bb^{(i)} \cdot \bQ$ provided by Lemma \ref{HilfslemmaS}. By the linear independence of the $k$ nullvectors $\bfe^{(i)}$, this restricts the set of $\bfl$ such that $\dim X_\bfl=3$ to a
 collection of $2^k$ subspaces of dimension $k$ in $\Z^{2k}$. Hence their contribution to the quantity in Lemma \ref{lemL} is bounded by 
\begin{equation*}
\#\{|\bfl|\leq L: \dim X_\bfl =3\}\ll L^{k}.
\end{equation*}

Next we consider the collection of $\bfl\in \Z^{2k}$ that are labelled bad because $\dim Y_\bfl =2$. Fix one such $\lbf$ and recall that $Y_\bfl$ is defined by
\begin{equation*}
\Del_{ij}M(\u{x};\bfl_3,\bfl_4)=0,\quad 1\leq i,j\leq k+1,
\end{equation*}
and we have $\Del_{k+1,k+1}M(\u{x};\bfl_3,\bfl_4)=F_{\u{Q}}(\u{x})$. Recall that we assume that $\u{Q}$ satisfies Condition \ref{cond2}, so that the curve in the projective plane given by $F_{\u{Q}}(\u{x})=0$ is smooth and hence irreducible. We claim that if $\dim Y_\bfl=2$, then $\u{x}$ solves all the equations 
\begin{equation}\label{minors_vanish}
\Del_{ij} M(\u{x};\bfl_3,\bfl_4)=0, \forall 1\leq i,j\leq k+1,
\end{equation}
as soon as $F_{\u{Q}}(\u{x})=0$. Otherwise the irreducible projective curve $F_{\u{Q}}(\u{x})=0$ would intersect some other curve $\Del_{ij} M(\u{x};\bfl_3,\bfl_4)$ (for $i,j$ not both equal to $k+1$) in a finite number of points in $\P^2$, and then the dimension of $Y_\bfl$ as an affine variety could be at most one.  (We note for this argument that for all $i,j$, the polynomials $\Del_{ij}M(\u{x};\bfl_3,\bfl_4)$ are homogeneous in $\u{x}$.)

In particular, taking the tuples $\bb^{(i)}$ for $1\leq i\leq k$ as in Lemma \ref{HilfslemmaS}, by construction we have $F_{\u{Q}}(\u{b}^{(i)}) = \det (\u{b}^{(i)} \cdot \u{Q})=0$ and hence by the above argument, for each $\u{x} = \u{b}^{(i)}$, all the $k \times k$ minors in (\ref{minors_vanish}) vanish as well. Thus $M(\bb^{(i)};\bfl_3,\bfl_4)$ has rank at most $k-1$ for $1\leq i\leq k$, so certainly it has vanishing determinant for each $\u{b}^{(i)}$. 
 Then we may argue exactly as before to conclude that $\lbf$ must lie in a collection of $2^k$ subspaces of dimension $k$ in $\Z^{2k}$, so that
\begin{equation*}
\#\{ |\bfl|\leq L: \dim Y_\bfl=2\}\ll L^k,
\end{equation*}
which completes the proof of Lemma \ref{lemL}.
\end{proof}

\begin{proof}[Proof of Lemma \ref{lemreduction}]

  We let the $P_i$ be the polynomials in $\bfl$ that are the coefficients of each monomial in the $\u{x}$ of $\det M(\u{x};\bfl_3,\bfl_4)$.
   Then for $\bfl\in \Z^{2k}$, if $P_i(\bfl)=0$ for all $i$, then $\det M(\u{x};\bfl_3,\bfl_4)$ is the zero polynomial in $\u{x}$ over $\Q$ so that $\dim X_{\bfl}=3$ and $\lbf$ is bad.  If $\bfl\in \Z^{2k}$, and the $P_i(\bfl)$ do not all vanish modulo $p$, then $\det M(\u{x};\bfl_3,\bfl_4)$ does not vanish identically as a polynomial in $\u{x}$ with coefficients in $\F_p$, and so $\dim X_{\bfl,p} = 2$.

 To construct the polynomials $R_j$, we first define polynomials $\tilde{R}_j$ that are the polynomials in $\bfl$ that are the coefficients of each monomial in the $\u{x}$ of each of the minors
   \begin{equation*}
\Del_{ij} M(\u{x};\bfl_3,\bfl_4)=0,\quad 1\leq i,j\leq k+1, (i,j)\ne(k+1,k+1).
\end{equation*}
Then we set $R_j (\lbf)= \Disc (F_{\u{Q}}) \tilde{R}_j (\lbf) $.
  Then for $\bfl\in \Z^{2k}$, if $R_j(\bfl)=0$ for all $j$, then $\dim Y_{\bfl}\geq 2$ and $\lbf$ is bad.  If $\bfl\in \Z^{2k}$, and the $R_j(\bfl)$ do not all vanish modulo $p$, then first, since this implies $p\nmid \Disc (F_{\u{Q}})$, we have that $F_{\u{Q}}=0$ defines a irreducible projective plane curve of degree $k$ over $\F_p$.
 In addition, for this given $\bfl$, at least one of the other minors
    $\Del_{ij} M(\u{x};\bfl_3,\bfl_4)$ for $(i,j)\ne(k+1,k+1)$ as a polynomial in the $\u{x}$ is non-zero and degree $k-1$.  Thus the common intersection of $\Del_{ij} M(\u{x};\bfl_3,\bfl_4)=0$ from this non-zero degree $k-1$ minor and the irreducible degree $k$ equation $F_{\u{Q}}=0$ is dimension $1$, and so $\dim Y_{\bfl,p} \leq 1$.

 Note if the $P_i$ were all $0$, then all $\bfl \in \Z^{2k}$ would be bad, which contradicts Lemma~\ref{lemL}, and similarly if all the $R_j$ were $0$.

\end{proof}

\subsection{Proof of Proposition \ref{propSav}}\label{sec_Sav}
We now return to the proof of the main result we will apply to exponential sums on the minor arcs, Proposition \ref{propSav}.
First we recall from Lemma \ref{lemS1} the upper bound
\begin{equation*}
S(\bfl;q)\ll_\eps q^{k+3+\eps}. 
\end{equation*}
With this combined with Lemma \ref{lemL}, we  quickly obtain an upper bound for the average restricted to bad $\lbf$:
\begin{equation}\label{eqnSav0}
\sum_{q\leq Q} \sum_{\substack{|\bfl|\leq L\\ \bfl\in \calL^b}} |S(\bfl;q)| \ll Q^{k+4+\eps} \# \{ |\bfl|\leq L: \bfl\in \calL^b\} \ll_\eps Q^{k+4+\eps}L^{k}.
\end{equation}
This is sufficient for the conclusion of the proposition.\par
Next we note that Lemma \ref{lemS1} in connection with Lemma \ref{lemS2} implies that there exists a constant $C$ such that
\begin{equation}\label{eqnSav1}
|S(\bfl;p^e)|\leq C (ke+1) p^{e(k+3)} (p^{-1}+p^{-3}f(\bfl;p)+p^{-2}g(\bfl;p)),
\end{equation}
where we recall the functions $f(\lbf;p)$ and $g(\lbf;p)$ from Lemma \ref{lemS2}. Note that since finitely many primes are bad, we have been able to choose $C$ sufficiently large so that this bound holds uniformly for all primes $p$.
Moreover we can trivially bound $(ke+1)\leq (e+1)^k$ so that when we combine the factors $(ke+1)$ resulting from all the prime powers $p^e|q$, this will contribute at most $d(q)^k \ll q^\ep$.

As we will now want to combine conditions for several primes simultaneously, we re-write the functions $f$ and $g$ in the equivalent form, for any square-free integer $q$, as
\begin{eqnarray*}
f(\bfl;q)&=& \#\{ \bb \modd{q}:\ q| \det M(\bb;\bfl_3,\bfl_4)\}, \\
g(\bfl;q)&=&\#\{\bb \modd{q}:\ q| \Del_{ij} M(\bb;\bfl_3,\bfl_4),\ \forall {1\leq i,j\leq k+1}\}.
\end{eqnarray*}
Note that the functions $f(\bfl;q)$ and $g(\bfl;q)$ are multiplicative in $q$.

Given an integer $q$, we write $\kap (q)= \prod_{p|q} p$.  Then we have for any integer $q$ the bound
\begin{eqnarray}
|S(\bfl;q)|&\leq & C^{\omega(q)} d(q)^{k} q^{k+3}  \prod_{p|q} (p^{-1}+p^{-3}f(\bfl;p)+p^{-2}g(\bfl;p)) \nonumber\\ 
&\leq &C^{\omega(q)} d(q)^{k} q^{k+3} \sideset{}{'}\sum_{q_1q_2q_3=q} \frac{1}{\kap(q_1)}\frac{f(\bfl;\kap(q_2))}{\kap(q_2)^3}\frac{g(\bfl;\kap(q_3))}{\kap(q_3)^2},\label{eqnSav2}
\end{eqnarray}
in which the sum is over all factorizations of $q$ into three factors that are pairwise relatively prime.
We now turn to the contribution of the good $\bfl$ and fix some vector $\bfl\in \calL^g$. Let the polynomials $P_i(\bfl)$, $i\in I$ and $R_j(\bfl)$, $j\in J$ be given as in Lemma \ref{lemreduction}. Assume that for our vector $\bfl$ under consideration, we have specified indices $i$ and $j$ (guaranteed by Lemma \ref{lemreduction}) so that the  values $P_i(\bfl)$ and $R_j(\bfl)$ are nonzero. Using equation (\ref{eqnSav2}) we bound the sum
\begin{align*}
\sum_{q\leq Q} |S(\bfl;q)| &\ll_\eps Q^{k+3+\eps} \sum_{q\leq Q} \sideset{}{'}\sum_{q_1q_2q_3=q} \frac{1}{\kap(q_1)}\frac{f(\bfl;\kap(q_2))}{\kap(q_2)^3}\frac{g(\bfl;\kap(q_3))}{\kap(q_3)^2}\\
&\ll_\eps Q^{k+3+\eps}  \Sig_1(\bfl)\Sig_2(\bfl)\Sig_3(\bfl),
\end{align*}
with sums of the form
\begin{eqnarray*}
\Sig_1(\bfl)&=& \sum_{q\leq Q} \frac{1}{\kap(q)},\\
\Sig_2(\bfl)&=&\sum_{q\leq Q} \frac{f(\bfl;\kap(q))}{\kap(q)^3},\\
\Sig_3(\bfl)&=&\sum_{q\leq Q} \frac{g(\bfl;\kap(q))}{\kap(q)^2}.
\end{eqnarray*}
We now separately estimate the sums $\Sig_i(\bfl)$ for $1\leq i\leq 3$. We start with $\Sig_1(\bfl)$, which we treat via Rankin's trick, as in Section 5.5 of \cite{HBPierce}; for convenience we briefly recall the method. Note that
\begin{equation*}
\Sig_1(\bfl) \leq Q^{\eps} \sum_{q=1}^\infty \frac{1}{q^\eps \kap (q)} \leq Q^\eps \prod_{p} (1+p^{-1-\eps}+p^{-1-2\eps}+\cdots ).
\end{equation*}
We set 
\beq\label{c_dfn}
c=c(\eps)= 1+2^{-\eps}+4^{-\eps}+\cdots
\eeq
 and rewrite the upper bound as
\begin{equation*}
\Sig_1(\bfl)\leq Q^\eps \prod_p (1+p^{-1-\eps}c) \leq Q^\eps \zet (1+\eps)^c\ll_\eps Q^\eps.
\end{equation*}
We next bound the factor $\Sig_2(\bfl)$. Recall that we have assumed that $P_i(\bfl)\neq 0$. First we note the trivial bound $f(\bfl;p)\leq p^3$ which holds for any prime $p$ by definition of the counting function $f(\bfl;p)$. If $p\nmid P_i(\bfl)$, then by Lemma \ref{lemreduction} we know that $\dim X_{\bfl,p}=2$. Hence Lemma \ref{geolem4} implies for such primes $p$ the upper bound
\begin{equation*}
f(\bfl;p)\ll p^2,
\end{equation*}
with an implied constant only depending on $k$. We conclude by multiplicativity of the function $f(\bfl;q)$ that 
\begin{equation*}
f(\bfl;\kap(q))\ll \left(\prod_{p|(P_i(\bfl),\kap(q))} p\right) C^{\omega(q)} (\kap(q))^2,
\end{equation*}
for some positive constant $C$. We insert this into the definition of $\Sig_2(\bfl)$ and obtain the upper bound
\begin{align*}
\Sig_2(\bfl)&\ll_\eps Q^\eps \sum_{q\leq Q} \left(\prod_{p|(P_i(\bfl),\kap(q))} p\right)\frac{1}{\kap(q)} \\
&\ll_\eps Q^{2\eps} \sum_{q=1}^\infty \frac{1}{q^\eps} \frac{1}{\kap(q)} \prod_{p|(P_i(\bfl),\kap(q))} p.
\end{align*}
Recalling the definition of $c=c(\eps)$ in (\ref{c_dfn}), we similarly proceed in bounding $\Sig_2(\bfl)$ by
\begin{align*}
\Sig_2(\bfl)&\ll_\eps Q^{2\eps} \prod_{p|P_i(\bfl)} (1+p^{-\eps}+p^{-2\eps}+\cdots) \prod_{p\nmid P_i(\bfl)} (1+p^{-1-\eps}+p^{-1-2\eps}+\cdots ) \\
&\ll_\eps Q^{2\eps} c^{w(P_i(\bfl))}\zet (1+\eps)^c \ll_\eps Q^{2\eps} L^\eps.
\end{align*}

The treatment of $\Sig_3(\bfl)$ is very similar. We recall that the $k\times k$ minor $M_{ij}(\bb;\bfl_3,\bfl_4)$ with $(i,j) = (k+1,k+1)$ is just the function $F_{\u{Q}}(\bb)$, hence we get as a first upper bound
\begin{equation*}
g(\bfl;p)\leq \#\{ \bb\modd{p}:\ p|F_{\u{Q}}(\bb)\} \leq c p^2,
\end{equation*}
by Lemma \ref{lemF}, where $c$ depends only on the finitely many bad primes and hence may now be regarded as a universal constant. Furthermore, if $p\nmid R_j(\bfl)$, then the variety $Y_{\bfl,p}$ is at most one dimensional by Lemma \ref{lemreduction}. Hence again Lemma \ref{geolem4} implies that $g(\bfl;p)\ll p$ with an implied constant only depending on $k$. Together these estimates imply that
\begin{equation*}
g(\bfl;\kap(q))\ll C^{\omega(q)} \kap(q) \prod_{p|(R_j(\bfl),\kap(q))} p,
\end{equation*}
for some positive constant $C$. Hence we can bound the sum $\Sig_3(\bfl)$ by
\begin{equation*}
\Sig_3(\bfl)\ll_\eps Q^\eps \sum_{q\leq Q} \frac{1}{\kap(q)} \left( \prod_{p|(R_j(\bfl),\kap(q))}p\right) \ll_\eps Q^{2\eps} L^\eps,
\end{equation*}
by a similar argument to that given for $\Sig_2(\lbf)$.\par
We now assemble the estimates for the sums $\Sig_i(\bfl)$ in order to bound the contribution of the good vectors $\bfl\in \calL^g$. We easily obtain that
\begin{align*}
\sum_{\substack{|\bfl|\leq L\\ \bfl\in \calL^g}} \sum_{q\leq Q} |S(\bfl;q)| \ll_\eps Q^{k+3+\eps} \sum_{\substack{|\bfl|\leq L\\ \bfl\in \calL^g}} \Sig_1(\bfl)\Sig_2(\bfl)\Sig_3(\bfl) \ll_\eps Q^{k+3+\eps} L^{2k+\eps}.
\end{align*}
In combination with the estimate (\ref{eqnSav0}), this completes the proof of  Proposition \ref{propSav}.

\section{The Singular Series}
We define the singular series to be
\begin{equation}\label{sing_ser_dfn}
\grS(\bn)= \sum_{q=1}^\infty \frac{1}{q^k} T(\bn;q),
\end{equation}
recalling the definition of $T(\u{n};q)$ in (\ref{Tnq_dfn}).
We first show that $\grS (\bn)$ is absolutely convergent for $\bn$ with $H_{\u{Q}}(\bn)\neq 0$. 

\begin{proposition}\label{prop_sing_ser_conv}
Assume that $k>6$ and $H_{\u{Q}}(\bn)\neq 0$. Then $\grS(\bn)$ is absolutely convergent; more precisely we have 
\begin{equation*}
\sum_{q\geq R} \frac{1}{q^k} |T(\bn;q)|\ll_\eps |\bn|^\eps R^{-\nu},
\end{equation*}
for any $0 < \nu < k/2-3$, with an implied constant only depending on $\eps$ and the system of quadratic forms $\bQ$.
\end{proposition}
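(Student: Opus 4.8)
The plan is to reduce the tail estimate to a convergence statement for a Dirichlet series, via the trivial inequality $q^{-k}\le R^{-\nu}q^{-(k-\nu)}$ valid for $q\ge R$. It therefore suffices to show that for every $\eps>0$ and every fixed $\nu$ with $0<\nu<k/2-3$,
\[
\sum_{q=1}^\infty \frac{|T(\bn;q)|}{q^{k-\nu}} \ll_{\eps,\nu,\u{Q}} |\bn|^\eps ,
\]
since this gives $\sum_{q\ge R}q^{-k}|T(\bn;q)|\le R^{-\nu}\sum_{q\ge1}q^{-(k-\nu)}|T(\bn;q)|$. Now $q\mapsto |T(\bn;q)|$ is a nonnegative multiplicative function (as $T(\bn;q)$ is multiplicative), so the sum equals the Euler product $\prod_p L_p$ with $L_p=\sum_{e\ge0}p^{-e(k-\nu)}|T(\bn;p^e)|$, in the standard sense that finiteness of the product implies absolute convergence of the sum. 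I would estimate $L_p$ according to whether $p$ is bad, good of Type~II with respect to $\bn$, or good of Type~I with respect to $\bn$; these three classes partition all primes.

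For the finitely many bad primes $p\mid 2\Disc(F_{\u{Q}})$, Proposition~\ref{lemT1} gives $|T(\bn;p^e)|\le A_p p^{e(3+k/2)}$, so $L_p\le 1+A_p\sum_{e\ge1}p^{-e(k/2-3-\nu)}<\infty$, the geometric series converging precisely because $\nu<k/2-3$ (this is where $k>6$ enters); the product over these finitely many primes is a constant depending only on $\u{Q}$ and $\nu$. For a Type~I prime $p$, Proposition~\ref{lemT4} gives $T(\bn;p^e)=0$ for $e\ge2$ and $|T(\bn;p)|\ll p^{(k+3)/2}$, hence $L_p=1+O\big(p^{-(k-3)/2+\nu}\big)$, and since $\nu<k/2-3$ the exponent satisfies $(k-3)/2-\nu>3/2>1$, so $\prod_{p\text{ Type I}}L_p$ converges to a constant depending only on $\u{Q}$. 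For a Type~II prime $p$ — necessarily good, hence with the uniform constant $A$ of Proposition~\ref{lemT1} — we have $|T(\bn;p)|\ll p^{(k+4)/2}$ from Proposition~\ref{lemT4} and $|T(\bn;p^e)|\le A p^{e(3+k/2)}$ for $e\ge2$, so
\[
L_p \le 1 + O\big(p^{-(k-4)/2+\nu}\big) + A\sum_{e\ge2}p^{-e(k/2-3-\nu)} \le C'
\]
for a constant $C'\ge2$ depending only on $\u{Q}$ and $\nu$, again using $\nu<k/2-3$ (and $(k-4)/2-\nu>1$).

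Assembling the Euler factors, $\prod_{p\text{ bad}}L_p$ and $\prod_{p\text{ Type I}}L_p$ are $O_{\u{Q},\nu}(1)$, while, since $H_{\u{Q}}(\bn)\neq0$, the Type~II primes are exactly the good prime divisors of $H_{\u{Q}}(\bn)$, so $\prod_{p\text{ Type II}}L_p\le (C')^{\omega(H_{\u{Q}}(\bn))}\le d\big(|H_{\u{Q}}(\bn)|\big)^{\log_2 C'}\ll_\eps |H_{\u{Q}}(\bn)|^\eps$ by the divisor bound; as $H_{\u{Q}}$ is a fixed polynomial, $|H_{\u{Q}}(\bn)|\ll_{\u{Q}}|\bn|^{O(1)}$, so this is $\ll_{\eps,\u{Q}}|\bn|^\eps$ after relabeling $\eps$. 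Multiplying the three bounds yields the Dirichlet series estimate and hence the proposition. The only step needing care is the Type~II primes: their local factors are bounded but not close to $1$, so they cannot be absorbed into a convergent infinite product, and one must instead count them ($\le\omega(H_{\u{Q}}(\bn))$ of them) and combine the divisor bound with the polynomial growth of $H_{\u{Q}}$; it is also essential to use the exact vanishing $T(\bn;p^e)=0$ for $e\ge2$ at Type~I primes, since with only the crude bound of Proposition~\ref{lemT1} the Type~I product would converge only for $\nu<(k-7)/2$, which would not reach the sharp exponent $k/2-3$.
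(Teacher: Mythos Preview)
Your proof is correct and follows essentially the same approach as the paper: reduce via Rankin's trick to bounding the full Dirichlet series, factor into an Euler product, and estimate the local factors according to whether $p$ is bad, Type~I, or Type~II, finishing with the divisor bound on $\omega(H_{\u{Q}}(\bn))$. The paper's version is slightly less careful (it uses only Proposition~\ref{lemT1} for all $e\ge1$ at Type~II and bad primes rather than invoking the $e=1$ bound of Proposition~\ref{lemT4}), but the structure and key ideas are identical.
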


\begin{proof}
Let $R\geq 1$ and $\nu$ be as assumed. Then the tail of the sum admits the bound
\begin{equation*}
\sum_{q\geq R} \frac{1}{q^k} |T(\bn;q)| \leq R^{-\nu} \sum_{q=1}^\infty q^{\nu-k} |T(\bn;q)|.
\end{equation*}
Note that the right hand side factorizes over all finite primes, and hence we have
\begin{equation}\label{T_prod}
\sum_{q\geq R} \frac{1}{q^k} |T(\bn;q)| \leq R^{-\nu}\prod_p \psi_p,
\end{equation}
with 
\begin{equation*}
\psi_p=1+\sum_{e=1}^\infty p^{e(\nu-k)}|T(\bn;p^e)|.
\end{equation*}
If $p$ is a good prime of Type I, then we use Proposition \ref{lemT4} to note that $T(\u{n};p^e)=0$ for $e \geq2$, so that we may bound the local factor $\psi_p$ by
\begin{equation}\label{psi_T1}
\psi_p=1+O(p^{\nu-k}p^{(k+3)/2}).
\end{equation}
If $p$ is a good prime of Type II or a bad prime, then we use the weaker bounds  provided by Proposition \ref{lemT1} to obtain
\begin{eqnarray*}
\psi_p&=&1+O\left(\sum_{e=1}^\infty p^{e(\nu-k)}p^{e(3+k/2)}\right) \\
&=& 1+O\left(\sum_{e=1}^\infty p^{e(\nu +3-k/2)}\right)= 1+O\left(p^{\nu+3-k/2}\right).
\end{eqnarray*}
Here we have used the assumption that $\nu+3 - k/2<0$, and we note that the implied constant may be taken to be independent of $p$ (since it depends only on the finitely many bad primes).\par

Thus we see that the product in (\ref{T_prod}) is absolutely convergent as soon as $k/2>\nu+3$ (in which case $\nu +3/2-k/2 <-1$, so that (\ref{psi_T1}) gives sufficient decay for the infinitely many Type I primes). More explicitly, we require an upper bound for the product, in terms of $\u{n}$. Note that if $\nu +3/2-k/2 <-1$,  the product of $\psi_p$ over all primes of Type I is bounded by $O(1)$ independently of $\bn$. The product of $\psi_p$ over the finitely many bad primes is $O(1)$, independently of $\u{n}$, since the notion of being ``bad'' is entirely independent of $\u{n}$. On the other hand, the Type II primes must divide $H_{\u{Q}}(\u{n})$, so that their contribution is 
\begin{equation*}
\prod_{p \; \text{Type II}} \psi_p \leq C^{\omega(H_{\u{Q}}(\bn))} \ll_\eps |\bn|_\infty^\eps,
\end{equation*}
for some constant $C$. 
\end{proof}

Having proved convergence, we are next interested in lower bounds for $\grS(\bn)$:

\begin{proposition}
Let $k>6$ and suppose $H_{\u{Q}}(\bn)\neq 0$, and let $\alptil$ be given as in Lemma \ref{lem5.6}. Assume additionally that for every prime $p$ the system of equations $\bQ (\bfx)=\bn$ is soluble in $\Z_p$. Then there exists $p_0>0$ such that
\begin{equation*}
\grS(\bn)\gg_\eps |\bn|_\infty^{-\eps} \prod_{p\leq p_0} |H_{\u{Q}}(\bn)|_p^{2\alptil(k-3)},
\end{equation*}
for any $\eps >0$.
\end{proposition}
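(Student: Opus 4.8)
The plan is to exploit the factorization $\grS(\bn)=\prod_p \sig_p(\bn)$ into the local densities $\sig_p(\bn)=\sum_{e\ge 0}p^{-ek}T(\bn;p^e)$, which is legitimate because Proposition~\ref{prop_sing_ser_conv} (using $k>6$) guarantees the underlying sum is absolutely convergent, so the Euler product converges to it. Recall from (\ref{apply_tel}) that $\sig_p(\bn)=\lim_{E\to\infty}p^{E(3-k)}N(\bn;p^E)$. I would fix once and for all a prime $p_0$, depending only on $k$ and $\u{Q}$, that exceeds every bad prime and is large enough for the uniform estimate below to produce strictly positive local factors. The only place the local solubility hypothesis is really needed is at the finitely many primes $p\le p_0$.

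First I would dispose of the primes $p\le p_0$. For each such $p$, solvability of $\u{Q}(\bfx)=\bn$ in $\Z_p$ lets me invoke Proposition~\ref{prop5.2}, giving $\sig_p(\bn)\ge\ome_p\,|H_{\u{Q}}(\bn)|_p^{2\alptil(k-3)}$ with $\ome_p>0$, where $\alptil$ is as in Lemma~\ref{lem5.6}. Since there are only finitely many primes $\le p_0$, the product $\prod_{p\le p_0}\ome_p$ is a positive constant depending only on $k$ and $\u{Q}$, whence
\[ \prod_{p\le p_0}\sig_p(\bn)\;\gg_{k,\u{Q}}\;\prod_{p\le p_0}|H_{\u{Q}}(\bn)|_p^{2\alptil(k-3)}.\]

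The heart of the argument is the uniform lower bound $\sig_p(\bn)\ge 1+O(p^{-(k-4)/2})$ for every good prime $p>p_0$, with an implied constant independent of $\bn$ and $p$; here Type I and Type II primes are handled together. By Lemma~\ref{lemN1}, $N(\bn;p)=p^{k-3}+O(p^{(k-2)/2})$. Among these solutions modulo $p$, those at which $J_{\u{Q}}$ has rank $<3$ lie on the affine part of the singular locus of the projective variety $\u{Q}(\bfx)=\bn x_0^2$ over $\F_p$; by Lemma~\ref{lemma_pTI} this singular locus has dimension at most $0$ (dimension $-1$ for Type I), so by Lang--Weil (Lemma~\ref{geolem4}) it carries $O(1)$ points over $\F_p$, uniformly in $\bn$ and $p$. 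Every other solution $\bar\bfx$ modulo $p$ has some $3\times 3$ minor of $J_{\u{Q}}(\bar\bfx)$ a unit, so Hensel's lemma lifts it to exactly $p^{(k-3)(E-1)}$ solutions modulo $p^E$; summing over these smooth solutions gives $N(\bn;p^E)\ge\bigl(N(\bn;p)-O(1)\bigr)p^{(k-3)(E-1)}$ for all $E$, hence $\sig_p(\bn)\ge p^{-(k-3)}\bigl(N(\bn;p)-O(1)\bigr)=1+O(p^{-(k-4)/2})$. Since $k\ge 7$ we have $(k-4)/2>1$, so, enlarging $p_0$ if necessary to make each factor positive, $\prod_{p>p_0}\sig_p(\bn)\ge\prod_{p>p_0}\bigl(1-Cp^{-(k-4)/2}\bigr)$ is bounded below by a positive constant depending only on $k$ and $\u{Q}$.

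Multiplying the two bounds yields $\grS(\bn)\gg_{k,\u{Q}}\prod_{p\le p_0}|H_{\u{Q}}(\bn)|_p^{2\alptil(k-3)}$, which is a fortiori the asserted estimate since $|\bn|_\infty^{-\eps}\ll 1$. I expect the main obstacle to be the third paragraph: one must control the Type II primes, whose reductions are genuinely singular and whose number grows with $\bn$, and for this the geometric input of Lemma~\ref{lemma_pTI}—that the reduction of $\u{Q}(\bfx)=\bn x_0^2$ can acquire only a zero-dimensional singular locus—is essential, as is a point count that is uniform in both $\bn$ and $p$. (Were one content to be cruder at the Type II primes $p>p_0$, one could instead bound their number by $\omega(H_{\u{Q}}(\bn))\ll_\eps |\bn|_\infty^\eps$; this is where the harmless factor $|\bn|_\infty^{-\eps}$ in the statement naturally appears.)
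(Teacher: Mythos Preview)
Your argument is correct and actually proves a slightly stronger statement than what the paper obtains (you do not need the factor $|\bn|_\infty^{-\eps}$). The two routes diverge in how the good primes $p>p_0$ are handled. The paper works directly with the exponential sums $T(\bn;p^e)$: from Proposition~\ref{lemT4} it gets $\sig_p(\bn)=1+O(p^{(3-k)/2})$ for Type~I primes, but for Type~II primes it only extracts $\sig_p(\bn)\ge 1-Bp^{-1}$ from Propositions~\ref{lemT1} and~\ref{lemT4}, which is not summable over~$p$; the resulting loss is absorbed by noting that Type~II primes divide $H_{\u{Q}}(\bn)$, contributing at most $\omega(H_{\u{Q}}(\bn))\ll_\eps|\bn|_\infty^\eps$ factors bounded below by a constant, and this is the origin of the $|\bn|_\infty^{-\eps}$.

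You instead return to the density via (\ref{apply_tel}) and lift smooth $\F_p$-points by Hensel, using Lemma~\ref{lemma_pTI} and a Lang--Weil bound to show the non-liftable (singular) points number $O(1)$ uniformly in $\bn$ and $p$. This yields $\sig_p(\bn)\ge 1-Cp^{-(k-4)/2}$ for \emph{every} good prime, Type~I and Type~II alike, and since $(k-4)/2>1$ for $k\ge 7$ the infinite product converges absolutely to a positive constant independent of $\bn$. One small remark: Lemma~\ref{geolem4} as stated in the paper is for homogeneous equations, so strictly speaking you should apply it to the affine cone over the projective singular locus of $\u{Q}(\bfx)=\bn x_0^2$ (which is homogeneous of dimension $\le 1$, giving $O(p)$ points on the cone and hence $O(1)$ projective, equivalently affine, singular points); this is cosmetic and your conclusion stands.
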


\begin{proof}
By the multiplicativity of $T(\bn;q)$ we can factorize the singular series as
\begin{equation*}
\grS(\bn)= \prod_p \sig_p(\bn),
\end{equation*}
with local densities 
\[ \sig_p(\bn)=1 + \sum_{e=1}^\infty p^{-ek}T(\bn;p^e).
\]
For any good prime $p$ (including Type II), applying Proposition \ref{lemT4} for $e=1$ and Proposition \ref{lemT1} for $e \geq 2$ implies that for $k >6$,
\begin{equation*}
\sig_p(\bn)=1+O\left(p^{-k/2+2}+\sum_{e=2}^\infty p^{e(3-k/2)} \right)= 1+O\left(p^{-k/2+2}+p^{6-k}\right).
\end{equation*}
Hence as long as $k>6$, there is some positive real number $B$ with the property that for all good primes,
\begin{equation*}
\sig_p(\bn)\geq 1-Bp^{-1}.
\end{equation*}
In addition, for all good primes of Type I we apply Proposition \ref{lemT4} again to note that
\begin{equation*}
\sig_p(\bn)=1+O\left(p^{\frac{3-k}{2}}\right) \geq 1-Bp^{\frac{3-k}{2}},
\end{equation*}
if we enlarge $B$ suitably. We may then fix some threshold $p_0=p_0(B)$ such that there exists some fixed $0< \del < 1/2$ so that for $p \geq p_0$ of Type I,
\begin{equation*}
\sig_p(\bn)\geq 1-p^{\frac{3-k+\del}{2}}.
\end{equation*}
For Type II primes $p \geq p_0$ we have $\sig_p(\u{n}) \geq B_0^{-1}$ for a fixed constant $B_0>1$ dependent only on $B$.
Then if $k > 6$, the contribution of all good primes $p\geq p_0$ to the product $\prod_p \sig_p(\bn)$ is bounded below by
\begin{equation*}
 \prod_{p \; \text{Type I}} \left(1-p^{\frac{3-k+\del}{2}}\right) \prod_{p | H_{\u{Q}}(\u{n})} B_0^{-1} \gg (d(H_{\u{Q}}(\u{n})))^{-1} \gg_\eps |\bn|_\infty^{-\eps},
\end{equation*}
where we have again used the fact that if $p$ is of Type II then $p| H_{\u{Q}}(\u{n})$.
We may also assume that $p_0$ is chosen sufficiently large such that all bad primes are at most of size $p_0$. For $p\leq p_0$ we use the bound from Proposition \ref{prop5.2} and obtain
\begin{equation*}
\sig_p(\bn)\geq \ome_p |H_{\u{Q}}(\bn)|_p^{2\alptil (k-3)}.
\end{equation*}
In total we can bound the singular series below by
\begin{equation*}
\grS(\bn)\gg_\eps |\bn|_\infty^{-\eps}\prod_{p\leq p_0} |H_{\u{Q}}(\bn)|_p^{2\alptil (k-3)}.
\end{equation*}
\end{proof}

\section{The Singular integral}
The goal of this section is to prove the results in Theorem \ref{thm_SJ} regarding the singular integral $J_w(\bmu)$. Note that the fact that $J_w(\bmu)\ll 1$  uniformly in $\bmu$, already follows from Proposition \ref{prop_sing_int_conv}. Hence it remains to prove the explicit lower bound (\ref{JggH}) for $J_w(\bmu)$. For this we recall Lemma 9.1 of \cite{HBPierce}, which immediately generalizes to the situation of three forms (we omit the proof, which may be found in \cite{HBPierce}):

\begin{lemma}\label{lem8.1}
There is a real constant $\Lamm$ depending only on the system of quadratic forms $\bQ$, such that the following holds. If $1/2 \leq |\bmu|\leq 1$  is such that $\bQ(\bfx)=\bmu$ has a real solution $\bfx\in \R^k$,  then there is also a real solution $\bfx_0$ to this system with $|\bfx_0|\leq \Lamm$. 
\end{lemma}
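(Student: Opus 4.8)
The plan is to reduce the statement to a compactness-plus-submersion argument, splitting according to whether the real affine cone $\mathcal{C}=\{\bfx\in\R^k:\bQ(\bfx)=\u 0\}$ contains a nonzero point. In the easy case $\mathcal{C}=\{\u 0\}$, I would first check that the set $\{\bfx\in\R^k:|\bQ(\bfx)|\le 1\}$ is bounded: otherwise one extracts $\bfx_n$ in it with $|\bfx_n|\to\infty$ and a subsequential limit $\bfy^*=\lim \bfx_n/|\bfx_n|$ with $|\bfy^*|=1$, and since $\bQ(\bfx_n/|\bfx_n|)=\bQ(\bfx_n)/|\bfx_n|^2\to\u 0$ one gets $\bQ(\bfy^*)=\u 0$, contradicting $\mathcal{C}=\{\u 0\}$. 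One then lets $\Lamm$ be the supremum of $|\bfx|$ over this bounded set; it depends only on $\bQ$, and every real solution of $\bQ(\bfx)=\bmu$ with $|\bmu|\le 1$ lies inside it, which more than suffices.

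The substantive case is when there is a nonzero real $\bfy^*$ with $\bQ(\bfy^*)=\u 0$, which we may normalize to have $|\bfy^*|=1$. The key geometric input is that, since $\bQ$ satisfies Condition~\ref{cond2} and hence Condition~\ref{cond1} by Proposition~\ref{prop1}, we have $\rank J_{\bQ}(\bfy^*)=3$; equivalently the linear map $L(\bfv)=J_{\bQ}(\bfy^*)\bfv$ from $\R^k$ to $\R^3$ is surjective, so it admits a bounded linear right inverse $R$. Using the symmetry of the matrices $Q_i$ together with $\bQ(\bfy^*)=\u 0$ one gets the exact identity $\bQ(t\bfy^*+\bfv)=t\,L(\bfv)+\bQ(\bfv)$ for all $t\in\R$ and $\bfv\in\R^k$. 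I would then, for any target $\bmu$ with $|\bmu|\le 1$, solve $\bQ(t\bfy^*+\bfv)=\bmu$ for $\bfv$ by rewriting it as the fixed-point equation $\bfv=t^{-1}R(\bmu-\bQ(\bfv))$ and running a standard contraction-mapping (Newton/Lyapunov--Schmidt) argument on the ball $\{|\bfv|\le 2\|R\|/t\}$: there the map $\bfv\mapsto t^{-1}R(\bmu-\bQ(\bfv))$ sends the ball into itself and has Lipschitz constant $O(\|R\|\,\|\bQ\|/t)$, so once $t\ge t_0$ for a threshold $t_0$ depending only on $\|R\|$ and $\|\bQ\|$ there is a genuine solution $\bfv(\bmu)$ with $|\bfv(\bmu)|\le 2\|R\|/t_0$. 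Taking $t=t_0$ produces a real solution $\bfx_0=t_0\bfy^*+\bfv(\bmu)$ of $\bQ(\bfx_0)=\bmu$ with $|\bfx_0|\le t_0+2\|R\|/t_0=:\Lamm$, a constant depending only on $\bQ$ (via $\bfy^*$, $R$, and $\|\bQ\|$). Finally one takes $\Lamm$ to be the maximum of the two constants produced.

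The main obstacle — and the reason one cannot simply set $\Lamm=\sup\{|\bfx|:\tfrac12\le|\bQ(\bfx)|\le 1\}$ — is that this set is in general \emph{unbounded} once the real cone $\bQ=\u 0$ is nontrivial (which, for $k\ge 10$, is the typical situation): moving a short distance transverse to a positive-dimensional real component of $\{\bQ=\u 0\}$ keeps $|\bQ|$ of size $\asymp 1$ while letting $|\bfx|$ run off to infinity, so the naive supremum is infinite. The resolution is precisely to exploit the \emph{smoothness} of $\{\bQ=\u 0\}$ at the nonzero real point $\bfy^*$, guaranteed by Condition~\ref{cond1}: the submersion property of $\bQ$ there lets us build, along the single ray $\R_{>0}\bfy^*$, a preimage of \emph{every} bounded $\bmu$ whose norm we control uniformly. (Note this incidentally shows that in Case~2 every $\bmu$ in the unit ball is represented, so the hypothesis of the lemma is really only doing work in Case~1.) The remaining steps — the two compactness extractions and the contraction estimate — are routine and I would not expect difficulty there.
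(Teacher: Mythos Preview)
Your argument is correct. The paper does not actually supply a proof of this lemma: it simply cites \cite[Lemma~9.1]{HBPierce} and remarks that the two-form argument there ``immediately generalizes to the situation of three forms.'' So there is no in-paper proof to compare against; what you have written is a clean, self-contained substitute.

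A couple of comments. Your key identity $\bQ(t\bfy^*+\bfv)=t\,L(\bfv)+\bQ(\bfv)$ is exactly right (the $t^2$ term drops out since $\bQ(\bfy^*)=\u 0$, and the cross term is $t\cdot 2(\bfy^*)^tQ_i\bfv = t\,(\nabla Q_i(\bfy^*))\cdot\bfv$), and the contraction estimate on the ball $\{|\bfv|\le 2\|R\|/t\}$ goes through as you say. To make the fixed-point step airtight you should note that any fixed point of $\bfv\mapsto t^{-1}R(\bmu-\bQ(\bfv))$ automatically lies in the image of $R$, so that applying $L$ to both sides really does recover $tL(\bfv)+\bQ(\bfv)=\bmu$ (equivalently, run the contraction on $\bfw\in\R^3$ with $\bfv=R\bfw$). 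Your observation that in Case~2 the hypothesis ``$\bQ(\bfx)=\bmu$ has a real solution'' is not used is correct and worth keeping: you actually prove the stronger statement that once the real cone $\{\bQ=\u 0\}$ is nontrivial, \emph{every} $\bmu$ with $|\bmu|\le 1$ is represented by a bounded $\bfx_0$.
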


If we are now given a solution to the system $\bQ(\bfx)=\bmu$ with $\bfx$ bounded as in Lemma \ref{lem8.1}, then we can deduce a lower bound on the maximal determinant of a $3\times 3$ minor $\Del_{i j \ell}(\xbf)$ of the Jacobian $J_{\u{Q}}(\xbf)$ at the point $\bfx$; this is the real analogue to Lemma \ref{lem5.6}.

\begin{lemma}\label{lem8.2}
Let $\bfx\in \R^k$ be such that $|\bfx|\leq \Lam$ for some real $\Lam >0$ and assume that $\bQ(\bfx)=\bmu$. Let $\alptil$ be given as in Lemma \ref{lem5.6}. Then we have
\begin{equation*}
\max_{i,j,\ell}|\Del_{ij\ell}(\bfx)|\gg |H_{\u{Q}}(\bmu)|^{\alptil},
\end{equation*}
with an implied constant only depending on $\Lam$ and the system $\u{Q}$.
\end{lemma}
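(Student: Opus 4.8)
The plan is to reuse, verbatim, the algebraic Nullstellensatz identity already produced in the proof of Lemma~\ref{lem5.6}, and simply evaluate it at the Archimedean place rather than a $p$-adic one. Recall that that proof produced an exponent $\alptil\in\N$ (with $\alptil\le 3^{3+k}$) together with polynomials $g_i(\bfx,\bn),\,h_{ij\ell}(\bfx,\bn)\in\Q[\bfx,\bn]$ such that
\[
H_{\u{Q}}(\bn)^{\alptil}=\sum_{i=1}^3 g_i(\bfx,\bn)\bigl(Q_i(\bfx)-n_i\bigr)+\sum_{1\le i,j,\ell\le k}h_{ij\ell}(\bfx,\bn)\,\Del_{ij\ell}(\bfx),
\]
an identity of polynomials in $\bfx$ and $\bn$. (The reduction from $\overline{\Q}$-coefficients to $\Q$-coefficients, via taking the trace of the finite extension, was already carried out there, so this identity is genuine over $\Q$ and hence over $\R$.)

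First I would specialize this identity to $\bn=\bmu$ and to the given real point $\bfx$ with $\bQ(\bfx)=\bmu$. Then $Q_i(\bfx)-\mu_i=0$ for $i=1,2,3$, so the first sum vanishes and we are left with
\[
H_{\u{Q}}(\bmu)^{\alptil}=\sum_{1\le i,j,\ell\le k}h_{ij\ell}(\bfx,\bmu)\,\Del_{ij\ell}(\bfx).
\]
Next I would bound the right-hand side trivially. Since $|\bfx|\le\Lam$ and $|\mu_i|=|Q_i(\bfx)|\le\Vert Q_i\Vert\,\Lam^2$, each argument of each $h_{ij\ell}$ ranges over a fixed compact set depending only on $\Lam$ and $\u{Q}$; as the $h_{ij\ell}$ are finitely many fixed polynomials with rational coefficients, there is a constant $C=C(\Lam,\u{Q})$ with $|h_{ij\ell}(\bfx,\bmu)|\le C$ for all admissible $i,j,\ell$. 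Taking absolute values and noting there are at most $k^3$ triples gives
\[
|H_{\u{Q}}(\bmu)|^{\alptil}\le k^3 C\,\max_{i,j,\ell}|\Del_{ij\ell}(\bfx)|,
\]
which, upon rearranging, is precisely the asserted lower bound $\max_{i,j,\ell}|\Del_{ij\ell}(\bfx)|\gg|H_{\u{Q}}(\bmu)|^{\alptil}$ with implied constant depending only on $\Lam$ and $\u{Q}$.

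There is no substantive obstacle here: the entire content was front-loaded into the Nullstellensatz argument of Lemma~\ref{lem5.6}, and the present statement is just its Archimedean shadow. The only mild point to flag in the write-up is that one must invoke the $\Q$-coefficient version of the identity (so that $h_{ij\ell}$ have bounded Archimedean size on compacta), which is exactly the form in which Lemma~\ref{lem5.6} leaves it.
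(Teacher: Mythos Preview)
Your proof is correct and follows essentially the same approach as the paper: both specialize the Nullstellensatz identity from Lemma~\ref{lem5.6} at the real point $\bfx$ with $\bQ(\bfx)=\bmu$, observe the first sum vanishes, and bound the $h_{ij\ell}$ on the compact set determined by $|\bfx|\le\Lam$ (hence $|\bmu|\ll_\Lam 1$).
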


\begin{proof}
The proof is essentially the same as that of Lemma \ref{lem5.6}. Equation (\ref{eqn5.6}) shows that
\begin{equation*}
H_{\u{Q}}(\bmu)^\alptil= \sum_{i=1}^3 g_i(\bfx,\bmu) (Q_i(\bfx)-\mu_i) + \sum_{i,j,\ell} h_{ij\ell}(\bfx,\bmu) \Del_{ij\ell}(\bfx).
\end{equation*}
If $|\bfx|\leq \Lam$ then one also has $\bmu\ll_\Lam 1$, so that $h_{ij\ell}(\xbf,\u{\mu}) \ll_{\Lambda} 1$. Using the fact that by assumption the terms $Q_i(\bfx) - \mu_i$ all vanish, we may conclude that
\begin{equation*}
|H_{\u{Q}}(\bmu)|^{\alptil} \ll \sum_{i,j,\ell} |\Del_{ij\ell}(\bfx)|,
\end{equation*}
and the lemma follows.
\end{proof}

To prove (\ref{JggH}), assume now that we are given some $\bmu$ with $1/2\leq |\bmu|\leq 1$ for which the system $\bQ(\bfx)=\bmu$ has a real solution. By Lemma \ref{lem8.1} we may assume that the size of this solution is bounded by $\Lamm$, i.e. there is some $|\bfx_0|\leq \Lamm$ with $\bQ(\bfx_0)=\bmu$. 
Furthermore, we may assume that $M:=\max_{i,j,\ell} |\Del_{ij\ell}(\bfx_0)|> 0$ since otherwise (\ref{JggH}) is trivial, by Lemma \ref{lem8.2}.\par
We will next explicitly construct a region in which we may apply the implicit function theorem in order to invert the singular integral.

\begin{lemma}\label{lem8.3} 
Given a real vector $\bfx_0\in \R^k$ with $|\bfx_0|\leq \Lamm$ and $\bQ(\bfx_0)=\bmu$, assume that $M:=\max_{i,j,\ell}|\Del_{ij\ell}(\bfx_0)|>0$. Let $f: \R^k\rightarrow \R^k$ be given by 
\[ f : \bfx\mapsto (Q_1(\bfx), Q_2(\bfx), Q_3(\bfx),x_4,\ldots, x_k).\] 

Then there are open subsets $V$ and $W$ with $\bfx_0\in V\subset \R^k$ and $f(\bfx_0)\in W\subset \R^k$ such that $f$ is a bijection from $V$ to $W$ and has a differentiable inverse $f^{-1}$ on $W$, with $\det((f^{-1})') \geq M^{-1}$ on $W$. Furthermore, one may choose 
\[W= \{\bfy\in \R^k: |f(\bfx_0)-\bfy|\leq AM^2\}\]
 for some positive constant $A$, which only depends on $\Lam$ and the system of quadratic forms $\bQ$.
\end{lemma}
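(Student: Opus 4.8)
The plan is to prove Lemma~\ref{lem8.3} as a quantitative version of the inverse function theorem about the point $\bfx_0$, with every constant tracked in terms of $M$ and of $\Lam,\bQ$. First I would relabel the coordinates $x_1,\dots,x_k$ so that the maximum $M=\max_{i,j,\ell}|\Del_{ij\ell}(\bfx_0)|$ is attained by the minor of $J_{\bQ}$ on columns $1,2,3$; this is a permutation of the coordinates and so affects none of the relevant quantities. Expanding $\det f'(\bfx)$ along its last $k-3$ rows — each of which is a standard basis vector, since $f$ is the identity in the coordinates $x_4,\dots,x_k$ — one finds $\det f'(\bfx)=\pm\Del_{123}(\bfx)$, so in particular $|\det f'(\bfx_0)|=M>0$. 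This is precisely the point at which the hypothesis $M>0$ is used, and it guarantees that $f'(\bfx_0)$ is invertible; it remains to make the invertibility quantitative and then read off the Jacobian bound for $f^{-1}$.

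The argument rests on two elementary estimates, both uniform in $\Lam$ and $\bQ$. First, $\|f'(\bfx_0)^{-1}\|\ll_{\Lam,\bQ}M^{-1}$: by Cramer's rule each entry of $f'(\bfx_0)^{-1}$ is a cofactor of $f'(\bfx_0)$ divided by $\det f'(\bfx_0)=\pm M$, and because $|\bfx_0|\le\Lamm\le\Lam$ the entries of $f'(\bfx_0)$ — which are $0$, $1$, or $\partial Q_i/\partial x_j(\bfx_0)=2(Q_i\bfx_0)_j$ — are $O_{\Lam,\bQ}(1)$, hence so are the cofactors (and note $M\ll_{\Lam,\bQ}1$ too). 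Second, since the $Q_i$ are quadratic, $f'$ is affine‑linear in $\bfx$, so $\|f'(\bfx)-f'(\bfx_0)\|\ll_{\bQ}|\bfx-\bfx_0|$. Combining the two, I would fix a radius $\delta=c_0M$, with $c_0=c_0(\Lam,\bQ)>0$ small, so that $\|f'(\bfx_0)^{-1}\bigl(f'(\bfx)-f'(\bfx_0)\bigr)\|\le\tfrac12$ on the ball $B(\bfx_0,\delta)$; on this ball $f'$ is then invertible with $\|f'(\bfx)^{-1}\|\ll_{\Lam,\bQ}M^{-1}$.

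Next I would run the standard contraction‑mapping argument. For $\bfy$ near $f(\bfx_0)$, the map $T_\bfy(\bfx)=\bfx-f'(\bfx_0)^{-1}(f(\bfx)-\bfy)$ has derivative $I-f'(\bfx_0)^{-1}f'(\bfx)$ of norm $\le\tfrac12$ on $B(\bfx_0,\delta)$, so it is a $\tfrac12$‑contraction there, and it maps $\overline{B}(\bfx_0,\delta)$ into itself as soon as $|T_\bfy(\bfx_0)-\bfx_0|=|f'(\bfx_0)^{-1}(f(\bfx_0)-\bfy)|\le\delta/2$; since $\delta\asymp_{\Lam,\bQ}M$ and $\|f'(\bfx_0)^{-1}\|^{-1}\gg_{\Lam,\bQ}M$, this holds whenever $|\bfy-f(\bfx_0)|\le AM^2$ for a suitable $A=A(\Lam,\bQ)>0$. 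For such $\bfy$ the contraction has a unique fixed point, which is the unique solution of $f(\bfx)=\bfy$ in $\overline{B}(\bfx_0,\delta)$. Setting $W=\{\bfy:|\bfy-f(\bfx_0)|\le AM^2\}$ and $V=f^{-1}(W)\cap B(\bfx_0,\delta)$ gives open sets with $f\colon V\to W$ a bijection; since $f'$ is invertible on $V$, the usual argument shows $f^{-1}$ is differentiable on $W$ with $(f^{-1})'(\bfy)=f'(f^{-1}(\bfy))^{-1}$. Finally, $\det((f^{-1})'(\bfy))=\pm\Del_{123}(f^{-1}(\bfy))^{-1}$ (and one may fix the sign by transposing two of the coordinates $x_4,\dots,x_k$ if necessary), and since $f^{-1}(\bfy)\in B(\bfx_0,\delta)$ while $\Del_{123}$ is a degree‑$3$ polynomial with $O_{\Lam,\bQ}(1)$‑bounded coefficients, $|\Del_{123}(f^{-1}(\bfy))|$ differs from $|\Del_{123}(\bfx_0)|=M$ by at most $O_{\Lam,\bQ}(\delta)=O_{\Lam,\bQ}(c_0 M)$; shrinking $c_0$ (and correspondingly $A$) then yields the desired lower bound $\det((f^{-1})')\ge M^{-1}$ on $W$.

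Conceptually there is no difficulty: this is just the contraction‑mapping proof of the inverse function theorem, and—unlike Lemmas~\ref{lem5.6} and~\ref{lem8.2}—no geometric input from Condition~\ref{cond2} is needed here, the geometry having already entered upstream in producing $M$. The only real work, and hence the part to be careful about, is the \emph{quantitative bookkeeping}: checking that $\|f'(\bfx_0)^{-1}\|\asymp M^{-1}$, that the admissible radius around $\bfx_0$ is $\asymp M$ and hence the target ball has radius $\asymp M^2$, that every implied constant depends only on $\Lam$ and $\bQ$ (which is where the bound $|\bfx_0|\le\Lamm$ is essential, to control the entries of $f'(\bfx_0)$ and of $\Del_{123}$ on the relevant ball), and pinning down the constant in the final Jacobian estimate.
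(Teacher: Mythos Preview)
Your argument is correct and, like the paper's, is a quantitative inverse function theorem with the same bookkeeping (source radius $\asymp M$, target radius $\asymp M^2$, all constants depending only on $\Lam,\bQ$); the difference lies in the existence step. The paper follows Spivak's minimization proof: on the closed ball $U=\{|\bfx-\bfx_0|\le C_1M\}$ it first establishes the bi-Lipschitz bound $|f(\bfx_1)-f(\bfx_2)|\ge C_2M|\bfx_1-\bfx_2|$ (from the Taylor expansion and the estimate $|A\bfh|\gg(\det A)|\bfh|$ for invertible matrices with bounded entries), then for each $\bfy\in W$ minimizes $g(\bfx)=|\bfy-f(\bfx)|^2$ over $U$, shows the minimum is interior, and uses $\nabla g=0$ together with invertibility of $f'$ to force $f(\bfx)=\bfy$; uniqueness comes separately from the bi-Lipschitz bound. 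Your contraction-mapping route via $T_{\bfy}(\bfx)=\bfx-f'(\bfx_0)^{-1}(f(\bfx)-\bfy)$ is the other textbook proof; it is arguably cleaner here since it yields existence and uniqueness simultaneously and makes the role of the Cramer-rule estimate $\|f'(\bfx_0)^{-1}\|\ll_{\Lam,\bQ}M^{-1}$ explicit, whereas the paper hides this in the inequality $|A\bfh|\gg(\det A)|\bfh|$. One small overclaim: shrinking $c_0$ does not literally give $\det((f^{-1})')\ge M^{-1}$ on $W$, only $\ge cM^{-1}$ for a constant $c=c(\Lam,\bQ)$, since $|\Del_{123}|$ can exceed $M$ near $\bfx_0$; the paper's proof has the same slippage, and the application only uses $|\det((f^{-1})')|\gg M^{-1}$.
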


We momentarily defer the proof of Lemma \ref{lem8.3}, and first complete the proof of (\ref{JggH}) in Theorem \ref{thm_SJ}. 
By Proposition \ref{prop15.4} we can express the singular integral $J_w(\bmu)$ as the limit
\begin{equation*}
J_w(\bmu)=\lim_{\eps\rightarrow 0}\eps^{-3}\int_{\max_i|Q_i(\bfx)-\mu_i|\leq \eps} w(\bfx) \prod_{i=1}^3\left(1-\frac{|Q_i(\bfx)-\mu_i|}{\eps}\right)\d\bfx .
\end{equation*}
We define the auxiliary integral
\begin{equation*}
 J_w^{(\ep)}(\bmu)=\int_{\max_i|Q_i(\bfx)-\mu_i|<\frac{1}{2}\eps}w(\bfx)\d\bfx;
\end{equation*}
then 
\beq\label{JJ_bd}
J_w(\bmu) \geq \frac{1}{(2\ep)^3} J^{(\ep)}_w(\u{\mu}),
\eeq
 so that it suffices to bound the auxiliary integral from below.

In Theorem \ref{thm_SJ} we choose the constant $C$ governing the support of $w$ so that $C>2\Lam$; then we can choose the set $V$ in Lemma \ref{lem8.3} sufficiently small so that $w(\bfx)\gg 1$ on $V$. Let $\chi_{\ep/2}$ be the characteristic function of the interval $(-\frac{1}{2}\eps,\frac{1}{2}\eps)$. Then we may bound our auxiliary integral below by
\begin{equation*}
J_w^{(\ep)}(\bmu)
	\geq \int_{V} w(\bfx)\prod_{i=1}^3 \chi_{\ep/2}(Q_i(\bfx)-\mu_i)\d\bfx 
	\gg \int_{V} \prod_{i=1}^3 \chi_{\ep/2}(Q_i(\bfx)-\mu_i)\d\bfx.
\end{equation*}
We apply the change of variables $f:V\rightarrow W$ described in Lemma \ref{lem8.3}, and obtain
\begin{equation*}
\begin{split}
J_w^{(\ep)}(\bmu)&\gg \int_W |\det ((f^{-1})')| \prod_{i=1}^3 \chi_{\ep/2}(y_i-\mu_i)\d\bfy \\ &\gg  \int_W M^{-1} \prod_{i=1}^3 \chi_{\ep/2}(y_i-\mu_i)\d\bfy.
\end{split}
\end{equation*}
We recall the choice of $W$ in Lemma \ref{lem8.3} and obtain for $\eps$ sufficiently small a lower bound of the form
\begin{equation*}
J_w^{(\ep)}(\bmu)\gg M^{-1}\eps^3 M^{2(k-3)}.
\end{equation*}
We now deduce from (\ref{JJ_bd}) and Lemma \ref{lem8.2} that 
\begin{equation*}
J_w(\bmu)\gg M^{2(k-3)-1}\gg |H_{\u{Q}}(\bmu)|^{\alptil(2(k-3)-1)},
\end{equation*}
where $\alptil$ is given as in Lemma \ref{lem5.6}. This completes the proof of Theorem \ref{thm_SJ}.

\begin{proof}[Proof of Lemma \ref{lem8.3}]
For simplicity of notation we assume that 
\begin{equation*}
|\Del_{123}(\bfx_0)|= M :=   \max_{i,j,\ell}|\Del_{ij\ell}(\bfx_0)| ;
\end{equation*}
recall that we assume $M>0$.
In this case we let $f: \R^k\rightarrow \R^k$ be given by $\bfx\mapsto (\bQ(\bfx),x_4,\ldots, x_k)$. We will explicitly find a small open neighbourhood of $\xbf_0$ in which the implicit function theorem is applicable, following the proof of Theorem 2.11 in \cite{Spivak}.
Let $U$ be the closed ball given by 
\begin{equation*}
U=\{|\bfx-\bfx_0|\leq C_1M\},
\end{equation*}
for a sufficiently small constant $C_1$. (All constants $C_i$ to follow only depend on $\Lamm$ as in Lemma \ref{lem8.1} and the system of quadratic forms $\bQ$.) Let $C_3>0$ be fixed. We claim that for $C_1$ sufficiently small, there is a positive real number $C_2$ such that the following three properties hold for all $\bfx,\bfx_1,\bfx_2\in U$.
\begin{enumerate}
\item $|f(\bfx)-f(\bfx_0)|\geq C_2M|\bfx-\bfx_0|$,
\item $|\frac{\partial}{\partial x_j}f_i(\bfx)-\frac{\partial}{\partial x_j}f_i(\bfx_0)|\leq C_3$, for all $1\leq i,j\leq k$,
\item $|\Del_{123}(\bfx)|\geq \frac{1}{2}M$.
\end{enumerate}
Since $M\ll 1$ the second observation is clear if $C_1$ is chosen sufficiently small. Similarly, since $M=\Del_{123}(\bfx_0)$, observation (3) follows from the boundedness of all derivatives of $\Del_{123}(\xbf)$, for $|\bfx-\bfx_0|\leq C_1 M$ and $C_1$ sufficiently small. 

To prove observation (1) we note that 
\begin{equation}\label{eqn8.5}
f(\bfx)-f(\bfx_0)=J(f)(\bfx_0)(\bfx-\bfx_0)+O(|\bfx-\bfx_0|^2),
\end{equation}
where $J(f)$ is the $k \times k$ Jacobian matrix of $f$, so in particular $\det J(f)(\xbf) \gg M$ for all $\xbf \in U$, by (3).
In general, if $A$ is an invertible $k\times k$ matrix with bounded entries, and $\bfh\in \R^k$ then we have $|\bfh|\leq \Vert A^{-1}\Vert |A\bfh|$, and hence $|A\bfh|\gg (\det A)|\bfh|$. We apply this to the Jacobian $J(f)(\bfx_0)$ and obtain $|J(f)(\bfx_0)(\bfx-\bfx_0)|\gg M |\bfx-\bfx_0|$. Together with equation (\ref{eqn8.5}) we obtain 
\begin{equation*}
|f(\bfx)-f(\bfx_0)|\geq C_2 M|\bfx-\bfx_0|,
\end{equation*}
for $C_2$ and $C_1$ sufficiently small, proving (1).
In fact, this same argument shows that for any for $\bfx_1,\bfx_2\in U$, as long as $C_1$ is sufficiently small,
\begin{equation}\label{eqn8.6}
|f(\bfx_1)-f(\bfx_2)|\geq C_2 M|\bfx_1-\bfx_2|.
\end{equation}
Note that for $\xbf$ on the boundary of $U$ we have
\begin{equation*}
|f(\bfx)-f(\bfx_0)|\geq C_2M|\bfx-\bfx_0|= C_1C_2M^2.
\end{equation*}
We set $d:=C_1C_2M^2$ and define 
$$W=\{\bfy\in\R^k: |\bfy-f(\bfx_0)|<\frac{1}{2}d\}.
$$
In particular, if $\bfy\in W$ and $\bfx$ is on the boundary of $U$ then we have
\begin{equation}\label{eqn8.7}
|\bfy-f(\bfx_0)|< \frac{1}{2}d\leq |\bfy-f(\bfx)|.
\end{equation}
We claim that for all $\bfy\in W$ there is exactly one $\bfx$ in the interior of $U$ such that $f(\bfx)=\bfy$. For this we consider the function $g:U\rightarrow \R$ given by
\begin{equation*}
g(\bfx)=|\bfy-f(\bfx)|^2=\sum_{i=1}^k (y_i-f_i(\bfx))^2.
\end{equation*}
Since this is a continuous function on $U$, it takes its minimum on $U$ and equation (\ref{eqn8.7}) shows that the minimum does not occur on the boundary of $U$. Hence the minimum is achieved in the interior and this implies (see for example Theorem 2.6 in \cite{Spivak}) that there is some $\bfx\in{\rm int} (U)$ such that $\frac{\partial}{\partial x_j}g(\bfx)=0$ for all $1 \leq j \leq k$. That is, for each $1 \leq j \leq k$,
\begin{equation*}
\sum_{i=1}^k 2(y_i-f_i(\bfx))\frac{\partial}{\partial x_j}f_i(\bfx)=0.
\end{equation*}
Since the Jacobian matrix $J(f)(\bfx)$ is invertible on $U$ we get $y_i=f_i(\bfx)$ for all $1\leq i\leq k$. The uniqueness of $\bfx$ follows from equation (\ref{eqn8.6}).\par
We now set $V$ to be the intersection of $\mathrm{int}(U)$ and the pre-image of $W$ under $f$. Then the function $f:V\rightarrow W$ has an inverse $f^{-1}:W\rightarrow V$ which is continuous by (\ref{eqn8.6}). Similarly as in Theorem 2.11 in \cite{Spivak} one sees that $f^{-1}$ is differentiable and $(f^{-1})'(\bfy)=[f'(f^{-1}(\bfy))]^{-1}$ for all $\bfy\in W$, 
 so that $|\det ((f^{-1})')| \geq M^{-1}$ on $W$.
This completes the proof of Lemma \ref{lem8.3}.

\end{proof}

\section{The Circle method}

\subsection{Division into major and minor arcs}
Let $\Del>0$ be a small real number to be chosen later. For any integers $1 \leq a_1,a_2,a_3 \leq q \leq B^\Del$ we define the box 
\[ I(\u{a};q)  = \prod_{j=1}^3 \left[\frac{a_j}{q} - \frac{B^\Del}{B^2}, \frac{a_j}{q}
 + \frac{B^\Del}{B^2}\right]  \subseteq [0,1]^3.\]
If the tuples $(\u{a};q)$ and $(\u{a}';q')$ are not identical, the corresponding boxes are disjoint, provided that $B$ is large enough and
\[  \qquad \Del< 2/3.\]
Indeed if $(\u{a};q) \neq (\u{a}';q')$ 
then for $j=1,2,$ or 3 we have
\[ \left| \frac{a_j}{q} - \frac{a_j'}{q'} \right| \geq \frac{1}{qq'}
\geq \frac{1}{B^{2\Del}} > 2\frac{B^\Del}{B^2},\] 
for all $B$ sufficiently large.  We now define the major arcs to be \[ \Mf(\Delta) = \Union_{1 \leq q \leq B^\Del} \Union_{\substack{1 \leq a_1,
   a_2, a_3 \leq q\\ (\u{a},q)=1}} I(\u{a};q)\] and we take the minor arcs to be the complement of the major arcs in $[0,1]^3$,
\[ \mf(\Delta) = [0,1]^3 \setminus \Mf (\Delta).\]

In order to obtain an alternative characterization of the minor arcs we will employ 
a 3-dimensional Dirichlet approximation with a parameter $S \geq 1$ (see
Hardy and Wright \cite[Theorem 200]{HardyWright}, for example).
Given any tuple $\u{\al} \in [0,1]^3,$ there exist $1 \leq q \leq S$ and $1 \leq
a_1, a_2, a_3 \leq q$ with $(\u{a},q)=1$ such that
 \[ \left| \al_j - \frac{a_j}{q} \right| \leq \frac{1}{ qS^{1/3}}, \quad \text{for $j=1,2,3$}.\]
 Our ultimate strategy is to bound the integral over the minor arcs from above by a sum of integrals over the collection of relevant arcs provided by the Dirichlet approximation; the fact that we are not controlling for overlap of the intervals is acceptable, since we only seek an upper bound.
 
Given $\u{\al}=( \al_1,\al_2, \al_3)$ and approximations $\al_j = a_j/q + \theta_j$ of the above type for $j=1,2,3$, then if
$\u{\al} \in \mf(\Delta)$, at least one of the four inequalities \beq\label{maj_cond}
q \leq B^\Delta, \qquad |\theta_1| \leq B^{-2 + \Delta}, \qquad
|\theta_2| \leq B^{-2 + \Delta}, \qquad
|\theta_3| \leq B^{-2 + \Delta},
\eeq
must fail to hold. 
We may now further decompose the minor arcs as 
\beq\label{mindecomp}
\mf(\Delta)\subseteq \mf_0(\Delta)\cup\mf_1(\Delta)\cup\mf_2(\Delta)\cup\mf_3(\Delta),
\eeq
where we set
\[\mf_0(\Delta)=\bigcup_{B^{\Delta}\leq q\leq S}
\bigcup_{\substack{1 \leq a_1,a_2,a_3 \leq q\\ (\u{a},q)=1}}\mf_0(\Delta,q,\u{a})\]
and for $j=1,2,3$, we set
\[\mf_j(\Delta)=\bigcup_{1\leq q\leq S}
\bigcup_{\substack{1 \leq a_1,a_2,a_3 \leq q\\ (\u{a},q)=1}}\mf_j(\Delta,q,\u{a}),\]
with
\[\mf_0(\Delta,q,\u{a})=
\{\u{\al}: |\al_j-a_j/q|\leq (qS^{1/3})^{-1}\mbox{ for }j=1,2,3\},\]
and for $j=1,2,3$,
\[\mf_j(\Delta,q,\u{a})=\mf_0(\Delta,q,\u{a})
\cap
\{\u{\al}:\, |\al_j-a_j/q|\geq B^{-2+\Delta}\}.\]
  Given $q$ and $\u{a}$ we will write
\[\al_j=a_j/q+\theta_j \qquad \text{$j=1,2,3$}.\]
For our application we shall choose \[S=B^{3/2},\]
which is essentially optimal.

\subsection{The major arcs: the singular integral and singular series}
We recall that our ultimate interest is in the representation function
\[ \Rcal_B(\u{n}) =  \sum_{\bstack{\xbf \in \Z^k}{\u{Q}(\xbf) = \u{n}}} w_B(\xbf),\]
where $w_B(\xbf) := w (\xbf/B)$.
Upon defining 
\[S(\u{\al}) = \sum_{\x \in \Z^k} e(\u{\al} \cdot \u{Q}(\xbf) )w_B(x),\]
we may express
\[ \Rcal_B(\u{n}) = \int_{[0,1]^3} S(\u{\al}) e(-\u{\al} \cdot
\u{n}) d\u{\al} .\] 
We now record the standard argument used to isolate the main term on the major arcs:

\begin{prop}\label{prop_major_int}
Suppose that $k >6$ is an integer and $|\u{n}|\ll B^2$ with $H_{\u{Q}}(\u{n})\not=0$. Then for any fixed positive $\Delta \leq 1/10$
we have
\[ \int_{\Mf(\Delta)} S(\u{\al}) e(-\u{\al} \cdot \u{n})d\u{\al}= \Sf(\u{n}) J_w(B^{-2}\u{n}) B^{k-6} + E(\u{n}),\] with
\beq\label{eqn_E}
E(\u{n}) \ll B^{k-6 - \Del/4},
\eeq
where $\Sf(\u{n})$ and $J_w(\u{\mu})$ are given by
(\ref{sing_ser_dfn}) and (\ref{sing_int_dfn}) respectively.
\end{prop}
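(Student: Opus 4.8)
The plan is to follow the classical major-arc analysis, substituting rational approximations $\u{\al} = \u{a}/q + \u{\theta}$ on each box $I(\u{a};q)$ and factoring the resulting exponential sum. First I would write
\[
\int_{\Mf(\Delta)} S(\u{\al}) e(-\u{\al}\cdot\u{n})\,d\u{\al}
= \sum_{q\le B^\Del}\sum_{\substack{\u{a}\bmod q\\(\u{a},q)=1}} e_q(-\u{a}\cdot\u{n})
\int_{|\theta_j|\le B^{\Del-2}} S(\u{a}/q+\u{\theta})\,e(-\u{\theta}\cdot\u{n})\,d\u{\theta}.
\]
The inner complete-sum-plus-integral factorization is standard: splitting $\xbf = q\ybf + \bfr$ with $\bfr$ running over residues mod $q$ gives $S(\u{a}/q+\u{\theta}) = q^{-k}S_q(\u{a})\, I_{w_B}(\u{\theta}) + (\text{error})$, where $S_q(\u{a})$ is the complete exponential sum from Section 7 and $I_{w_B}(\u{\theta}) = \int_{\R^k} e(\u{\theta}\cdot\u{Q}(\bfx))w_B(\bfx)\,d\bfx$; the error from replacing the sum over $\ybf$ by an integral is controlled by the smoothness of $w$ and the support size, using $|\u{\theta}| \le B^{\Del-2}$ and $q \le B^\Del$ so that $q|\u{\theta}|\,\|\u{Q}\|\, B^2 \ll B^{2\Del}$ stays small relative to the scale $B$.

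Next I would rescale the oscillatory integral: the substitution $\bfx \mapsto B\bfx$ gives $I_{w_B}(\u{\theta}) = B^k I_w(B^2\u{\theta})$, and then the $\u{\theta}$-integral becomes $B^{k-6}$ times an integral of $I_w(\u{\btet})e(-\u{\btet}\cdot B^{-2}\u{n})$ over the box $|\btet_j| \le B^\Del$. Assembling over all $q,\u{a}$ produces
\[
B^{k-6}\sum_{q\le B^\Del} q^{-k} T(\u{n};q)\, J_w(B^{-2}\u{n};B^\Del) + (\text{errors}),
\]
where $T(\u{n};q) = \sum_{(\u{a},q)=1} S_q(\u{a})e_q(-\u{a}\cdot\u{n})$ is exactly the sum from (\ref{Tnq_dfn}) and $J_w(\u{\mu};R)$ is the truncated singular integral (\ref{sing_int_dfn_R}). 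It then remains to complete both truncations to their infinite limits. Using Proposition \ref{prop_sing_ser_conv}, the tail $\sum_{q > B^\Del} q^{-k}|T(\u{n};q)| \ll_\eps |\u{n}|^\eps B^{-\Del\nu}$ for any $\nu < k/2-3$; since $k > 6$ and $|\u{n}|\ll B^2$, choosing $\nu$ close to $k/2-3 \ge 1$ gives a saving of a power of $B^\Del$, comfortably absorbed into $B^{k-6-\Del/4}$ after the $|\u{n}|^\eps \ll B^\eps$ loss. Using Proposition \ref{prop_sing_int_conv}, $|J_w(\u{\mu}) - J_w(\u{\mu};R)| \ll R^{3-k/2}(\log R)^2$ with $R = B^\Del$, again a power saving since $k > 6$. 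One also checks that $J_w(B^{-2}\u{n};B^\Del)$ may be replaced by $J_w(B^{-2}\u{n})$ in the leading term at the cost of the same kind of error, and that the partial singular series $\sum_{q\le B^\Del}q^{-k}T(\u{n};q)$ may be completed to $\Sf(\u{n})$.

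The main obstacle — really the only genuinely delicate point — is bookkeeping the various error terms so that each is $\ll B^{k-6-\Del/4}$ with an implied constant independent of $\u{n}$ in the range $|\u{n}|\ll B^2$. The lattice-point-versus-integral error in the complete-sum factorization must be shown to be $\ll B^{k-6-c\Del}$ for some $c > 0$ after summing over the $\ll B^{4\Del}$ pairs $(\u{a},q)$ and over the $\ll B^{3\Del}$ volume of the $\u{\theta}$-box (weighted by $q^{-k}|S_q(\u{a})| \le q^{-k/2}\,Z(\u{a},q)^{1/2}$ via (\ref{eqn5.0}), whose average is controlled by Lemma \ref{lemZ}); here one needs $\Del$ small enough, and the constraint $\Del \le 1/10$ is exactly what makes the arithmetic of these exponents work out with the stated final error $B^{k-6-\Del/4}$. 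The $|\u{n}|^\eps$ factor from the Type II primes in Proposition \ref{prop_sing_ser_conv} is harmless since $|\u{n}|^\eps \ll B^{2\eps}$ and $\eps$ may be taken smaller than $\Del/8$. Since all of these are routine but tedious, and structurally identical to the two-form case in \cite{HBPierce}, the write-up can afford to be brief, citing Propositions \ref{prop_sing_ser_conv} and \ref{prop_sing_int_conv} and Lemma \ref{lemZ} for the quantitative inputs.
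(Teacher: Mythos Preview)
Your proposal is correct and follows essentially the same route as the paper: approximate $S(\u{a}/q+\u{\theta})$ on each box by $q^{-k}B^kS_q(\u{a})I_w(B^2\u{\theta})$ (the paper's Lemma~\ref{xxx}), assemble into $B^{k-6}J_w(B^{-2}\u{n};B^\Del)\sum_{q\le B^\Del}q^{-k}T(\u{n};q)$, and complete the truncations via Propositions~\ref{prop_sing_int_conv} and~\ref{prop_sing_ser_conv}. One simplification: the paper handles the lattice-point error by taking the uniform bound $O(B^{k-1+2\Del})$ from Lemma~\ref{xxx} and multiplying by the total major-arc measure $\ll B^{-6+7\Del}$ to get $O(B^{k-7+9\Del})$, so your proposed refinement via $q^{-k/2}Z(\u{a},q)^{1/2}$ and Lemma~\ref{lemZ} is unnecessary here.
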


The first step is the following result:
\begin{lemma}\label{xxx}
If $\u{\al}$ belongs to a major arc $I(\u{a};q)$ and
$\u{\theta} = \u{\al} - \u{a}/q$, then \beq\label{S_2terms}
S(\u{\al}) = q^{-k} B^k S_q(\u{a}) I_w(B^2 \u{\theta})+O(B^{k-1+2\Delta}), \eeq
with $I_w(\u{\phi})$ given by (\ref{I_dfn}).
\end{lemma}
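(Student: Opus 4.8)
The plan is to carry out the classical major-arc computation: split $S(\u{\al})$ by residue classes modulo $q$, peel off the complete exponential sum $S_q(\u{a})$, and replace the remaining slowly oscillating sum by an integral. Write $\u{\al}=\u{a}/q+\u{\theta}$ and decompose every $\xbf\in\Z^k$ as $\xbf=\bfr+q\bfy$ with $\bfr$ running over $(\Z/q\Z)^k$ and $\bfy\in\Z^k$. Since each $Q_i$ has integral matrix, $Q_i(\bfr+q\bfy)=Q_i(\bfr)+2q\,\bfr^tQ_i\bfy+q^2Q_i(\bfy)$ with the last two terms divisible by $q$, so $e_q(\u{a}\cdot\u{Q}(\bfr+q\bfy))=e_q(\u{a}\cdot\u{Q}(\bfr))$, and therefore
\[
S(\u{\al})=\sum_{\bfr\bmod q}e_q\bigl(\u{a}\cdot\u{Q}(\bfr)\bigr)\sum_{\bfy\in\Z^k}h(\bfr+q\bfy),\qquad h(\xbf):=e\bigl(\u{\theta}\cdot\u{Q}(\xbf)\bigr)w_B(\xbf).
\]

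Next I would approximate the inner sum over $\bfy$ by $q^{-k}\int_{\R^k}h(\xbf)\,d\xbf$, viewing it as a Riemann sum of mesh $q$ in each coordinate. The key input is the gradient bound $\|\nabla h\|_\infty\ll B^{-1+\Delta}$: on the support of $h$ one has $|\xbf|\ll B$, so $|\nabla(\u{\theta}\cdot\u{Q})(\xbf)|\ll|\u{\theta}|\,|\xbf|\ll B^{-2+\Delta}\cdot B$ by the major-arc bound $|\u{\theta}|\ll B^{-2+\Delta}$, while $|\nabla w_B(\xbf)|\ll B^{-1}$. Comparing the sum with the integral cube-by-cube over a mesh of side $q$ — only $\ll(B/q)^k$ cubes meet the support since $q\le B^\Delta\le B$, and each contributes an error $\ll q^{k+1}\|\nabla h\|_\infty$ — yields, after dividing by $q^k$,
\[
\Bigl|\sum_{\bfy\in\Z^k}h(\bfr+q\bfy)-q^{-k}\int_{\R^k}h(\xbf)\,d\xbf\Bigr|\ll q^{1-k}B^{k-1+\Delta}.
\]
Summing over the $q^k$ residues $\bfr$, with $|e_q(\cdot)|=1$, the total error is $\ll q\,B^{k-1+\Delta}\ll B^{k-1+2\Delta}$.

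Finally I would identify the main term: $\sum_{\bfr\bmod q}e_q(\u{a}\cdot\u{Q}(\bfr))=S_q(\u{a})$ by definition, and the substitution $\xbf=B\ubf$ together with homogeneity $Q_i(B\ubf)=B^2Q_i(\ubf)$ gives $\int_{\R^k}h(\xbf)\,d\xbf=B^k\int_{\R^k}e(B^2\u{\theta}\cdot\u{Q}(\ubf))w(\ubf)\,d\ubf=B^kI_w(B^2\u{\theta})$ by (\ref{I_dfn}). Combining the three steps yields $S(\u{\al})=q^{-k}B^kS_q(\u{a})I_w(B^2\u{\theta})+O(B^{k-1+2\Delta})$. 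The argument is entirely routine; the one place that rewards care is the error bookkeeping in the Riemann-sum comparison, where it is essential that the factor $B^{-1+\Delta}$ in $\|\nabla h\|_\infty$ comes from the major-arc restriction $|\u{\theta}|\le B^{-2+\Delta}$ rather than from a trivial $O(1)$ bound on the phase — this is exactly what keeps the error at the claimed size $O(B^{k-1+2\Delta})$.
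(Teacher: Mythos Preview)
Your proof is correct and follows exactly the standard route that the paper has in mind: the paper actually omits the proof, citing it as routine and pointing to Birch's Lemma 5.1 and Lemma 7.2 of \cite{HBPierce}, and your argument---splitting into residue classes modulo $q$, bounding $\|\nabla h\|_\infty\ll B^{-1+\Delta}$ from the major-arc restriction $|\u{\theta}|\le B^{-2+\Delta}$, and comparing the $q$-mesh Riemann sum with the integral---is precisely that standard computation. Your bookkeeping in the Riemann-sum step is clean and the final scaling $\int h = B^k I_w(B^2\u{\theta})$ is correct.
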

The proof, which we omit, is standard. Lemma 5.1 in \cite{Bir62} is a classical source;  Lemma 7.2 of \cite{HBPierce} proves the analogous result for 2 quadratic forms, and that proof is easily modified to the case of three forms.

Next we recall that by definition of the major arcs,
\[
\int_{\Mf(\Delta)} S(\u{\al}) e(-\u{\al} \cdot \u{n}) d\u{\al}
= \sum_{1 \leq q \leq B^\Del} \sum_{\substack{1 \leq a_1,a_2, a_3 \leq
           q\\ (\u{a},q)=1}} \int_{I(\u{a};q)} S(\u{\al}) e(-\u{\al} \cdot \u{n}) d\u{\al}.
\]
We note that
measure of the total collection of major arcs is
 \[\ll B^{\Delta} \cdot B^{3\Delta} \cdot (B^{-2 + \Delta})^3
\ll B^{-6+ 7\Delta}.\] Thus Lemma \ref{xxx} immediately implies: 
\[
\int_{\Mf(\Delta)} S(\u{\al}) e(-\u{\al} \cdot \u{n}) d\u{\al} = 
B^{k-6}J_w(B^{-2}\u{n};B^\Del)\sum_{q\leq B^\Delta}q^{-k}T(\u{n};q)
+ O (B^{k-7 +9\Delta})
\]
with $J_w(\u{\mu};R)$ given by (\ref{sing_int_dfn_R}).

Finally, we apply the results of Propositions \ref{prop_sing_int_conv}
and \ref{prop_sing_ser_conv} (with integral $k >6$, so we may certainly take $\nu =1/3 < k/2-3$) to the truncated singular integral and
singular series in order to pass to the limit on the right hand
side. We obtain
\begin{eqnarray*}
\int_{\Mf(\Delta)} S(\u{\al}) e(-\u{\al} \cdot \u{n})d\u{\al}
&=& \Sf(\u{n}) J_w(B^{-2}\u{n}) B^{k-6}\\ &&\hspace{5mm}\mbox{}+O(B^{k-6 - \Del/3+\varepsilon})+O(B^{k-7 + 9\Del}),
\end{eqnarray*}
as long as $H_{\u{Q}}(\u{n})\not=0$.
Proposition \ref{prop_major_int} then follows, upon restricting $\Del \leq 1/10.$

\section{Proof of main theorem}
We now formulate the mean-square argument that allows us to prove our foundational result, Theorem \ref{thm_meansquare}. The basic structure of the mean-square method is standard. The key input to the method is Proposition \ref{prop_minor_arcs}, which controls the minor arcs: this proposition is the culmination of all the estimates for exponential sums and oscillatory integrals we have established thus far, and this proposition controls the number of variables ultimately required in our main theorems.

\subsection{The mean square argument}
 Proposition \ref{prop_major_int} establishes that for $N = B^2$,
\begin{multline*}
  \sum_{\bstack{|\u{n}|_\infty \leq N}{H_{\u{Q}}(\u{n}) \neq 0}} |\Rcal_B(\u{n} )- \Sf(\u{n}) J_w(B^{-2}\u{n}) B^{k-6}|^2 
  \\
 	=   \sum_{\bstack{|\u{n}|_\infty \leq N}{H_{\u{Q}}(\u{n}) \neq 0}}\left| \int_{\mathfrak{m}(\Del)} S(\u{\al}) e(-\u{\al} \cdot \u{n} )d\u{\al}\right|^2 + O(B^{2k-6 -\Del/2}),
	\end{multline*}
for any positive $\Del \leq 1/10$.
Temporarily set $f(\u{\al}) = S(\u{\al})\chi_{\mathfrak{m}(\Del)}(\u{\al})$; then by positivity, the contribution of the minor arcs may be bounded above by
\begin{eqnarray*}
\ll \sum_{\u{n} \in \Z^3}\left| \int_{\mathfrak{m}(\Del)} S(\u{\al}) e(-\u{\al} \cdot \u{n} )d\u{\al}\right|^2 
 	& = &  \sum_{\u{n} \in \Z^3}\left| \int_{\R^3} f(\u{\al}) e(-\u{\al} \cdot \u{n} )d\u{\al}\right|^2  \\
	& = &\int_{\R^3} |f(\u{\al})|^2 d \u{\al} \\
	& = &  \int_{\mathfrak{m}(\Del)} |S(\u{\al})|^2 d \u{\al} ,
\end{eqnarray*}
where in the second equality we have applied Parseval's identity to $f$.
Thus in total 
\beq\label{RS_final}
  \sum_{\bstack{|\u{n}|_\infty \leq N}{H_{\u{Q}}(\u{n}) \neq 0}} |\Rcal_B(\u{n} )- \Sf(\u{n}) J_w(B^{-2}\u{n}) B^{k-6}|^2 
\ll \int_{\mathfrak{m}(\Del)} |S(\u{\al})|^2 d\u{\al} + O(B^{2k-6 -\Del/2}).
\eeq
The keystone of the paper is the following bound for the minor arcs integral:
\begin{prop}\label{prop_minor_arcs}
For any $k  \geq 10$, any $\ep>0$, and any $\Del\in(0,1/13)$, we have
\beq\label{min_int}
\int_{\mf(\Delta)} |S(\u{\al})|^2d\u{\al} \ll
B^{2k-6- 6\Del + \ep}.
\eeq
\end{prop}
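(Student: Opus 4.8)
The plan is to carry out a $3$-dimensional Kloosterman analysis of the minor-arc integral, following the template of the one-form case but now applied to the $2k$-variable system $\u{\Fcal}=(\Fcal_1,\Fcal_2,\Fcal_3)$ with $\Fcal_i(\xbf,\ybf)=Q_i(\xbf)-Q_i(\ybf)$ obtained by opening the square. By the decomposition (\ref{mindecomp}) it suffices to bound $\int_{\mf_j(\Del)}|S(\u{\al})|^2\,d\u{\al}$ for each $j=0,1,2,3$; and since $\mf_j(\Del)$ is covered by the boxes $\mf_j(\Del,q,\u{a})$, I would fix a denominator $q$, integrate over the local box in $\u{\theta}$, and sum over all numerators $\u{a}$ with $(\u{a},q)=1$. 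Writing $|S(\u{\al})|^2=\sum_{\xbf,\ybf\in\Z^k}e(\u{\al}\cdot\u{\Fcal}(\xbf,\ybf))w_B(\xbf)w_B(\ybf)$, I would split $\xbf,\ybf$ into residue classes modulo $q$ and apply Poisson summation in the resulting $2k$ integer variables. The essential Kloosterman point is that the sum over numerators $\u{a}$ then folds into the residue-class sums and produces precisely the exponential sum $S(\bfl;q)$, with its arithmetic cancellation intact, while the analytic part becomes an oscillatory integral $\int_{\mathrm{box}_j}I\bigl(B^2\u{\theta}\cdot\u{\Fcal};\,(B/q)\bfl\bigr)\,d\u{\theta}$ in the notation of the section on oscillatory integrals (with the weight there taken to be the product $w(\cdot)w(\cdot)$ on $\R^{2k}$). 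This yields a bound of the shape $\int_{\mf_j(\Del)}|S(\u{\al})|^2\,d\u{\al}\ll B^{2k}\sum_{q}q^{-2k}\sum_{\bfl\in\Z^{2k}}|S(\bfl;q)|\cdot\bigl|\int_{\mathrm{box}_j}I(B^2\u{\theta}\cdot\u{\Fcal};(B/q)\bfl)\,d\u{\theta}\bigr|$.

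Next I would control the two ingredients. By the rapid decay in Lemma~\ref{lem15.0}, the oscillatory integral is negligible once $|(B/q)\bfl|$ exceeds a fixed multiple of $\|B^2\u{\theta}\cdot\u{\Fcal}\|$, which lets me truncate the dual sum to $|\bfl|\le L$ with $L\ll B^{1/2+\eps}$ on $\mf_0$ (and similarly, after dyadic decomposition, on the other pieces), the tail being discarded trivially. Moreover, for $\bfl\ne\u{0}$ the oscillatory integral decays like a negative power of $|(B/q)\bfl|$ (roughly $|(B/q)\bfl|^{3-k}$ up to logarithms); I would make this quantitative by combining the eigenvalue bound of Lemma~\ref{lem15} with the volume estimates underlying Lemmas~\ref{lem15.1} and~\ref{lem15.2}, which is exactly where Condition~\ref{cond2} is used, through the nonsingularity of $F_{\u{Q}}$. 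For the arithmetic side I would feed $\sum_{\bfl}|S(\bfl;q)|$ into the average bound of Proposition~\ref{propSav}, supplemented where needed by the pointwise bound of Lemma~\ref{lemS1}; the two terms $Q^{k+3+\eps}L^{2k+\eps}$ and $Q^{k+4+\eps}L^{k}$ in Proposition~\ref{propSav} reflect the dichotomy between generic $\bfl$ and the sparse bad set $\calL^b$ of Lemma~\ref{lemL}, and both must be carried through.

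Assembling these estimates, summing dyadically over $q$, and exploiting on $\mf_0$ the lower cutoff $q\ge B^{\Del}$, I expect the contribution of $\mf_0(\Del)$ to be $\ll B^{2k-6-6\Del+\eps}$ once $k\ge 10$. For $\mf_1(\Del),\mf_2(\Del),\mf_3(\Del)$ the denominator ranges over all of $1\le q\le S=B^{3/2}$ with no lower cutoff, so instead one uses that the coordinate $|\theta_j|\ge B^{-2+\Del}$ is bounded below: then $B^2\u{\theta}\cdot\u{\Fcal}$ has a large eigenvalue in the $\nu_j$-direction and the oscillatory integral decays there. I would dyadically decompose $\theta_j$ (and trivially the remaining two coordinates), apply Lemma~\ref{lem15.2} on each dyadic box, and sum the geometric series in the dyadic parameters against the $q$-sum furnished by Proposition~\ref{propSav}, again arriving at $\ll B^{2k-6-6\Del+\eps}$. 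Summing the four contributions gives the proposition for $k\ge 10$ and $\Del\in(0,1/13)$.

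The main obstacle is bookkeeping, but of a delicate sort: one must simultaneously track the range of $q$, the truncation length $L$ of the dual sum, and the size of the local $\u{\theta}$-box together with the decay it forces on the oscillatory integral, and then balance these against the two terms in Proposition~\ref{propSav}. The pieces $\mf_1,\mf_2,\mf_3$ — where $q$ may be small but $\theta_j$ is large, so the gain comes from dyadic averaging over $\theta_j$ rather than from a denominator cutoff — are the most intricate, and it is the optimization there that pins the number of variables at $k\ge 10$.
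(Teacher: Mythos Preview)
Your proposal is correct and follows the same overall strategy as the paper: Poisson summation in the $2k$ variables after opening the square, truncation of the dual sum via the rapid decay in Lemma~\ref{lem15.0}, the oscillatory-integral bound of Lemma~\ref{lem15.2}, and the average exponential-sum bound of Proposition~\ref{propSav}. The differences are organizational rather than substantive.

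First, the paper does not treat $\mf_0,\mf_1,\mf_2,\mf_3$ separately. Instead it dyadically decomposes simultaneously in $q$ (parameter $L$) and in all three coordinates $\theta_i$ (parameter $\u{\phi}$), reducing everything to a single estimate for the dyadic piece $\Sig(L,\u{\phi})$ (Proposition~\ref{prop_sig_bd}), valid whenever at least one of $L>\tfrac12 B^{\Del}$ or $\phi^*>\tfrac12 B^{-2+\Del}$ holds. This unified treatment avoids the case-splitting you describe for $\mf_j$ with $j=1,2,3$; in the paper the ``large $\theta_j$'' scenario is absorbed automatically, since the oscillatory-integral bound depends only on $\phi^*=\max_i\phi_i$ and not on which coordinate is large.

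Second, one imprecision: you write that for $\bfl\ne\mathbf{0}$ the oscillatory integral ``decays like a negative power of $|(B/q)\bfl|$.'' In fact Lemma~\ref{lem15.2} gives a bound $\min\bigl((B^2\phi^*)^3,(B^2\phi^*)^{3-k}\bigr)\log B$ that is \emph{uniform} in $\bfl$; the only role of $\bfl$ is the tail truncation at $|\bfl|\gg L_0:=LB^{-1+\eps}(1+B^2\phi^*)$ coming from Lemma~\ref{lem15.0}. So in the main range $1\le|\bfl|\le L_0$ one simply multiplies the uniform oscillatory bound by the $\bfl$-sum from Proposition~\ref{propSav}, and it is the competition between $(B^2\phi^*)^{3-k}$ and the two terms $L^{k+3}L_0^{2k}$, $L^{k+4}L_0^k$ (together with $\phi^*\ll (LB^{1/2})^{-1}$ from the Dirichlet approximation) that forces $k\ge 10$. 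The paper also isolates the $\bfl=\mathbf{0}$ term and bounds it separately using Lemma~\ref{lemS1}; this is worth making explicit in your write-up.
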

Combining this with (\ref{RS_final}), with the choice $\Delta=1/14$, then establishes Theorem \ref{thm_meansquare}.

\subsection{Control of the minor arcs}
We now turn to proving Proposition \ref{prop_minor_arcs}. 
We will bound the minor arcs integral (\ref{min_int}) from above by using a collection of integrals over dyadic pieces, defined for any real $L \geq 1$ and $\u{\phi} \in \R^3_{\geq 0}$ by
\beq\label{Sig_dfn}
\Sig (L, \u{\phi}) = \sum_{L \leq q < 2L}
\sum_{\substack{1 \leq a_1,a_2, a_3 \leq q\\ (\u{a},q)=1}}
\int_{\{\u{\phi} \}} |S(\u{a}/q + \u{\theta})|^2
d\u{\theta}.
\eeq

The key upper bound for these dyadic pieces $\Sig(L,\u{\phi})$, which will immediately imply Proposition \ref{prop_minor_arcs}, is as follows:
\begin{prop}\label{prop_sig_bd}
For any $k \geq 10$, any $\ep>0$, and any $\Del\in(0,1/13)$, 
\beq\label{Sig_R}
 \Sig (L, \phi)  \ll B^{2k-6-6\Del+\ep}
 \eeq
for $L\ll S=B^{3/2}$ and $\u{\phi}$ with $\phi^* = \max_i\{\phi_i\} \ll L^{-1}S^{-1/3}$,
unless 
\beq\label{maj_cond'}
L \leq \frac{1}{2}B^\Delta, \qquad \text{and} \quad \phi^* \leq \frac{1}{2}B^{-2 + \Delta}.
\eeq
 \end{prop}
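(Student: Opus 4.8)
The plan is to open up $|S(\u{\al})|^2$ by squaring out the exponential sum, which produces a sum over pairs $\xbf_1,\xbf_2 \in \Z^k$ weighted by $w_B(\xbf_1)w_B(\xbf_2)$ of $e(\u{\al}\cdot(\u{Q}(\xbf_1)-\u{Q}(\xbf_2)))$. Writing $\u{\Fcal}_i(\xbf_1,\xbf_2)=Q_i(\xbf_1)-Q_i(\xbf_2)$ and substituting $\u{\al}=\u{a}/q+\u{\theta}$, the sum over $\u{a}\bmod q$ with $(\u{a},q)=1$ combined with the integral over $\u\theta\in\{\u\phi\}$ turns the diagonal $\{\u{\Fcal}(\xbf_1,\xbf_2)=\u{0}\}$ and its near-neighbours into the main term, and the off-diagonal terms into the exponential sum $S(\bfl;q)$ from Section 7 (after Poisson summation / completing the sum over $\xbf_1,\xbf_2$ in residues mod $q$). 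Concretely, one expects a formula of the shape
\[
\Sig(L,\u\phi) \ll \sum_{L\le q<2L} q^{-2k}\sum_{\bfl}\, |S(\bfl;q)|\cdot \Bigl| \int_{\{\u\phi\}} I\bigl(\u\nu\cdot\u{\Fcal}; \text{(linear term in }\bfl,B)\bigr)\,d\u\nu \Bigr| \cdot (\text{weight factors from } w_B),
\]
where the $\bfl$-sum ranges over $|\bfl|\ll q/B \cdot (1+B^2\phi^*)$ roughly, i.e.\ $|\bfl|\ll L B^{-1}(1+B^2\phi^*)=:\Lcal$ after rescaling. This is the exact analogue of the reduction in \cite{HBPierce}, and I would follow that template closely, citing the oscillatory integral bound Lemma \ref{lem15.2} (applied with $\phi_i \leftrightarrow B^2\phi_i$, giving the factor $(B^2\phi^*)^{3-k}(1+|\log(B^2\phi^*)|)$ together with $B^k$ from each of the two $\xbf$-sums before Poisson) to control the $\u\nu$-integral, and the average bound Proposition \ref{propSav} to control $\sum_{q}\sum_{\bfl}|S(\bfl;q)|$.

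The second step is to insert the two key estimates. From Lemma \ref{lem15.2} one gets, for the $\u\theta$-integral after the change of variables $\u\nu = B^2\u\theta$, a bound of the form $B^{-6}\cdot B^{2k}\cdot \min\{(B^2\phi^*)^3,(B^2\phi^*)^{3-k}(1+|\log(B^2\phi^*)|)\}$ — the $B^{2k}$ coming from executing the two complete sums over $\xbf_1,\xbf_2$ modulo $q$ (each contributing $(B/q)^k\cdot q^k$-type factors once the weight $w_B$ of support size $B$ is accounted for), and the $B^{-6}$ being the Jacobian of the rescaling in three variables. From Proposition \ref{propSav}, $\sum_{q\le 2L}\sum_{|\bfl|\le \Lcal}|S(\bfl;q)| \ll L^{k+3+\ep}\Lcal^{2k+\ep}+L^{k+4+\ep}\Lcal^{k}$. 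One then also needs the diagonal contribution $\bfl=\u 0$, which after the standard analysis (the main term of $\Rcal_B$ removed already, so here we are only bounding, not extracting a main term) contributes a quantity of size $\ll B^{2k-6}L^{3}(\phi^*)^{3}\cdot(\text{stuff})$ handled by the first alternative in the $\min$; this is dominated by the claimed bound in the stated parameter range. Assembling all the powers of $L$, $\Lcal$, $\phi^*$ and $B$, and using $\phi^*\ll L^{-1}S^{-1/3}$, $S=B^{3/2}$, $L\ll B^{3/2}$, one checks that the total is $\ll B^{2k-6-6\Del+\ep}$ provided $k\ge 10$ and $\Del<1/13$, while in the excluded regime \eqref{maj_cond'} the bound is weaker because there the $\u\theta$-integral is too large (this is exactly the major-arc regime). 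The arithmetic of combining exponents is the same bookkeeping as in \cite[\S5--\S7]{HBPierce} with the different numerology coming from the three-variable $\u\theta$-integral and the sharper second term in Proposition \ref{propSav}.

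The main obstacle I anticipate is the bookkeeping of the exponents at the boundary of the parameter ranges, i.e.\ verifying that $k\ge 10$ (rather than $k=9$) and $\Del<1/13$ are precisely what is forced when one optimises over $L$ and over the size of $\phi^*$ (equivalently, over which term of the $\min$ in Lemma \ref{lem15.2} and which term of the two-term bound in Proposition \ref{propSav} dominates). There will be several regimes: $L$ small vs.\ large, $B^2\phi^*$ small vs.\ large, and $\bfl$ good vs.\ bad; in each the worst case must be bounded separately, and one must make sure the cross-terms (e.g.\ $L$ near $B^{3/2}$ with $\phi^*$ near $L^{-1}S^{-1/3}$) do not beat the target. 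A secondary technical point is the Poisson-summation / completing-the-sum step that produces $S(\bfl;q)$ together with the correct oscillatory integral; this requires care with the Gauss-sum normalisation and with the support constraint on $\bfl$, but it is entirely parallel to the two-form case, so I would present it compactly and refer to \cite{HBPierce} for the details common to both settings. Finally, once Proposition \ref{prop_sig_bd} is in hand, Proposition \ref{prop_minor_arcs} follows by summing $\Sig(L,\u\phi)$ dyadically over $L$ and over each coordinate of $\u\phi$ using the decomposition \eqref{mindecomp} of the minor arcs, exactly as indicated after the statement of Proposition \ref{prop_sig_bd}.
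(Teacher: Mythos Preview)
Your proposal is correct and follows essentially the same approach as the paper: Poisson summation in $\xbf_1,\xbf_2$ produces the decomposition $\Sigma(L,\u\phi)=B^{2k}\sum_{q}q^{-2k}\sum_{\bfl}S(\bfl;q)\Ical_{\{\u\phi\}}(\bfl;q)$, after which the tail $|\bfl|\gg L_0=LB^{-1+\ep}(1+B^2\phi^*)$ is disposed of by the rapid decay in Lemma~\ref{lem15.0}, the term $\bfl=\mathbf{0}$ is bounded using Lemma~\ref{lemS1} and Lemma~\ref{lem15.2}, and the range $1\le|\bfl|\le L_0$ is handled by combining Lemma~\ref{lem15.2} with Proposition~\ref{propSav}, splitting according to whether $B^2\phi^*\ge 1$ or $\le 1$. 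The only point you leave implicit is the treatment of the tail in $\bfl$, but you have correctly identified the effective truncation $\Lcal\asymp L_0$, so this is a routine omission rather than a gap.
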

 We note that for $L \leq q < 2L$ and $\u{\theta}$ in the dyadic range of the integral, the two conditions in (\ref{maj_cond'}) encode the four conditions (\ref{maj_cond}) that  identify the major arcs.

Before proving Proposition \ref{prop_sig_bd} we show how it implies
Proposition \ref{prop_minor_arcs}.  We begin by applying the trivial bound 
$|S(\u{\al})|\ll B^k$ for the portion of the integral (\ref{min_int}) over the region in which at least one of $|\theta_1|,|\theta_2|,|\theta_3|$ is $\ll B^{-2k}$. Recalling that  $S = B^{3/2}$, the total measure  contributed by  such points is at most
\[\ll \sum_{q\leq S}q^3(qS^{1/3})^{-2}B^{-2k}\ll S^{4/3}B^{-2k}=B^{2-2k}.\]
We thus see  that the integral of $|S(\al)|^2 \ll B^{2k}$ over this set contributes $O(B^2)$, which is
acceptable for Proposition \ref{prop_minor_arcs}.

There remain tuples $\u{\al},q,\u{a}$ for which $\u{\theta} = \u{a}/q - \u{\al}$ satisfies
\[1\leq q\leq S,\;\;\;B^{-2k}\leq  \theta_1,\theta_2,\theta_3 \leq (q S^{1/3})^{-1}\]
and at least one of
the inequalities
\[q\geq B^{\Delta},\;\;\; |\u{\theta}|_\infty\geq B^{2-\Delta}\]
holds. We may cover these with $O((\log B)^4)$ dyadic
contributions of the form $\Sig(L,\u{\phi})$.
We thus have
\[ \int_{\mathfrak{m}(\Del)} |S(\u{\al})|^2 d\u{\al} \ll
B^2+(\log B)^4 \sup \Sig(L,\u{\phi}),\] 
where the supremum is taken over all dyadic parameters $L,\u{\phi}$ with $0<L< S$
and $B^{-k} \leq \phi_i \leq (LS^{1/3})^{-1}$ for $i=1,2,3$ such that not
both conditions (\ref{maj_cond'}) hold. 
An application of
Proposition \ref{prop_sig_bd} then clearly suffices for
Proposition \ref{prop_minor_arcs}.

We now turn to the proof of Proposition \ref{prop_sig_bd}.
We begin with an application of Poisson summation, which gives the following:
\begin{lemma}
\beq\label{Sig_SI}
\Sig (L, \u{\phi}) = B^{2k}\sum_{L \leq q < 2L}
\frac{1}{q^{2k}}	\sum_{\lbf \in \Z^{2k}}S(\lbf;q)
\Ical_{\{\u{\phi}\}} (\lbf;q),
\eeq
where
\[S(\lbf;q) = \sum_{\substack{\u{a} \modd{q}\\ (\u{a},q)=1}}
\sum_{\substack{\rbf_1 \modd{q}\\ \rbf_2 \modd{q}}} e_q( \u{a} \cdot
\u{Q} (\rbf_1)- \u{a} \cdot \u{Q}(\rbf_2))e_q(\rbf \cdot \lbf)\]
and 
\[\Ical_{\{\u{\phi}\}} (\lbf;q) = \int_{\{\u{\phi}\}}
J_w(B^2\u{\theta},B\lbf/q)d\u{\theta}=
B^{-6}\int_{\{B^2\u{\phi}\}} J_w(\u{\nu},B\lbf/q)d\u{\nu}.\]
\end{lemma}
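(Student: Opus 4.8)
The plan is a direct Poisson summation in the $2k$ variables of the doubled lattice, preceded by a splitting of the sum defining $|S(\u{a}/q+\u{\theta})|^2$ into residues modulo $q$. First I would fix $q$ with $L\le q<2L$ and $\u{a}$ with $(\u{a},q)=1$, expand $|S(\u{a}/q+\u{\theta})|^2$ as a double sum over $\xbf_1,\xbf_2\in\Z^k$ coming from $S(\u{\al})=\sum_{\xbf\in\Z^k}e(\u{\al}\cdot\u{Q}(\xbf))w_B(\xbf)$, and write $\xbf_i=q\zbf_i+\rbf_i$ with $\rbf_i$ running over residues modulo $q$ and $\zbf_i\in\Z^k$. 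Because each $Q_j$ has an integral symmetric matrix, one has $\u{Q}(q\zbf+\rbf)\equiv\u{Q}(\rbf)\pmod q$ (the cross terms carry a factor $q$, the pure terms a factor $q^2$), so the phase $e_q(\u{a}\cdot(\u{Q}(\rbf_1)-\u{Q}(\rbf_2)))$ factors out of the $\zbf$-sum, leaving $\sum_{\zbf_1,\zbf_2\in\Z^k}g(\zbf_1,\zbf_2)$, where $g$ is the smooth, compactly supported function $e(\u{\theta}\cdot(\u{Q}(q\zbf_1+\rbf_1)-\u{Q}(q\zbf_2+\rbf_2)))\,w_B(q\zbf_1+\rbf_1)\,w_B(q\zbf_2+\rbf_2)$ on $\R^{2k}$.

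Next I would apply Poisson summation to this Schwartz function, so that $\sum_{\zbf\in\Z^{2k}}g(\zbf)=\sum_{\lbf\in\Z^{2k}}\widehat g(\lbf)$, both sides being absolutely (indeed rapidly) convergent since $g\in C_c^\infty(\R^{2k})$. The Fourier transform $\widehat g(\lbf)$ splits as a product over the two blocks of $k$ variables, and in each block the substitution $\ubf_i=(q\zbf_i+\rbf_i)/B$ (so that $q\zbf_i+\rbf_i=B\ubf_i$, $d\zbf_i=(B/q)^k\,d\ubf_i$, $\u{Q}(q\zbf_i+\rbf_i)=B^2\u{Q}(\ubf_i)$, $w_B(q\zbf_i+\rbf_i)=w(\ubf_i)$, and the linear exponential produces a factor $e_q(\rbf_i\cdot\lbf_i)$) turns the block integral into $(B/q)^k\,e_q(\rbf_i\cdot\lbf_i)$ times an oscillatory integral of the type $I(\calQ;\bflam)$ introduced earlier, with quadratic part $\calQ=B^2\u{\theta}\cdot\u{Q}$ and linear frequency $\bflam=B\lbf_i/q$ (the second block contributing a complex conjugate because of the sign on $\u{Q}(\xbf_2)$). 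Hence $\widehat g(\lbf)=(B/q)^{2k}\,e_q(\rbf_1\cdot\lbf_1+\rbf_2\cdot\lbf_2)$ times the product of these two $I$-type integrals, which is exactly $J_w(B^2\u{\theta},B\lbf/q)$ by the definition of the two-variable singular integral. Summing $\widehat g(\lbf)$ over $\rbf_1,\rbf_2$ modulo $q$ and over $\u{a}$ coprime to $q$ then collapses the factors $e_q(\u{a}\cdot(\u{Q}(\rbf_1)-\u{Q}(\rbf_2)))\,e_q(\rbf_1\cdot\lbf_1+\rbf_2\cdot\lbf_2)$ precisely into $S(\lbf;q)$; integrating over $\u{\theta}\in\{\u{\phi}\}$ and summing over $L\le q<2L$ yields the asserted identity, and the change of variables $\u{\nu}=B^2\u{\theta}$ (Jacobian $B^6$) gives the alternative expression for $\Ical_{\{\u{\phi}\}}(\lbf;q)$.

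I do not expect a genuine obstacle here; the argument is essentially bookkeeping. The only points worth a sentence of justification are: (i) the legitimacy of interchanging the finite $q$- and $\u{a}$-sums and the $\u{\theta}$-integral with the Poisson-dual sum over $\lbf$, which is harmless because $\widehat g(\lbf)$ decays faster than any power of $|\lbf|$, uniformly for $\u{\theta}$ in the bounded set $\{\u{\phi}\}$, so that Fubini applies; and (ii) carefully tracking the powers of $B/q$, the factors $e_q(\rbf_i\cdot\lbf_i)$ produced by the linear change of variables, and the complex conjugation coming from the $\xbf_2$-block, so that the exponential sum that emerges is exactly the $S(\lbf;q)$ of the statement and not a variant of it. This last point — matching signs and normalizations so that $S(\lbf;q)$ appears verbatim — is the place where one must be most attentive, but it presents no real difficulty.
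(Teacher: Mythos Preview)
Your proposal is correct and follows essentially the same route as the paper: split $\xbf_i$ into residues modulo $q$, apply Poisson summation in the $2k$ lattice variables, rescale by $\ubf_i=(q\zbf_i+\rbf_i)/B$ to produce the factor $(B/q)^{2k}$, the characters $e_q(\rbf\cdot\lbf)$, and the integral $J_w(B^2\u{\theta},B\lbf/q)$, then sum over $\rbf_1,\rbf_2,\u{a}$ and integrate over $\{\u{\phi}\}$. The care points you flag (absolute convergence for the interchange, and matching signs/normalizations) are exactly the right ones, and neither presents a genuine obstacle.
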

To see this, we need only expand the integrand in (\ref{Sig_dfn}) and write $\xbf_j = \lbf_j q +
\rbf_j$, where $\lbf_j \in \Z^{k}$ and $\rbf_j \in (\Z/q\Z)^{k}$ for
$j=1,2$ in order to obtain
\[|S(\u{a}/q + \u{\theta})|^2=
\sum_{\substack{\rbf_1 \modd{q}\\ \rbf_2\modd{q}}} e_q(\u{a}\cdot\u{Q}(\rbf_1)-\u{a}\cdot\u{Q}(\rbf_2))
\Sigma(\u{\theta},\rbf_1,\rbf_2,q),\]
where we have temporarily set
\[\Sigma(\u{\theta},\rbf_1,\rbf_2,q)=
\sum_{\lbf_1, \lbf_2 \in \Z^k} e(\u{\theta} \cdot
\u{Q} (\lbf_1q + \rbf_1) - \u{\theta} \cdot \u{Q} (\lbf_2q +
\rbf_2)) w_B(\lbf_1q+\rbf_1)w_B(\lbf_2q+\rbf_2).\]
We use Poisson summation to transform this into the sum
\[\Sigma(\u{\theta},\rbf_1,\rbf_2,q)=\left(\frac{B}{q} \right)^{2k}
\sum_{\lbf \in \Z^{2k}}e_q(\rbf \cdot \lbf)J_w(B^2\u{\theta},B\lbf/q),\]
where $J_w(\u{\nu},\lambf)$ is given by
\[ J_w(\u{\nu},\lambf)  = \int_{\R^{2k}} e( \unu \cdot \u{Q}(\ubf_1)
- \unu \cdot \u{Q}(\ubf_2))  w(\ubf_1) w(\ubf_2) e(-\ubf \cdot \lambf)
d\ubf.\] 
We may conclude that 
\[\sum_{\substack{\u{a} \modd{q}\\ (\u{a},q)=1}}
|S(\u{a}/q+ \u{\theta})|^2=B^{2k}
\sum_{\lbf \in \Z^{2k}}q^{-2k}S(\lbf;q)J_w(B^2\u{\theta},B\lbf/q),\]
and the lemma follows after integrating over the region $\{ \u{\phi} \}$.

In order to bound $\Sig(L,\u{\phi})$, we will now bound $I_{\{\u{\phi}\}}(\lbf;q)$ individually, and then apply bounds we have proved for averages of $S(\lbf;q)$. 
We first note that Lemmas \ref{lem15.0} and \ref{lem15.2} immediately imply the following: 
\begin{lemma}
For any $\lbf \in \Z^{2k}$, 
\[ \int_{\{B^2\u{\phi}\}} J_w(\u{\nu},B\lbf/q) d \u{\nu} 
	\ll \min ((B^2 \phi^*)^3, (B^2\phi^*)^{3-k}) \log B.\]
If $|\lbf | \gg LB\phi^*$, then for any $M \geq 1$,
\[ \int_{\{B^2\u{\phi}\}} J_w(\u{\nu},B\lbf/q) d \u{\nu} 
	\ll_M (B^2 \phi^*)^3 \left( \frac{B|\lbf|}{q} \right)^{-M}.\]
\end{lemma}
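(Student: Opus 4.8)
The plan is to recognize that $J_w(\unu,\lambf)$ is precisely an oscillatory integral of the type $I(\calQ;\bflam)$ studied earlier, with $n=2k$ variables, weight $w(\ubf_1)w(\ubf_2)$ on $\R^{2k}$, quadratic form $\calQ = \unu\cdot\u{\Fcal}$ where $\Fcal_i(\ubf_1,\ubf_2)=Q_i(\ubf_1)-Q_i(\ubf_2)$, and frequency $\bflam = B\lbf/q$. Both displayed estimates then follow by specializing Lemmas \ref{lem15.0} and \ref{lem15.2} with $\u{\phi}$ replaced by $B^2\u{\phi}$.

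For the first bound, one branch of the minimum, namely $(B^2\phi^*)^3$, comes from the trivial estimate $|J_w(\unu,\lambf)|\leq \|w\|_{L^1}^2\ll 1$ together with the fact that $\{B^2\u{\phi}\}$ has measure $\ll (B^2\phi^*)^3$. The other branch comes directly from Lemma \ref{lem15.2} applied with $\u{\phi}\mapsto B^2\u{\phi}$, which gives $\int_{\{B^2\u{\phi}\}}|I(\unu\cdot\u{\Fcal};B\lbf/q)|\,d\unu \ll (B^2\phi^*)^{3-k}\bigl(1+|\log(B^2\phi^*)|\bigr)$. In the range relevant to Proposition \ref{prop_sig_bd}, i.e.\ for dyadic tuples $\u{\phi}$ with $B^{-k}\leq \phi_i\leq (LS^{1/3})^{-1}$ and $L,S$ powers of $B$, the quantity $B^2\phi^*$ lies between $B^{2-k}$ and a fixed power of $B$, so $1+|\log(B^2\phi^*)|\ll \log B$, and taking the minimum of the two branches yields the stated bound.

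For the second bound, note that when $\unu\in\{B^2\u{\phi}\}$ we have $|\unu|_\infty\leq 2B^2\phi^*$, hence $\|\unu\cdot\u{\Fcal}\|\leq 2B^2\phi^*\max_i\|\Fcal_i\|\ll B^2\phi^*$ with an implied constant depending only on $\u{Q}$. On the other hand $|\bflam| = B|\lbf|/q\geq B|\lbf|/(2L)$ since $q<2L$, so the hypothesis $|\lbf|\gg LB\phi^*$ (with a sufficiently large implied constant) forces $|\bflam|\geq 4\|\unu\cdot\u{\Fcal}\|$ for every $\unu\in\{B^2\u{\phi}\}$. Thus the first part of Lemma \ref{lem15.0} applies and gives $|J_w(\unu,B\lbf/q)|\ll_M |\bflam|^{-M} = (B|\lbf|/q)^{-M}$, uniformly in $\unu$ over the box. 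Integrating over $\{B^2\u{\phi}\}$, whose measure is $\ll (B^2\phi^*)^3$, produces the claimed bound $\ll_M (B^2\phi^*)^3(B|\lbf|/q)^{-M}$.

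There is no serious obstacle here; the two points that require a moment's care are: (i) that the implied constant in the rapid-decay part of Lemma \ref{lem15.0} is uniform over the family of quadratic forms $\unu\cdot\u{\Fcal}$ as $\unu$ ranges over the box, which is built into that lemma's statement, the constant depending only on $M$ and the weight; and (ii) the bookkeeping that converts the hypothesis ``$|\lbf|\gg LB\phi^*$'', stated relative to $L$, into the inequality ``$|B\lbf/q|\geq 4\|\calQ\|$'' needed relative to $q$, which is immediate from $q<2L$.
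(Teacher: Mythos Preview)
Your proposal is correct and matches the paper's approach exactly: the paper simply states that the lemma follows immediately from Lemmas~\ref{lem15.0} and~\ref{lem15.2}, and your write-up spells out precisely how---identifying $J_w(\unu,B\lbf/q)=I(\unu\cdot\u{\Fcal};B\lbf/q)$, invoking Lemma~\ref{lem15.2} (with $\u{\phi}\mapsto B^2\u{\phi}$) for the first bound, and invoking the rapid-decay clause of Lemma~\ref{lem15.0} for the second after checking $|B\lbf/q|\ge 4\|\unu\cdot\u{\Fcal}\|$ via $q<2L$. Your handling of the $\log$ factor and of the passage from $L$ to $q$ is exactly the intended bookkeeping.
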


Once $L,\{ \u{\phi} \}$ are fixed, we define the threshold $L_0 = L B^{-1+\ep} (1 + B^2 \phi^*)$. Upon recalling the trivial bound $S(\lbf;q) \ll q^{2k+3}$, we may take care of the tail of the sum in (\ref{Sig_SI}) with the estimate 
\begin{eqnarray*}
B^{2k}\sum_{L \leq q < 2L}
\frac{1}{q^{2k}} \left|	\sum_{|\lbf| \geq L_0}S(\lbf;q)
\Ical_{\{ \u{\phi} \}} (\lbf;q) \right|
	&\ll& B^{2k-6} L^4 (B^2 \phi^*)^3 \left( \frac{L}{B} \right)^M \sum_{|\lbf| \geq L_0} |\lbf|^{-M} \\
		&\ll& B^{2k} L^4 \left( \frac{L}{B} \right)^M L_0^{2k+1-M} \\
	&\ll& B^{2k} L^4 \left( \frac{L}{B} \right)^M (LB^{-1+\ep})^{2k+1-M} \\
	&\ll &B^{2k} L^4 (LB^{-1+\ep})^{2k+1} B^{-\ep M},
	\end{eqnarray*}
valid for $M >2k+1$, and with implied constant depending on $\ep$ and $M$.
 Upon choosing $M$ sufficiently large with respect to $\ep$, this tail contributes $O(1)$ to $\Sig (L, \{ \u{\phi} \})$. 

On the other hand, we recall from Lemma \ref{lemS1} that for any $q$ and $\lbf$, we have the bound $S(\lbf;q) \ll q^{k+3 +\ep}$. Thus in particular the contribution of the $\lbf=\mathbf{0}$ term to $\Sig(L, \u{\phi} )$ is at most 
\begin{eqnarray*}
 && \ll B^{2k-6} (B^2 \phi^*)^3 \min (1, (B^2 \phi^*)^{-k}) (\log B) \sum_{L \leq q \leq 2L} q^{-2k} q^{k+3+\ep} \\
 && 
  \ll B^{2k-6} (B^2 \phi^*)^3 \min (1, (B^2 \phi^*)^{-k}) (\log B) L^{-k+4+\ep}.
  \end{eqnarray*}
If $L$ is such that $L \leq \frac{1}{2} B^\Del$,  we must have $\phi^* \geq  \frac{1}{2} B^{-2+\Del}$; then we may conclude that the above contribution is 
  \[ \ll B^{2k-6} (B^2 \phi^*)^{3-k} (\log B) L^{-k+4+\ep} \ll B^{2k-6} B^{\Del(3-k)} (\log B),
  \]
  which is $o(B^{2k-6})$ for $k >3$ and suffices for Proposition \ref{prop_sig_bd} if $k \geq 9$.
Alternatively, if $L \geq \frac{1}{2} B^\Del$ then the contribution is 
  \[ \ll B^{2k-6} L^{-k+4+\ep} \ll B^{2k-6} B^{\Del(4-k +\ep)},
  \]
which is $o(B^{2k-6})$ for $k>4$ and suffices for Proposition \ref{prop_sig_bd} if $k \geq 10$.
 
 Finally, we turn to the most important range, with $1 \leq |\lbf| \leq L_0$, which contributes to $\Sig(L, \u{\phi} )$ at most
 \[ B^{2k-6} \min( (B^2 \phi^*)^3, (B^2 \phi^*)^{3-k}) (\log B) \sum_{L \leq q < 2L} q^{-2k} \sum_{1 \leq |\lbf| \leq L_0} |S(\lbf;q)|.\]
 Upon applying the average bounds of Proposition \ref{propSav}, this is bounded above by
 \[ B^{2k-6+5\ep} \min( (B^2 \phi^*)^3, (B^2 \phi^*)^{3-k}) L^{-2k} \{ L^{k+3} L_0^{2k} + L^{k+4}L_0^k \},\]
 where we have used the fact that $LL_0 \ll B^4 $ so that $(LL_0)^\ep \ll B^{4\ep}$.
 In the case where $B^2 \phi^* \geq 1$, we have $L_0 \ll LB^{1+\ep} \phi^*$, and we compute this contribution as 
 \begin{eqnarray*}
 && \ll B^{2k-6+3k\ep} (B^2 \phi^*)^{3-k} L^{-2k} \{ L^{k+3} (LB\phi^*)^{2k} + L^{k+4}(LB\phi^*)^k \} \\
  && \ll B^{2k-6+3k\ep} \{ L^{k+3}B^6( \phi^*)^{k+3}+ L^{4}B^{6-k}(\phi^*)^3 \}.
 \end{eqnarray*}
 Now we recall that by the Diophantine approximation, we only consider 
 \[ \phi^* \ll L^{-1} S^{-1/3} \ll L^{-1} B^{-1/2},\]
 so that (upon recalling $L \leq S = B^{3/2}$) the contribution above is at most
   \begin{eqnarray*}
&& \ll B^{2k-6+3k\ep} \{ B^{-(k-9)/2}+ B^{6-k} \} \ll B^{2k-6+3k\ep} \cdot B^{-(k-9)/2},
 \end{eqnarray*}
which is sufficient for Proposition \ref{prop_sig_bd} if $k\geq 10$ and $6\Del < 1/2$.

On the other hand if $B^2 \phi^* \leq 1$, we have $L_0 \ll LB^{-1+\ep}$, so that (upon recalling $L \leq S = B^{3/2}$)
 the contribution is 
 \begin{eqnarray*}
 && \ll  B^{2k-6+3k\ep}  (B^2 \phi^*)^3 L^{-2k} \{ L^{k+3} (LB^{-1})^{2k} + L^{k+4}(LB^{-1})^k \} \\
 && \ll B^{2k-6+3k\ep}  \{ L^{k+3} B^{-2k} + L^{4}B^{-k} \} \\
  && \ll B^{2k-6+3k\ep}  \{ B^{-(k-9)/2} + B^{6-k} \} \ll B^{2k-6+3k\ep} \cdot B^{-(k-9)/2}, 
 \end{eqnarray*}
 which is again sufficient for Proposition \ref{prop_sig_bd} if $k\geq 10$ and $6\Del < 1/2$. This completes the proof of Proposition \ref{prop_sig_bd} and hence of Theorem \ref{thm_meansquare}.

\section{Proofs of the remaining main theorems}\label{sec_final_proofs}
We have proved Theorems \ref{thm_meansquare} and \ref{thm_SJ}. 
To derive Theorem \ref{thm_asymp}, we need only define $\Ecal_\varpi(N)$ to be the set of $\u{n} \in [-N,N]^3$ such that either $H_{\u{Q}}(\u{n})=0$ or the difference 
\[ \Rcal_B(\u{n}) - J_w(B^{-2}\u{n})\mathfrak{S}(\u{n}) B^{k-6}\]
fails to be $O(B^{k-6 - \varpi}).$ We deduce from Lemma \ref{lemPoly2} that at most $O(N^2)$ tuples $\u{n}$ satisfy the first criterion, and see by Theorem \ref{thm_meansquare} that at most $O(N^{3 - (1/56 - \varpi)})$ tuples $\u{n}$ satisfy the second criterion. This provides the upper bound on the size of the exceptional set, as claimed in Theorem \ref{thm_asymp}.

We next note that the derivation of Theorem \ref{thm_primes} from Theorem \ref{thm_exceptions} is completely analogous to the derivation of Theorem 1.4 from Theorem 1.3 in \cite{HBPierce}, thus we omit the proof here. 

We turn to proving Theorem \ref{thm_exceptions}. For this we first note that by a dyadic division it is enough to prove Theorem \ref{thm_exceptions} for tuples $\u{n} \in \Z^3$ with $N/2\leq |\u{n}|_\infty < N$. Furthermore we may choose our weight function $w$ in such a way that the assumptions in Theorem \ref{thm_SJ} are satisfied and set $B=N^{1/2}$ for the rest of this section.\par
Now we divide the tuples $\bn$ with $N/2\leq  |\u{n}|_\infty< N$ which are counted by $E(N)$ into three cases, according to  real parameters $a>0,b>0$ to be chosen later. In Case I, we assume that 
\begin{equation*}
J_w (B^{-2}\bn)\geq B^{-b},\mbox{ and } \grS(\bn)\geq B^{-a}.
\end{equation*}
In Case II we assume that $H_{\u{Q}}(\bn)=0$ or $J_w (B^{-2}\bn)<B^{-b}$. The remaining Case III may be characterized by the property that $H_{\u{Q}}(\bn)\neq 0$ and $\grS (\bn)<B^{-a}$. For $1 \leq i \leq 3$ we let $E_i(N)$ denote the number of $N/2\leq |\u{n}|_\infty < N$ counted in Case I, II
and III respectively.
We will prove:
\begin{lemma}\label{lemma_E123}
With the parameters $a,b$ as above, and with the real constants $c = 1/28$ and $\al, \be$ as given in Theorem \ref{thm_SJ},
\begin{equation*}
\begin{split}
E_1(N)&\ll N^2+N^{3+a+b-c/2},\\
E_2(N)&\ll N^2+N^{3-b/(2\bet \deg(H_{\u{Q}}))},\\
E_3(N)&\ll N^{3-1/(\deg(H_{\u{Q}}))+\eps}+N^{3-a/(2\alp \deg (H_{\u{Q}}))+\eps}.
\end{split}
\end{equation*}
\end{lemma}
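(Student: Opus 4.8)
The plan is to dispatch the three cases with three separate tools. Write $d=\deg H_{\u{Q}}$ and recall $B=N^{1/2}$, $N/2\le |\u{n}|_\infty<N=B^2$. Three features of an $\u{n}$ counted by $E(N)$ will be used repeatedly: $\Rcal_B(\u{n})=0$ (there is no integer solution), the system $\u{Q}(\u{x})=\u{n}$ is solvable over $\R$ and over every $\Z_p$, and $1/2\le |B^{-2}\u{n}|_\infty<1$. Moreover the tuples with $H_{\u{Q}}(\u{n})=0$ number $\ll N^2$ by Lemma~\ref{lemPoly2} (taken with $M=1$), which is where the $N^2$ summands come from; henceforth I assume $H_{\u{Q}}(\u{n})\ne 0$.

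For Case I I would argue as follows. Since $\Rcal_B(\u{n})=0$, $J_w(B^{-2}\u{n})\ge B^{-b}$ and $\grS(\u{n})\ge B^{-a}$, one has
\[
\bigl|\Rcal_B(\u{n})-J_w(B^{-2}\u{n})\grS(\u{n})B^{k-6}\bigr|^2\ge B^{2k-12-2a-2b}.
\]
Summing over the $\u{n}$ counted by $E_1(N)$ and comparing with the mean-square bound $\ll B^{2k-6-1/28}$ of Theorem~\ref{thm_meansquare} (applicable as $k\ge 10$) gives $E_1(N)\ll B^{6+2a+2b-1/28}=N^{3+a+b-1/56}$, i.e. the claimed bound with $c=1/28$. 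For Case II the hypothesis is $J_w(B^{-2}\u{n})<B^{-b}$ (the alternative $H_{\u{Q}}(\u{n})=0$ being already counted). Using that $\u{Q}(\u{x})=B^{-2}\u{n}$ is solvable over $\R$ with $1/2\le |B^{-2}\u{n}|_\infty<1$, the lower bound (\ref{JggH}) of Theorem~\ref{thm_SJ} and the homogeneity of $H_{\u{Q}}$ in $\u{n}$ of degree $d$ give $B^{-b}>J_w(B^{-2}\u{n})\gg B^{-2d\beta}|H_{\u{Q}}(\u{n})|^{\beta}$, hence $|H_{\u{Q}}(\u{n})|\ll N^{d-b/(2\beta)}$; Lemma~\ref{lemPoly2} with this value of $M$ then bounds the number of such $\u{n}$ by $\ll N^2M^{1/d}=N^{3-b/(2\beta d)}$.

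For Case III, where $H_{\u{Q}}(\u{n})\ne 0$ and $\grS(\u{n})<B^{-a}$, I would use the singular-series lower bound of Theorem~\ref{thm_SJ}: since $\u{n}$ is everywhere locally solvable, $B^{-a}>\grS(\u{n})\gg_{\eps}|\u{n}|_\infty^{-\eps}\prod_{p\le p_0}|H_{\u{Q}}(\u{n})|_p^{\alpha}$, so with $m$ denoting the largest $p_0$-smooth divisor of $H_{\u{Q}}(\u{n})$ (so $\prod_{p\le p_0}|H_{\u{Q}}(\u{n})|_p=m^{-1}$) we obtain $m\gg N^{a/(2\alpha)-\eps}=:T$, while $m\le |H_{\u{Q}}(\u{n})|\ll N^{d}$. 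The next step is to pass to a \emph{controlled} divisor $m^{*}$: dividing $m$ by its prime factors (all $\le p_0$) until one first lands at a value $\le N$ produces a $p_0$-smooth divisor $m^{*}\mid H_{\u{Q}}(\u{n})$ with $\min(T,N/p_0)\le m^{*}\le N$. For a fixed $p_0$-smooth $m^{*}\le N$, the Chinese remainder theorem together with Lemma~\ref{lemPoly4} bounds the number of residues $\u{n}\bmod m^{*}$ with $m^{*}\mid H_{\u{Q}}(\u{n})$ by $\ll (m^{*})^{3-1/d}$ (the implied constant being a product over the fixed finite set of primes $\le p_0$, hence absolute), and each such class meets $[-N,N]^3$ in $\ll (N/m^{*})^3$ points, so there are $\ll N^3(m^{*})^{-1/d}$ such $\u{n}$; summing over $p_0$-smooth $m^{*}\in[\min(T,N/p_0),N]$ and using $\sum_{m\ p_0\text{-smooth}}m^{-\delta}=\prod_{p\le p_0}(1-p^{-\delta})^{-1}<\infty$ for any $\delta>0$ to control the tail gives $E_3(N)\ll N^3\min(T,N/p_0)^{-1/d+\delta}\ll N^{3-1/d+\eps}+N^{3-a/(2\alpha d)+\eps}$, as required.

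The only step demanding real care is Case III: one must convert the $p$-adic smallness of $\prod_{p\le p_0}|H_{\u{Q}}(\u{n})|_p$ into divisibility of $H_{\u{Q}}(\u{n})$ by a genuinely large smooth integer and then count lattice points in the resulting union of congruence classes, keeping the modulus $\le N$ so that Lemma~\ref{lemPoly4} remains effective; the dichotomy $m^{*}\asymp T$ versus $m^{*}\asymp N$ is precisely what yields the two exponents appearing in the bound for $E_3(N)$. Cases I and II are routine manipulations of Theorems~\ref{thm_meansquare} and~\ref{thm_SJ} together with the sublevel set estimate of Lemma~\ref{lemPoly2}.
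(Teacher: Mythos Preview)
Your argument is correct and follows the paper's proof essentially line for line: Case~I is the same Chebyshev-type extraction from Theorem~\ref{thm_meansquare}, Case~II is the same combination of the lower bound (\ref{JggH}) with homogeneity of $H_{\u{Q}}$ and Lemma~\ref{lemPoly2}, and Case~III is the same reduction to a bounded-size $p_0$-smooth divisor of $H_{\u{Q}}(\u{n})$ followed by Lemma~\ref{lemPoly4}. The only cosmetic difference is that in Case~III the paper bounds the number of admissible smooth moduli $q$ by $(\log N)^{p_0}\ll N^{\eps}$ and then takes a maximum, whereas you sum over all $p_0$-smooth $m^{*}$ in the range via a Rankin-type inequality; both devices yield the same exponents.
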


Assuming Lemma \ref{lemma_E123} (whose proof we defer for the moment), we may quickly derive Theorem \ref{thm_exceptions}. We see that in order for $E_1(N)$ to be sufficiently small to prove Theorem \ref{thm_exceptions} with some $\varpi>0$, we must at least choose $a,b$ so that $a+b < c/2$. 
We will also assume we choose $a$ so that $a\leq 2\alp$, at which point $E_3(N) \ll N^{3-a/(2\alp \deg (H_{\u{Q}}))+\eps}.$
 We also note that $3-1/\deg(H_{\u{Q}})\geq 2$ as soon as $\deg (H_{\u{Q}})\geq 1$, so that we expect the upper bound for $E_3(N)$ to be at least as big as those for $E_1(N)$ and $E_2(N)$. 
In balancing the three terms, we aim to choose $a,b$ so that 
\begin{equation}\label{3terms}
 \frac{c}{2}-a-b   = \frac{b}{2\bet\deg (H_{\u{Q}})}=\frac{a}{2\alp \deg(H_{\u{Q}})} .
\end{equation}
Equating the second and third terms implies that $b=\frac{\bet}{\alp}a$; subsequently equating the first and third terms gives
\begin{equation}\label{a_dfn}
a= \frac{c}{2} \left(\frac{1}{2\alp \deg (H_{\u{Q}})}+1+\frac{\bet}{\alp}\right)^{-1}.
\end{equation}
We note that with this choice, we have 
\[ a+b = \frac{c}{2} \left(\frac{1}{2\alp \deg (H_{\u{Q}})}+1+\frac{\bet}{\alp}\right)^{-1} \left(1+\frac{\be}{\al} \right) < \frac{c}{2},\]
as required. In addition, upon defining $a$ by (\ref{a_dfn}), we certainly have $a < 1/56 < 1 \leq 2\alp$ (recalling that $\al \geq 1$ from the remark following Theorem \ref{thm_SJ}).
We may conclude that after choosing $a,b$ as above, Theorem \ref{thm_exceptions} holds for 
\[ \varpi = \frac{c}{2} - a-b  = \frac{c}{2} \cdot \frac{1}{2(\al+\be) \deg(H_{\u{Q}}) +1}.\]
 In particular, to make this independent of the particular system $\u{Q}$, we note that the degree of $H_{\u{Q}}(\u{x})$ as a polynomial in $\u{x}$ is always bounded above by the degree $\deg(H)$ of $H(\Abf,\u{x})$ as a polynomial in all the variables. Thus (also recalling the allowable choices of $\al,\be$ from Theorem \ref{thm_SJ}), we may make the smaller, but universal, choice of 
 \[  \varpi = \frac{c}{2} \cdot \frac{1}{2(\al+\be) \deg(H) +1} = \frac{1}{2^4.7.3^{3+k}(4k-13) \deg(H)+1}.\]

\subsection{Proof of Lemma \ref{lemma_E123}}
We recall that by Lemma \ref{lemPoly2},
\begin{equation}\label{H_bound_set}
\#\{\max_i|n_i|\leq N: H_{\u{Q}}(\bn)=0\}\ll N^2,
\end{equation}
since $H_{\u{Q}}(\bn)$ is non-zero polynomial in $\bn$.
We recall that $E(N)$ counts those $\u{n}$ such that $\u{Q}(\xbf) = \u{n}$ has no integral solution, so $\Rcal_B(\u{n})=0$.
 Hence if $E_1'(N)$ is the number of those $\bn$  in Case I with the additional property that $H_{\u{Q}}(\bn)\neq 0$, then we see from Theorem \ref{thm_meansquare} that
\begin{align*}
E_1'(N) B^{-2a}B^{-2b} B^{2k-12} &\leq \sum_{\substack{|\u{n}|_\infty \leq N,\ \text{case I}\\ H_{\u{Q}}(\bn)\neq 0}} |\Rcal_B(\bn)-J_w (B^{-2}\bn)\grS(\bn)B^{k-6}|^2 \\ &\ll B^{2k-6-c},
\end{align*}
with $c=1/28$. Hence we obtain the upper bound
\begin{equation*}
E_1(N)\ll N^2+B^{6+2a+2b-c}\ll N^2+N^{3+a+b-c/2}.
\end{equation*}

Next we consider the contribution of $E_2(N)$. Since we already have a satisfactory bound for the $\bn\in \Z^3$ with $H_{\u{Q}}(\bn)=0$ by (\ref{H_bound_set}), we may assume that $H_{\u{Q}}(\bn)\neq 0$ and $J_w (B^{-2}\bn)<B^{-b}$. Note that for such $\bn$ the assumptions in the second part of Theorem \ref{thm_SJ} are satisfied by $B^{-2}\u{n}$ since $1/2 \leq |B^{-2}\u{n}|_\infty\leq 1$ and $\bQ(\bfx)=B^{-2}\bn$ has a real solution in $\bfx$ as soon as $\bQ(\bfx)=\bn$ is soluble in $\R^k$. Hence we see that for such $\bn$ we have the lower bound 
\begin{equation*}
J_w (B^{-2}\bn)\gg |H_{\u{Q}}(B^{-2}\bn)|^\beta.
\end{equation*}
We recall that the polynomial $H_{\u{Q}}(\bn)$ is homogeneous. Hence we can estimate the contribution of $E_2(N)$ by
\begin{equation}\label{eqnE2}
\begin{split}
E_2(N) &\ll N^2+ \#\{\bn\in\Z^3: |\u{n}|_\infty\leq N,\ |H_{\u{Q}}(B^{-2}\bn)|^\bet\ll B^{-b}\} \\
&\ll N^2 +  \#\{ \bn\in \Z^3: |\u{n}|_\infty \leq N,\ |H_{\u{Q}}(\bn)|\leq C_1 N^{\deg(H_{\u{Q}})- b/(2\bet)}\},
\end{split}
\end{equation}
for some positive constant $C_1$. 
Then the sublevel set estimate of Lemma \ref{lemPoly2} delivers the bound
\begin{equation*}
E_2(N)\ll N^2 + N^{3- b/(2\bet\deg(H_{\u{Q}}))}.
\end{equation*}

We next turn to the contribution of $E_3(N)$. We recall that $E_3(N)$ counts all tuples $\bn\in \Z^3$ with $N/2\leq |\u{n}|_\infty \leq N$ such that $H_{\u{Q}}(\bn)\neq 0$ and $\grS(\bn)<B^{-a}$ and such that $\bQ(\bfx)=\bn$ is locally soluble everywhere. Hence the first part of Theorem \ref{thm_SJ} implies that for $\bn$ counted by $E_3(N)$,
\begin{equation*}
|\grS(\bn)|\gg_\eps |\bn |_\infty^{-\eps} \prod_{p\leq p_0} |H_{\u{Q}}(\bn)|_p^\alp,
\end{equation*}
with some parameter $p_0$ only depending on the system of quadratic forms $\bQ$. Hence we need to estimate the cardinality of the set
\begin{equation*}
\#\{ \bn\in \Z^3: N/2 \leq | \bn |_\infty\leq N,\ H_{\u{Q}}(\bn)\neq 0, \prod_{p\leq p_0} |H_{\u{Q}}(\bn)|_p^\alp \ll B^{-a} N^\eps\}.
\end{equation*}
We can rewrite this as
\begin{multline}\label{eqnE3a}
E_3(N)\ll_\eps \#\bigg\{ \bn\in\Z^3:N/2\leq |\bn|_\infty\leq N,\ H_{\u{Q}}(\bn)\neq 0, \bigg. \\
	\bigg. \prod_{p\leq p_0}|H_{\u{Q}}(\bn)|_p\ll B^{-a/\alp+\eps}\bigg\}.
\end{multline}
If  $\bn$ is contained in this set, then there is a divisor $q$ of $H_{\u{Q}}(\bn)$ with
\begin{equation*}
q\gg B^{a/\alp-\eps},
\end{equation*}
which is composed entirely of primes $p \leq p_0$. Since we also assume that $H_{\u{Q}}(\bn)\neq 0$, we see we also have an upper bound $q\ll N^{\deg (H_{\u{Q}})}$. In particular, for any prime power $p^f|q$, this implies that $f\ll \log N$. Thus in particular there exists some constant $C_2(N)$ with
 $C_2(N) \ll \log N$ such that $q |(\prod_{p \leq p_0}p)^{C_2(N)}$; this shows that the number of possibilities for $q$ is $O(C_2(N)^{p_0})=O((\log N)^{p_0})$. Recalling that $p_0$ depends only on $\u{Q}$, which is an allowable dependence in our implied constants, there is thus some positive constant $C_3=C_3(\eps,\u{Q})$ such that there is a set $\calS$ of at most $C_3N^\eps$ divisors $q$ such that 
\begin{enumerate}
\item for any $\bn$ counted in (\ref{eqnE3a}) the polynomial $H_{\u{Q}}(\bn)$ is divisible by one of these divisors $q$,
\item $q$ is entirely composed of primes $\leq p_0$,
\item $B^{a/\alp-\eps} \ll q\ll N^{\deg(H_{\u{Q}})},$
\item and furthermore we may assume $q\leq p_0 N$.
\end{enumerate} 
All but the last property have been proved; for the last point, if we had some $\bn$ such that $H_{\u{Q}}(\bn)$ is divisible by $q>p_0N$ composed only of primes less than or equal to $p_0$, then $q/p$ also divides $H_{\u{Q}}(\bn)$ for any $p\leq p_0$. We can hence divide $q$ by primes $p \leq p_0$ until we have found some divisor $q'$ of $H_{\u{Q}}(\bn)$ with $q'\leq p_0N$, as claimed. We conclude that we can estimate the contribution of $E_3(N)$ by
\begin{equation}\label{eqnE3b}
E_3(N)\ll N^\eps \max_{q\in \calS} \#\{ \bn\in \Z^3: |\u{n}|_\infty \leq N: q| H_{\u{Q}}(\bn)\}.
\end{equation}
We fix one $q\in \calS$ and consider
\begin{equation}\label{eqnE3c}
\#  \{1\leq |\u{n}|_\infty \leq N: q|H_{\u{Q}}(\bn)\} \ll \left(\frac{N}{q}+1\right)^3r(q),
\end{equation}
with the counting function $r(q)$ defined by
\begin{equation*}
r(q):= \#\{ \u{n} \modd{q}: q|H_{\u{Q}}(\bn)\}.
\end{equation*}
We note that $r(q)$ is a multiplicative function in $q$, so that if $q=\prod p^{f_p}$ is some decomposition into prime powers, then $r(q)=\prod r(p^{f_p})$. In particular it is enough to bound $r(p^f)$ for prime powers. For this we need the $p$-adic version of Lemma \ref{lemPoly2} given in Lemma \ref{lemPoly4}, which provides the upper bound
\begin{equation*}
r(p^f)\ll p^{3f-f/(\deg(H_{\u{Q}}))},
\end{equation*}
for any prime $p$. In combination with the estimates in (\ref{eqnE3b}) and (\ref{eqnE3c}) this leads to the bound
\begin{equation*}
\begin{split}
E_3(N)
&\ll N^\eps\max_{q\in \calS}\left(\frac{N^3}{q^3}+1\right) C^{\omega(q)}q^{3-1/(\deg(H_{\u{Q}}))} \\ 
&\ll N^{2 \eps}\max_{q\in \calS} \{ N^3q^{-1/(\deg(H_{\u{Q}}))} + q^{3-1/(\deg(H_{\u{Q}}))}\}\\ 
&\ll N^{3\eps} \{ N^3 B^{-a/(\alp\deg(H_{\u{Q}}))} +N^{3-1/(\deg(H_{\u{Q}}))}\}.
\end{split}
\end{equation*}
We note that here we have used the bounds in (3) and (4) for $q\in \calS$. Hence we have
\begin{equation*}
E_3(N)\ll N^{3-a/(2\alp \deg(H_{\u{Q}}))+\eps} + N^{3-1/(\deg(H_{\u{Q}}))+\eps},
\end{equation*}
as claimed.

 \section{Appendix}\label{sec_appendix}
 
 \subsection{Forms satisfying Condition 2}
We provide here a specific example of a system of three quadratic forms $Q_1,Q_2,Q_3$ in $10$ variables such that the corresponding form $F_{\u{Q}}(x,y,z) = \det (xQ_1 + yQ_2+zQ_3)$ satisfies $\Disc(F_{\u{Q}}) \neq 0$, and hence Condition \ref{cond2} holds over $\bar{\Q}$. This therefore serves as a natural example of a system for which our main theorems hold.

Let $k=10$. For $i=1,2,3$ we define $Q_i$ to be the integral quadratic form with the following associated matrix $Q_i$: 
\[ Q_1 = \left( \begin{array}{cccc}
1 &&& \\
 & 1&& \\
 && \ddots & \\
 &&&1 
\end{array} \right), \; 
Q_2 = \left( \begin{array}{cccc}
1 &&& \\
 & 2&& \\
 && \ddots & \\
 &&&k 
\end{array} \right), \;
Q_3 = \left( \begin{array}{cccc}
1 &1&& \\
 1& 1&1& \\
 && \ddots &1 \\
 &&1&1 
\end{array} \right).
\]
We may confirm that 
\beq\label{F_disc0_final}
\Disc(F_{\u{Q}}) \neq 0
\eeq
 by checking that $F_{\u{Q}}=0$ is (projectively) nonsingular; that is, (using the notation $\partial_x = \partial/\partial x$) we need to check that  the only point at which 
\beq\label{partials_3}
\partial_x F_{\u{Q}} (x,y,z) = \partial_y F_{\u{Q}} (x,y,z) =\partial_z F_{\u{Q}} (x,y,z) =0
\eeq
 is $(x,y,z)=(0,0,0)$. 
Now let $\Rcal_t(G_1,G_2)$ denote the resultant of two polynomials $G_1,G_2$ as a function of $t$, so that $\Rcal_t(G_1,G_2)=0$ if and only if there is a value of $t$ such that both $G_1(t)=G_2(t)=0$. 
 In particular,
\beq\label{Resultant}
\Rcal_y(\Rcal_x(\partial_x F_{\u{Q}}, \partial_y F_{\u{Q}}), \Rcal_x(\partial_x F_{\u{Q}}, \partial_z F_{\u{Q}}))
\eeq
is a function of $z$ that vanishes if
there are values of $x,y$  such that (\ref{partials_3}) holds.
Resultants are easily computed by computer algebra packages (e.g., {\tt{maple}}), and for example we have
\begin{align*}
\Rcal_x(\partial_x F_{\u{Q}}, \partial_y F_{\u{Q}})=-&122880000y^9\\
 \cdot &\; (2740056028310076978941971660800000000000000y^{72} \\
& +\; 24804900858187350835399766310912000000000000y^{70}z^2\\
&+\dots+29405801953440000z^{72}).
\end{align*}
In particular, we can compute that (\ref{Resultant}) is equal to $cz^{6561}$ for a certain integer $c$, so that only if $z=0$ can there exist values of $x,y$ such that (\ref{partials_3}) holds.

 After specifying $z=0$ one may explicitly compute that $\Rcal_x(\partial_x F_{\u{Q}}, \partial_y F_{\u{Q}})=0$ and $\Rcal_x(\partial_x F_{\u{Q}}, \partial_z F_{\u{Q}})=0$ then require $y=x=0$ as well, as desired. Alternatively, after specifying $z=0$, seeking solutions to (\ref{partials_3}) reduces to seeking $x,y$ such that $\partial_x \det(xQ_1 + yQ_2)= \partial_y \det(xQ_1+yQ_2)=0$. But by \cite[Proposition 2.1]{HBPierce}, we know that $\det(xQ_1+yQ_2)=0$ is (projectively) nonsingular since the ratios of the eigenvalues of $Q_1,Q_2$ are all distinct, so only $x=y=0$ may be solutions.

%

Finally, we observe that (\ref{F_disc0_final}) always fails for a system of three \emph{diagonal} quadratic forms, say $Q_1 = \diag(a_i)$, $Q_2 = \diag(b_i)$, $Q_3 = \diag(c_i)$ in $k \geq 2$ variables. For indeed, in this case 
\[ F_{\u{Q}} (x,y,z)= \prod_{i=1}^k (a_i x +  b_i y + c_iz),\]
and we may for example solve for $(x,y,z)$ in the system resulting from the first two linear factors, 
\[ a_i x + b_i y + c_i z =0, \quad i =1,2.\]
This has rank at most 2, so admits a nontrivial solution $(x_0,y_0,z_0)$, which certainly satisfies $F_{\u{Q}}(x_0,y_0,z_0)=0$ but also visibly satisfies 
\[ \partial_x F_{\u{Q}}(x_0,y_0,z_0) =  \partial_y F_{\u{Q}}(x_0,y_0,z_0)= \partial_z F_{\u{Q}}(x_0,y_0,z_0) =0\]
as long as $k \geq 2$. Thus (\ref{F_disc0_final}) fails.

%
%
%
%
%

\subsection{Construction of discriminants}

Though the construction of discriminants is classical (see e.g., \cite{GKZ}), the understanding of their basic properties over fields of arbitrary characteristic is relatively recent (see e.g., \cite{Benoist}).  For our work, we need to work over the integers so that we have a discriminant we can use for all our fields at once.  So we include the necessary background and constructions here.
 
Let $K$ be a field.  Consider a fixed number of homogeneous forms $f_i$ in $x_0,\dots,x_N$ of specified degrees.  Let
$\A_K^M$ be the parameter space of such forms, with coordinates $a_{i\alpha}$ corresponding to all coefficients of the forms.
Then the Jacobian criterion  for the nonsingularity of the system of equations $\intersect_{i}\{ f_i =0\}$
asks if each $f_i(x_0,\ldots, x_N)$ vanishes as well as certain forms (the maximal minors of the Jacobian matrix) in the $a_{i\alpha}$ and $x_j$ are $0$; let $J_K$ be the ideal of $K[a_{i\alpha},x_j]$ generated by these forms.
  For example, if there are two forms, $J_K$ is generated by 
  \begin{align*}
 & \text{$f_1(\xbf)$ and $f_2(\xbf)$,} \\
 & \partial f_1/\partial x_j ({\bf x}) \cdot \partial f_2/\partial x_k ({\bf x}) -\partial f_1/\partial x_k ({\bf x})\cdot \partial f_2/\partial x_j ({\bf x}) \quad \text{for $0\leq j <k\leq N$.}
  \end{align*} 
  The ideal $J_K$ determines a closed subscheme $S_K\subset \A_K^M \times \P_K^N$, which is the parameter space of all $(\underline{f},{\bf x})$ such that the $\underline{f}$ are forms (of our specified degrees) and ${\bf x}$
  a point of $\P_K^N$ such that $\underline{f}$ fails the Jacobian criterion at ${\bf x}$.

We consider the projection $\pi: \A_K^M \times \P_K^N \rightarrow \A_K^M $.  The set-theoretic image $\pi(S_K)$ is the set of all forms with at least one point failing the Jacobian criterion.  (Note that 
even if the forms are defined over $K$, the point ${\bf x}$ may be defined over a finite extension.)
We have an ideal $D_K:=J_K\cap k[a_{i\alpha}] $ of $k[a_{i\alpha}]$. In general such an elimination of variables describes
the closure of the image.  However, since $\P_K^N$ is projective, the map $\pi$ is closed, and thus $\pi(S_K)$ is closed and consists exactly of the points of the subscheme of $\A_K^M$ given by $D_K$. The variety described by $\sqrt{D_K}$ turns out to be codimension $1$ (as long as the forms are not all linear) \cite[Remarque 1.1]{Benoist} and irreducible \cite[Lemme 3.2]{Benoist}.  Hence we can choose an irreducible polynomial $\Disc_K$ of
$k[a_{i\alpha}]$ such that $(\Disc_K)=\sqrt{D_K}$.

Since the elimination of variables and taking radicals commute with separable field extensions, if $L/K$ is a separable field extension, we may take $\Disc_L=\Disc_K$.  For example, we take $\Disc_{\overline{\Q}}=\Disc_{\Q}$.
By minimally clearing denominators, we may take $\Disc_{\Q}$ to have integral coefficients with greatest common divisor $1$.

In order to compare these discriminants across fields, we will make a similar construction over the integers.  The maximal minors of the Jacobian matrix and the forms $f_i$ all lie in $\Z[a_{i\alpha},x_j]$, so they generate an ideal $J_\Z$  of $\Z[a_{i\alpha},x_j]$. The ideal $J_\Z$ determines a closed subscheme $S_\Z\subset \A_\Z^M \times \P_\Z^N$.  The base change of $S_\Z$ to a field $K$ is clearly $S_K$, as they are defined by the same equations. 
We again consider the projection $\pi: \A_\Z^M \times \P_\Z^N \rightarrow \A_\Z^M.$  We have that the set-theoretic image $\pi(S_\Z)$ is closed and contains $\pi(S_\Q)$.  So in particular, it contains the closure of 
$\pi(S_\Q)$ in $\A_\Z^M$.  The closure of $\pi(S_\Q)$ in $\A_\Z^M$ is described by the ideal
$\sqrt{D_\Q}\cap \Z[a_{i\alpha}]$ in $\Z[a_{i\alpha}]$, which is exactly $(\Disc_{\Q})$.
So in particular, any set of forms over any field $K$ for which $\Disc_{\Q}$ vanishes has a point (over a finite extension) for which the Jacobian criterion fails.  

Thus, it follows that in $K[a_{i\alpha}]$, we have
$(\Disc_K)\subset \sqrt{(\Disc_{\Q})}$ (note that over $K$ the ideal $(\Disc_{\Q})$ is not necessarily prime).  
Thus, in $K[a_{i\alpha}]$, we have $ \Disc_{\overline{\Q}}| \Disc_K^m$ for some integer $m$.  
Since $\Disc_K$ is irreducible, we must have $ \Disc_{\overline{\Q}}=\Disc_K^{m'}$.
 By \cite[Th\'eor\`eme 1.3]{Benoist}, we have that when $K$ has characteristic $0$ or odd characteristic that $\deg (\Disc_K)=\deg( \Disc_{\Q}),$ 
so we may take $\Disc_K=\Disc_{\Q}$ for such $K$. 
(Note this can fail in characteristic $2$.  For example, when $N=1$, in the case of a single quadratic form,
we have $\Disc_{\Q}=b^2-4ac$ and $\Disc_{\Z/2\Z}=b$.)
 We thus  call $\Disc_{\Q}$ the discriminant,
 and also denote it $\Disc$.

 \section{Acknowledgements}
The first two authors gratefully acknowledge the Hausdorff Center for Mathematics for support and an excellent work environment. Pierce is partially supported by NSF DMS-1402121. Schindler is partially supported by NSF DMS-1128155. 
Wood is supported by an 
American Institute of Mathematics Five-Year Fellowship, a Packard Fellowship for Science and Engineering, a Sloan Research Fellowship, and National Science Foundation grant DMS-1301690.

\bibliographystyle{amsbracket}
\providecommand{\bysame}{\leavevmode\hbox to3em{\hrulefill}\thinspace}

\label{endofpaper}

\end{document}